\def\quotient#1#2{%
    \raise1ex\hbox{$#1$}\Big/\lower1ex\hbox{$#2$}%
}
\newcommand{\smxy}[1]{{\text{\scriptsize$#1$}}}
\newcommand{\smmxy}[1]{{\text{\small$#1$}}}
\newcommand{\Decr}{\operatorname{Dec}^{\operatorname{r}}}
\newcommand{\Decl}{\operatorname{Dec}^{\operatorname{l}}}
\newcommand{\cG}{\mathcal G}
\newcommand{\cK}{\mathcal K}
\newcommand{\HH}{\operatorname{H}}
\newcommand{\C}{\operatorname{C}}
\newcommand{\D}{\operatorname{D}}
\newcommand{\M}{\operatorname{M}}
\newcommand{\HC}{\operatorname{HC}}
\newcommand{\lto}{\longrightarrow}
\newcommand{\Hom}{\operatorname{Hom}}
\newcommand{\cA}{\mathcal{A}}
\newcommand{\cB}{\mathcal{B}}
\newcommand{\cC}{\mathcal{C}}
\newcommand{\cD}{\mathcal{D}}
\newcommand{\cE}{\mathcal{E}}
\newcommand{\cH}{\mathcal{H}}
\newcommand{\cx}{\mathcal{M}}
\newcommand{\cX}{\mathcal{Y}}
\newcommand{\cY}{\mathcal{Z}}
\newcommand{\cV}{\mathcal{V}}
\newcommand{\cM}{\mathcal{M}}
\newcommand{\sC}{\mathscr{C}}
\newcommand{\sD}{\mathscr{D}}
\newcommand{\sR}{\mathscr{R}}
\newcommand{\sM}{\mathscr{M}}
\newcommand{\sN}{\mathscr{N}}
\newcommand{\sL}{\mathscr{L}}
\newcommand{\sF}{\mathscr{F}}
\newcommand{\sU}{\mathscr{U}}
\newcommand{\rBB}{\mathrm B} 
\newcommand{\rC}{\mathrm C}
\newcommand{\rCC}{\mathrm C} 
\newcommand{\rF}{\mathrm F}
\newcommand{\rH}{\mathrm H}
\newcommand{\rM}{\mathrm M}
\newcommand{\rN}{\mathrm N}
\newcommand{\rT}{\mathrm T}
\newcommand{\rV}{\mathrm V}
\newcommand{\bG}{\mathbb G}
\newcommand{\cMix}{\operatorname{Mix}} 
\newcommand{\cDist}{\operatorname{Dist}} 
\newcommand{\cSet}{\mathbf{Set}} 
\newcommand{\cMod}{\mbox{-}\mathbf{Mod}}
\newcommand{\cComod}{\mbox{-}\mathbf{Comod}}
\newcommand{\mix}{\mathbf{Mix}}
\newcommand{\dist}{\mathbf{Dist}}
\newcommand{\kmod}{k\mbox{-}\mathbf{Mod}}
\newcommand{\amoda}{A\mbox{-}\mathbf{Mod}\mbox{-}A}
\newcommand{\AmodA}{\mathcal A \mbox{-}\mathbf{Mod}\mbox{-}\mathcal A}
\newcommand{\amod}{A\mbox{-}\mathbf{Mod}}
\newcommand{\aemod}{\Ae\mbox{-}\mathbf{Mod}}
\newcommand{\modae}{\mathbf{Mod}\mbox{-}\Ae}
\newcommand{\cmd}{\operatorname{Cmd}}
\newcommand{\mnd}{\operatorname{Mnd}}
\newcommand{\set}{\mathbf{Set}}
\newcommand{\cat}{\mathbf{Cat}}
\newcommand{\twocat}{\operatorname{2-\mathbf{Cat}}}
\newcommand{\sq}{\mathrm{Sq}}
\newcommand{\Tor}{{\rm Tor}}
\newcommand{\Aop}{{A^*}}
\newcommand{\Ae}{{A^\mathrm{e}}}
\renewcommand\epsilon{\varepsilon}
\renewcommand\phi{\varphi} 
\newcommand{\lmapsto}{\longmapsto}
\newcommand{\colim}{\operatorname{colim}}
\newcommand{\im}{\operatorname{im}}
\newcommand{\ev}{\mathrm{ev}}
\newcommand{\lact}{\smalltriangleright}
\newcommand{\ract}{\smalltriangleleft}
\newcommand{\blact}{\blacktriangleright}
\newcommand{\bract}{\blacktriangleleft}
\numberwithin{equation}{section}
\newcommand{\STRINGDIAGRAM}{\xymatrix@R=10pt@C=10pt@H=0pt@W=0pt@M=0pt}
\newcommand{\bprod}{\ar@{-}[dd] &
\ar@{-}@(d,r)@/^3mm/[dl]}
\newcommand{\bkopr}{\ar@{-}[uu]
&\ar@{-}@(u,l)@/_3mm/[ul]}
\newcommand{\bmult}{\ar@{-}@(d,r)@/^2mm/}
\newcommand{\bkomu}{\ar@{-}@(u,r)@/_1.7mm/}
\newcommand{\bbigkomu}{\ar@{-}@(u,r)@/_4mm/}
\newcommand{\FERMION}{\ar@{-}@(d,u)}
\newcommand{\FERMIONddl}{\ar@{-}@(d,u)[ddl]}
\newcommand{\FERMIONddddr}{\ar@{-}@(d,u)[ddddr]}
\newcommand{\FERMIONddddrr}{\ar@{-}@(d,u)[ddddrr]}
\newcommand{\FERMIONddddl}{\ar@{-}@(d,u)[ddddl]}
\newcommand{\FERMIONddddll}{\ar@{-}@(d,u)[ddddll]}
\newcommand{\FERMIONddddlll}{\ar@{-}@(d,u)[ddddlll]}
\newcommand{\BOSON}{\ar@{~}}
\newtheorem{thm}{Theorem}[section]
\newtheorem{prop}[thm]{Proposition}
\newtheorem{lem}[thm]{Lemma}
\newtheorem{cor}[thm]{Corollary}
\theoremstyle{definition}
\newtheorem{exa}[thm]{Example}
\newtheorem{defn}[thm]{Definition}
\newtheorem{ques}{Question}
\newtheorem{rem}[thm]{Remark}
\title{2-categories and cyclic homology}
\author{Paul Slevin \\ BSc (Hons), MASt}
\begin{document}
\maketitle

\newlength\longest

\clearpage

\thispagestyle{empty}
\null\vfill

\settowidth\longest{\LARGE\itshape ``Das Mittelding, das Wahre in allen Sachen}
\begin{center}

  \Large{to}\par%
  \Large{Anthony Thomas McMahon\\
  1933--1968}

\end{center}
\vfill\vfill

\clearpage

\thispagestyle{empty}
\null\vfill

\settowidth\longest{\LARGE\itshape ``Das Mittelding, das Wahre in allen Sachen}
\begin{center}
\parbox{\longest}{%
  \raggedright{\LARGE\itshape%
   Das Mittelding, das Wahre in allen Sachen kennt und sch\"atzt man jetzt nimmer; um Beifall zu erhalten mu{\ss}
   man Sachen schreiben, die so verst\"andlich sind, da{\ss} es ein Fiacre nachsingen k\"onnte, oder so unverst\"andlich, da{\ss} es ihnen,
   eben weil es kein vern\"unftiger Mensch verstehen kann, gerade eben deswegen gef\"allt.\par\bigskip
  }
  \raggedleft\Large\textsc{Wolfgang Amadeus Mozart}\par%
  \raggedleft\normalsize{December 28 1782}
}
\end{center}
\vfill\vfill

\clearpage

\chapter*{Statement}
I certify that all material in this thesis that is not my own work has been identified and that no material has previously been submitted and approved for the award of a degree by the University of Glasgow or any other institution.

\clearpage

\chapter*{Abstract}
The topic of this thesis is the application of distributive laws between comonads to the theory of cyclic homology. The work herein is based on the three papers~\cite{2, 1, woohoo}, to which the current author has contributed. Explicitly, our main aims are:
\begin{itemize}
\item To study
how the cyclic homology of associative algebras and
of Hopf algebras in the original sense of Connes and
Moscovici arises from a distributive law, and to clarify
the r\^ole of different notions of bimonad in this
generalisation.

\item To extend the procedure of twisting the cyclic
homology of a unital associative algebra to any duplicial object
defined by a distributive law.

\item To study the universality of B{\"o}hm and {\c S}tefan's approach
to constructing duplicial objects, which we do in terms of a 2-categorical
generalisation of Hochschild (co)homology.

\item To characterise those categories whose nerve admits a duplicial structure.
\end{itemize}
%
%

\chapter*{Acknowledgements}
Firstly, I would like to thank my family and friends for the immense amount of love and support offered to me whilst undertaking this venture. I would not have been able to produce this document if not for your patience and understanding in helping to see things clearly when maybe I could not.

Secondly, I must thank my second family at the University of Glasgow for providing the greatest learning environment to me for the last ten years. I will not forget the dedication and passion of my lecturers in their commitment to my learning in my undergraduate years, nor will I forget the insane level of support from my PhD supervisors, Ulrich Kr\"ahmer and Tara Brendle.

Thirdly, thank you to my friends and colleagues in Australia for the incredible hospitality. I learned a great deal of category theory in my two visits to Macquarie University, as well as the fact that it is possible to get sick of hot weather (but seriously, thank you for allowing me to experience two summers this year).

Finally, thank you to those in the musical part of my life. Art is the most crucial aspect of our humanity, and I have been lucky enough to be able to practise two of them.

I gratefully acknowledge the support of the Engineering and Physical Sciences Research Council (grants K503058/1 and P505534/1) in providing the funding necessary to complete this PhD. Thank you also to the Centre of Australian Category Theory and the University of Glasgow College of Science and Engineering for providing support for my research project in Australia.
\tableofcontents
\chapter{Introduction}\label{INTRO}
We begin by giving some context and background for the thesis in terms of the aims presented in the abstract, followed by the conventions that we use throughout.


\section{Background and aims}

The Dold-Kan correspondence generalises chain complexes
in abelian categories to general simplicial objects,
and thus homological algebra to homotopical algebra.
The classical homology theories
defined by an augmented algebra (such as group, Lie
algebra, Hochschild, de Rham and Poisson homology)
become expressed as the homology of suitable comonads
$T$, defined via simplicial objects
$\rCC_T(N,M)$ obtained from the bar
construction (see e.g.~\cite[\S6.5]{MR1269324}). Here $M,N$ are suitable functors
providing homology coefficients.

Distributive laws between monads were originally defined by Beck in~\cite{MR0241502}
and correspond to monad structures on the composite of the two underlying endofunctors. They
have found many applications in mathematics as well as computer science; see
e.g.~\cite{MR2520969, MR2504663, MR1692751, MR2220892, MR2784770}.

The study of monads and comonads arose from homological algebra, which prompts the question: can we go back and apply distributive laws to homological algebra? The answer is yes. Connes' cyclic homology created a new paradigm
of homology theories defined in terms of mixed
complexes \cite{MR883882,MR826872}. The homotopical
counterparts are cyclic
\cite{MR777584} or more generally duplicial
objects \cite{MR826872,MR885102}. B\"ohm and \c Stefan \cite{MR2415479} showed how
$\rCC_T(N,M)$ becomes duplicial in the
presence of a second comonad $S$ together with a distributive law between $T$ and $S$, which is compatible in
a suitable sense with $N$ and $M$.

The paradigmatic example of such a cyclic homology theory is the cyclic homology $\HC(A)$ of
a unital associative algebra $A$~\cite{MR823176, MR695483}. This leads us to our first aim:
\begin{itemize}
\item To study
how the cyclic homology of associative algebras and
of Hopf algebras in the original sense of Connes and
Moscovici \cite{MR1657389}
fits into the monadic formalism  of B{\" o}hm and {\c S}tefan, extending
the construction from \cite{MR2803876}, and to clarify
the r\^ole of different notions of bimonad in this
generalisation (Chapters~\ref{DISTRIBUTIVE},~\ref{CYCLIC},~\ref{EXAMPLES}).
\end{itemize}

It was observed by Kustermans, Murphy,
and Tuset~\cite{MR1943179} that the functor $\HC$ can be twisted by automorphisms of $A$. In fact, this twisted cyclic homology
occurs as an instance of B{\" o}hm and {\c S}tefan's construction. Thus, our second aim:
\begin{itemize}
\item To extend the procedure of twisting the cyclic
homology of a unital associative algebra to any duplicial object
defined by a distributive law (Chapters~\ref{DISTRIBUTIVE},~\ref{CYCLIC},~\ref{EXAMPLES}).
\end{itemize}

The construction of simplicial objects via the bar resolution is universal in the sense that comonads on a category $\cB$ correspond to strict monoidal functors $\Delta_+^* \to [\cB, \cB]$ where
$\Delta_+$  denotes the augmented simplicial category (cf.~Definition~\ref{simpldef}), and $*$ denotes the opposite category. Using the symmetric monoidal closed structure of $\cat$, we obtain the bar resolution of the corresponding comonad as a functor $\cB \to [\Delta_+^*, \cB]$. What has been missing in the literature so far is a similar universal description of the situation for duplicial objects. Our third main aim is then:
\begin{itemize}
\item To study the universality of B{\"o}hm and {\c S}tefan's approach
to constructing duplicial objects, which we do in terms of a 2-categorical
generalisation of Hochschild (co)homology (Chapter~\ref{AUSTRALIA}).
\end{itemize}

The nerve functor $N \colon \cat \to [\Delta^*, \set]$ is full and faithful, embedding categories into simplicial sets. Thus, it makes sense to discuss
a simplicial or duplicial structure on a category. This leads to our final main aim:

\begin{itemize}
\item To characterise those categories whose nerve admits a duplicial structure (Chapter~\ref{AUSTRALIA}).
\end{itemize}
In addition to our main goals, we have some subsidiary aims:
\begin{itemize}
\item To develop some 2-category theory to assist with the above aims (Chapter~\ref{MONAD}).

\item To give a wide variety of examples (Chapters~\ref{DISTRIBUTIVE},~\ref{EXAMPLES}).

\item To pose some questions that the author was not able to answer (Chapter~\ref{GRANDFINALE}).
\end{itemize}

\section{Conventions}

We assume that the reader has familiarity with ordinary category theory (as, for example, in~\cite{MR1712872, MR1291599}), as well as the very basics of the theory of (co)modules over (co)algebras (see e.g.~\cite{MR2455920, MR1269324, MR2012570}). When natural transformations (and similar notions) appear, by abuse of notation we often write the same symbol for a natural transformation and its components, e.g.\ $\alpha \colon F \Rightarrow G$ and $\alpha \colon FX \to GX$. When a commutative diagram is given with unlabelled variables, we rather mean the collection of commutative diagrams where the variables are objects in the relevant category of interest. We completely ignore issues of size throughout (for more information on size considerations, see~\cite[p.~viii]{MR2178101} or~\cite[Ch.~I]{MR1712872} for a more serious discussion).

Each chapter contains relevant background material as well as original work of the author.
Chapters~2,~3,~4 and 6 contain joint work of the author with Ulrich Kr\"ahmer and Niels Kowalzig, although here some of the results of the relevant papers~\cite{1,2} are developed further and more background detail is given.
Chapter~5 contains joint work carried out by the author, Richard Garner and Steve Lack in~\cite{woohoo}.
At the beginning of each chapter it is made precise which parts of the material consist of original work.

\chapter{Monads and comonads}\label{MONAD}

In this chapter, we review all the 2-category theory needed for the thesis. After giving basic definitions we study monads internal to 2-categories and related concepts. Sections~\ref{2-categories} and~\ref{monadsandcomonads} contain the required definitions and preliminary results. Section~\ref{liftingthroughadjunctions}, concerning how one obtains distributive laws from certain types of squares, contains original work which is an expansion of~\cite[\S2.1--2.16]{1}. In Section~\ref{emem} we explicitly give the constructions of the previous sections in the 2-category $\cat$.

\section{Preliminaries}\label{2-categories}
We begin by recalling some fundamental notions of 2-category theory.
\subsection{2-categories}
Let $\mathbbm{1}$ denote the terminal category, containing one object and one (identity) morphism.

\begin{defn}
A 2-\emph{category} $\sC$ consists of
\begin{itemize}
\item a class $|\sC|$ whose elements we call 0-\emph{cells}
\item for any $\cA, \cB \in |\sC|$, a category $\sC(\cA, \cB)$ whose objects we call 1-\emph{cells}, whose morphisms
we call 2-\emph{cells}, and whose composition law we call \emph{vertical composition}
\item for each 0-cell $\cA$, a functor $u_\cA \colon \mathbbm{1} \to \sC(\cA, \cA)$, called the \emph{unit}
\item for any $\cA, \cB, \cC$ in $|\sC|$, a functor $\circ_{\cA, \cB, \cC} \colon \sC(\cB, \cC) \times \sC(\cA, \cB) \to \sC(\cA, \cC)$, called
\emph{horizontal composition}
\end{itemize}
satisfying associativity and unitality conditions, that is, commutativity of the two diagrams
$$
\xymatrix@C=3em{
\sC(\cC, \cD) \times \sC(\cB, \cC) \times \sC(\cA, \cB)\ar[d]_-{\circ_{\cB, \cC, \cD} \times 1} \ar[rr]^-{1 \times \circ_{\cA, \cB, \cC}} && \sC(\cC, \cD) \times \sC(\cA, \cC) \ar[d]^-{\circ_{\cA, \cC, \cD}} \\
\sC(\cB, \cD) \times \sC(\cA, \cB) \ar[rr]_-{\circ_{\cA, \cB, \cD} } && \sC(\cA, \cD)
}
$$
$$
\xymatrix@C=3em{
\sC(\cA, \cB) \ar@{=}[drr] \ar[rr]^-{u_\cA \times 1} \ar[d]_-{1 \times u_\cB} && \sC(\cB, \cB) \times \sC(\cA, \cB) \ar[d]^-{\circ_{\cA, \cB, \cB}} \\
\sC(\cA, \cB) \times \sC(\cA, \cA) \ar[rr]_-{\circ_{\cA, \cA, \cB} } & & \sC(\cA, \cB)
}
$$
for all $\cA, \cB, \cC, \cD \in |\sC|$.
\end{defn}
From this point onward, we omit the subscripts on the functors $\circ$ and $u$, much as we do for natural transformations.

We denote a 1-cell $F$ in $\sC(\cA, \cB)$ by $F \colon \cA \to \cB$, and a 2-cell $\alpha$ between $F,F'$ is denoted by $\alpha \colon F \Rightarrow F'$. We denote both horizontal and vertical composition by concatenation, or occasionally by the symbol $\circ$. It is always clear from the context to which type of composition we refer.

A 2-cell inside a diagram of 1-cells denotes a 2-cell between their horizontal composites, e.g.\
$$
\xymatrix{
\cA \ar[r]^-F \ar[d]_-H & \cB \ar[d]^-G \ar@{}[dl]^(.25){}="a"^(.75){}="b" \ar@{=>}_-\alpha "a";"b"\\
\cC \ar[r]_-K \ar[r]_-K & \cD
}
$$
means that $\alpha \colon GF \Rightarrow KH$ is a 2-cell. In diagrams consisting only of 2-cells, we usually abandon the double arrows $\Rightarrow$ in favour of regular arrows $\to$ for the sake of readability.

We use the symbol $1$ to denote both the identity 1-cell and 2-cell, i.e.\ the images of the unique morphism and object respectively, under the unit functor $u$. However, when we write a horizontal composite involving an identity 2-cell, we rather write the \emph{corresponding 1-cell}. We do this because it makes it easy to reference individual cells, while also making diagrams easier to interpret; for example, given a diagram
$$
\xymatrix{
\cA \rtwocell^F_{F'}{\ \alpha}  & \cB \ar[r]^-G & \cC
}
$$
we denote the horizontal composite with the identity by
$$
\xymatrix{
\cA \rrtwocell^{GF}_{GF'}{\ \ \ G \alpha} && \cC
}
$$

There are various duals one obtains by reversing some of the structure in a 2-category. We denote by $\sC^*$ the
2-category obtained by reversing all 1-cells in
$\sC$, and by $\sC_*$ we denote the
2-category obtained by reversing all 2-cells in
$\sC$. Of course, $(\sC_*)^* = (\sC^*)_*$ so there is no harm in writing ${\sC}^*_*$ to denote either of these duals.

We may view an ordinary category $\cC$ as a 2-category, where for two objects $A,B$ in $\cC$, the category $\cC(A,B)$ is the discrete category on the morphisms $A \to B$. Using the above notation, $\cC^*$ can thus be viewed as the dual (or opposite) category to $\cC$ in the ordinary sense.
\begin{exa}
The paradigmatic example of a 2-category is $\cat$. This is the 2-category whose 0-, 1- and 2-cells are categories, functors and natural transformations respectively. 
Most examples of 2-categories in this thesis appear as constructions based on $\cat$.
\end{exa}
\subsection{2-functors and 2-natural transformations}
\begin{defn}
Let $\sC, \sD$ be 2-categories. A 2-\emph{functor} $\Phi \colon \sC \to \sD$ consists of
\begin{itemize}
\item a function $\Phi \colon |\sC| \to |\sD|$
\item for each pair $\cA, \cB \in \sC$, a functor $\Phi_{\cA, \cB} \colon \sC(\cA, \cB) \to \sD(\Phi\cA, \Phi\cB)$
\end{itemize}
that are compatible with both horizontal and vertical composition, that is, the diagrams
$$
\xymatrix{
\sC(\cB, \cC) \times \sC(\cA, \cB) \ar[r]^-{\circ} \ar[d]_-{\Phi_{\cB, \cC} \times \Phi_{\cA, \cB}} & \sC(\cA, \cC) \ar[d]^-{\Phi_{\cA, \cC}} \\
\sD(\Phi\cB, \Phi\cC) \times \sD(\Phi\cA, \Phi\cB) \ar[r]_-\circ & \sD(\Phi\cA, \Phi\cC)
}
\qquad
\xymatrix{
\mathbbm{1} \ar[r]^-u \ar[dr]_-u & \sC(\cA,\cA) \ar[d]^-{\Phi_{\cA, \cA}} \\
& \sD(\Phi\cA, \Phi\cA)
}
$$
commute for all $\cA, \cB, \cC \in |\sC|$.
\end{defn}
Thus a 2-functor is simply a mapping of cells between 2-categories that preserves commutative diagrams of 1-cells and 2-cells, as well as identities.
We now omit the subscripts on a 2-functor $\Phi$, similar again to the case for the composition and unit of a 2-category, and natural transformations.
\begin{exa}\label{hom2functors}
For each 0-cell $\cA$ in $\sC$, there are \emph{hom 2-functors}
\begin{align*}
\sC(\cA, -) \colon \sC \to \cat \\
\sC(-, \cA) \colon \sC^* \to \cat
\end{align*}
defined in the obvious way.
\end{exa}

There is also a notion of morphism between 2-functors:

\begin{defn}
Let $\Phi, \Phi' \colon \sC \to \sD$ be 2-functors. A 2-\emph{natural transformation}, which we denote by $\nu \colon \Phi \Rightarrow \Phi'$, is given by, for each $\cA \in |\sC|$, a 1-cell $\nu_{\cA} \colon \Phi\cA \to \Phi'\cA$ such that the diagram
$$
\xymatrix{
\sC(\cA, \cB) \ar[rr]^-\Phi \ar[d]_-{\Phi '} && \sD(\Phi\cA, \Phi\cB)\ar[d]^-{\sD(\Phi\cA, \nu_{\cB})} \\
\sD(\Phi'\cA, \Phi'\cB) \ar[rr]_-{\sD(\nu_{\cA}, \Phi'\cB)} && \sD(\Phi\cA, \Phi'\cB)
}
$$
commutes for all $\cA, \cB \in |\sC|$.\end{defn}
Again, we usually omit the indices on the 1-cell components of 2-natural transformations.
\begin{exa}
There is a 2-category $\twocat$ whose 0-cells are 2-categories, whose 1-cells are 2-functors and whose 2-cells are 2-natural transformations.
\end{exa}

\subsection{Adjunctions}
One advantage of studying 2-categories as opposed to just ordinary categories is that it allows us to study certain phenomena as being \emph{internal}. For example, instead of thinking of an adjunction \emph{between} categories, we can think of an adjunction \emph{inside} a 2-category:
\begin{defn}
Let $F \colon \cA \to \cB$ and $U \colon \cB \to \cA$ be 1-cells in a 2-category $\sC$. We say that $F$ is \emph{left adjoint} to $U$ if there are 2-cells $\eta \colon 1 \Rightarrow UF$, $\epsilon \colon FU \Rightarrow 1$ called the \emph{unit} and \emph{counit} respectively, such that the two \emph{triangle identities} hold, i.e.\ the diagrams
$$
\xymatrix{
F \ar[r]^-{F \eta} \ar@{=}[dr] & FUF\ar[d]^-{\epsilon F} \\
& F
}\qquad
\xymatrix{
U \ar[r]^-{\eta U} \ar@{=}[dr] & UFU \ar[d]^-{U \epsilon} \\
& U
}
$$
commute. In this situation, we also say that $U$ is \emph{right adjoint to} $F$, and that $F\dashv U$ is an \emph{adjunction}.
\end{defn}
\begin{exa}
An adjunction in $\cat$ is just an ordinary adjunction between categories.
\end{exa}
\begin{exa}
An adjunction in $\twocat$ is called a \emph{2-adjunction}. A 2-adjunction
$$
\xymatrix{
\sC \ar@{}[rr]|-{\perp}\ar@/^0.5pc/[rr]^-\Phi & & \ar@/^0.5pc/[ll]^-\Theta \sD
}
$$
with unit $\nu \colon 1 \Rightarrow \Theta\Phi$ and counit $\xi \colon \Phi \Theta \Rightarrow 1$ can be described equivalently as having 2-natural isomorphisms
$$
\sC(\cA, \Theta \cE) \cong \sD(\Phi\cA, \cE).
$$
explicitly given by
\begin{align*}
\xymatrix{
\cA \rrtwocell^X_{X'}{\alpha} & & \Theta\cE & \lmapsto & \Phi\cA\rrtwocell^{\Phi X}_{\Phi X'}{\ \ \Phi \alpha} & & \Phi \Theta \cE \ar[r]^-{\xi \cE} & \cE
} \\
\xymatrix{
\Phi \cA \rrtwocell^Z_{Z'}{\beta} & & \cE & \lmapsto & \cA \ar[r]^-{\nu \cA} & \Theta\Phi\cA \rrtwocell^{\Theta Z}_{\Theta Z'}{\ \ \Theta \beta} & &  \Theta \cE
}
\end{align*}
\end{exa}

Often 2-functors preserve these internal properties. Indeed:
\begin{prop}
All 2-functors preserve adjunctions.
\end{prop}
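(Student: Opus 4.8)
The plan is straightforward: given a 2-functor $\Phi \colon \sC \to \sD$ and an adjunction $F \dashv U$ in $\sC$ with unit $\eta \colon 1 \Rightarrow UF$ and counit $\epsilon \colon FU \Rightarrow 1$, I would show that $\Phi F \dashv \Phi U$ in $\sD$, with unit $\Phi\eta$ and counit $\Phi\epsilon$. So the proof amounts to applying $\Phi$ to the data and the axioms of the adjunction and checking that everything lands where it should.

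First I would do the type-checking. Since $\Phi$ preserves horizontal composition and sends identity 1-cells to identity 1-cells, we have $\Phi(UF) = \Phi U\,\Phi F$ and $\Phi(1_\cA) = 1_{\Phi\cA}$, and likewise over $\cB$. Hence $\Phi\eta$ is a 2-cell $1_{\Phi\cA} \Rightarrow \Phi U\,\Phi F$ and $\Phi\epsilon$ is a 2-cell $\Phi F\,\Phi U \Rightarrow 1_{\Phi\cB}$, which are exactly the shapes required of a candidate unit and counit for $\Phi F \dashv \Phi U$.

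Next I would verify the triangle identities. Applying $\Phi$ to the first triangle identity $(\epsilon F) \circ (F\eta) = 1_F$ and using that each hom-functor $\Phi_{\cA,\cB}$ preserves vertical composition gives $\Phi(\epsilon F) \circ \Phi(F\eta) = \Phi(1_F) = 1_{\Phi F}$; then using that $\Phi$ preserves horizontal composition rewrites the left-hand side as $(\Phi\epsilon\,\Phi F) \circ (\Phi F\,\Phi\eta)$, so that $(\Phi\epsilon\,\Phi F) \circ (\Phi F\,\Phi\eta) = 1_{\Phi F}$ — precisely the first triangle identity for the image data. The second triangle identity is obtained identically from $(U\epsilon) \circ (\eta U) = 1_U$. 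This completes the argument.

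I do not expect any genuine obstacle here; the only point requiring care is keeping track of which composites are horizontal and which are vertical, so that the correct preservation property of $\Phi$ is invoked in each case (the square axiom for horizontal composites, functoriality of $\Phi_{\cA,\cB}$ for vertical ones, the unit axiom for identities). It is worth noting afterwards that, dually, $\Phi$ also preserves adjunctions when regarded as a 2-functor $\sC^*_* \to \sD^*_*$, and that the statement applies in particular to the hom 2-functors of Example~\ref{hom2functors}, recovering in internal form the fact that an adjunction in $\sC$ induces an adjunction between the associated hom-categories.
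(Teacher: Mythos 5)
Your proof is correct and is essentially the paper's own argument: apply $\Phi$ to the unit and counit and observe that the triangle identities are preserved because 2-functors preserve identities and both horizontal and vertical composition. You simply spell out the type-checking and the two compatibility axioms in more detail than the paper does.
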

\begin{proof}
If $F \dashv U$ is an adjunction with unit $\eta$ and counit $\epsilon$, and $\Phi$ is a 2-functor, then the triangle identities for $\Phi\eta$ and $\Phi \epsilon$ are satisfied since 2-functors preserve identities and all forms of composition. Hence $\Phi F \dashv \Phi U$.
\end{proof}

\section{Monads and comonads in 2-categories}\label{monadsandcomonads}
Throughout the remainder of this chapter, let $\sC$ be a 2-category. In subsequent chapters, the 2-categorical constructions presented here
are considered only in the case  $\sC = \cat$, and are explicitly described in that way in Section~\ref{emem}, so if the reader is not entirely comfortable with the language of 2-categories, they may replace appearances of 0-, 1- and 2-cells in $\sC$ with the words `category', `functor' and `natural transformation' respectively.

We restate, in our terminology, some of the definitions and results involving monads in \cite{MR0299653}. We also emphasise results for comonads since they and their interplay with monads are an important topic in later chapters.

\subsection{The 2-category of monads}
\begin{defn}
Let $\cA$ be a 0-cell in $\sC$. A \emph{monad on $\cA$ in $\sC$} is a triple $(B, \mu, \eta)$ consisting of a 1-cell $B \colon \cA \to \cA$ together with 2-cells $\eta \colon 1 \Rightarrow B$, $\mu \colon BB \Rightarrow B$, called the \emph{unit} and \emph{multiplication} respectively, such that the following two diagrams commute:
$$
\xymatrix{
BBB \ar[r]^-{B \mu} \ar[d]_-{\mu B} & BB \ar[d]^-\mu \\
BB \ar[r]_-\mu & B
}\qquad
\xymatrix{
B \ar@{=}[dr] \ar[r]^-{B \eta} \ar[d]_-{\eta B} & BB \ar[d]^-{\mu} \\
BB \ar[r]_-{\mu} & B
}
$$
\end{defn}
By abuse of notation, we refer to a monad by its underlying 1-cell, and we always use the symbols $\eta$ and $\mu$ to refer to the unit and multiplication of an arbitrary monad.

Monads in $\sC$ constitute the 0-cells of a 2-category $\mnd(\sC)$, defined as follows. The 1-cells $(\cA,B) \to (\cD,A )$ consist of pairs $(\Sigma, \sigma)$ where $\Sigma \colon \cA \to \cD$ is a 1-cell in $\sC$ and $\sigma \colon A\Sigma \Rightarrow \Sigma B$ is a 2-cell in $\sC$, subject to the commutativity conditions
$$
\xymatrix{
AA\Sigma \ar[d]_-{\mu \Sigma} \ar[r]^-{A \sigma} & A \Sigma B \ar[r]^-{\sigma B} & \Sigma BB \ar[d]^-{\Sigma \mu} \\
A \Sigma \ar[rr]_-{\sigma} & & \Sigma B
}\qquad
\xymatrix{
\Sigma \ar[dr]_-{\Sigma \eta} \ar[r]^-{\eta \Sigma} & A \Sigma \ar[d]^-\sigma \\
& \Sigma B
}
$$
We call these 1-cells \emph{morphisms of monads}.
A 2-cell $\alpha \colon (\Sigma, \sigma) \Rightarrow (\Sigma', \sigma')$ is a 2-cell $\alpha \colon \Sigma \Rightarrow \Sigma'$ in $\sC$ such that the diagram
$$
\xymatrix{
A \Sigma \ar[d]_-{A \alpha} \ar[r]^-\sigma & \Sigma B \ar[d]^-{\alpha B} \\
A \Sigma ' \ar[r]_-{\sigma '} & \Sigma' B
}
$$
commutes.

Dually, we define a \emph{comonad} $(T, \delta, \epsilon)$ in $\sC$ to be a monad in $\sC_*$. We define the 2-category of comonads in $\sC$ as $\cmd(\sC):= \mnd(\sC_*)_*$. The 1-cells herein are called \emph{morphisms of comonads}. We say \emph{opmorphism of (co)monads} to mean a morphism of (co)monads in $\sC^*$.

\begin{exa}\label{monadj}
Suppose that we have an adjunction
$$
\xymatrix{
\cA \ar@{}[rr]|-{\perp}\ar@/^0.5pc/[rr]^-F & & \ar@/^0.5pc/[ll]^-U \cB
}
$$
in $\sC$ with unit $\eta$ and counit $\epsilon$.
The 1-cell $UF$ becomes a monad on $\cA$, and dually $FU$ becomes a comonad on $\cB$, and we say that these are \emph{generated by the adjunction}. The
(co)units and (co)multiplications are given by
\begin{align*}
  \xymatrix{
   1 \ar[r]^-{\eta} & UF} & & & \xymatrix{FU \ar[r]^-\epsilon & 1} \\
 \xymatrix{UFUF \ar[r]^-{U \epsilon F} & UF}& & &\xymatrix{FU \ar[r]^-{F \eta U} & FUFU}
\end{align*}
\end{exa}

\begin{rem}
Since the taking of duals is confusing, let us make explicit that if
$$\xymatrix{
 (\cA, B) \ar[rr]^-{(\Sigma, \sigma)} && (\cD, A),
}\qquad
\xymatrix{
(\cB, T) \ar[rr]^-{(\Gamma, \gamma)} && (\cE, G)
}
$$
  are morphisms of monads and comonads respectively, then the underlying 2-cells are given by $\sigma \colon A\Sigma \Rightarrow \Sigma B$ and $\gamma \colon \Gamma T \Rightarrow G \Gamma$. Note the differing positions of $\Gamma, \Sigma$ in each case, telling us that these morphisms are not of the same \emph{variance}. If we take opmorphisms instead, these 2-cells would reverse direction.
\end{rem}
\begin{exa}\label{trivialdist}
Consider a monad $(\cA, B)$ in $\sC$. The 2-cell
$$
\xymatrix{
\tau \colon BB \ar[rr]^-{BB\eta} && BBB \ar[r]^-{\mu B} & BB
}
$$
defines a morphism of monads
$$
\xymatrix{
(\cA, B) \ar[rr]^-{(B, \tau)} && (\cA, B).
}
$$
Furthermore, any morphism of monads $(\Sigma, \sigma) \colon (\cA, B) \to (\cD, A)$ induces a monad 2-cell
$$
\xymatrix{
(\cA, B) \ar[rr]^-{(\Sigma, \sigma)} \ar[dd]_-{(B, \tau)} && (\cD,A) \ar[dd]^-{(A, \tau)} \ar@{}[ddll]^(.25){}="a"^(.75){}="b" \ar@{=>}_-{\sigma} "a";"b" \\
\\
(\cA, B) \ar[rr]_-{(\Sigma, \sigma)} && (\cD, A)
}$$
\end{exa}
As in the case of adjunctions, 2-functors preserve the internal property of being a monad. Thus, a 2-functor
$\Phi \colon \sC \to \sD$ restricts to a 2-functor
$$\xymatrix{  \mnd(\sC) \ar[rr]^-{\mnd(\Phi)} && \mnd(\sD),}$$ defined diagramatically by
$$
\xymatrix{
(\cA, B ) \ddtwocell<9>_{(\Sigma, \sigma)\ \ \ \ \ }^{\ \ \ \ \ \ \  (\Sigma', \sigma')}{^\alpha}  & & & &  (\Phi\cA, \Phi B) \ddtwocell<9>_{(\Phi\Sigma, \Phi\sigma)\ \ \ \ \ \  \ \ \ }^{\ \ \ \ \ \ \ \ \ \ (\Phi\Sigma', \Phi\sigma')}{^\Phi\alpha} \\
 &  & \longmapsto \\
(\cD,  A) & & & & (\Phi\cD, \Phi A)
}
$$
A 2-natural transformation $\nu \colon \Phi \Rightarrow \Phi'$ clearly induces another 2-natural transformation $\mnd(\nu)$. We thus obtain the following:
\begin{prop}
The above assignment defines a 2-functor $\mnd \colon \twocat \to \twocat$.
\end{prop}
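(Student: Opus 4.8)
The plan is to check, in turn, that $\mnd$ acts correctly on $0$-, $1$-, and $2$-cells of $\twocat$ and that these actions respect vertical and horizontal composition and units. That each $\mnd(\sC)$ is a $2$-category has already been recorded, so the first genuine task is to see that $\mnd(\Phi)\colon\mnd(\sC)\to\mnd(\sD)$ is a \emph{2-functor} for every $2$-functor $\Phi\colon\sC\to\sD$, not merely a map of underlying graphs. We already know $\Phi$ carries a monad $(\cA,B)$ to a monad $(\Phi\cA,\Phi B)$. For a morphism of monads $(\Sigma,\sigma)\colon(\cA,B)\to(\cD,A)$ one applies $\Phi$ to the two defining squares; since those squares are built entirely from horizontal and vertical composites of $\sigma$, $\mu$, $\eta$ together with whiskering, and $\Phi$ preserves all of these and sends identities to identities, the images commute, so $(\Phi\Sigma,\Phi\sigma)$ is again a morphism of monads; the same argument shows $\Phi\alpha$ is a monad $2$-cell whenever $\alpha$ is. Because composition and units in $\mnd(\sC)$ are computed from the corresponding operations in $\sC$ on the underlying $1$- and $2$-cells — with the $\sigma$-components pasted in the evident way — and $\Phi$ respects exactly those operations in $\sC$, the functor $\mnd(\Phi)$ respects them in $\mnd(\sC)$. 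Hence $\mnd(\Phi)$ is a $2$-functor.

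Next I would treat $2$-natural transformations. Given $\nu\colon\Phi\Rightarrow\Phi'$, strictness means that for every $1$-cell $F\colon\cA\to\cB$ in $\sC$ one has $(\Phi'F)\,\nu_\cA=\nu_\cB\,(\Phi F)$ \emph{on the nose}, and for every $2$-cell one has the corresponding equality of $2$-cells. I define the component of $\mnd(\nu)$ at a monad $(\cA,B)$ to be the pair $(\nu_\cA,1)$, the $2$-cell part being the identity; this typechecks as a morphism of monads $(\Phi\cA,\Phi B)\to(\Phi'\cA,\Phi'B)$ precisely because $(\Phi'B)\,\nu_\cA=\nu_\cA\,(\Phi B)$, and, since all $\sigma$-components in sight are identities, the two morphism-of-monads axioms collapse to the naturality of $\nu$ against the $2$-cells $\mu$ and $\eta$ of $B$. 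The $2$-naturality square for $\mnd(\nu)$ at a monad morphism $(\Sigma,\sigma)$ has underlying $1$-cell part the naturality square of $\nu$ at $\Sigma$, and its $\sigma$-part reduces, again via the identity $2$-cell components, to naturality of $\nu$ at $\sigma$; similarly the condition on $2$-cells of $\mnd(\sC)$ is just naturality of $\nu$ at the underlying $2$-cell. So $\mnd(\nu)$ is a $2$-natural transformation $\mnd(\Phi)\Rightarrow\mnd(\Phi')$.

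It then remains to verify functoriality on the hom-categories and compatibility with horizontal composition. For fixed $\sC,\sD$, the assignment $\twocat(\sC,\sD)\to\twocat(\mnd\sC,\mnd\sD)$ preserves vertical composition because $(\nu'\nu)_\cA=\nu'_\cA\,\nu_\cA$ with identity $2$-cell components, so $\mnd(\nu'\nu)$ and $\mnd(\nu')\,\mnd(\nu)$ have equal components, and it sends the identity $2$-natural transformation of $\Phi$ to the identity of $\mnd(\Phi)$ for the same reason. Finally, reading off the effect on cells gives $\mnd(\Psi\Phi)(\Sigma,\sigma)=(\Psi\Phi\Sigma,\Psi\Phi\sigma)=\mnd(\Psi)\bigl(\mnd(\Phi)(\Sigma,\sigma)\bigr)$, and likewise on $2$-cells, so $\mnd(\Psi\Phi)=\mnd(\Psi)\,\mnd(\Phi)$, while $\mnd(1_\sC)=1_{\mnd(\sC)}$ is immediate. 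Assembling these points yields the claim. The only part requiring any care is the bookkeeping for $\mnd(\nu)$ — correctly pinning down its $2$-cell components as identities and seeing that both the morphism-of-monads axioms and the $2$-naturality squares degenerate to instances of the $2$-naturality of $\nu$ — but there is no real obstacle here; the whole proof is an exercise in unwinding definitions.
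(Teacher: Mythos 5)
Your proof is correct and is exactly the routine unwinding of definitions that the paper has in mind: the thesis states this proposition without proof, treating it as clear from the preceding diagrammatic description of $\mnd(\Phi)$ and the remark that $\nu$ "clearly induces" $\mnd(\nu)$. Your identification of the $2$-cell component of $\mnd(\nu)_{(\cA,B)}$ as the identity (which typechecks by strict $2$-naturality of $\nu$, with the monad-morphism axioms and the $2$-naturality squares all collapsing to instances of $2$-naturality of $\nu$ at $\mu$, $\eta$, $\Sigma$, $\sigma$ and $\alpha$) is precisely the intended argument.
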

Dually, there is a 2-functor $\cmd \colon \twocat \to \twocat$.

\subsection{Distributive laws}\label{finallyoversection}
\begin{defn}\label{finallyover}
A \emph{distributive law of comonads} is a comonad in $\cmd(\sC)$. Explicitly, a distributive law between comonads $T,S$ on the same 1-cell $\cB$ is a 2-cell $\chi \colon TS \Rightarrow ST$  such that the four diagrams
\begin{align*}
\xymatrix{
TS \ar[d]_-{\delta S} \ar[rr]^-\chi & & ST \ar[d]^-{S \delta} \\
TTS \ar[r]_-{T \chi} & TST \ar[r]_-{\chi T} & STT
}\qquad
\xymatrix{
TS \ar[dr]_-{\epsilon S} \ar[r]^-\chi & ST \ar[d]^-{S \epsilon} \\
& S
} \\
\xymatrix{
TS \ar[d]_-{T\delta} \ar[rr]^-\chi & & ST \ar[d]^-{\delta T} \\
TSS \ar[r]_-{\chi S} & STS \ar[r]_-{ S\chi } & SST
}
\qquad
\xymatrix{
TS \ar[dr]_-{T \epsilon} \ar[r]^-\chi & ST \ar[d]^-{\epsilon T} \\
& T
}
\end{align*}
commute.
\end{defn}
We denote by $\cDist(\sC)$ the 2-category $\cmd(\cmd(\sC)^*)^*$. Thus, explicitly,
\begin{itemize}
\item $0$-cells are quadruples $(\cB, \chi, T, S)$,
where $\chi \colon TS \Rightarrow ST$ is a
comonad distributive law on $\cB$,
\item $1$-cells $$(\cB, \chi, T, S) \rightarrow (\cD,
\tau, G, C)$$ are triples $(\Sigma,
\sigma, \gamma)$, where $(\Sigma, \sigma) \colon (\cB, T) \rightarrow (\cD, G)$ is
an opmorphism of comonads, and $(\Sigma, \gamma)
\colon (\cB , S) \rightarrow (\cD, C)$ is a
morphism of comonads satisfying the Yang-Baxter
equation, i.e.\
$$
	\xymatrix@R=0.5em{ & \Sigma TS \ar[r]^-{\Sigma
\chi} & \Sigma ST \ar[dr]^-{\gamma  T} & \\
G\Sigma S \ar[dr]_-{G \gamma} \ar[ur]^-{\sigma
S} & & & C \Sigma T \\ & GC\Sigma
\ar[r]_-{\tau \Sigma} & CG \Sigma \ar[ur]_-{C
\sigma} & }
$$
commutes, and
\item $2$-cells $(\Sigma,
\sigma, \gamma) \Rightarrow (\Sigma', \sigma', \gamma')$
are 2-cells $\alpha \colon \!\Sigma \Rightarrow
\Sigma'$ in $\sC$ for which the diagrams
$$
	\xymatrix{
	G\Sigma \ar[d]_-\sigma \ar[r]^-{G \alpha}
	& G \Sigma '
	\ar[d]^-{\sigma '} \\ \Sigma T \ar[r]_-{\alpha T } &
	\Sigma' T }
	\quad \quad \quad
	\xymatrix{ \Sigma S
	\ar[r]^-{\alpha S} \ar[d]_-\gamma & \Sigma' S
	\ar[d]^-{\gamma '} \\ C \Sigma \ar[r]_-{C \alpha}& C
	\Sigma' }
$$
commute.
\end{itemize}

Similarly, we define the $2$-category of \emph{mixed distributive laws} in $\sC$ as $$\cMix(\sC):=\mnd(\cmd(\sC)).$$
Let us unpack this definition.
Consider an arbitrary 1-cell in the 2-category $\cMix(\sC)$:
$$
\xymatrix{
((\cA, C), (B, \theta) ) \ar[rr]^-{((\Sigma, \gamma), \sigma)} && ((\cD, D), (A, \tau) ).
}
$$
We have that:
\begin{itemize}
\item $(\cA, C)$ and $(\cD, D)$ are comonads in $\sC$;
\item $(B, \theta)$ and $(A, \tau)$ are monads in $\cmd(\sC)$, meaning that $$\theta \colon BC \Rightarrow CB, \qquad \tau \colon AD \Rightarrow DA$$ are 2-cells in $\sC$, compatible with the appropriate monad and comonad structures;
\item $(\Sigma, \gamma)$ is a morphism of comonads, so in particular $\Sigma \colon \cA \to \cD$ is a 1-cell in $\sC$, and $\gamma \colon \Sigma C \Rightarrow D \Sigma$ is a 2-cell in $\sC$ compatible with the comonad structures of $C$ and $D$;
\item $\sigma \colon (A, \tau) \circ (\Sigma, \gamma) \Rightarrow (\Sigma, \gamma) \circ (B, \theta)$ is a 2-cell in $\cmd(\sC)$ which corresponds to a Yang-Baxter-esque commutative hexagon;
\item $\sigma \colon A\Sigma \Rightarrow \Sigma B$ is a 2-cell compatible with the monad structures of $A$ and $B$.
\end{itemize}
The above data exactly defines a $1$-cell
$$
\xymatrix{
((\cA, B), (C, \theta) ) \ar[rr]^-{((\Sigma, \sigma), \gamma)} & & ((\cD, A), (D, \tau))
}
$$
in the 2-category $\cmd(\mnd(\sC))$. Comparing the 2-cells in a similar way shows that:
\begin{lem}\label{mndcmdiso}
There is a 2-isomorphism $$\mnd(\cmd(\sC)) \cong \cmd(\mnd(\sC)).$$
\end{lem}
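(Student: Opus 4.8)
The plan is to produce an explicit invertible 2-functor $\Phi \colon \mnd(\cmd(\sC)) \to \cmd(\mnd(\sC))$ by reshuffling the data, exactly as begun in the paragraph preceding the statement, and to check that the shuffle preserves all structure. On $0$-cells, a monad $(B,\theta)$ on a comonad $(\cA,C)$ in $\sC$ consists of: a monad $(B,\mu,\eta)$ in $\sC$; a comonad $(C,\delta,\epsilon)$ in $\sC$; a $2$-cell $\theta \colon BC \Rightarrow CB$ which is a mixed distributive law; together with the requirement that $\mu$ and $\eta$ be morphisms of comonads with respect to $\theta$. One checks that this bundle of equations in $\sC$ is visibly symmetric under interchanging the words ``monad'' and ``comonad'': it is precisely the data of a comonad $(C,\theta)$ on the monad $(\cA,B)$ in $\sC$. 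So we set $\Phi((\cA,C),(B,\theta)) := ((\cA,B),(C,\theta))$, which is a bijection on $0$-cells.

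Second, on $1$- and $2$-cells we use the unpacking already carried out: a $1$-cell $((\Sigma,\gamma),\sigma)$ of $\mnd(\cmd(\sC))$ carries a $1$-cell $\Sigma$ of $\sC$, a $2$-cell $\gamma \colon \Sigma C \Rightarrow D\Sigma$ making $(\Sigma,\gamma)$ a morphism of comonads, a $2$-cell $\sigma \colon A\Sigma \Rightarrow \Sigma B$ making $(\Sigma,\sigma)$ a morphism of monads, and the compatibility of $\sigma$ with $\gamma$ (that $\sigma$ be a $2$-cell in $\cmd(\sC)$), which unwinds to a Yang-Baxter-type commuting hexagon in $\sC$ involving $\sigma,\gamma,\theta,\tau$. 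That hexagon is literally the same one that expresses ``$\gamma$ is a $2$-cell in $\mnd(\sC)$'', so $\Phi$ sends this $1$-cell to $((\Sigma,\sigma),\gamma)$ of $\cmd(\mnd(\sC))$; and the two square conditions defining a $2$-cell of $\mnd(\cmd(\sC))$ are the same pair of squares defining a $2$-cell of $\cmd(\mnd(\sC))$, so $\Phi$ is the identity on underlying $2$-cells. It remains to verify $2$-functoriality, i.e.\ that $\Phi$ preserves identities and both compositions; but horizontal and vertical composition in each of $\mnd(\cmd(\sC))$ and $\cmd(\mnd(\sC))$ is ultimately assembled from the corresponding composition in $\sC$ applied to the same underlying cells, so this is immediate. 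The inverse $\cmd(\mnd(\sC)) \to \mnd(\cmd(\sC))$ is defined by the same reshuffle with the roles of monad and comonad exchanged, and both composites fix every cell on the nose, giving the claimed $2$-isomorphism.

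The main obstacle is purely bookkeeping: one must unwind the definitions $\cmd(\sC) = \mnd(\sC_*)_*$ through two dualisations and check, with the correct handling of variance flagged in the Remark, that the full list of coherence axioms for a monad in $\cmd(\sC)$ --- the monad axioms for $B$ internal to $\cmd(\sC)$, which split into the monad axioms for $B$ in $\sC$, the distributive-law axioms for $\theta$, and the compatibility of $\mu,\eta$ with $\theta$ --- coincides, equation for equation in $\sC$, with the list of axioms for a comonad in $\mnd(\sC)$. There is no conceptual difficulty once the dictionary between the two unpackings is set up; the only real risk is a sign-of-variance slip, which is why it is worth writing both unpackings in parallel, as the preceding paragraph does, before declaring them equal. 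One could alternatively phrase the result as a natural isomorphism $\mnd \circ \cmd \cong \cmd \circ \mnd$ of endo-$2$-functors of $\twocat$, but the componentwise construction above is the most transparent.
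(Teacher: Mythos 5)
Your proposal is correct and follows essentially the same route as the paper: the paper's proof consists precisely of unpacking a $1$-cell of $\mnd(\cmd(\sC))$ into its constituent $\sC$-data (the mixed distributive laws, the monad and comonad morphisms, and the Yang--Baxter hexagon) and observing that the identical list of data and axioms assembles into a $1$-cell of $\cmd(\mnd(\sC))$, with the $0$-cells and $2$-cells handled by the same comparison. Your additional remarks on $2$-functoriality and on phrasing the result as a natural isomorphism $\mnd\circ\cmd\cong\cmd\circ\mnd$ are sound but do not change the argument.
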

We represent the 1-cells of $\cMix(\sC)$ hereafter as
$$
\xymatrix{
(\cA, \theta, B, C ) \ar[rr]^-{(\Sigma, \sigma, \gamma)} & & (\cD, \tau, A, D ),
}
$$
so our notation aligns with that of the category $\cDist(\sC)$.
\subsection{Eilenberg-Moore constructions}\label{EM2cat}

Let $\cA$ be a 0-cell in $\sC$. The identity 1-cell $1 \colon \cA \to \cA$ is part of a monad with multiplication and unit both given by $1$. This defines an inclusion 2-functor $I \colon \sC \to \mnd(\sC)$.

\begin{defn}
We say that $\sC$ admits \emph{Eilenberg-Moore constructions for monads} if the inclusion 2-functor $I$ has a right 2-adjoint $J \colon \mnd(\sC) \to \sC$.
\end{defn}

If this 2-functor exists, its action on a morphism of monads $(\Sigma, \sigma) \colon (\cA, B) \to (\cD, A)$ is denoted by $\Sigma^\sigma \colon \cA^B \to \cD^A$, and its action on a 2-cell $\alpha \colon (\Sigma, \sigma) \Rightarrow (\Sigma', \sigma')$ is denoted by $\tilde \alpha \colon \Sigma^\sigma \Rightarrow  \Sigma'^{\sigma'}$. The 2-adjunction part of the definition explicitly means that there are 2-natural isomorphisms
$$
\mnd(\sC) ( (\cA, 1), (\cD, A) ) \cong \sC\left(\cA, \cD^A\right)
$$
of hom-categories.

Let $\nu \colon 1 \Rightarrow JI$ and $\xi \colon IJ \Rightarrow 1$ denote the unit and counit of $I\dashv J$, respectively. For each monad $(\cA, B)$ the counit has a component $(\cA^B, 1) \to (\cA, B)$ . This is a morphism of monads, and we denote it by $(U^B, \kappa)$, where $\kappa \colon BU^B \Rightarrow U^B$ is a 2-cell in $\sC$ such that the diagrams
$$
\xymatrix{
U^B \ar[r]^-{\eta U^B}\ar@{=}[dr] & BU^B \ar[d]^-\kappa \\
& U^B
}\qquad
\xymatrix{
BBU^B \ar[r]^-{B\kappa} \ar[d]_-{\mu U^B} & BU^B \ar[d]^-\kappa \\
BU^B \ar[r]_-\kappa & B
}
$$
commute. The monad axioms tell us that $(B, \mu) \colon (\cA, 1) \to (\cA, B)$ is a morphism of monads, and this corresponds under the 2-adjunction $I\dashv J$ to a 1-cell $F^B\colon \cA \to \cA^B$ in $\sC$, unique such that
\begin{equation}\label{EMmonad}
\begin{array}{c}
\xymatrix{
(\cA, 1) \ar[r]^-{(F^B, 1)} \ar[dr]_-{(B, \mu)} & (\cA^B, 1) \ar[d]^-{(U^B,\kappa)} \\
& (\cA, B)
}
\end{array}
\end{equation}
commutes in $\mnd(\sC)$. In particular this tells us that $U^BF^B = B$ and $\kappa F^B = \mu$. We have that
$$
\xymatrix{
(\cA^B, 1) \rrtwocell^{<1.5>(U^BFU^B,~\mu U^B)}_{<1.5>(U^B,~\kappa)}{\kappa}&& (\cA, B)
}
$$
is a 2-cell in $\mnd(\sC)$ corresponding under the 2-adjunction to a 2-cell
$$
\xymatrix{
\cA^B \rrtwocell^{<1.5>F^BU^B}_{1}{\epsilon} && \cA^B
}
$$
in $\sC$, unique such that $U^B \epsilon = \kappa$. It turns out that $\eta$ is the unit and $\epsilon$ is the counit of an adjunction $F^B \dashv U^B$; see~\cite[p.~152]{MR0299653} for the full details of this construction.

As explained in Example~\ref{monadj}, the adjunction $F^B \dashv U^B$ generates the monad $B$ as well a comonad $F^BU^B \colon \cA^B \to \cA^B$ with counit $\epsilon$ and comultiplication $F^B \eta U^B$. We write $\tilde B$ to denote this comonad.

\begin{prop}
Let $B = 1$ be the identity monad on $\cA$. Then $F^B \colon \cA \to \cA^B$ is the unit of $I \dashv J$ evaluated at $\cA$ and is an isomorphism. Furthermore, $\tilde B = 1$.
\end{prop}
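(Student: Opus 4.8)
The plan is to read all three assertions off the universal property of the Eilenberg--Moore $2$-adjunction $I \dashv J$, exploiting that the identity monad has \emph{identity} unit and multiplication, which forces almost all of the $2$-cells occurring in the construction of $F^B$, $U^B$, $\kappa$ and $\epsilon$ to be identities.

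First I would identify $F^B$ with $\nu_\cA$. By construction $F^B$ is characterised as the unique $1$-cell of $\sC$ making \eqref{EMmonad} commute, i.e.\ with $(U^B,\kappa)\circ IF^B = (B,\mu)$ in $\mnd(\sC)$. For $B=1$ we have $\mu = 1$, so $(B,\mu)=(1_\cA,1)$ is the identity $1$-cell $1_{I\cA}$ of $\mnd(\sC)$, and $(U^B,\kappa)$ is the counit component $\xi_{I\cA}$. Hence the defining property of $F^1$ becomes $\xi_{I\cA}\circ IF^1 = 1_{I\cA}$, which is exactly what the triangle identity for $I\dashv J$ asserts of $\nu_\cA$; by uniqueness, $F^1 = \nu_\cA$.

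Next I would show $F^1$ (equivalently $\nu_\cA$) is invertible. The key point is that $I\colon \sC \to \mnd(\sC)$ is fully faithful as a $2$-functor: a morphism of monads $(\Sigma,\sigma)\colon(\cC,1)\to(\cD,1)$ between identity monads has $\sigma\colon\Sigma\Rightarrow\Sigma$ forced to be $1_\Sigma$ by the unit axiom for morphisms of monads, so $(\Sigma,\sigma)=I\Sigma$, and the condition on a monad $2$-cell between two such is vacuous, so it is $I\alpha$ for a unique $\alpha$. Therefore $\mnd(\sC)(I-,I\cA)\cong\sC(-,\cA)$, and composing with the isomorphism $\sC(-,\cA^1)=\sC(-,J I\cA)\cong\mnd(\sC)(I-,I\cA)$ from $I\dashv J$ gives a $2$-natural isomorphism $\sC(-,\cA^1)\cong\sC(-,\cA)$; by Yoneda this is represented by an isomorphism $\cA^1\to\cA$, and tracing the identity of $\cA^1$ through the chain identifies that isomorphism with the underlying $1$-cell $U^1$ of $\xi_{I\cA}=(U^1,\kappa)$. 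Thus $U^1$ is invertible; since $U^BF^B=B$ gives $U^1F^1=1_\cA$, we get $F^1=(U^1)^{-1}$, invertible.

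Finally, for $\tilde B = 1$: from $U^1F^1=1_\cA$ and the invertibility of $F^1$ we obtain $U^1=(F^1)^{-1}$ and hence $F^1U^1=1_{\cA^1}$, so the underlying $1$-cell of the comonad $\tilde B = F^1U^1$ is the identity. Its comultiplication is $F^1\eta U^1$ where $\eta$ is the unit of the monad $B=1$, hence $\eta=1$ and the comultiplication is $1_{1_{\cA^1}}$. Its counit $\epsilon$ is determined by $U^1\epsilon=\kappa$, and $\kappa$, being the $2$-cell of the morphism of monads $(U^1,\kappa)\colon(\cA^1,1)\to(\cA,1)$ between identity monads, equals $1_{U^1}$ (same argument as above); since $U^1$ is invertible, whiskering by it reflects identities, so $\epsilon=1_{1_{\cA^1}}$. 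Hence $\tilde B$ is the identity comonad on $\cA^1$. I expect the only delicate part to be the bookkeeping of variances and whiskerings needed to see precisely which $2$-cells ($\sigma$, $\kappa$, $\mu$, $\eta$) collapse to identities; the rest is a direct unwinding of the definitions recalled above.
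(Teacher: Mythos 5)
Your proof is correct, and all three claims do follow from the arguments you give; but the route to invertibility is genuinely different from the paper's. The paper argues in the opposite order: from diagram~\ref{EMmonad} it reads off $U^BF^B=1$ and $\kappa=1$, then invokes the uniqueness clause ``$\epsilon$ is the unique $2$-cell with $U^B\epsilon=\kappa$'' to conclude $\epsilon=1$, which forces $F^BU^B=1$ on the nose and hence makes $F^B$ and $U^B$ mutually inverse in one stroke; the identification $F^B=\nu_\cA$ via the triangle identity is then the same as yours. You instead establish invertibility of $U^1$ first, by observing that $I$ is $2$-fully-faithful (the unit axiom kills $\sigma$ and the $2$-cell condition is vacuous for identity monads) and then running the chain $\sC(-,\cA^1)\cong\mnd(\sC)(I-,I\cA)\cong\sC(-,\cA)$ through Yoneda, and only afterwards deduce $\epsilon=1$ by whiskering with the now-invertible $U^1$. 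Your version is heavier machinery for the same conclusion, but it buys two things: it makes explicit the full faithfulness of $I$ that underlies the paper's subsequent convention of identifying $JI$ with $1$, and it sidesteps a small awkwardness in the paper's uniqueness step (the identity $2$-cell only competes with $\epsilon\colon F^BU^B\Rightarrow 1$ once one already knows $F^BU^B=1$, whereas in your ordering that equality is in hand before $\epsilon$ is discussed). The remaining computations of the comultiplication $F^1\eta U^1$ and counit of $\tilde B$ are routine and match the paper's.
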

\begin{proof}
By diagram~\ref{EMmonad}, we have that $U^BF^B = 1$ and $\kappa = 1$. It follows that $\epsilon = 1$ since it is the unique 2-cell such that $U^B \epsilon = 1$. Hence $\tilde B = F^BU^B = 1$ and $F^B,U^B$ are isomorphisms. By the triangle identities for $I\dashv J$, Diagram~\ref{EMmonad} commutes for the choice $F^B = \nu$, so this must be the only choice by uniqueness.
\end{proof}
We henceforth identify $JI$ with 1.
 If $(\Sigma, \sigma) \colon (\cA, 1)\to (\cD, A)$ is an arbitrary morphism of monads, then it corresponds to
$$
\xymatrix{
\cA \ar[r]^-\nu_-\cong & \cA^1 \ar[r]^-{\Sigma^\sigma} & J(\cD, A)
}
$$
under the 2-adjunction, which we write as $\Sigma^\sigma \colon \cA \to \cD^A$. In a similar fashion, a morphism of monads $(\Sigma, \sigma) \colon (\cA, B) \to (\cD, 1)$ corresponds under the 2-adjunction to $\Sigma^\sigma \colon \cA^B \to \cD$.
\begin{prop}\label{cmdEM}
If $\sC$ admits Eilenberg-Moore constructions for monads, then so too does $\cmd(\sC)$.
\end{prop}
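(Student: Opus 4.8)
The plan is to produce the required right 2-adjoint by pushing the given 2-adjoint $J$ through the 2-functor $\cmd$ and then through the 2-isomorphism of Lemma~\ref{mndcmdiso}. Write $I \colon \sC \to \mnd(\sC)$ for the inclusion 2-functor and $J \colon \mnd(\sC) \to \sC$ for its right 2-adjoint, so that $I \dashv J$ is an adjunction in $\twocat$. Since $\cmd \colon \twocat \to \twocat$ is a 2-functor and, as proved above, all 2-functors preserve adjunctions, applying $\cmd$ yields a 2-adjunction $\cmd(I) \dashv \cmd(J)$ between $\cmd(\sC)$ and $\cmd(\mnd(\sC))$. Let $\Xi \colon \cmd(\mnd(\sC)) \xrightarrow{\,\cong\,} \mnd(\cmd(\sC))$ be the 2-isomorphism of Lemma~\ref{mndcmdiso}; being an isomorphism in $\twocat$ it preserves adjunctions in either direction, so composing with $\Xi$ and $\Xi^{-1}$ gives a 2-adjunction $\Xi\,\cmd(I) \dashv \cmd(J)\,\Xi^{-1}$ between $\cmd(\sC)$ and $\mnd(\cmd(\sC))$.

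It then remains to identify the left adjoint $\Xi\,\cmd(I)$ with the inclusion 2-functor $I' \colon \cmd(\sC) \to \mnd(\cmd(\sC))$ (or at least to see that they are 2-naturally isomorphic, which is all that is needed for $I'$ to inherit a right 2-adjoint); granting this, $\cmd(J)\,\Xi^{-1}$ is the desired right 2-adjoint of $I'$ and the proof is complete. This identification is the only substantive point and is a matter of unwinding the definitions of $\cmd(I)$ and of $\Xi$. On $0$-cells, $\cmd(I)$ sends a comonad $(\cA, C)$ in $\sC$ to $((\cA,1),(C,\mathbf 1))$, where $(C,\mathbf 1)\colon(\cA,1)\to(\cA,1)$ is the morphism of monads whose underlying $2$-cell $1C \Rightarrow C1$ is the identity and whose comultiplication and counit are $I\delta$ and $I\epsilon$; the explicit description of $\Xi$ recorded before Lemma~\ref{mndcmdiso} carries this to $((\cA,C),(1,\mathbf 1))$, which is exactly the identity monad on $(\cA,C)$ in $\cmd(\sC)$, i.e.\ the image of $(\cA,C)$ under $I'$. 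The analogous bookkeeping on $1$- and $2$-cells --- using that $\cmd(I)$ merely applies $I$ componentwise and that $\Xi$ merely rearranges the data, as in the unpacking of $\cMix(\sC)$ above --- shows the two 2-functors coincide.

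I expect the main obstacle to be purely notational rather than mathematical: keeping track of the nested layers of $\mnd$ and $\cmd$ and of the dualisations concealed inside Lemma~\ref{mndcmdiso}, so as to be confident that the claimed matching $\Xi\,\cmd(I)\cong I'$ genuinely holds. Once the preservation-of-adjunctions principle and Lemma~\ref{mndcmdiso} are in hand there is no further difficulty; in fact the same argument with the roles of $\mnd$ and $\cmd$ interchanged shows that $\mnd(\sC)$ admits Eilenberg-Moore constructions for comonads whenever $\sC$ does.
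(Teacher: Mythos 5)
Your argument is exactly the paper's proof: apply the 2-functor $\cmd$ to the adjunction $I \dashv J$ (using that 2-functors preserve adjunctions), then transport across the 2-isomorphism $\mnd(\cmd(\sC)) \cong \cmd(\mnd(\sC))$ of Lemma~\ref{mndcmdiso} and observe that the left adjoint becomes the inclusion. You simply spell out the identification $\Xi\,\cmd(I) \cong I'$ in more detail than the paper does; the approach and all its ingredients coincide.
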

\begin{proof}
Since 2-functors preserve adjunctions, $\cmd(J) \colon \cmd(\mnd(\sC)) \to \cmd(\sC)$ is a right 2-adjoint to $\cmd(I)$. After composing with the isomorphism $$\mnd(\cmd(\sC)) \cong \cmd(\mnd(\sC))$$ of Lemma~\ref{mndcmdiso}, the 2-functor $\cmd(I)$ becomes the inclusion 2-functor, and thus we have constructed its right 2-adjoint, as required.
\end{proof}
Since $\cMix(\sC) = \mnd(\cmd(\sC))$, we have given a 2-functor $\cMix(\sC) \to \cmd(\sC)$. Diagramatically it is represented as
$$
\xymatrix{
(\cA, \theta, B, C) \ddtwocell<10>_{(\Sigma, \sigma, \gamma)\ \ \ \ \ \ \ }^{\ \ \ \ \ \ \ \ \ \ (\Sigma', \sigma', \gamma')}{^\alpha}  & & & &  (\cA^B, C^\theta) \ddtwocell<10>_{(\Sigma^\sigma, \tilde\gamma)\ \ \ \ \ \ \ }^{\ \ \ \ \ \ \ \ \ \ (\Sigma'^{\sigma '}, \tilde\gamma')}{^\tilde\alpha} \\
 &  & \longmapsto \\
(\cD, \psi, A, D) & & & & (\cD^A, D^\psi)
}
$$
where the notation $\tilde\gamma$ is explained as follows: by the Yang-Baxter equation, $\gamma$ is a 2-cell of monads
$$
\xymatrix{
(\cA, B) \ar[rr]^-{(C, \theta)} \ar[d]_-{(\Sigma, \sigma)} & & \ar@{}[dll]^(.25){}="a"^(.75){}="b" \ar@{=>}_-\gamma "a";"b" (\cA, B) \ar[d]^-{(\Sigma, \sigma)} \\
(\cD, A) \ar[rr]_-{(D, \psi)} & & (\cD, A)
}
$$
and applying $J \colon \mnd(\sC) \to \sC$ to this square gives a 2-cell
$$
\xymatrix{
\cA^B \ar[rr]^-{C^\theta} \ar[d]_-{\Sigma^\sigma} && \ar@{}[dll]^(.25){}="a"^(.75){}="b" \ar@{=>}_-{\tilde\gamma} "a";"b" \cA^B \ar[d]^-{\Sigma^\sigma} \\
\cD^A \ar[rr]_-{D^\psi} & & \cD^A
}
$$
Since $J \colon \mnd(\sC) \to \sC$ is a 2-functor, it sends comonads to comonads. In this situation,  a mixed distributive law $\theta \colon BC \Rightarrow CB$ is sent to a comonad $C^\theta$.

Using Proposition~\ref{cmdEM} we may freely replace $\sC$ with $\cmd(\sC)$ in any statement made about a 2-category $\sC$ which admits Eilenberg-Moore constructions for monads.
\section{Lifting through adjunctions}\label{liftingthroughadjunctions}

Here we discuss distributive
laws that are compatible in a specific way with adjunctions that generate one of the
involved comonads. We go on to explain, given a 2-category $\sC$ which admits Eilenberg-Moore constructions for monads, how the comparison 1-cell of~\cite{MR0299653} lifts to become a 1-cell between comonad distributive laws and that there is a canonical 2-functor $ \cMix(\sC) \to \cDist(\sC)$.

\subsection{The lifting theorem}
Let $\sC$ be a 2-category. Consider squares in $\sC$ of the form
$$
\xymatrix{\cB \ar[r]^ U  \ar[d]_ S & \ar@{}[dl]^(.25){}="a"^(.75){}="b" \ar@{=>}_-{\Omega} "a";"b"   \cA \ar[d]^ C\\
	\cD \ar[r]_V  & \cC}
$$
We obtain two 2-categories $\sq^h(\sC)$ and $\sq^v(\sC)$ by defining such squares to be 1-cells and defining their composition by pasting horizontally and vertically respectively (really these are the horizontal and vertical components of a double category $\sq(\sC)$, see~\cite[Observation~76]{MR2399898}). The 2-cells are pairs of 2-cells in $\sC$ that satisfy the obvious compatibility condition.

\begin{prop}\label{monadsq}
To give a monad in $\sq^v(\sC)$ is the same as to give a morphism of monads in $\sC$.
\end{prop}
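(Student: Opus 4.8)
The plan is to unwind both sides of the claimed correspondence and exhibit a bijection between monads in $\sq^v(\sC)$ and morphisms of monads in $\sC$, then check that it respects 2-cells and all forms of composition. First I would spell out what a monad in $\sq^v(\sC)$ is: its underlying 1-cell is a square
$$
\xymatrix{\cB \ar[r]^-U \ar[d]_-S & \ar@{}[dl]^(.25){}="a"^(.75){}="b" \ar@{=>}_-{\Omega} "a";"b" \cA \ar[d]^-C \\ \cD \ar[r]_-V & \cC}
$$
but since multiplication and unit in $\sq^v(\sC)$ must be 1-cells in that 2-category whose vertical composites are identities, a monad structure forces the left and right edges $S$ and $C$ to be \emph{identity} 1-cells (vertical composition being strict). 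So a monad in $\sq^v(\sC)$ is really a square of the shape $\cB \xrightarrow{U} \cA$ on top, $\cB \xrightarrow{V} \cA$ on bottom, with $S = 1_{\cB}$, $C = 1_{\cA}$, equipped with a 2-cell $\Omega \colon U \Rightarrow V$ (the data of the endo-1-cell in $\sq^v$), together with unit and multiplication 2-cells in $\sq^v(\sC)$ satisfying the monad axioms. The key observation is that once the vertical edges are identities, the horizontal data $(U \text{ on top}, V \text{ on bottom}, \Omega)$ together with the monad structure is exactly the data of a 1-cell $U \colon \cB \to \cA$ (playing the role of $\Sigma$), a 1-cell $V$ which will be forced to equal $1$... — more precisely, I would identify the correct reading: a monad on the object $\cB \xrightarrow{1} \cB$... \emph{et cetera}. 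Let me instead state the cleaner route below.

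Concretely: a monad in $\sq^v(\sC)$ lives on a 0-cell of $\sq^v(\sC)$, namely a square with $S$ and $C$ as its vertical legs being the identity automorphisms $1_{\cB}, 1_{\cA}$ and top/bottom both some fixed 1-cell; call the 0-cell $(\cB \xrightarrow{1} \cB, \cA \xrightarrow{1} \cA, U)$. An endomorphism of this in $\sq^v(\sC)$ is a square whose vertical edges are $1_{\cB}, 1_{\cA}$ and whose top and bottom are $U$, with a 2-cell $\sigma \colon U \Rightarrow U$; composing such endomorphisms vertically just composes the $\sigma$'s horizontally, so a monad structure on it is precisely a monad structure on $U \colon \cB \to \cA$ as a 1-cell, i.e. $\mu \colon UU \Rightarrow U$ and $\eta \colon 1 \Rightarrow U$ — wait, this only typechecks if $\cA = \cB$. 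The resolution is that the endo-1-cell in $\sq^v(\sC)$ need not have $U$ on top and bottom; rather its top and bottom are arbitrary endo-1-cells $B$ on $\cB$ and $A$ on $\cA$, the vertical edges are the \emph{fixed} 1-cell $\Sigma = U$ of the base 0-cell, and the inhabiting 2-cell is $\sigma \colon A\Sigma \Rightarrow \Sigma B$ (this is exactly what a 2-cell filling a square with those edges is). Vertical composition of two such stacks up the $A$'s and $B$'s and pastes the $\sigma$'s, so a monad structure on this endo-1-cell is: monad structures on $B$ and on $A$ (from the top and bottom edges) plus the compatibility of $\sigma$ with both — which is precisely the two commuting diagrams in the definition of a morphism of monads $(\Sigma,\sigma)\colon(\cB,B)\to(\cA,A)$ recalled earlier in the excerpt.

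So the bijection is: a monad in $\sq^v(\sC)$ $\longleftrightarrow$ a choice of 0-cells $(\cB,B)$ and $(\cA,A)$ (the monads appearing on the horizontal edges, with their structure) together with a morphism of monads $(\Sigma,\sigma)$ between them. Having set up this dictionary I would then verify the 2-cell part: a 2-cell between two such monads in $\sq^v(\sC)$ is a pair of 2-cells in $\sC$, one on the top edge and one on the bottom, compatible with everything; chasing definitions shows the top and bottom components must be monad 2-cells $\beta\colon B\Rightarrow B'$, $\alpha\colon A\Rightarrow A'$ and the square-compatibility is exactly the condition making $(\beta,\alpha)$ a 2-cell of monad morphisms in $\sC$ — matching the 2-cells of $\mnd(\sC)$. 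Finally I would remark that horizontal and vertical composition on both sides match under the dictionary, which is immediate from the definitions. The main obstacle is purely bookkeeping: being careful about which edges of the square carry the monad structure and making sure the strict vertical composition in $\sq^v(\sC)$ is what forces the base 0-cell's legs to be identities while still allowing nontrivial $B, A$ on the horizontal edges; once that is pinned down, every verification is a routine diagram-chase, so I would phrase the proof as "unwind both sides, observe they are the same data" and leave the diagram-chases to the reader.
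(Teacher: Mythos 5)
Your final dictionary is correct and is essentially the paper's own (one-sentence) proof: a monad in $\sq^v(\sC)$ sits on a $0$-cell of $\sq^v(\sC)$, i.e.\ a single $1$-cell $\Sigma$ of $\sC$; its underlying endo-$1$-cell is a square with $\Sigma$ on two parallel edges and endo-$1$-cells $B$, $A$ on the other two; and the unit, multiplication and monad axioms unpack to monad structures on $B$ and $A$ together with the two compatibility diagrams for $\sigma\colon A\Sigma\Rightarrow\Sigma B$, which is precisely a morphism of monads $(\Sigma,\sigma)\colon(\cA,B)\to(\cD,A)$. One bookkeeping correction: since composition in $\sq^v(\sC)$ is \emph{vertical} pasting, the $0$-cells are the \emph{horizontal} edges and it is the \emph{vertical} legs that compose under pasting and hence carry the monad structures; your final picture is transposed ($B$, $A$ on top and bottom, $\Sigma$ on the sides), which happens to be harmless for the truth of the statement because transposing squares gives an isomorphism $\sq^v(\sC)\cong\sq^h(\sC)$, but the paper's orientation is the one actually needed when the proposition is invoked in Theorem~\ref{arisem}. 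Your two false starts (that the monad structure forces the legs to be identities, and that a $0$-cell of $\sq^v(\sC)$ is itself a square) should simply be deleted, and note that the unit and multiplication are $2$-cells of $\sq^v(\sC)$, i.e.\ compatible pairs of $2$-cells of $\sC$, not $1$-cells.
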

\begin{proof}
A square
$$
\xymatrix{\cA \ar[r]^ \Sigma  \ar[d]_ B & \ar@{}[dl]^(.25){}="a"^(.75){}="b" \ar@{=>}_-{\sigma} "a";"b"   \cD \ar[d]^ A\\
	\cA \ar[r]_\Sigma  & \cD}
$$
is a monad in $\sq^v(\sC)$ if and only if $B,A $ are monads and $$\xymatrix{(\cA, B) \ar[rr]^-{(\Sigma, \sigma)} && (\cD, A)}$$ is a morphism of monads.
\end{proof}
Dually, to give a comonad in $\sq^v(\sC)$ is the same as to give an opmorphism of comonads in $\sC$.

Suppose that we have a square
$$
\xymatrix{\cB \ar[r]^ U  \ar[d]_ S & \ar@{}[dl]^(.25){}="a"^(.75){}="b" \ar@{=>}_-{\Omega} "a";"b"   \cA \ar[d]^ C\\
	\cD \ar[r]_V  & \cC}
$$
in $\sC$ where $\Omega$ is an isomorphism, and $U,V$ are both right adjoints
$$
\xymatrix{
\cA \ar@{}[rr]|-{\perp}\ar@/^0.5pc/[rr]^-F & & \ar@/^0.5pc/[ll]^-U \cB
},\qquad
\xymatrix{
\cC \ar@{}[rr]|-{\perp}\ar@/^0.5pc/[rr]^-G & & \ar@/^0.5pc/[ll]^-V \cD
}
$$
where $\eta, \epsilon$ denote the units and counits respectively of both adjunctions.

\begin{defn}\label{lift}
In the above situation, we say that $ C$ is an \emph{extension of} $ S$ and $ S$ is a \emph{lift
of $ C$ through the adjunctions} $F \dashv U$ and $G \dashv V$.
\end{defn}

\begin{lem}\label{sq}
The square
$$\xymatrix{
\cB \ar[rr]^-S \ar[dd]_-U && \cD \ar[dd]^-V \ar@{}[ddll]^(.25){}="a"^(.75){}="b" \ar@{=>}_-{\Omega^{-1}} "a";"b" \\
\\
\cA \ar[rr]_-C && \cC
}$$
is a right adjoint in $\sq^v(\sC)$.
\end{lem}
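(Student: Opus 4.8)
\emph{Proof sketch.} The plan is to exhibit an explicit left adjoint to the square of the statement inside $\sq^v(\sC)$ and to read off its unit and counit from those of the two given adjunctions; this is the double-categorical incarnation of the classical fact that a square of right adjoints with invertible comparison cell is itself a right adjoint. Write $R$ for the square of the statement, regarded as a 1-cell $S\to C$ of $\sq^v(\sC)$, whose filling is the isomorphism $\Omega^{-1}\colon VS\Rightarrow CU$. First I would take the candidate left adjoint $L\colon C\to S$ to be the square with top edge $C$, bottom edge $S$, left edge $F$, right edge $G$, and filling the mate $\bar\Omega\colon GC\Rightarrow SF$ of $\Omega$ under the adjunctions $F\dashv U$ and $G\dashv V$, that is, the composite $(\epsilon SF)\circ(G\Omega F)\circ(GC\eta)$. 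Since $\sq^v(\sC)$ imposes no condition on its 1-cells, $L$ is automatically a legitimate 1-cell.

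Next I would form the two vertical pastings needed for an adjunction in $\sq^v(\sC)$. The composite $RL$ is the 1-cell $C\to C$ given by the square with vertical edges $UF$ and $VG$ and filling $(\Omega^{-1}F)\circ(V\bar\Omega)$, while $LR$ is the 1-cell $S\to S$ given by the square with vertical edges $FU$ and $GV$ and filling $(\bar\Omega U)\circ(G\Omega^{-1})$. I would then propose as unit of $L\dashv R$ the pair formed by the units $\eta\colon 1\Rightarrow UF$ and $\eta\colon 1\Rightarrow VG$, viewed as a 2-cell $1_C\Rightarrow RL$ of $\sq^v(\sC)$, and as counit the pair formed by the counits $\epsilon\colon FU\Rightarrow 1$ and $\epsilon\colon GV\Rightarrow 1$, viewed as a 2-cell $LR\Rightarrow 1_S$. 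The substantive step is to check that these pairs satisfy the compatibility condition that makes them 2-cells of $\sq^v(\sC)$. For the unit this unwinds to the identity $(\Omega^{-1}F)(V\bar\Omega)(\eta C)=C\eta$ between 2-cells $C\Rightarrow CUF$ (the two occurrences of $\eta$ being the units of $G\dashv V$ and of $F\dashv U$ respectively), which I would establish by substituting the definition of $\bar\Omega$, sliding the two unit cells past each other and past $\Omega$ via the interchange law, cancelling a factor $(V\epsilon)(\eta V)$ by a triangle identity for $G\dashv V$, and finally invoking $\Omega^{-1}\Omega=1$; the counit case is dual, using a triangle identity for $F\dashv U$ and $\Omega\Omega^{-1}=1$.

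With this in place, the triangle identities for $L\dashv R$ in $\sq^v(\sC)$ are purely formal: horizontal composition of 2-cells and whiskering by 1-cells in $\sq^v(\sC)$ act separately on the left and on the right vertical boundaries, so each triangle identity splits into the triangle identity of $F\dashv U$ on the left boundary and that of $G\dashv V$ on the right boundary, both of which hold by hypothesis. Hence $L\dashv R$, so $R$ is a right adjoint in $\sq^v(\sC)$. I expect the only real obstacle to be the bookkeeping in the middle paragraph: one must keep careful track of the conventions (the orientation of the cell filling a square, which factor sits on top in a vertical paste, and the precise shape of the ``obvious'' compatibility condition) and then carry the mate-and-triangle-identity computation through without inadvertently reversing a 2-cell; everything else is routine.
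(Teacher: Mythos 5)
Your proposal is correct and follows essentially the same route as the paper: the left adjoint is the square filled by the mate $\Lambda=(\epsilon SF)\circ(G\Omega F)\circ(GC\eta)$ of $\Omega$, with unit and counit given by the pairs of units and counits of $F\dashv U$ and $G\dashv V$. The paper simply asserts this (citing the mate calculus), whereas you additionally verify the 2-cell compatibility conditions in $\sq^v(\sC)$ and note that the triangle identities split componentwise; those verifications are accurate.
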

\begin{proof}
The left adjoint is constructed as
$$\xymatrix{
\cA \ar[r]^-C \ar[d]_-F & \cC \ar[d]^-G  \ar@{}[dl]^(.25){}="a"^(.75){}="b" \ar@{=>}_-{\Lambda} "a";"b"\\
\cB \ar[r]_-S & \cD
}$$
where $\Lambda$ is the uniquely determined \emph{mate}~\cite{MR0357542} of $\Omega$, that is, the composite
$$
\xymatrix{
GC \ar[r]^-{GC \eta} & GCUF \ar[rr]^-{G \Omega F} && GVSF \ar[r]^-{\epsilon SF} & SF
}
$$
The (co)unit for this adunction is given by taking the pair of (co)units for the adjunctions $F\dashv U$ and $G \dashv V$.
\end{proof}

The following theorem,
which closely follows \cite[Lemmata~6.1.1
and~6.1.4]{MR2094071}, constructs a canonical pair of
distributive laws from $\Lambda$.

\begin{thm}\label{arisem}
The 2-cells
$$
	\xymatrix{\theta \colon
	VGC \ar[rr]^-{V \Lambda}
	&& VSF \ar[rr]^-{\Omega^{-1} F} &&
	CUF}
$$
and
$$
	\xymatrix{\chi \colon
	GVS\ar[rr]^-{G \Omega^{-1}} & &GCU
	\ar[rr]^-{\Lambda U} && SFU}
$$
define a morphism of monads $(C, \theta) \colon (\cA, UF) \to (\cC, VG)$ and an opmorphism of comonads
$(S, \chi) \colon (\cB, FU) \to (\cD, GV)$ respectively.
\end{thm}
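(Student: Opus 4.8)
The plan is to avoid a direct chase of the four coherence diagrams and instead read off both statements from the adjunction in $\sq^v(\sC)$ supplied by Lemma~\ref{sq}. Write $\mathbf{R}$ for the square with horizontal edges $S$ (top) and $C$ (bottom), vertical edges $U$ (left) and $V$ (right) and interior $2$-cell $\Omega^{-1} \colon VS \Rightarrow CU$, regarded as a $1$-cell $C \to S$ in $\sq^v(\sC)$; and write $\mathbf{L}$ for its left adjoint, the square with horizontal edges $C$ (top) and $S$ (bottom), vertical edges $F$ (left) and $G$ (right) and interior $2$-cell $\Lambda \colon GC \Rightarrow SF$, regarded as a $1$-cell $S \to C$. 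By the proof of Lemma~\ref{sq}, the unit and counit of $\mathbf{L} \dashv \mathbf{R}$ are the pairs formed from the units and counits of $F \dashv U$ and $G \dashv V$. By Example~\ref{monadj}, this adjunction generates a monad $\mathbf{R}\mathbf{L}$ on the $0$-cell $C$ and a comonad $\mathbf{L}\mathbf{R}$ on the $0$-cell $S$ of $\sq^v(\sC)$.

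First I would compute $\mathbf{R}\mathbf{L}$ by pasting $\mathbf{L}$ above $\mathbf{R}$: the resulting square has both horizontal edges equal to $C$, left vertical edge $UF$, right vertical edge $VG$, and interior $2$-cell the composite $VGC \xrightarrow{V\Lambda} VSF \xrightarrow{\Omega^{-1}F} CUF$, which is precisely $\theta$. Now $\mathbf{R}\mathbf{L}$ is a monad in $\sq^v(\sC)$ on the $0$-cell $C$, and the monad structure it inherits from $\mathbf{L}\dashv\mathbf{R}$ restricts on its two vertical edges to the monads $UF$ on $\cA$ and $VG$ on $\cC$ of Example~\ref{monadj}: its unit is the pair of adjunction units, restricting to $\eta$ on each edge, and its multiplication is the whiskered adjunction counit, restricting to $U\epsilon F$ and to $V\epsilon G$. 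By Proposition~\ref{monadsq}, a monad in $\sq^v(\sC)$ on the $0$-cell $C$ is exactly a morphism of monads in $\sC$ with underlying $1$-cell $C$, so $(C, \theta) \colon (\cA, UF) \to (\cC, VG)$ is a morphism of monads.

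Dually, I would compute $\mathbf{L}\mathbf{R}$ by pasting $\mathbf{R}$ above $\mathbf{L}$: the composite square has both horizontal edges equal to $S$, left vertical edge $FU$, right vertical edge $GV$, and interior $2$-cell the composite $GVS \xrightarrow{G\Omega^{-1}} GCU \xrightarrow{\Lambda U} SFU$, that is, $\chi$. By the dual of Proposition~\ref{monadsq}, a comonad in $\sq^v(\sC)$ on the $0$-cell $S$ is an opmorphism of comonads in $\sC$ with underlying $1$-cell $S$, and the comonad structure of $\mathbf{L}\mathbf{R}$ restricts on its vertical edges to the comonads $FU$ on $\cB$ and $GV$ on $\cD$ generated by the two adjunctions; hence $(S, \chi) \colon (\cB, FU) \to (\cD, GV)$ is an opmorphism of comonads.

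The only genuine work, and the main obstacle, is the bookkeeping: verifying that vertical pasting in $\sq^v(\sC)$ sends $\mathbf{R}\mathbf{L}$ and $\mathbf{L}\mathbf{R}$ to squares whose interior $2$-cells are literally the composites defining $\theta$ and $\chi$, and that the structure transported along Example~\ref{monadj} and Proposition~\ref{monadsq} agrees edge by edge with the standard (co)monads and their (co)multiplications — this requires care with the orientation and whiskering conventions. As a cross-check, or as an alternative proof, one can instead verify the two monad-morphism axioms for $\theta$ and the two comonad-opmorphism axioms for $\chi$ directly, using the triangle identities of both adjunctions, the naturality of $\epsilon$, and the fact that $\Lambda$ is the mate of the isomorphism $\Omega$ (so that $\Omega$ is conversely the mate of $\Lambda$); the argument above merely repackages these same ingredients.
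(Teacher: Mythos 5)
Your proposal is correct and is essentially the paper's own proof: the paper likewise observes that the adjunction of Lemma~\ref{sq} generates a monad and a comonad in $\sq^v(\sC)$ whose interior $2$-cells are $\theta$ and $\chi$, and then invokes Proposition~\ref{monadsq} and its dual. You have simply written out the vertical-pasting computation that the paper leaves implicit.
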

\begin{proof}
The adjunctions in Lemma~\ref{sq} generate a monad and a comonad
$$
\xymatrix{
\cA \ar[r]^-C \ar[d]_-{UF} & \cC \ar[d]^-{VG}  \ar@{}[dl]^(.25){}="a"^(.75){}="b" \ar@{=>}_-{\theta} "a";"b"\\
\cA \ar[r]_-C & \cC
}\qquad
\xymatrix{
\cB \ar[r]^-S \ar[d]_-{FU} & \cD \ar[d]^-{GV}  \ar@{}[dl]^(.25){}="a"^(.75){}="b" \ar@{=>}_-{\chi} "a";"b"\\
\cB \ar[r]_-S & \cD
}
$$
respectively in $\sq^v(\sC)$. By Proposition~\ref{monadsq} this is equivalent to the theorem. \end{proof}
In fact, $\theta, \chi$ satisfy a universal property:
\begin{prop}\label{univprop}
In the setting of Theorem~\ref{arisem},
\begin{enumerate}
\item The 2-cell $\theta \colon VGC \Rightarrow CUF$ is unique such that the diagram
\begin{equation}\label{univmonad}
\begin{array}{c}
\xymatrix{
VGCU \ar[r]^-{\theta U } \ar[d]_-{VG\Omega} & CUFU \ar[r]^-{CU \epsilon} & CU \ar[d]^-{\Omega} \\
VGVS \ar[rr]_-{V\epsilon S} & & VS
}
\end{array}
\end{equation}
commutes.
\item The 2-cell $\chi \colon GVS \Rightarrow SFU$ is unique such that the diagram
\begin{equation}\label{univcomonad}
\xymatrix{
CU \ar[rr]^-{C\eta U} \ar[d]_-{\Omega} & & CUFU\ar[d]^-{\Omega FU} \\
VS \ar[r]_-{V \eta S} & VGVS \ar[r]_-{V\chi} & VSFU
}
\end{equation}
commutes.
\end{enumerate}
\end{prop}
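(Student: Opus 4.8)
The plan is to view both parts as manifestations of the mates correspondence for the two adjunctions $F \dashv U$ and $G \dashv V$. For part~(1), note that the top-and-right leg of~\eqref{univmonad} defines an operation
$$
\alpha \longmapsto \Omega \circ (CU\epsilon) \circ (\alpha U)
$$
from $2$-cells $\alpha \colon VGC \Rightarrow CUF$ to $2$-cells $VGCU \Rightarrow VS$, and the assertion is precisely that $\theta$ is the unique preimage of the fixed $2$-cell $(V\epsilon S)\circ(VG\Omega)$. So the first thing I would do is show this operation is injective. Since $\Omega$ is invertible, it suffices to recover $\alpha$ from $h := (CU\epsilon)\circ(\alpha U)$; and indeed $\alpha = (hF)\circ(VGC\eta)$, as one sees by whiskering $h$ by $F$, precomposing with $VGC\eta$, using the interchange law to slide $\alpha$ past the unit $\eta$ of $F\dashv U$, and invoking the triangle identity $(\epsilon F)\circ(F\eta)=1_F$. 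This gives uniqueness in part~(1).

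For existence in part~(1) I would substitute the explicit formula $\theta = (\Omega^{-1}F)\circ(V\Lambda)$ of Theorem~\ref{arisem} into that leg. The Godement naturality identity $\Omega\circ(CU\epsilon) = (VS\epsilon)\circ(\Omega FU)$ then lets the outer $\Omega$ annihilate against $\Omega^{-1}F$, collapsing the leg to $V\big((S\epsilon)\circ(\Lambda U)\big)$, so it only remains to verify $(S\epsilon)\circ(\Lambda U) = (\epsilon S)\circ(G\Omega)$ as $2$-cells $GCU\Rightarrow S$. Expanding $\Lambda = (\epsilon SF)\circ(G\Omega F)\circ(GC\eta)$, commuting the two counits past one another by interchange, applying Godement naturality of $\Omega$ once more, and finishing with the triangle identity $(U\epsilon)\circ(\eta U)=1_U$ of $F\dashv U$, the left-hand side reduces to the right; applying $V$ then yields the bottom-and-left leg $(V\epsilon S)\circ(VG\Omega)$, as needed. (Conceptually this is just the observation that the displayed operation is one leg of the mates bijection composed with postcomposition by the iso $\Omega$, which one can also read off from the adjunction of Lemma~\ref{sq}.)

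Part~(2) I would prove by the mirror-image argument, with $G\dashv V$ now in the role $F\dashv U$ played above. Uniqueness of a $\chi$ satisfying~\eqref{univcomonad} follows since, after cancelling the iso $\Omega$, any such $\chi$ is recovered from $(V\chi)\circ(V\eta S)$ by whiskering on the left by $G$ and postcomposing with the counit of $G\dashv V$ (again via interchange and a triangle identity of $G\dashv V$); existence follows by substituting $\chi=(\Lambda U)\circ(G\Omega^{-1})$, cancelling $\Omega^{-1}$ against $\Omega$ using naturality of the unit $\eta$ of $G\dashv V$, expanding $\Lambda$, pushing that unit to the far left by repeated naturality, and collapsing with the triangle identity $(V\epsilon)\circ(\eta V)=1_V$. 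I expect the only real obstacle to be bookkeeping: the paper writes $\eta$ and $\epsilon$ for the (co)units of \emph{both} adjunctions, so one must be careful which is being used at each whiskering; once that is fixed, every step is a single application of interchange, of Godement naturality of $\Omega$, or of a triangle identity, and nothing deeper is required.
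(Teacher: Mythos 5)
Your proof is correct and follows essentially the same route as the paper's: uniqueness by recovering the $2$-cell from its image under the relevant leg of the mates bijection (whisker by the left adjoint, precompose with the unit, apply interchange and a triangle identity), and existence by substituting the explicit formula for $\theta$ (resp.\ $\chi$) and collapsing via interchange, naturality of $\Omega$, and the other triangle identity. The only differences are cosmetic — you phrase the pasting diagrams as equations of $2$-cells and you write out part~(2) explicitly where the paper merely notes it is similar.
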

\begin{proof} We prove part 1, and remark that the proof for part 2 is similar. Consider the diagram
$$
\xymatrix@=3.5em{
VGCU \ar@{=}[d] \ar[r]^-{VGC\eta U} & VGCUFU \ar[d]_-{VGCU\epsilon} \ar[r]^-{VG\Omega FU} & VGUSFU \ar[d]_-{VGUS\epsilon}\ar[r]^-{V\epsilon S FU} & VSFU \ar[d]_-{VS\epsilon}\ar[r]^-{\Omega^{-1} FU} & CUFU \ar[d]^-{CU\epsilon} \\
 VGCU\ar@{=}[r] & VGCU \ar[r]_-{VG\Omega} & VGVS \ar[r]_-{V\epsilon S} & VS \ar[r]_-{\Omega^{-1}} & CU
}
$$
The first square commutes by a triangle identities for the adjunction $G \dashv V$ and the other squares commute by compatibility of horizontal and vertical composition in $\sC$. Thus, the outer diagram commutes, which means that diagram~\ref{univmonad} commutes also.

Let $\theta \colon VGC \Rightarrow CUF$ be a 2-cell satisfying the hypothesis. Consider the diagram
$$
\xymatrix@=3.5em{
VGC \ar[r]^-{\theta'} \ar[d]_-{VGC\eta} & CUF \ar@{=}[r] \ar[d]^-{CUF\eta} & CUF \ar@{=}[d] \\
VGCUF \ar[r]^-{\theta' UF} \ar[d]_-{VG\Omega F}& CUFUF \ar[r]^-{CU\epsilon F} & CUF \ar[d]^-{\Omega F} \\
VGVSF \ar[rr]_-{V\epsilon SF} & & VSF
}
$$
The leftmost square commutes by compatibility of composition in $\sC$, the rightmost square commutes by a triangle identity for $F \dashv U$ and the lower rectangle commutes by assumption. Thus, the outer rectangle commutes, which means exactly that $\theta' = \theta$.
\end{proof}

\begin{rem}\label{univrem}
The commutativity of diagrams~\ref{univmonad} and~\ref{univcomonad} is equivalent to the statement that
$$
\xymatrix{
(\cB, 1) \ar[rr]^-{(U, U\epsilon)} \ar[dd]_-{(S,1)} && (\cA, UF)\ar[dd]^-{(C, \theta)}  \ar@{}[ddll]^(.25){}="a"^(.75){}="b" \ar@{=>}_-{\Omega} "a";"b"\\
\\
(\cD, 1) \ar[rr]_-{(V, V\epsilon)} && (\cC, VG)
}
\qquad
\xymatrix{
(\cB, FU) \ar[rr]^-{(U, \eta U)} \ar[dd]_-{(S,\chi)} && (\cA, 1)\ar[dd]^-{(C, 1)}  \ar@{}[ddll]^(.25){}="a"^(.75){}="b" \ar@{=>}_-{\Omega} "a";"b"\\
\\
(\cD, GV) \ar[rr]_-{(V, \eta V)} && (\cC, 1)
}
$$
are 2-cells in $\mnd(\sC)$ and $\cmd(\sC^*)^*$ respectively.
\end{rem}

We now state an important corollary of Theorem~\ref{arisem}. Since we use this extensively later, we explicitly state the necessary terminology.
\begin{cor}\label{arisec}
Suppose that we have a square
$$
\xymatrix{\cB \ar[r]^ U  \ar[d]_ S & \ar@{}[dl]^(.25){}="a"^(.75){}="b" \ar@{=>}_-{\Omega} "a";"b"   \cA \ar[d]^ C\\
	\cB \ar[r]_U  & \cA}
$$
in a 2-category $\sC$ where $\Omega$ is an isomorphism, and $U$  is a right adjoint
$$
\xymatrix{
\cA \ar@{}[rr]|-{\perp}\ar@/^0.5pc/[rr]^-F & & \ar@/^0.5pc/[ll]^-U \cB.
}
$$
Let $\eta$ be the unit, and let $\epsilon$ be the counit of this adjunction, and let $\Lambda$ denote the 2-cell
$$
\xymatrix{
FC \ar[r]^-{FC \eta} & FCUF \ar[rr]^-{F \Omega F} && FUSF \ar[r]^-{\epsilon SF} & SF.
}
$$
Then:
\begin{enumerate}
\item The 2-cells
$$
	\xymatrix{\theta \colon
	UFC \ar[rr]^-{U \Lambda}
	&& USF \ar[rr]^-{\Omega^{-1} F} &&
	CUF}
$$
and
$$
	\xymatrix{\chi \colon
	FUS\ar[rr]^-{F \Omega^{-1}} & &FCU
	\ar[rr]^-{\Lambda U} && SFU}
$$
define a morphism of monads $(C, \theta) \colon (\cA, UF) \to (\cA, UF)$ and an opmorphism of comonads
$(S, \chi) \colon (\cB, FU) \to (\cB, FU)$ respectively.
\item
If $C$ and $S$ are themselves comonads, and
$(U, \Omega) \colon (\cB, S) \to (\cA, C)$ is an opmorphism of comonads, then $\theta$ is a mixed distributive law and $\chi$ is a comonad distributive law.
\end{enumerate}

\end{cor}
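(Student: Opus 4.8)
Part~1 requires no new argument: it is Theorem~\ref{arisem} specialised by taking $V := U$, $G := F$, $\cC := \cA$ and $\cD := \cB$, so that the two adjunctions of that theorem collapse to a single adjunction $F \dashv U$ used twice. Under these substitutions the mate of $\Omega$ is exactly the $2$-cell $\Lambda$ written in the statement, and $\theta$, $\chi$ are the corresponding $2$-cells of Theorem~\ref{arisem}; Proposition~\ref{monadsq} together with Lemma~\ref{sq} then yields the claimed morphism of monads and opmorphism of comonads.

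For part~2 the plan is to produce $\theta$ and $\chi$ as the monad and comonad \emph{generated by an adjunction in $\cmd(\sC)$}, so that the defining diagrams of a (mixed) distributive law come for free. The first observation is that because $\Omega \colon CU \Rightarrow US$ is an \emph{invertible} opmorphism of comonads, its inverse $\Omega^{-1} \colon US \Rightarrow CU$ is a morphism of comonads: the two axioms for $\Omega^{-1}$ are obtained from the two opmorphism axioms for $\Omega$ by inverting the $2$-cells that occur, and each of these ($\Omega$ itself together with the whiskerings $C\Omega$ and $\Omega S$) is invertible. Hence $(U, \Omega^{-1}) \colon (\cB, S) \to (\cA, C)$ is a $1$-cell of $\cmd(\sC)$.

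Next I would lift the adjunction $F \dashv U$ along this data. Since $U$ is a right adjoint in $\sC$ and $(U, \Omega^{-1})$ is a morphism of comonads, the mate of $\Omega^{-1}$ along $F \dashv U$ (on both sides) equips $F$ with the structure of a morphism of comonads $(F, \Lambda') \colon (\cA, C) \to (\cB, S)$, and a short calculation with the triangle identities identifies $\Lambda'$ with the $2$-cell $\Lambda$ of the statement. One then checks that $\eta$ and $\epsilon$ remain $2$-cells after passing to $\cmd(\sC)$, so that $(F, \Lambda) \dashv (U, \Omega^{-1})$ is an adjunction in $\cmd(\sC)$ whose triangle identities are literally those of $F \dashv U$. (This is the comonad instance of doctrinal adjunction; alternatively one verifies directly, by the same kind of mate manipulation used in the proof of Proposition~\ref{univprop}, that $(F,\Lambda)$ is a morphism of comonads and that $\eta,\epsilon$ are compatible with the comonad structures of $C$ and $S$.)

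Finally, apply Example~\ref{monadj} inside the $2$-category $\cmd(\sC)$ to the adjunction $(F,\Lambda) \dashv (U, \Omega^{-1})$: it generates a monad $(U,\Omega^{-1}) \circ (F,\Lambda)$ on $(\cA, C)$ and a comonad $(F,\Lambda) \circ (U,\Omega^{-1})$ on $(\cB, S)$. Unwinding the composition law for (op)morphisms of comonads shows that these have underlying $1$-cells $UF$ and $FU$ and underlying $2$-cells $\theta$ and $\chi$ respectively, while the forgetful $2$-functor $\cmd(\sC) \to \sC$ identifies their underlying (co)monad structures with the standard ones generated by $F \dashv U$. Since a monad in $\cmd(\sC)$ is by definition a mixed distributive law and a comonad in $\cmd(\sC)$ is by definition a comonad distributive law (Definition~\ref{finallyover}), this is precisely the assertion. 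I expect the main obstacle to be the lifting step: confirming that the mate of $\Omega^{-1}$ is genuinely a morphism of comonads and that the unit and counit of $F \dashv U$ descend to $\cmd(\sC)$; everything else is bookkeeping with composites of (co)monad morphisms.
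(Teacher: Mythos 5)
Your proposal is correct and follows essentially the same route as the paper: part 1 is Theorem~\ref{arisem} with the two adjunctions taken equal, and part 2 exhibits $(F,\Lambda)\dashv(U,\Omega^{-1})$ as an adjunction in $\cmd(\sC)$ and reads off $\theta$ and $\chi$ as the generated monad and comonad, which by Definition~\ref{finallyover} and the definition of $\cMix(\sC)$ are exactly a mixed and a comonad distributive law. The paper states this in two lines; you have merely filled in the verifications (invertibility of $\Omega$ making $\Omega^{-1}$ a morphism of comonads, doctrinal adjunction for the mate, descent of $\eta,\epsilon$) that it leaves implicit.
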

\begin{proof}
Part 1 follows immediately from Theorem~\ref{arisem}, in the special case where the two adjunctions are equal.
If $C,S$ are comonads and $(U, \Omega)$ is an opmorphism of comonads, then there is an adjunction $(F,\Lambda) \dashv (U, \Omega^{-1})$ in $\cmd(\sC)$. This adjunction generates a monad and comonad, yielding the distributive laws $\theta,\chi$.
\end{proof}

\begin{defn}\label{arisedeffo}
A comonad distributive law $ \chi $, or a mixed distributive law $\theta$, as in
Corollary~\ref{arisec} is said to \emph{arise from the
adjunction $F \dashv U$}.
\end{defn}

\subsection{The extremal case}\label{extremalcase}
In this section, we examine what can be said about lifts when we start with a morphism of monads, instead of the other way around.
Suppose that $\sC$ admits Eilenberg-Moore constructions for monads, with 2-adjunction $I\dashv J$.
One extremal situation in which specifying a morphism of monads uniquely determines a lift of a 1-cell is the following: consider an arbitrary morphism of monads
$$
\xymatrix{
(\cA, B) \ar[rr]^-{(C, \theta)} && (\cC, E).
}
$$
By the remarks in Section~\ref{EM2cat}, there are adjunctions
$$
\xymatrix{
\cA \ar@{}[rr]|-{\perp}\ar@/^0.5pc/[rr]^-{F^B} & & \ar@/^0.5pc/[ll]^-{U^B} \cA^B
},\qquad
\xymatrix{
\cC \ar@{}[rr]|-{\perp}\ar@/^0.5pc/[rr]^-{F^E} & & \ar@/^0.5pc/[ll]^-{U^E} \cC^E
}
$$
with unit, counit denoted by $\eta, \epsilon$ respectively where
$$
\xymatrix{
(\cA^B, 1) \ar[rr]^-{(U^B, U^B \epsilon)} && (\cA, B)
}, \qquad
\xymatrix{
(\cC^E, 1) \ar[rr]^-{(U^E, U^E \epsilon)} && (\cC, E)
}
$$
are morphisms of monads given by evaluating the counit $\xi \colon  IJ  \Rightarrow 1$  at $(\cA, B)$ and $ (\cC, E)$ respectively.

\begin{thm}\label{unique1cell}
The 1-cell $C^\theta \colon \cA^B \to \cC^E$ is unique such  that the diagram
\begin{equation}\label{liftdiag}
\begin{array}{c}
\xymatrix{
(\cA^B, 1) \ar[d]_-{(C^\theta, 1)} \ar[rr]^-{(U^B, U^B\epsilon)} && (\cA, B) \ar[d]^-{(C, \theta)} \\
(\cC^E, 1) \ar[rr]_-{(U^E, U^E\epsilon)} && (\cC, E)
}
\end{array}
\end{equation}
commutes in $\mnd(\sC)$.
\end{thm}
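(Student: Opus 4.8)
The plan is to identify the square~\eqref{liftdiag} with the image under $J \colon \mnd(\sC) \to \sC$ of a suitable $2$-cell in $\mnd(\mnd(\sC))$, and then to extract uniqueness from the $2$-adjunction $I \dashv J$ exactly as was done for the $1$-cell and $2$-cell structure in Section~\ref{EM2cat}. Concretely, the morphism of monads $(C, \theta) \colon (\cA, B) \to (\cC, E)$ is, by Proposition~\ref{monadsq}, a monad in $\sq^v(\sC)$; but it can also be read as a morphism of monads \emph{in $\mnd(\sC)$} from the trivial-monad square on $(\cA,B)$ to the trivial-monad square on $(\cC,E)$, via the morphism-of-monads structure maps $(B,\mu)$ and $(E,\mu)$ described around diagram~\eqref{EMmonad}. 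Applying $\mnd(J)$ (which exists and is right $2$-adjoint to $\mnd(I)$ since $2$-functors preserve adjunctions, cf.\ the proof of Proposition~\ref{cmdEM}) sends this to the square in~\eqref{liftdiag}, with the horizontal arrows being precisely the counit components $(U^B, U^B\epsilon)$ and $(U^E, U^E\epsilon)$ of $I \dashv J$ evaluated at $(\cA,B)$ and $(\cC,E)$, and the left vertical arrow being $(C^\theta, 1)$ by definition of the action of $J$ on $1$-cells.

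First I would spell out that commutativity of~\eqref{liftdiag} in $\mnd(\sC)$ is equivalent to the pair of underlying equalities $U^E C^\theta = C U^B$ and a compatibility equation relating $U^E\epsilon$, $\theta$, $U^B\epsilon$ and $\kappa$-type cells; this is just unpacking the definition of composition of morphisms of monads. Next I would invoke the defining universal property of the Eilenberg--Moore $2$-adjunction: a morphism of monads $(\cA^B,1) \to (\cC,E)$ corresponds bijectively and $2$-naturally to a $1$-cell $\cA^B \to \cC^E$ in $\sC$. Under this bijection, the composite $(U^E,U^E\epsilon)\circ(C^\theta,1)$ along the bottom-left of~\eqref{liftdiag} corresponds to $C^\theta \colon \cA^B \to \cC^E$, while the composite $(C,\theta)\circ(U^B,U^B\epsilon)$ along the top-right corresponds — by $2$-naturality of the adjunction isomorphism in the monad variable, applied to the morphism of monads $(C,\theta)$ — to the $1$-cell obtained by whiskering $C^\theta$ appropriately. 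The point is that both composites correspond to $1$-cells $\cA^B \to \cC^E$, and~\eqref{liftdiag} asserts they are equal; since the correspondence is a bijection, there is exactly one $1$-cell with this property, and $C^\theta$ is it.

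The step I expect to be the main obstacle is the bookkeeping in the previous paragraph: verifying that the adjunction isomorphism $\mnd(\sC)((\cA^B,1),(\cC,E)) \cong \sC(\cA^B,\cC^E)$ carries the top-right composite of~\eqref{liftdiag} to $C^\theta$ (rather than to some other reindexing of it), i.e.\ checking that the $2$-naturality square of the EM isomorphism, instantiated at the $1$-cell $(C,\theta)$ of $\mnd(\sC)$ and at the object $(\cA^B,1)$, reproduces precisely the assignment $(C,\theta) \mapsto C^\theta$ used in the definition of the $2$-functor $J$. This is a diagram chase through the definitions of Section~\ref{EM2cat} together with the triangle identities for $I \dashv J$; it is routine but needs care because it involves the unit $\nu$, the counit $\xi$, and the identification $JI = 1$ all at once. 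Once that compatibility is in hand, uniqueness of $C^\theta$ follows immediately from injectivity of the adjunction bijection, completing the proof.
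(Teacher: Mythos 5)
Your proposal is correct and is essentially the paper's own argument: the paper proves commutativity by $2$-naturality of the counit $\xi$ of $I\dashv J$ (whose component at a monad is exactly $(U^B,U^B\epsilon)$, with $IJ(C,\theta)=(C^\theta,1)$), and proves uniqueness by noting that under the adjunction bijection $\mnd(\sC)((\cA^B,1),(\cC,E))\cong\sC(\cA^B,\cC^E)$ the bottom-left composite corresponds to the left vertical $1$-cell while the top-right composite corresponds to $J(C,\theta)=C^\theta$ — precisely your second paragraph. The only caveat is that your opening detour through $\mnd(J)$ and monads in $\sq^v(\sC)$ is unnecessary (and doesn't quite typecheck, since the square is a naturality square for $\xi$ rather than the $J$-image of anything); everything you need is already contained in the naturality of the hom-isomorphism.
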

\begin{proof}
By 2-naturality of $\xi$, the diagram
$$
\xymatrix{IJ(\cA, B) \ar[d]_-{IJ(\cC, \theta)} \ar[r]^-\xi & (\cA, B) \ar[d]^-{(C, \theta)} \\
IJ(\cC, E) \ar[r]_-{\xi} & (\cC, E)
}
$$
commutes, which is identical to diagram~\ref{liftdiag}. If $H \colon \cA^B \to \cC^E$ is another such 1-cell then the diagram
$$
\xymatrix{IJ(\cA, B) \ar[d]_-{IH} \ar[r]^-\xi & (\cA, B) \ar[d]^-{(C, \theta)} \\
IJ(\cC, E) \ar[r]_-{\xi} & (\cC, E)
}
$$
commutes. The morphism obtained from going along the top of the diagram corresponds to $H$ under the 2-adjunction $I\dashv J$, and the morphism along the bottom corresponds to $J(C, \theta) = C^\theta$. Hence $H = C^\theta$.
\end{proof}
Note that diagram~\ref{liftdiag} says exactly that the identity is a monad 2-cell. Therefore, by Proposition~\ref{univprop} and Remark~\ref{univrem} (taking $E = VG$ and $B = UF$), we recover $(C,\theta)$ as the canonical morphism given by Theorem~\ref{arisem}. We write $\tilde\theta$ to denote the 2-cell
$$
\chi \colon \tilde E C^\theta \Rightarrow C^\theta \tilde B
$$
from Theorem~\ref{arisem}, 

\begin{exa}\label{trivv2}
We have a commutative diagram
$$\xymatrix{
\cA^B \ar[r]^-{U^B} \ar[d]_-{\tilde B} & \cA \ar[d]^-B \\
\cA^B \ar[r]_-{U^B} & \cA
}
$$
and thus, by Theorem~\ref{arisem}, a morphism of monads $(B, \theta) \colon (\cA, B) \to (\cA, B)$  defined by
$$
\xymatrix{
\theta \colon BB = UFUF \ar[rr]^-{UFUF\eta} && UFUFUF \ar[rr]^-{U\epsilon F U F} && UFUF = BB
}
$$
However, $U \epsilon F = \mu$ by diagram~\ref{EMmonad}, so in fact $\theta = \tau$ from Example~\ref{trivialdist}. Since both $\tilde B$, $B^\tau$ fit into diagram~\ref{trivialdist}, Theorem~\ref{unique1cell} tells us that $\tilde B = B^\tau$ as 1-cells.
\end{exa}

Now let us specialise to the situation that $\cC = \cA$, $E = B$, and $C$ is a comonad on $\cA$. Suppose also that $\theta$ is a distributive law of comonads.

\begin{prop}\label{everydistlawarises}
The identity 2-cell
$$
\xymatrix{\cA^B \ar[r]^-{U^B}  \ar[d]_{C^\theta} & \ar@{}[dl]^(.25){}="a"^(.75){}="b" \ar@{=>}_-{1} "a";"b"   \cA \ar[d]^ C\\
	\cA^B \ar[r]_{U^B}  & \cA
}
$$
defines an opmorphism of comonads
$$\xymatrix{ (\cA^B, C^\theta) \ar[rr]^-{(U^B, 1)} && (\cA, C).}$$
\end{prop}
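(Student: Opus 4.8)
The plan is to verify directly the two axioms of an opmorphism of comonads for the pair $(U^B, 1)$, exploiting the fact that the Eilenberg--Moore $2$-adjunction $I \dashv J$ transports the comonad structure of $(C, \theta)$ in $\mnd(\sC)$ down to that of $C^\theta$ in $\sC$. First note that the statement typechecks: since $\theta$ is a distributive law of comonads, $(C, \theta)$ is a comonad in $\mnd(\sC)$, whose counit $\tilde\epsilon$ and comultiplication $\tilde\delta$ have underlying $2$-cells in $\sC$ the counit $\epsilon_C$ and comultiplication $\delta_C$ of $C$; and, as in the discussion following Proposition~\ref{cmdEM}, $C^\theta$ is the comonad on $\cA^B$ obtained by applying $J$ to it, so its counit is $\epsilon_{C^\theta} = J\tilde\epsilon$ and its comultiplication is $\delta_{C^\theta} = J\tilde\delta$ (using $JI = 1$ and the $2$-functoriality identification $J((C,\theta)(C,\theta)) = C^\theta C^\theta$). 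Moreover $U^B C^\theta = C U^B$ holds strictly --- this is the equality of underlying $1$-cells in diagram~\ref{liftdiag} (taking $\cC = \cA$, $E = B$) --- so the identity $2$-cell $1 \colon C U^B \Rightarrow U^B C^\theta$ is well defined and the displayed square commutes.

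The core of the argument is a consequence of $2$-naturality of the counit $\xi \colon IJ \Rightarrow 1$ of $I \dashv J$, whose component at $(\cA, B)$ is $(U^B, \kappa)$: for every $2$-cell $\alpha \colon (\Sigma, \sigma) \Rightarrow (\Sigma', \sigma')$ between endomorphisms of $(\cA, B)$ in $\mnd(\sC)$, the pasting equation $\xi \cdot IJ(\alpha) = \alpha \cdot \xi$ holds in $\mnd(\sC)$, and reading off underlying $2$-cells in $\sC$ yields $U^B \tilde\alpha = \alpha U^B$. Applying this with $\alpha = \tilde\epsilon$ gives $U^B \epsilon_{C^\theta} = \epsilon_C U^B$, and with $\alpha = \tilde\delta$ gives $U^B \delta_{C^\theta} = \delta_C U^B$. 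These two equalities are precisely the opmorphism-of-comonads axioms for $(U^B, 1)$: because the structure $2$-cell is the identity on $C U^B = U^B C^\theta$, the whiskerings $C \cdot 1$ and $1 \cdot C^\theta$ occurring in the comultiplication axiom are themselves identities, so that axiom collapses to $\delta_C U^B = U^B \delta_{C^\theta}$, and the counit axiom likewise collapses to $\epsilon_C U^B = U^B \epsilon_{C^\theta}$; this completes the verification.

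The only genuine subtlety is the bookkeeping of duals and whiskerings --- checking that the opmorphism-of-comonads axioms really do degenerate to the two whiskering identities once the structure $2$-cell is forced to be the identity, and that $2$-naturality of $\xi$ delivers $U^B \tilde\alpha = \alpha U^B$ after projecting from $\mnd(\sC)$ to $\sC$. One could instead run the argument without invoking $\xi$, by unwinding $C^\theta$ together with its comonad structure directly from the Eilenberg--Moore construction of Section~\ref{EM2cat}, or more slickly by recognising $(U^B, 1)$ as the underlying datum of the component at $(\cA, \theta, B, C)$ of the Eilenberg--Moore counit for $\cmd(\sC)$ provided by Proposition~\ref{cmdEM}; but the route through $2$-naturality of $\xi$ is the shortest.
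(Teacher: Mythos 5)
Your proof is correct: the strict equality $U^BC^\theta = CU^B$ does follow from the underlying 1-cells of diagram~\ref{liftdiag}, the comonad structure of $C^\theta$ is indeed the $J$-image of that of the comonad $(C,\theta)$ in $\mnd(\sC)$, the whiskering identity $U^B\tilde\alpha = \alpha U^B$ is exactly what 2-naturality of $\xi$ gives after projecting to $\sC$, and the two opmorphism-of-comonads axioms do degenerate to the counit and comultiplication whiskering identities once the structure 2-cell is forced to be the identity on strictly equal composites. The paper argues differently, along the lines of the ``slicker'' alternative you sketch at the end but packaged through mixed distributive laws rather than through the counit of the Eilenberg--Moore adjunction for $\cmd(\sC)$: it first observes that $1^\eta = U^B$ (by factoring the counit $(U^B, U^B\epsilon)$ of $I\dashv J$ through $(1,\eta)\colon(\cA,B)\to(\cA,1)$ and applying $J$), and then applies the 2-functor $\cMix(\sC)\to\cmd(\sC)$ of Proposition~\ref{cmdEM} to the 1-cell $(1,\eta,1)\colon(\cA,\theta,B,C)\to(\cA,1,1,C)$; the image is precisely $(U^B,1)\colon(\cA^B,C^\theta)\to(\cA,C)$, and since it is by construction a 1-cell in $\cmd(\sC)$ there are no axioms left to check. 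Both arguments ultimately rest on the 2-functoriality of $J$; yours makes the two equations being proved, and their provenance, completely explicit at the cost of some bookkeeping of duals and whiskerings, while the paper's outsources that bookkeeping to the already-established functorial machinery at the cost of the small auxiliary computation $1^\eta = U^B$.
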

\begin{proof}
There is a commutative diagram
$$
\xymatrix{
(\cA^B, 1) \ar[rr]^-{(U^B, U^B\epsilon)} \ar[drr]_-{(U^B, 1)} && (\cA, B) \ar[d]^-{(1, \eta)} \\
&& (\cA, 1)
}
$$
in $\mnd(\sC)$. The top 1-cell is the counit of $I \dashv J$, so applying $J$ maps it to the identity. Therefore $1^\eta = U^B \colon \cA^B \to \cA$. The result now follows by applying the 2-functor $\cMix(\sC) \to \cmd(\sC)$ of Proposition~\ref{cmdEM} to the 1-cell
$$
\xymatrix{
(\cA, \theta, B, C) \ar[rr]^-{(1, \eta, 1)} & & (\cA, 1, 1, C)
}
$$
in $\cMix(\sC)$.
\end{proof}
Now we are in the situation of Corollary~\ref{arisec} (taking $B = U^BF^B = UF$), and from that we recover the fact that $\theta, \chi$ are distributive laws.
\subsection{The comparison 1-cell}\label{comparisonsection}
Let $\sC$ be a 2-category which admits Eilenberg-Moore constructions for monads, with 2-adjunction $I\dashv J$,  and let
$$
\xymatrix{
\cA \ar@{}[rr]|-{\perp}\ar@/^0.5pc/[rr]^-F & & \ar@/^0.5pc/[ll]^-U \cB
}
$$
be an adjunction therein, which generates a monad $B = UF$ on $\cA$ as well as a comonad $T = FU$ on $\cB$.  Let $\epsilon$ denote the counit $T \to 1$. There is a morphism of monads
$$
\xymatrix{
(\cB, 1) \ar[rr]^-{(U, U\epsilon)} && (\cA, B)
}
$$
which is mapped by the right adjoint $J \colon \mnd(\sC) \to \sC$
 to a 1-cell
$$
\xymatrix{
\cB^1 = \cB \ar[r]^-{U^{U\epsilon}} & \cA^B.
}
$$
Equivalently, we can view this as the 1-cell corresponding to the morphism of monads $(U, U\epsilon)$ under the 2-adjunction.
\begin{defn}
We call $U^{U\epsilon}$ the \emph{comparison 1-cell} associated to the adjunction $F\dashv U$.
\end{defn}
\begin{lem}\label{compcomonad}
We have that
$$
\xymatrix{
(\cB, T) \ar[rr]^-{(U^{U\epsilon}, 1)} && (\cA^B, \tilde B)
}
$$
is an opmorphism of comonads.
\end{lem}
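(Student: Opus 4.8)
The plan is to show that the asserted opmorphism has the \emph{identity} $2$-cell as its structure cell, so that the proof splits into (a) checking that the two relevant $1$-cells are literally equal, and (b) verifying the two axioms of an opmorphism of comonads. Write $K := U^{U\epsilon}$, and let $\epsilon_T,\delta_T$ (resp.\ $\epsilon_{\tilde B},\delta_{\tilde B}$) denote the counit and comultiplication of the comonad $T = FU$ (resp.\ $\tilde B = F^B U^B$); by Example~\ref{monadj} and the construction of $\tilde B$ one has $\delta_T = F\eta U$ and $\delta_{\tilde B} = F^B\eta U^B$, with $\eta$ the unit shared by the monad $B$ and both adjunctions. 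By definition $K$ is the $1$-cell corresponding to the morphism of monads $(U,U\epsilon)\colon(\cB,1)\to(\cA,B)$ under the Eilenberg--Moore hom-isomorphism $\sC(\cB,\cA^B)\cong\mnd(\sC)((\cB,1),(\cA,B))$, which sends a $1$-cell $H$ to $(U^B H,\kappa H)$; reading this off yields the identities $U^B K = U$ and $\kappa K = U\epsilon_T$ (where $\kappa = U^B\epsilon_{\tilde B}$).

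First I would prove $KF = F^B$ as $1$-cells $\cA\to\cA^B$: under the same hom-isomorphism (now with domain $\cA$), $F^B$ corresponds to $(B,\mu)$ by diagram~\ref{EMmonad}, while $KF$ corresponds to $(U^B K F,\kappa K F) = (UF, U\epsilon_T F) = (B,\mu)$ since $U\epsilon_T F = \mu$; as the correspondence is bijective on $1$-cells, $KF = F^B$. Hence $\tilde B K = F^B U^B K = F^B U = KFU = KT$, so the identity $2$-cell $1\colon \tilde B K \Rightarrow KT$ is legitimate. It then remains to verify the counit and comultiplication axioms for $(K,1)$, which for the identity structure cell collapse to the equalities $K\epsilon_T = \epsilon_{\tilde B} K$ and $K\delta_T = \delta_{\tilde B} K$ (of $2$-cells with codomain $K$ and $\tilde B\tilde B K$ respectively). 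The comultiplication equality is immediate: $K\delta_T = (KF)\eta U = F^B\eta U = \delta_{\tilde B} K$, using $KF = F^B$ and $U^B K = U$. For the counit equality, whiskering both sides with $U^B$ produces the \emph{same} $2$-cell, namely $U\epsilon_T$: indeed $U^B(K\epsilon_T) = (U^B K)\epsilon_T = U\epsilon_T$ and $U^B(\epsilon_{\tilde B} K) = (U^B\epsilon_{\tilde B})K = \kappa K = U\epsilon_T$.

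The only genuinely nontrivial step --- and the one I expect to require care --- is cancelling the $U^B$ in the last line: from $U^B(K\epsilon_T) = U^B(\epsilon_{\tilde B} K)$ one must deduce $K\epsilon_T = \epsilon_{\tilde B} K$. This does not follow from the object-level adjunction alone; it rests on the fact that $I\dashv J$ is an isomorphism of \emph{hom-categories}, so that a $2$-cell between $1$-cells with codomain $\cA^B$ is uniquely determined by the monad $2$-cell it corresponds to, whose underlying $2$-cell is obtained by whiskering with $U^B$. In other words, whiskering with $U^B$ is injective on such $2$-cells; applying this to $K\epsilon_T$ and $\epsilon_{\tilde B} K$ completes the proof. (This is the $2$-categorical shadow of the classical fact that the comparison functor commutes with the canonical comonads because the forgetful functor $U^B$ is faithful; one should also keep minor track of the fact that the symbol $\epsilon$ is overloaded and each (co)unit and comultiplication above is taken with respect to the intended adjunction.)
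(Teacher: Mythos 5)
Your proof is correct, but it takes a genuinely different route from the paper's. The paper first applies $J$ to a commuting square in $\mnd(\sC)$ to obtain the strict equality $\tilde B U^{U\epsilon} = U^{U\epsilon}T$ of $1$-cells, invokes Theorem~\ref{arisem} to get a \emph{canonical} opmorphism structure $\chi$ on $U^{U\epsilon}$, and then uses the uniqueness clause of Proposition~\ref{univprop}(2) (together with $U^B U^{U\epsilon} = U$) to conclude $\chi = 1$ --- so the compatibility axioms are never checked by hand, being absorbed into the already-proved universal property. You instead verify the two opmorphism axioms directly for the identity $2$-cell. This forces you to establish two auxiliary facts the paper's proof never makes explicit: the equality $KF = F^B$ (which you correctly deduce from the bijectivity of the Eilenberg--Moore hom-isomorphism on $1$-cells, both sides corresponding to $(B,\mu)$), and the injectivity of whiskering with $U^B$ on $2$-cells into $\cA^B$ (the $2$-categorical faithfulness of $U^B$, which you correctly extract from the hom-\emph{category} isomorphism, the point being that a monad $2$-cell is determined by its underlying $2$-cell in $\sC$). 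Your identification of this last step as the only delicate one is accurate: the counit axiom genuinely needs it, whereas the comultiplication axiom falls out of $KF = F^B$ and $U^BK = U$ by pure whiskering algebra. The trade-off is that the paper's argument is shorter given its prior investment in Theorem~\ref{arisem} and Proposition~\ref{univprop}, while yours is more self-contained and makes visible exactly which structural identities ($U^BK=U$, $\kappa K = U\epsilon$, $KF=F^B$, faithfulness of $U^B$) the lemma actually rests on; both are correct proofs at the same level of $2$-categorical generality.
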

\begin{proof}
There are two commutative squares
$$
\xymatrix{
(\cB, 1) \ar[rr]^-{(U, 1)} \ar[d]_-{(U, U\epsilon)} && (\cA,1) \ar@{=}[d] \\
(\cA, B) \ar[rr]_-{(1, \eta)} && (\cA, 1)
}\qquad
\xymatrix{
(\cB, 1) \ar[rr]^-{(U, U\epsilon)} \ar[d]_-{(T, 1)}&& (\cA, B) \ar[d]^-{(B, \tau)} \\
(\cB, 1) \ar[rr]_-{(U, U\epsilon)} && (\cA, B)
}
$$
in $\mnd(\sC)$, mapped to the two diagrams
$$
\xymatrix{
\cB \ar[rr]^-U \ar[d]_-{U^{U\epsilon}} && \cA \ar@{=}[d] \\
\cA^B \ar[rr]_-{U^B} && \cA
}\qquad
\xymatrix{
\cB \ar[rr]^-{U^{U\epsilon}} \ar[d]_-{T} && \cA^B \ar[d]^-{\tilde B} \\
\cB \ar[rr]_-{U^{U\epsilon}} && \cA^B
}
$$
in $\sC$. Applying Theorem~\ref{arisem} to the right-hand square yields an opmorphism of comonads $(U^{U\epsilon}, \chi) \colon (\cB, T) \to (\cA^B, \tilde B)$. Using the left-hand square, we see that
$$
\xymatrix{
U \ar[rrr]^-{\eta U} \ar@{=}[d] & & & UT \ar@{=}[d] \\
U^B U^{U\epsilon} \ar[rr]_-{\eta U^B U^{U\epsilon}} && \tilde B U^B U^{U\epsilon} \ar@{=}[r] & UT
}
$$
commutes, so by part 2 of Proposition~\ref{univprop}, $\chi = 1$, proving the Lemma.
\end{proof}

Now, consider a square
$$
\xymatrix{\cB \ar[r]^ U  \ar[d]_ S & \ar@{}[dl]^(.25){}="a"^(.75){}="b" \ar@{=>}_-{\Omega} "a";"b"   \cA \ar[d]^ C\\
	\cB \ar[r]_U  & \cA}
$$
where $(U, \Omega) \colon (\cB, S) \to (\cA, C)$ is an iso-opmorphism of comonads, so that we are in the context of Definition~\ref{lift}.  By Remark~\ref{univrem},
$$
\xymatrix{
(\cB, 1) \ar[rr]^-{(U, U\epsilon)} \ar[dd]_-{(S,1)} && (\cA, B)\ar[dd]^-{(C, \theta)}  \ar@{}[ddll]^(.25){}="a"^(.75){}="b" \ar@{=>}_-{\Omega} "a";"b"\\
\\
(\cB, 1) \ar[rr]_-{(U, U\epsilon)} && (\cA, B)
}
$$
is a monad 2-cell, giving rise to a 2-cell
$$
\xymatrix{
\cB \ar[r]^-{U^{U\epsilon}} \ar[d]_-{S} & \cA^B \ar[d]^-{C^\theta} \ar@{}[dl]^(.25){}="a"^(.75){}="b" \ar@{=>}_-{\tilde\Omega} "a";"b" \\
\cB \ar[r]_-{U^{U\epsilon}} & \cA^B
}
$$
and thus giving an opmorphism of comonads
$$
\xymatrix{
(\cB, S) \ar[rr]^-{(U^{U\epsilon}, \tilde\Omega)} && (\cA^B, C^\theta)
}
$$
or equivalently, a morphism of comonads
$$
\xymatrix{
(\cB, S) \ar[rr]^-{(U^{U\epsilon}, \tilde\Omega^{-1})} && (\cA^B, C^\theta).
}
$$
\begin{rem}
The morphism of comonads $(U^{U\epsilon}, \tilde\Omega^{-1})$ is the comparison 1-cell associated to the adjunction
$$
\xymatrix{
(\cA, C) \ar@{}[rr]|-{\perp}\ar@/^0.5pc/[rr]^-{(F, \Lambda)} & & \ar@/^0.5pc/[ll]^-{(U, \Omega^{-1})} (\cB, S)
}
$$
in $\cmd(\sC)$, where $\Lambda$ denotes the mate of $\Omega$.
\end{rem}

\begin{thm}\label{distcomp1cell}
Let $\theta, \chi$ be the distributive laws given by Corollary~\ref{arisec}. Then
$$
\xymatrix{
(\cB, \chi, T, S) \ar[rrr]^-{(U^{U\epsilon}, 1, \tilde\Omega^{-1})} & & & (\cA^B, \tilde\theta, \tilde B, C^\theta)
}
$$
is a 1-cell in $\cDist(\sC)$.
\end{thm}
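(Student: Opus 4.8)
The plan is to realise the entire configuration as the image, under the comparison $1$-cell construction, of a single adjunction living in the $2$-category $\cmd(\sC)$, so that all three conditions defining a $1$-cell of $\cDist(\sC)$ are read off from one application of Lemma~\ref{compcomonad} inside $\cmd(\sC)$.

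First I would assemble the ambient adjunction. By Proposition~\ref{cmdEM} the $2$-category $\cmd(\sC)$ admits Eilenberg--Moore constructions for monads, and by the proof of Corollary~\ref{arisec}(2) the data $(F\dashv U,\Omega)$ yields an adjunction $(F,\Lambda)\dashv(U,\Omega^{-1})$ in $\cmd(\sC)$ whose generated monad on $(\cA,C)$ is $(B,\theta)$ and whose generated comonad on $(\cB,S)$ is $(T,\chi)$. Next I would pin down the relevant Eilenberg--Moore data: unwinding Proposition~\ref{cmdEM} through Lemma~\ref{mndcmdiso}, the Eilenberg--Moore object of the monad $(B,\theta)$ in $\cmd(\sC)$ is the comonad $(\cA^B,C^\theta)$ (a mixed distributive law is sent to the lifted comonad, as recorded at the end of Section~\ref{EM2cat}), its forgetting morphism of comonads is $(U^B,1)$ by Proposition~\ref{everydistlawarises}, and --- this is the content of the Remark immediately preceding the theorem --- the comparison $1$-cell of $(F,\Lambda)\dashv(U,\Omega^{-1})$ in $\cmd(\sC)$ is the morphism of comonads $(U^{U\epsilon},\tilde\Omega^{-1})$. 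Finally I would check that the comonad generated by the induced Eilenberg--Moore adjunction of $(B,\theta)$ in $\cmd(\sC)$ is $(\tilde B,\tilde\theta)$: its underlying $1$-cell in $\sC$ is $F^BU^B=\tilde B$ because the evident forgetful $2$-functor $\cmd(\sC)\to\sC$ carries that adjunction to $F^B\dashv U^B$, while its distributive-law component is $\tilde\theta$ because --- via Proposition~\ref{everydistlawarises} and Example~\ref{trivv2} --- it is precisely the $\chi$ produced by Theorem~\ref{arisem} from the square exhibiting $C^\theta$ as a lift of $C$ through $F^B\dashv U^B$, which is exactly how $\tilde\theta$ was defined after Theorem~\ref{unique1cell}.

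With these identifications, Lemma~\ref{compcomonad} applied inside $\cmd(\sC)$ to the adjunction $(F,\Lambda)\dashv(U,\Omega^{-1})$ says that the comparison $1$-cell, paired with the identity $2$-cell,
$$
\xymatrix{
((\cB, S), (T, \chi)) \ar[rr]^-{((U^{U\epsilon}, \tilde\Omega^{-1}), 1)} && ((\cA^B, C^\theta), (\tilde B, \tilde\theta))
}
$$
is an opmorphism of comonads in $\cmd(\sC)$. Since $\cDist(\sC)=\cmd(\cmd(\sC)^*)^*$, a $1$-cell of $\cDist(\sC)$ is by definition an opmorphism of comonads in $\cmd(\sC)$; unpacking such a thing exactly as the $\cMix(\sC)$ case is unpacked before Lemma~\ref{mndcmdiso}, the ``$2$-cell in $\cmd(\sC)$'' clause yields the Yang--Baxter hexagon, the comonad-opmorphism clauses yield that $(U^{U\epsilon},1)\colon(\cB,T)\to(\cA^B,\tilde B)$ is an opmorphism of comonads (so we also recover Lemma~\ref{compcomonad} in $\sC$), and the underlying $1$-cell of $\cmd(\sC)$ is the morphism of comonads $(U^{U\epsilon},\tilde\Omega^{-1})$. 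This is precisely the assertion that $(U^{U\epsilon},1,\tilde\Omega^{-1})$ is a $1$-cell of $\cDist(\sC)$ of the stated type.

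The hard part will be the bookkeeping of the dualisations in the middle paragraph: one must be scrupulous that $\cmd(\sC)^*$, the distinction between morphisms and opmorphisms of comonads, and the isomorphism of Lemma~\ref{mndcmdiso} all conspire so that the comparison $1$-cell in $\cmd(\sC)$ genuinely is $(U^{U\epsilon},\tilde\Omega^{-1})$ with $2$-cell component the identity, and that the target comonad carries precisely the $2$-cell $\tilde\theta$ of Theorem~\ref{arisem} rather than some a priori different distributive law. A more hands-on alternative, if one distrusts the abstract route, is to take the $T$-part directly from Lemma~\ref{compcomonad}, the $S$-part from the Remark preceding the theorem, and then verify the Yang--Baxter hexagon by a single (sizeable) diagram chase obtained by substituting the definitions of $\chi$, $\tilde\theta$ and $\tilde\Omega^{-1}$ in terms of $\Omega$, $\Lambda$ and the (co)units of $F\dashv U$ and $F^B\dashv U^B$.
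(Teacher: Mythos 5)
Your argument is correct, and it reaches the same first two checkpoints as the paper's own proof --- the $S$-component $(U^{U\epsilon},\tilde\Omega^{-1})$ from the remarks preceding the theorem, and the $T$-component $(U^{U\epsilon},1)$ from Lemma~\ref{compcomonad} --- but it disposes of the Yang--Baxter hexagon in a genuinely different way. The paper verifies the hexagon by hand: it first records the identity $\Omega^{-1}T\circ U\chi=\theta U\circ B\Omega^{-1}$ in $\sC$ (which holds by the construction of $\theta$ and $\chi$ from the mate $\Lambda$), reads this as an equality of pasting composites of monad $2$-cells in $\mnd(\sC)$, and applies the $2$-functor $J$ to land on the hexagon. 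You instead run Lemma~\ref{compcomonad} one level up, in $\cmd(\sC)$, applied to the adjunction $(F,\Lambda)\dashv(U,\Omega^{-1})$; the hexagon then appears automatically as the condition for the identity to be a legitimate $2$-cell of $\cmd(\sC)$ between the two composite comonad morphisms, so it is extracted from the uniqueness clause of Proposition~\ref{univprop} rather than computed. What your route buys is conceptual economy --- the theorem literally \emph{is} Lemma~\ref{compcomonad} for $\cmd(\sC)$, and the paper has already licensed replacing $\sC$ by $\cmd(\sC)$ after Proposition~\ref{cmdEM} --- at the price of the identifications you flag yourself: that the Eilenberg--Moore object of $(B,\theta)$ in $\cmd(\sC)$ is $(\cA^B,C^\theta)$ with forgetful $1$-cell $(U^B,1)$; that the comonad generated there by the Eilenberg--Moore adjunction is $(\tilde B,\tilde\theta)$, which holds because its $2$-cell component is the $\chi$ of Theorem~\ref{arisem} for the square of Theorem~\ref{unique1cell} with identity $2$-cell, i.e.\ precisely the definition of $\tilde\theta$; and that an opmorphism of comonads in $\cmd(\sC)$ with identity $2$-cell component unpacks to exactly a $1$-cell of $\cDist(\sC)=\cmd(\cmd(\sC)^*)^*$ of the stated shape. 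All of these do check out, so your proof is complete once they are written down; the paper's computation is longer on the page but keeps the dualisation bookkeeping inside $\mnd(\sC)$, where the variance conventions have already been fixed explicitly.
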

\begin{proof}
By the above remarks, $(U^{U\epsilon}, \tilde\Omega^{-1})$ is a morphism of comonads, and $(U^{U\epsilon}, 1)$ is an opmorphism of comonads by Lemma~\ref{compcomonad}. Therefore, we need only check that the Yang-Baxter equation holds to prove the Theorem. By definition of $\theta, \chi$ there is a commutative diagram
$$
\xymatrix{
BUS \ar@{=}[d] \ar[r]^-{B \Omega^{-1}} & BCU \ar[r]^-{\theta U} & CBU \ar@{=}[d] \\
UTS \ar[r]_-{U \chi} & UST \ar[r]_-{\Omega^{-1} T} & CUT
}
$$ in $\sC$. Therefore, in $\mnd(\sC)$, the compositions of all the monad 2-cells in the diagrams
$$
\xymatrix{
(\cB, 1)  \ar[dd]_-{(T, 1)} \ar[rr]^-{(S, 1)} &  & \ar[dd]^-{(T, 1)}   (\cB, 1) \ar@{}[ddll]^(.25){}="a"^(.75){}="b" \ar@{=>}_-{\chi} "a";"b"\ar[rr]^-{(U, U\epsilon)} & & (\cA, B) \ar[dd]^-{(B, \tau)}  \ar@{}[ddll]^(.25){}="c"^(.75){}="d" \ar@{=>}_-{1} "c";"d"  \\
& & & &\\
(\cB, 1) \ar[rr]^-{(S, 1)} \ar@/_1.5pc/[ddrr]_-{(U U\epsilon)} & & (\cB, 1)  \ar@{}[dd]^(.25){}="e"^(.75){}="f" \ar@{=>}_-{\Omega^{-1}} "e";"f"\ar[rr]^-{(U, U\epsilon)}& & (\cA, B)  \\
& & & &\\
& & (\cA, B) \ar@/_1.5pc/[uurr]_-{(C, \theta)} & &
}
$$
$$\xymatrix{
(\cB, 1)  \ar@{}[ddrr]^(.25){}="c"^(.75){}="d" \ar@{=>}^-{\Omega^{-1}} "c";"d"\ar[rr]^-{(U, U\epsilon)} &  &   (\cA, B) \ar@{}[rrdd]^(.25){}="a"^(.75){}="b" \ar@{=>}^-{\theta} "a";"b" \ar[rr]^-{(B, \tau)} & & (\cA, B) \\
& & & &\\
(\cB, 1) \ar[uu]^-{(S, 1)}\ar[rr]^-{(U, U\epsilon)} \ar@/_1.5pc/[ddrr]_-{(T, 1)} & & (\cA, B)  \ar[uu]_-{(C, \theta)}\ar@{}[dd]^(.25){}="e"^(.75){}="f" \ar@{=>}_-{1} "e";"f"\ar[rr]^-{(B, \tau)}& & (\cA, B) \ar[uu]_-{(C, \theta)} \\
& & & &\\
& & (\cB, 1) \ar@/_1.5pc/[uurr]_-{(U, U\epsilon)} & &
}
$$
are the same. Applying $J$ to each composition gives the diagram
$$
\xymatrix{
\tilde B U^{U\epsilon} S \ar@{=}[d] \ar[rr]^-{\tilde B \tilde \Omega^{-1}} & &\tilde B C^\theta U^{U\epsilon} \ar[rr]^-{\tilde \theta U^{U\epsilon}} & & C^\theta \tilde B U^{U\epsilon} \ar@{=}[d] \\
U^{U\epsilon} TS \ar[rr]^-{U^{U\epsilon} \chi} & & U^{U\epsilon} ST \ar[rr]^-{\tilde\Omega^{-1} T} & &C^\theta U^{U\epsilon} T
}
$$
which is the Yang-Baxter equation.
\end{proof}
\subsection{From mixed to comonad distributive laws}
Finally, we explain how one can functorially assign a comonad distributive law to a mixed one. Let $\sC$ be a 2-category which admits Eilenberg-Moore constructions for monads. We begin by making a small observation. Let
$$
\xymatrix{ (\cA, B) \ar[rr]^-{(\Sigma, \sigma)}&&(\cD, A)}
$$
be a morphism of monads. Then
$$
\xymatrix{
(\cA, B) \ar[rr]^-{(\Sigma, \sigma)} \ar[dd]_-{(B, \tau)} && (\cD,A) \ar[dd]^-{(A, \tau)} \ar@{}[ddll]^(.25){}="a"^(.75){}="b" \ar@{=>}_-{\sigma} "a";"b" \\
\\
(\cA, B) \ar[rr]_-{(\Sigma, \sigma)} && (\cD, A)
}
$$
is a monad 2-cell, which is mapped by $J \colon \mnd(\sC) \to \sC$ to a 2-cell
$$
\xymatrix{
\cA^B \ar[r]^-{\Sigma^\sigma} \ar[d]_-{\tilde B} & \cD^A \ar[d]^-{\tilde A} \ar@{}[dl]^(.25){}="a"^(.75){}="b" \ar@{=>}_-{\tilde\sigma} "a";"b" \\
\cA^B \ar[r]_-{\Sigma^\sigma}& \cD^A
}
$$
Using this notation, we can now state the following Proposition. We omit the proof, which can be found in~\cite[p.~160]{MR0299653}.
\begin{prop}\label{StreetJ}
There is  a 2-functor $\overline J \colon \mnd(\sC) \to \cmd(\sC^*)^*$ defined by
$$
\xymatrix{
(\cA, B ) \ddtwocell<9>_{(\Sigma, \sigma)\ \ \ \ \ }^{\ \ \ \ \ \ \  (\Sigma', \sigma')}{^\alpha}  & & & &  (\cA^B, \tilde B) \ddtwocell<9>_{(\Sigma^{\sigma}, \tilde\sigma)\ \ \ \ \  \ \ \ }^{\ \ \ \ \ \ \ \ \ \ (\Sigma'^{\sigma'}, \tilde\sigma')}{^\tilde\alpha} \\
 &  & \longmapsto \\
(\cD,  A) & & & & (\cD^A, \tilde A)
}
$$
\end{prop}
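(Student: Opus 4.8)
The plan is to obtain $\overline{J}$ by decorating the right $2$-adjoint $J \colon \mnd(\sC) \to \sC$ of Section~\ref{EM2cat} with comonad data, and then to check in turn that (i) these decorations define genuine cells of $\cmd(\sC^*)^*$ and (ii) they are preserved by both horizontal and vertical composition. Since $J$ is already a $2$-functor, the underlying assignments $(\cA, B) \mapsto \cA^B$, $(\Sigma, \sigma) \mapsto \Sigma^\sigma$, $\alpha \mapsto \tilde\alpha$ are functorial for free; the whole content of the statement is therefore the comonad structure on $\cA^B$ and the structure $2$-cells $\tilde\sigma$, together with the verification that these behave correctly.

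First I would fix the data. On $0$-cells, $\overline{J}(\cA, B) := (\cA^B, \tilde B)$, where $\tilde B = F^B U^B$ is the comonad generated by the Eilenberg--Moore adjunction $F^B \dashv U^B$ via Example~\ref{monadj}, with counit $\epsilon$ and comultiplication $F^B \eta U^B$; that this is a comonad is exactly Example~\ref{monadj}. On $1$-cells, given $(\Sigma, \sigma) \colon (\cA, B) \to (\cD, A)$, Example~\ref{trivialdist} exhibits $\sigma$ as a $2$-cell of monads $(A,\tau)\circ(\Sigma,\sigma)\Rightarrow(\Sigma,\sigma)\circ(B,\tau)$ — the square displayed just before the statement — and $\tilde\sigma \colon \tilde A\, \Sigma^\sigma \Rightarrow \Sigma^\sigma \tilde B$ is its image under $J$; set $\overline{J}(\Sigma,\sigma) := (\Sigma^\sigma, \tilde\sigma)$, and $\overline{J}(\alpha) := \tilde\alpha = J\alpha$. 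The substantial point is that $(\Sigma^\sigma, \tilde\sigma)$ is a morphism of comonads of the appropriate variance: this amounts to two commuting diagrams, compatibility of $\tilde\sigma$ with the counits $\epsilon$ and with the comultiplications $F^{(-)}\eta U^{(-)}$. I would verify these by rewriting every occurrence of $\tilde B$, $\epsilon_B$ and the comultiplication through $F^B, U^B, \eta, \mu$ using the defining relations $U^BF^B = B$, $\kappa F^B = \mu$, $U^B\epsilon = \kappa$ of diagram~\ref{EMmonad}, and then invoking $2$-functoriality of $J$ (which preserves all composites and pastings), so that each diagram collapses to a consequence of $\sigma$ being a monad $2$-cell, of $(\Sigma,\sigma)$ being a morphism of monads, and of the (co)monad laws. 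The single square making $\tilde\alpha$ a $2$-cell of comonad morphisms is likewise just $J$ applied to the defining square of $\alpha$ in $\mnd(\sC)$.

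For $2$-functoriality, unwinding horizontal composition in $\cmd(\sC^*)^*$ shows that $\overline{J}((\Gamma,\gamma)\circ(\Sigma,\sigma))$ has underlying $1$-cell $\Gamma^\gamma\Sigma^\sigma$ (since $J$ is a $2$-functor) and structure $2$-cell the pasting of $\tilde\gamma$ and $\tilde\sigma$; because the squares of Example~\ref{trivialdist} paste correctly in $\mnd(\sC)$ and $J$ preserves pasting, this is precisely the structure $2$-cell of $\overline{J}(\Gamma,\gamma)\circ\overline{J}(\Sigma,\sigma)$. The identity $1$-cell $(1_\cA,1_B)$ of $(\cA,B)$ is sent to $(1_{\cA^B},1)$, the identity of $(\cA^B,\tilde B)$, because $J$ preserves identities and the associated $\tilde{(\cdot)}$ is then an identity $2$-cell; preservation of vertical composition and of $2$-cell identities is immediate from $2$-functoriality of $J$.

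The step I expect to be the main obstacle is keeping the variances straight: $\cmd(\sC^*)^*$ is a fourfold dual, so one must be careful about the direction of $\tilde\sigma$, of the comultiplications, and of every pasted composite, and correspondingly about which of the two comonad-morphism axioms consumes the multiplication relation $\kappa F^B = \mu$ and which consumes the unit relation $U^B\epsilon = \kappa$. Beyond that bookkeeping there is no genuinely hard content — everything is a formal consequence of $J$ being a $2$-functor together with the relations of diagram~\ref{EMmonad} — which is why the result may be cited from \cite{MR0299653} without reproducing the calculation.
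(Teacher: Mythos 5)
The paper does not actually prove this proposition --- it defers to \cite[p.~160]{MR0299653} --- but the data you construct is exactly what the paper sets up in the paragraph preceding the statement: $\tilde B = F^BU^B$ from the Eilenberg--Moore adjunction, and $\tilde\sigma = J(\sigma)$ where $\sigma$ is viewed as the monad $2$-cell of Example~\ref{trivialdist}. Your identification of the variance (that $\tilde\sigma \colon \tilde A\Sigma^\sigma \Rightarrow \Sigma^\sigma\tilde B$ is the structure $2$-cell of an \emph{op}morphism of comonads, i.e.\ a $1$-cell of $\cmd(\sC^*)^*$) is also correct, and the $2$-functoriality argument via preservation of pasting is fine.

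The one step that is not quite closed as written is the verification of the two comonad-opmorphism axioms for $(\Sigma^\sigma,\tilde\sigma)$. You propose to ``rewrite through $F^B, U^B, \eta, \mu$ and invoke $2$-functoriality of $J$'', but the counit $\epsilon\colon \tilde B \Rightarrow 1$ and the comultiplication $F^B\eta U^B$ are \emph{not} images under $J$ of $2$-cells of $\mnd(\sC)$ (there is no $2$-cell $(B,\tau)\Rightarrow (1,1)$ in $\mnd(\sC)$ in general), so $2$-functoriality alone cannot collapse these diagrams. What is actually needed is the full $2$-adjunction $I\dashv J$: whiskering with the counit component $(U^A, U^A\epsilon)$ induces an \emph{isomorphism} of hom-categories $\sC(\cA^B,\cD^A)\cong\mnd(\sC)((\cA^B,1),(\cD,A))$, so equations between $2$-cells with codomain $\cD^A$ may be checked after applying $U^A$. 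Once you add that observation, your computation goes through: applying $U^A$ and using $U^AF^A=A$, $U^A\epsilon=\kappa$, $\kappa F^A=\mu$ reduces the counit axiom to the unit axiom for the monad morphism $(\Sigma,\sigma)$ and the comultiplication axiom to its multiplication axiom, and likewise identifies the pasted structure $2$-cell of a composite with $J$ of the pasted monad $2$-cells. In a concrete setting such as $\sC=\cat$ this detection principle is just ``check on underlying functors'', which is presumably why it is easy to elide; in a general $2$-category it must be invoked explicitly.
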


By applying Proposition~\ref{StreetJ} to the 2-category $\cmd(\sC)$, we obtain the following:
\begin{cor}\label{2funcmixdist}
The assignment
$$
\xymatrix{
(\cA, \theta, B, C ) \ddtwocell<10>_{(\Sigma, \sigma, \gamma)\ \ \ \ \ \ \  }^{\ \ \ \ \ \ \ \ \ \  (\Sigma', \sigma', \gamma')}{^\alpha}  & & & &  (\cA^B, \tilde\theta, \tilde B, C^\theta) \ddtwocell<10>_{(\Sigma^{\sigma}, \tilde\sigma, \tilde\gamma)\ \ \  \ \ \  \ \ \ }^{\ \ \ \ \ \ \ \ \ \ \ \ (\Sigma'^{\sigma'}, \tilde\sigma', \tilde\gamma')}{^\tilde\alpha} \\
 &  & \longmapsto \\
(\cD, \psi, A, D) & & & & (\cD^A, \tilde\psi,  \tilde A, \tilde D)
}
$$
defines a 2-functor $\cMix(\sC) \to \cDist(\sC)$.
\end{cor}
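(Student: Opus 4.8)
The plan is to deduce the statement from Proposition~\ref{StreetJ} by replacing $\sC$ there with $\cmd(\sC)$. By Proposition~\ref{cmdEM}, the hypothesis that $\sC$ admits Eilenberg--Moore constructions for monads ensures that $\cmd(\sC)$ does too, so Proposition~\ref{StreetJ} applies verbatim to $\cmd(\sC)$ and furnishes a 2-functor
$$
\overline J \colon \mnd(\cmd(\sC)) \lto \cmd\bigl(\cmd(\sC)^*\bigr)^*.
$$
Since by definition $\cMix(\sC) = \mnd(\cmd(\sC))$ and $\cDist(\sC) = \cmd(\cmd(\sC)^*)^*$, this already \emph{is} a 2-functor $\cMix(\sC) \to \cDist(\sC)$; functoriality, compatibility with both compositions, and preservation of identities are inherited from Proposition~\ref{StreetJ}. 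All that remains is to verify that $\overline J$ acts on cells by the asserted formulae.

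First I would check the $0$-cells. A $0$-cell of $\cMix(\sC)$ is a monad in $\cmd(\sC)$, which in the streamlined notation reads $(\cA, \theta, B, C)$ with $\theta \colon BC \Rightarrow CB$ a mixed distributive law. Proposition~\ref{StreetJ} sends it to its Eilenberg--Moore object in $\cmd(\sC)$, equipped with the comonad that the Eilenberg--Moore adjunction generates there. By the description of the Eilenberg--Moore 2-functor of $\cmd(\sC)$ given just after Proposition~\ref{cmdEM}---which first transports $(\cA, \theta, B, C)$ across the 2-isomorphism $\mnd(\cmd(\sC)) \cong \cmd(\mnd(\sC))$ of Lemma~\ref{mndcmdiso} and then applies $\cmd(J)$---this Eilenberg--Moore object is $(\cA^B, C^\theta)$, a comonad in $\sC$, and the comonad it generates is $(\tilde B, \tilde\theta)$, where $\tilde B$ is the comonad of Section~\ref{EM2cat} and $\tilde\theta \colon \tilde B C^\theta \Rightarrow C^\theta \tilde B$ is the comonad distributive law of Theorem~\ref{arisem} in the guise fixed after Theorem~\ref{unique1cell}. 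Read as a $0$-cell of $\cDist(\sC) = \cmd(\cmd(\sC)^*)^*$, this pair is exactly the quadruple $(\cA^B, \tilde\theta, \tilde B, C^\theta)$.

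Next I would check the $1$- and $2$-cells. Unwound, a $1$-cell $(\Sigma, \sigma, \gamma)$ of $\cMix(\sC)$ is a morphism of monads $((\Sigma, \gamma), \sigma)$ in $\cmd(\sC)$, and Proposition~\ref{StreetJ} sends this to $((\Sigma, \gamma)^\sigma, \widetilde{\sigma})$, the morphism of comonads between Eilenberg--Moore objects together with the induced 2-cell. Transporting across Lemma~\ref{mndcmdiso} and applying $\cmd(J)$ as above, the underlying 1-cell is $\Sigma^\sigma \colon \cA^B \to \cD^A$, the 2-cell $\widetilde{\sigma}$ is the image under $J$ of the monad 2-cell determined by $\sigma$, and the comonad-morphism datum $\gamma$ becomes $\tilde\gamma$ precisely as described in Section~\ref{EM2cat}; hence $(\Sigma, \sigma, \gamma) \mapsto (\Sigma^\sigma, \tilde\sigma, \tilde\gamma)$. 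A 2-cell $\alpha$ is sent to $\tilde\alpha$ by the definition of the Eilenberg--Moore 2-functor on 2-cells.

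The part I expect to be fiddly rather than hard is exactly this last identification: one must keep careful track of the several layers of dualization appearing in $\cmd(\cmd(\sC)^*)^*$ and of the 2-isomorphism of Lemma~\ref{mndcmdiso}, so as to be sure that the abstractly produced $\overline J$ really does specialise componentwise to the stated assignment with the correct variances---in particular that the ``monad part'' $\sigma$ goes to $\tilde\sigma$ and the ``comonad part'' $\gamma$ goes to $\tilde\gamma$ in the senses already fixed in Section~\ref{EM2cat}. Once this notational matching is complete, there is nothing further to prove.
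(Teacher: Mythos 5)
Your proposal is correct and follows exactly the paper's route: the paper derives this corollary by applying Proposition~\ref{StreetJ} to the 2-category $\cmd(\sC)$, using Proposition~\ref{cmdEM} implicitly to ensure the hypotheses hold there. Your unwinding of the cell-level identifications via Lemma~\ref{mndcmdiso} is a more detailed account of the same bookkeeping the paper leaves to the reader.
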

Thus the image under this 2-functor of a general
1-cell $\theta \to \psi$ of mixed distributive laws
can be composed with the 1-cell given by the
comparison functor of Proposition~\ref{distcomp1cell} to give a new 1-cell $\chi \to
\tilde\psi$.

\section{Eilenberg-Moore constructions in \texorpdfstring{$\cat$}{Cat}}\label{emem}
We begin by explicitly describing the general 2-categorical constructions involving monads from the preceding sections, and discussing the dual versions of several of these.
\subsection{Eilenberg-Moore categories}\label{emcatsect}
Let $B$ be a monad on a category $\cA$.
\begin{defn}\label{algebra}
The \emph{Eilenberg-Moore category} of the monad $B$, denoted $\cA^B$, is the category whose objects are pairs $(X, \beta)$, called $B$-\emph{algebras}, where $X$ is an object of $\cA$ and $\beta \colon BX \to X$ is a morphism satisfying associativity and unitality axioms, that is, the diagrams
$$
\xymatrix{
BBX \ar[r]^-{{\mu_X}} \ar[d]_-{B \beta} & BX \ar[d]^-\beta \\
BX \ar[r]_-{\beta} & X
}
\qquad
\xymatrix{
X \ar@{=}[dr]\ar[r]^-{\eta_X} & BX \ar[d]^-{\beta} \\
& X
}
$$
commute. The morphisms $f \colon (X, \beta) \to (X', \beta')$, called \emph{$B$-algebra morphisms}, are morphisms $f \colon X \to Y$ in $\cA$ which are compatible with the algebra structures, that is, the diagram
$$
\xymatrix{
BX \ar[r]^-{Bf} \ar[d]_-\beta & BX' \ar[d]^-{\beta '} \\
X \ar[r]_-{f} & X'
}
$$
commutes.
\end{defn}
Let $(\Sigma, \sigma) \colon (\cA, B) \to (\cD, A)$ be a morphism of monads. The functor $\Sigma \colon \cA \to \cD$ lifts to a functor $\Sigma^\sigma \colon \cA^B \to \cD^A$, defined on objects by
$$
\Sigma^\sigma (X, \beta) =  (\Sigma X, \xymatrix{A\Sigma X \ar[r]^-{\sigma_X} & \Sigma B X \ar[r]^-{\Sigma \beta} & \Sigma X})
$$
and defined on morphisms by $\Sigma^\sigma f = \Sigma f$.
Also, any monad 2-cell $\alpha \colon (\Sigma, \sigma) \Rightarrow (\Sigma', \sigma')$ lifts in an obvious way to give a natural transformation $\tilde\alpha \colon \Sigma^\sigma \Rightarrow \Sigma'^{\sigma '}$. It is straightforward to check that these assignments define a 2-functor $J \colon \mnd(\cat) \to \cat$, exhibiting $\cat$ as a 2-category which admits Eilenberg-Moore constructions for monads (cf.\ Section~\ref{EM2cat}).

The canonical adjunction
$$
\xymatrix{
\cA \ar@{}[rr]|-{\perp}\ar@/^0.5pc/[rr]^-{F^B} & & \ar@/^0.5pc/[ll]^-{U^B} \cA^B
}
$$
is defined as follows. $U^B$ is the obvious forgetful functor, and $F^B$ sends an object $X$ to the \emph{free $B$-algebra} $(BX, \mu_X)$, while acting as $B$ on morphisms. Thus the comonad $\tilde B$ generated by this adjunction sends a $B$-algebra $(X, \beta)$ to the free $B$-algebra on $X$.

In fact, $\cat_*$ also admits Eilenberg-Moore constructions for monads; but of course, a monad in $\cat_*$ is nothing more than a comonad in $\cat$. The construction is obtained by dualising the monad case above. In particular:
\begin{defn}
Let $T$ be a comonad on a category $\cB$. A \emph{$T$-coalgebra} is a pair $(M, \nabla)$ where $M$ is an object of $\cB$, and $\nabla \colon M \Rightarrow TM$ is a natural transformation which satisfies coassociativity and counitality axioms, that is the diagrams
$$
\xymatrix{
X \ar[r]^-\nabla \ar[d]_-\nabla & TX \ar[d]^-{\delta_X } \\
TX \ar[r]_-\nabla & TTX
}\qquad
\xymatrix{
X \ar@{=}[dr] \ar[r]^-{\nabla} & \ar[d]^-{\epsilon_X} TX \\
& X
}
$$commute.
\end{defn}
These, along with the obvious notion of coalgebra morphism, constitute the \emph{Eilenberg-Moore} category of the comonad $T$, denoted $\cB^T$.

\subsection{Algebra structures on functors}\label{algebrafunc}
For a fixed category $\cA$, there are 2-functors
\begin{align*}
[\cA, -] \colon &\cat \to \cat \\
[-, \cA] \colon &\cat^* \to \cat
\end{align*}
where $[\cX, \cA] := \cat(\cX, \cA)$ is the category of functors $\cX \to \cA$ (cf.\ Example~\ref{hom2functors}). In particular, these both map monads to monads and comonads to comonads.

Let $B$ be a monad on $\cA$, and let $X \colon \cX \to \cA$ be a functor.

\begin{defn}\label{algebrafuncdef}
We say that a natural transformation $\beta \colon BX \Rightarrow X$ is a \emph{$B$-algebra structure on $X$} if $(X, \beta)$ is a $[\cX, \cA]$-algebra, i.e.\ an object of the Eilenberg-Moore category
$[\cX, \cA]^{[\cX, B]} $. We also say that $(X, \cX, \beta)$ is a \emph{$B$-algebra}.
\end{defn}
So, this definition is basically the same as Definition~\ref{algebra} except the object $X$ becomes a functor, and the structure morphism $\beta$ becomes a natural transformation. Both definitions are actually equivalent: we recover~\ref{algebra} from~\ref{algebrafuncdef} by choosing $\cX$ to be the terminal category $\mathbbm{1}$. Thinking of algebras as functors is advantageous, however, since it allows us to dualise in the right way:

\begin{defn}
Let $Y \colon \cA \to \cY$ be a functor. We say that a natural transformation $\omega \colon YB \Rightarrow Y$ is a \emph{$B$-opalgebra structure on $Y$} if $(Y, \omega)$ is a $[B, \cY]$-algebra, i.e.\ an object of the Eilenberg-Moore category $[\cA, \cY]^{[B, \cY]}$. We also say that $(Y, \cY, \omega)$ is a \emph{$B$-opalgebra}. Explicitly, there are commutative diagrams
$$
\xymatrix{
YBB\ar[r]^-{Y \mu} \ar[d]_-{\omega B} & YB \ar[d]^-\omega \\
YB \ar[r]_-\omega & B
}
\qquad
\xymatrix{
Y \ar[r]^-{Y \eta} \ar@{=}[dr] & YB \ar[d]^-\omega \\
& Y
}
$$
\end{defn}
Of course, we can dualise in a different way to obtain the notion of \emph{coalgebra} and \emph{opcoalgebra} structures on functors.

\subsection{Kleisli categories}
The 2-category $\cat^*$ also admits Eilenberg-Moore constructions for monads. We now describe the 0-cell part of the 2-functor $\mnd(\cat^*) \to \cat^*$. Let $B$ be a monad on a category $\cA$.
\begin{defn}
The \emph{Kleisli category} of the monad $B$, denoted $\cA_B$, is the category whose objects are precisely those of $\cA$, and whose morphisms $X \to Y$ are morphisms $X \to BY$ in $\cA$. The composite of
$$
\xymatrix{
X \ar[r]^-f & Y
},
\qquad
\xymatrix{
Y\ar[r]^-g & Z
}
$$
in $\cA_B$ is given by the composite
$$
\xymatrix{
X \ar[r]^-f & TY \ar[r]^-{Tg} & TTZ \ar[r]^-{\mu_Z} & TZ
}
$$
in $\cA$. The identity morphism $X \to X$ in $\cA_B$ is given by $\eta_X \colon X \to BX$ in $\cA$.
\end{defn}
We also have that $\cat_*^*$ admits Eilenberg-Moore constructions for monads, giving rise to the dual notion of the Kleisli category $\cB_T$ for a comonad $T$ on a category $\cB$, defined in a similar way.
\subsection{The comparison functor}
Let
$$
\xymatrix{
\cA \ar@{}[rr]|-{\perp}\ar@/^0.5pc/[rr]^-F & & \ar@/^0.5pc/[ll]^-U \cB
}
$$
be an adjunction between categories $\cA, \cB$. From Section~\ref{emcatsect}, the comparison functor $U^{U\epsilon} \colon \cB \to \cA^B$ is defined on objects by
$$
U^{U\epsilon} M = (UM, \xymatrix{BUM = UFUM \ar[r]^-{U\epsilon_M} & UM}) .
$$

%
%

\chapter{Distributive laws and their coalgebras}\label{DISTRIBUTIVE}

Now we concentrate on the theory of distributive laws in the 2-category $\sC = \cat$.
Inspired by~\cite{MR2787298, MR3020336}, in Section~\ref{galoismapsect} we focus on a special case of Corollary~\ref{arisec}, and how different instances thereof are related by a \emph{Galois map} (Definition~\ref{galoisdeff}). In Section~\ref{coalgsect} we study $\chi$-coalgebras~\cite{MR2411421, MR2415479} for a distributive law $\chi$ of comonads, and view these as coefficient modules of the cyclic homology theories which appear later. We conclude this chapter in Section~\ref{examplesect} by studying some simple examples in the category of sets.

Hereafter we are concerned with distributive laws only in $\cat$, and so we write $\dist$ in place of $\cDist(\cat)$, and $\mix$ in place of $\cMix(\cat)$.

The work in this chapter is original. Sections~\ref{galoismapsect} (inspired by~\cite{MR2787298, MR3020336}) and~\ref{coalgsect} are based on~\cite{2} and~\cite[\S2--3]{1}.

\section{The Galois map}\label{galoismapsect}
Corollary~\ref{arisec} yields comonad distributive laws
from lifts through an adjunction, and different lifts
produce different distributive laws. Here we describe
how these are related in terms of suitable
generalisations of the Galois map from the theory of
Hopf algebras (see Section~\ref{whygalois} below for the
example motivating the terminology).

Suppose that
$$
\xymatrix{\cA \ar@/^{0.5pc}/[rr]^-F
\ar@{}[rr]|-{\perp}&& \ar@/^0.5pc/[ll]^-U \cB }
$$
is an adjunction between categories $\cA, \cB$ which generates a monad $B = UF$  on $\cA$
and a comonad $T = FU$ on $\cB$.
\subsection{Main application of the lifting theorem}\label{mainapp}
Suppose furthermore that $S$ is a lift of an endofunctor $C$ through a single adjunction as in Definition~\ref{lift}, i.e.\ we have a square
$$
\xymatrix{\cB \ar[r]^ U  \ar[d]_ S & \ar@{}[dl]^(.25){}="a"^(.75){}="b" \ar@{=>}_-{\Omega} "a";"b"   \cA \ar[d]^ C\\
	\cB \ar[r]_U  & \cA}
$$
where $\Omega$ is a natural isomorphism. By Corollary \ref{arisec}, we have two natural transformations
$$
	\xymatrix@=2.9em{
	\theta \colon BC = UFC \ar[r]^-{UFC\eta} & UFCUF \ar[r]^-{UF\Omega F} & UFUSF \ar[r]^-{U\epsilon SF} & USF \ar[r]^-{\Omega^{-1} F} & CUF = CB}
$$
and
$$
	\xymatrix@=2.9em{\chi \colon
	TS = FUS \ar[r]^-{F \Omega^{-1}} & FCU \ar[r]^-{FC\eta U} & FCUFU \ar[r]^-{F\Omega FU} & FUSFU \ar[r]^-{\epsilon SFU} & SFU = ST
	}
$$
such that $(C, \theta)$ is an morphism of monads, and $(S, \chi)$ is an opmorphism of comonads.

\begin{exa}\label{trivv}
A trivial example which nevertheless plays a r\^ole below is the case where $C = B$, $S = T$, and $\Omega = 1$ (cf.~Example~\ref{trivv2}). In this case, $\theta$ and $\chi$ are given by
\begin{align*}
&\xymatrix{BB = UFUF\ar[r]^-{U\epsilon F} & UF \ar[r]^-{UF\eta} & UFUF = BB} \\
&\xymatrix{TT = FUFU \ar[r]^-{\epsilon FU} & FU \ar[r]^-{F \eta U} & FUFU = TT}
\end{align*}
respectively.
\end{exa}

Functors do not
necessarily lift nor extend through an
adjunction (for example, the functor on $\cSet$
which assigns the empty set to each set does not lift
to the category of vector spaces over a given field), and if they do, they may not do so
uniquely.  Theorem~\ref{arisem} says only that once a
lift or extension is chosen, there
is a unique compatible pair of natural transformations $
\theta $ and $\chi$.

Recall from Corollary~\ref{arisec} that if $(U, \Omega)$ is an opmorphism of comonads, then $\theta$ and $\chi$ are distributive laws, in which case we say that they arise from the adjunction $F \dashv U$ cf.~Definition~\ref{arisedeffo}.

\begin{prop}
Every mixed distributive law, and every comonad distributive law, arises from an adjunction.
\end{prop}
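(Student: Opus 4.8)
The plan is to handle the two kinds of distributive law separately, in each case producing an adjunction together with a commuting square of the form required by Corollary~\ref{arisec}, and arranging matters so that the distributive law reconstructed by that corollary is precisely the one we started with.

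For a mixed distributive law $\theta \colon BC \Rightarrow CB$ on a category $\cA$ (so $B$ a monad and $C$ a comonad), the necessary data is already at hand. By the discussion culminating in Proposition~\ref{everydistlawarises} (with $\cC = \cA$ and $E = B$) we have the lifted comonad $C^\theta$ on $\cA^B$, and the identity $2$-cell exhibits $(U^B, 1) \colon (\cA^B, C^\theta) \to (\cA, C)$ as an opmorphism of comonads. Since $U^B$ is a right adjoint with $F^B \dashv U^B$, Corollary~\ref{arisec} applies, and its second part supplies a mixed distributive law $\theta'$ and a comonad distributive law $\chi'$. It then only remains to note that $\theta'$ is the original $\theta$, which is the content of Theorem~\ref{unique1cell} together with Remark~\ref{univrem}, already invoked in the paragraph following Proposition~\ref{everydistlawarises}. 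Hence $\theta$ arises from $F^B \dashv U^B$.

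For a comonad distributive law $\chi \colon TS \Rightarrow ST$ on a category $\cB$ one runs the dual argument, but with the co-Kleisli category $\cB_T$ in place of an Eilenberg-Moore category: for a comonad it is the co-Kleisli inclusion, rather than the forgetful functor out of the coalgebra category, that is the \emph{right} adjoint and that commutes strictly with the relevant lift. Write $F \dashv U$ for the co-Kleisli adjunction, with $U \colon \cB \to \cB_T$ the inclusion and $F \colon \cB_T \to \cB$ the functor $X \mapsto TX$, so that $FU = T$. The distributive law $\chi$ extends the comonad $S$ to a comonad $\bar S$ on $\cB_T$: on objects $\bar S$ agrees with $S$, and it sends a co-Kleisli arrow $f \colon TX \to Y$ to the co-Kleisli arrow $Sf \circ \chi_X \colon TSX \to SY$. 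The counitality axiom $S\epsilon \circ \chi = \epsilon S$ of Definition~\ref{finallyover} gives the identity $\bar S U = U S$, so the identity $2$-cell makes $S$ a lift of $\bar S$ through $F \dashv U$; the remaining axioms make $(U, 1) \colon (\cB, S) \to (\cB_T, \bar S)$ an iso-opmorphism of comonads. Corollary~\ref{arisec}, part~2, then produces a comonad distributive law between $FU = T$ and $S$, and one checks that it is $\chi$ itself --- either by unwinding the explicit formula for $\chi$ in Corollary~\ref{arisec} in terms of the co-Kleisli unit and counit, or, more economically, by verifying that $\chi$ satisfies the diagram~\ref{univcomonad} characterising it in part~2 of Proposition~\ref{univprop}. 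Thus $\chi$ arises from the co-Kleisli adjunction.

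The mixed case is essentially bookkeeping on top of earlier results; the genuine content is the comonad case, where the two steps I expect to require care are the choice of the co-Kleisli construction (the coalgebra category fails, because there the natural lift of $S$ does not commute with the cofree functor up to isomorphism, whereas the co-Kleisli inclusion commutes with $\bar S$ on the nose) and the final identification of the reconstructed distributive law with $\chi$. I expect the uniqueness clause in part~2 of Proposition~\ref{univprop} to reduce that last point to coherences already built into the definition of $\bar S$.
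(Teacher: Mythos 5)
Your proof is correct and follows the same route as the paper: the mixed case is exactly Proposition~\ref{everydistlawarises} together with the remarks immediately after it, and the comonad case is handled by extending $S$ to the Kleisli category $\cB_T$ of the comonad $T$ and applying Corollary~\ref{arisec} to the Kleisli adjunction. The paper states all of this in two lines, whereas you supply the details it leaves implicit (the explicit description of the extension $\bar S$, the verification that $\bar S U = US$ via the counit axiom for $\chi$, and the appeal to the uniqueness clause of Proposition~\ref{univprop} to identify the reconstructed distributive law with the original one).
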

\begin{proof}
The statement for mixed distributive laws follows from Proposition~\ref{everydistlawarises} and the following comments. The statement for comonad distributive laws is proved dually: take $\cA$ to be the Kleisli category $\cB_T$ in which case a comonad distributive law $\chi \colon TS \Rightarrow ST$ yields an extension $C$ of a functor $S$.
\end{proof}
Recall that there is a 2-functor $\mix \to \dist$ as in Corollary~\ref{2funcmixdist}.  It is those distributive laws in the image of this 2-functor that are the main ones of interest in later chapters.
Analogously, we obtain a $2$-functor $\dist \rightarrow
\mix$ by taking extensions to Kleisli
categories.

\subsection{Generalising the Galois map}\label{galoismapsct}
Suppose that in addition to $S$, we have another lift $W \colon \cB \to \cB$ of the endofunctor $C$ through the adjunction $F\dashv U$. This means that we have two squares
$$
\xymatrix{\cB \ar[r]^ U  \ar[d]_ S & \ar@{}[dl]^(.25){}="a"^(.75){}="b" \ar@{=>}_-{\Omega} "a";"b"   \cA \ar[d]^ C\\
	\cB \ar[r]_U  & \cA} \qquad
\xymatrix{\cB \ar[r]^ U  \ar[d]_ W & \ar@{}[dl]^(.25){}="a"^(.75){}="b" \ar@{=>}_-{\Phi} "a";"b"   \cA \ar[d]^ C\\
	\cB \ar[r]_U  & \cA}
$$
where $\Omega \colon CU \Rightarrow US$ and $\Phi \colon CU \Rightarrow UW$ are natural isomorphisms. For any object $X$ in $\cA$ and any object $Y$ in $\cB$, consider the diagram
$$
\xymatrix{
\cB(FX, SY) \ar[d]_-\cong \ar@{.>}[rr] & & \cB(FX, WY) \\
\cA(X, USY) \ar[rr] & & \cA(X, UWY) \ar[u]_-\cong
}
$$
where the vertical maps are induced by the adjunction $F \dashv U$, and the lower map is induced by the composition
$$
\xymatrix{
USY \ar[r]^-{\Omega_Y^{-1}}  & CUY \ar[r]^-{\Phi_Y}  & UWY.
}
$$
The dashed arrow defines one component of a natural
isomorphism
$$
	\Gamma^{S, W} \colon \cB(F-, S-) \Rightarrow
\cB(F-, W-)
$$
of functors $\cA^* \times \cB \rightarrow \cSet$.
\begin{defn}\label{galoisdeff}
We call the natural isomorphism $\Gamma^{S,W}$ the \emph{Galois map}.
\end{defn}

The following properties
are easy consequences of the definition:

\begin{prop}\label{sunshines}
Let $S$ and $W$ be two lifts
of an endofunctor $C$ through an adjunction $F \dashv
U$. Then:
\begin{enumerate}
\item The inverse of
$\Gamma^{S,W}$ is given by $\Gamma^{W,S}$.
\item The natural transformation $\Gamma^{S,W}$ maps a morphism $f
\colon F X \rightarrow S Y$ to
$$
	\xymatrix{ F X
\ar[r]^-{F \eta_X} & FUF X \ar[r]^-{FU f} &
FUS Y \ar[rr]^{F(\Phi_Y \circ \Omega^{-1}_Y)} &
& FUW Y \ar[r]^-{\epsilon_{W Y}} & W Y. }
$$
\item If $\chi^{S}$ and $ \chi^{W}$ denote the natural transformations
determined by the two lifts, then the composite
$$
\xymatrix@C=3em{
\cB( FUS, SFU ) \ar[r]^-{\Gamma^{S,W}}_-\cong & \cB (FUS, WFU) \ar[r]_-\cong & \cB( FUW, WFU)
}
$$
maps $\chi^S$ to $\chi^W$, where the right-hand isomorphism is induced by the composite $\Omega \circ \Phi^{-1} \colon UW \Rightarrow US$.
\end{enumerate}
\end{prop}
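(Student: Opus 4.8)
These three assertions grow in computational weight: parts~1 and~2 merely unwind the defining square of the Galois map, whereas part~3 will need the universal property of $\chi$ from Proposition~\ref{univprop}; I would take them in that order. For part~1, the observation is that $\Gamma^{W,S}$ is built from the very same diagram as $\Gamma^{S,W}$ with the roles of the pairs $(S,\Omega)$ and $(W,\Phi)$ interchanged, so that its middle map $\cA(X,UWY)\to\cA(X,USY)$ is post-composition with $\Omega_Y\circ\Phi_Y^{-1}$, a two-sided inverse of the map $\Phi_Y\circ\Omega_Y^{-1}$ defining $\Gamma^{S,W}$. Writing out $\Gamma^{W,S}\circ\Gamma^{S,W}$, the two adjunction transpose bijections $\cB(FX,WY)\cong\cA(X,UWY)$ that occur consecutively cancel, so the composite collapses to the transpose bijection $\cB(FX,SY)\cong\cA(X,USY)$, then post-composition with $(\Omega_Y\circ\Phi_Y^{-1})\circ(\Phi_Y\circ\Omega_Y^{-1})=\idty$, then the inverse transpose, i.e.\ the identity of $\cB(FX,SY)$; the symmetric computation handles $\Gamma^{S,W}\circ\Gamma^{W,S}$, whence $\Gamma^{W,S}=(\Gamma^{S,W})^{-1}$.

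For part~2, I would simply spell out the two adjunction transposes: the transpose of $f\colon FX\to SY$ is $Uf\circ\eta_X$, post-composing with $\Phi_Y\circ\Omega_Y^{-1}$ yields a morphism $X\to UWY$, and its transpose back along $F\dashv U$ is obtained by applying $F$ and composing with $\epsilon_{WY}$. Concatenating these and using functoriality of $F$ reproduces the five-step composite in the statement verbatim.

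For part~3, I would argue componentwise (naturality of $\Gamma^{S,W}$ reduces the claim to the corresponding equality of components). Fixing $Y$ and regarding $\chi^S_Y\colon F(USY)\to S(FUY)$ as lying in $\cB(FX,SY')$ with $X:=USY$ and $Y':=FUY$, part~2 gives
\[
\Gamma^{S,W}(\chi^S_Y)=\epsilon_{WFUY}\circ F\Phi_{FUY}\circ F\Omega_{FUY}^{-1}\circ FU\chi^S_Y\circ F\eta_{USY},
\]
while the second leg of the composite in part~3, induced by $\Omega\circ\Phi^{-1}\colon UW\Rightarrow US$, is precomposition with $F(\Omega_Y\circ\Phi_Y^{-1})\colon FUWY\to FUSY$. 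The crucial input is that applying $F$ to the square~\ref{univcomonad} characterising $\chi^S$ — part~2 of Proposition~\ref{univprop}, in the single-adjunction setting of Corollary~\ref{arisec} — gives $FU\chi^S_Y\circ F\eta_{USY}\circ F\Omega_Y=F\Omega_{FUY}\circ FC\eta_{UY}$. Substituting this into the middle of $\Gamma^{S,W}(\chi^S_Y)\circ F(\Omega_Y\circ\Phi_Y^{-1})$ makes $F\Omega_{FUY}^{-1}$ and $F\Omega_{FUY}$ cancel, leaving $\epsilon_{WFUY}\circ F\Phi_{FUY}\circ FC\eta_{UY}\circ F\Phi_Y^{-1}$, which is precisely $\chi^W_Y$ by the explicit description of $\chi$ in Section~\ref{mainapp}.

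The one step that is not pure bookkeeping is the appeal to diagram~\ref{univcomonad} in part~3: one could instead insert the explicit formula for $\chi^S$ and push everything through the triangle identities, but that is appreciably messier, whereas the universal property makes the $F\Omega_{FUY}$-cancellation visible in a single stroke. Everything else is routine manipulation of adjunction transposes and whiskered natural transformations, and I expect no real obstacle beyond keeping the whiskering indices straight.
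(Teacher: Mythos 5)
Your argument is correct throughout: parts~1 and~2 are exactly the unwinding of the defining square of $\Gamma^{S,W}$ that the paper has in mind when it calls these ``easy consequences of the definition'' (the paper supplies no written proof), and your verification of part~3 checks out — precomposing $\Gamma^{S,W}(\chi^S_Y)=\epsilon_{WFUY}\circ F\Phi_{FUY}\circ F\Omega_{FUY}^{-1}\circ FU\chi^S_Y\circ F\eta_{USY}$ with $F(\Omega_Y\circ\Phi_Y^{-1})$ and cancelling $F\Omega_{FUY}^{-1}\circ F\Omega_{FUY}$ via the image under $F$ of diagram~\rmref{univcomonad} does land exactly on $\chi^W_Y$. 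Your use of the universal property of Proposition~\ref{univprop} in part~3 is a tidier route than expanding the explicit formula for $\chi^S$ and chasing triangle identities, and is a perfectly legitimate substitute since that proposition is established independently.
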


So, in the applications of Corollary~\ref{arisec}, all
distributive laws obtained from different lifts
$S,W$ of a
given comonad through an adjunction are obtained from
each other by application of $\Gamma^{S,W}$.

In particular, consider the situation of Example~\ref{trivv}, where we have a lift of $B$ itself through an adjunction, that is, a commutative square:
$$
\xymatrix{
\cB \ar[r]^-{U} \ar[d]_-{T} & \cA \ar[d]^-B \\
\cB \ar[r]_-{U} & \cA
}
$$
Let
$W$ be any other lift of $B$ through the
adjunction. By taking $X$ to be $U Y$ for an object
$Y$ of $\cB$, one obtains  a natural transformation
$ \Gamma^{T,W}
\colon \cB (T-,T-) \rightarrow \cB(T-,W-) $ that we can
evaluate on $ 1 \colon T Y \rightarrow T Y$, which
produces a natural transformation
$\Gamma^{\rT,\rV}(1) \colon T \Rightarrow W$.

Adapting \cite[Definition~1.3]{MR2651345}, we define:
\begin{defn} We say that $F$ is \emph{$W$-Galois}
if $$ \xymatrix{ \Gamma^{T,W}(1) \colon T=FU
\ar[r]^-{F \eta U} & FUFU = FUT
\ar[r]^-{F \Phi} & FUW  \ar[r]^-{\epsilon W}
& W } $$ is an isomorphism.
\end{defn}

The following proposition provides the connection to
Hopf algebra theory:

\begin{prop}\label{wisga}
If $F$ is $W$-Galois and
$ \theta \colon BB \Rightarrow BB$ is the natural transformation arising from the lift $W$ of $B$, then the
natural transformation
$$
	\xymatrix{ \beta \colon BB
	\ar[r]^-{B \eta B} & BBB \ar[r]^-{\theta B}
	& BBB \ar[r]^-{B \mu} & BB  }
$$
is an
isomorphism.
\end{prop}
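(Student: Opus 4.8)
The plan is to rewrite $\beta$ so that it visibly becomes the whiskering $U(-)F$ of the canonical map $\mathrm{can} := \Gamma^{T,W}(1)\colon T\Rightarrow W$, whose invertibility is exactly the hypothesis that $F$ is $W$-Galois, and then to conclude. Write $\Phi\colon BU\Rightarrow UW$ for the isomorphism witnessing that $W$ lifts $B$. Recall from Corollary~\ref{arisec}, applied with $C=B$, $S=W$ and $\Omega=\Phi$, that the natural transformation arising from the lift $W$ is the composite
$$
\theta\;=\;(\Phi^{-1}F)\circ(U\epsilon WF)\circ(UF\Phi F)\circ(UFUF\eta)\colon BB\Rightarrow BB
$$
(composites read right to left), while the definition of $W$-Galois gives
$$
\mathrm{can}\;=\;(\epsilon W)\circ(F\Phi)\circ(F\eta U)\colon T\Rightarrow W ,\qquad
U\,\mathrm{can}\,F\;=\;(U\epsilon WF)\circ(UF\Phi F)\circ(UF\eta UF)\colon BB\Rightarrow UWF .
$$
Thus $\theta$ and $U\,\mathrm{can}\,F$ share their two middle factors but differ in the whiskered unit --- $\theta$ uses $UFUF\eta = BB\eta$ where $U\,\mathrm{can}\,F$ uses $UF\eta UF = B\eta B$ --- and $\theta$ carries the additional isomorphism $\Phi^{-1}F$.

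The crux is the identity
$$
\beta\;=\;(\Phi^{-1}F)\circ(U\,\mathrm{can}\,F)\colon BB\Rightarrow BB .
$$
To prove it I would substitute the displayed formula for $\theta$ into $\beta = (B\mu)\circ(\theta B)\circ(B\eta B)$ and simplify. Counting (co)units, $\beta$ as written carries one more $\eta$ and one more $\epsilon$ than the proposed right-hand side, and these must cancel: the flanking $B\eta B$ and $B\mu$ get absorbed using the monad unit laws $\mu\circ B\eta = \mathrm{id}_B = \mu\circ\eta B$ --- which are the triangle identities for $F\dashv U$ whiskered by $U$ and $F$ --- and the surviving whiskered unit is then moved from the outermost slot ($BB\eta$) into the middle slot ($B\eta B$) using naturality of $\Phi$, $\eta$ and $\epsilon$, after which the composite is precisely $(\Phi^{-1}F)\circ(U\,\mathrm{can}\,F)$. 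As a consistency check, in the trivial case $W=T$, $\Phi=\mathrm{id}$ one has $\mathrm{can}=\mathrm{id}_T$ and $\theta=\tau$ as in Example~\ref{trivv2}, and a short chase with the unit laws alone yields $\beta=\mathrm{id}_{BB}$, in agreement with the claimed identity.

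Granting the identity, the conclusion is immediate: since $F$ is $W$-Galois, $\mathrm{can}$, and hence its whiskering $U\,\mathrm{can}\,F$, is an isomorphism; as $\Phi F$ is an isomorphism with inverse $\Phi^{-1}F$, the composite $\beta = (\Phi^{-1}F)\circ(U\,\mathrm{can}\,F)$ is an isomorphism, with inverse $(U\,\mathrm{can}^{-1}\,F)\circ(\Phi F)$. The one genuinely delicate point is the diagram chase establishing $\beta = (\Phi^{-1}F)\circ(U\,\mathrm{can}\,F)$: because $\theta$ and $U\,\mathrm{can}\,F$ are not equal on the nose (the whiskered unit sits in different slots), one must use the flanking $B\eta B$ and $B\mu$ in the definition of $\beta$ to do the compensating, and organising this cleanly is best done with string diagrams. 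Equivalently, one may avoid $\theta$'s explicit form altogether and instead write down a candidate inverse of $\beta$ built from $\mathrm{can}^{-1}$ and verify directly that both composites are identities, using the same manipulations; the essential difficulty is unchanged.
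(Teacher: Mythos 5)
Your proposal is correct and matches the paper's argument: both proofs reduce the claim to the identity $\beta = (\Phi^{-1}F)\circ(U\,\Gamma^{T,W}(1)\,F)$ and then conclude from the $W$-Galois hypothesis, and your forward simplification of $B\mu\circ\theta B\circ B\eta B$ (naturality of $\Phi$ and $\epsilon$, the identity $\mu = U\epsilon F$, and one monad unit law) does go through as you describe. The only cosmetic difference is direction: the paper starts from $U\Gamma^{T,W}(1)F$ and massages it into $\Phi F\circ\beta$ via the companion law $\chi$ and the relation $U\chi\circ UF\Phi = \Phi FU\circ\theta U$, whereas you expand $\theta$ explicitly and simplify $\beta$ directly.
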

\begin{proof}
If $F$ is
$W$-Galois, then $ U\Gamma^{T,W}(1)F$ is an
isomorphism
$$
	\xymatrix@C=2.455em{ UTF=UFUF
\ar[rr]^-{UF \eta UF} & & UFUFUF =
UFUT F \ar[r]^-{UF \Phi F} &
UFUW F \ar[r]^-{U\epsilon WF} &
UWF. }
$$
Let now $ \chi \colon TW \Rightarrow
WT$ be the natural transformation corresponding to $ \theta $
as in Theorem~\ref{arisem} and Corollary~\ref{arisec}.  Inserting $\varepsilon
W=(W \varepsilon) \circ \chi$ and $U \chi \circ
UF \Phi=\Phi FU \circ \theta U$ and
$B=UF$, the isomorphism becomes $$ \xymatrix@C=2.5em{
UTF=BB \ar[r]^-{B \eta B} & BBB
\ar[r]^-{\theta B} & BBB=BUFUF
\ar[r]^-{\Phi FUF} & UWFUF
\ar[r]^-{UW\epsilon F} & UWF } $$ Finally,
we have by construction $U \varepsilon F=\mu $, and
using the naturality of $\Phi$ this gives
$UW\varepsilon   F \circ \Phi FUF= \Phi F
\circ BU\varepsilon F$.  Composing the
above isomorphism with $\Phi^{-1} F$ gives $ \beta $.
\end{proof}

It is this associated map $ \beta $ that is used to
distinguish Hopf algebras amongst bialgebras, see
Section~\ref{whygalois} below.

\section{Coalgebras for distributive laws}\label{coalgsect}

We now discuss $ \chi
$-coalgebras, which serve as
coefficient modules  in the constructions of Chapter~\ref{CYCLIC}. Let $\chi \colon TS \Rightarrow ST$ be a distributive law of comonads on a category $\cB$.
\subsection{Coalgebras for distributive laws}\label{chicoalgs}

\begin{defn}\label{coalgdef}
A \emph{$\chi$-coalgebra} $(M,\rho)$ consists of an object $M$ in $\cB$, together with a morphism $\rho \colon TM \to SM$
in $\cB$ such that the following two diagrams commute:
$$
\xymatrix{ TM \ar[r]^-{\Delta M} \ar[d]_-{\rho} &
TTM \ar[r]^{T\rho} &TSM \ar[d]^-{\chi M} \\
SM \ar[r]_-{\Delta  M} & 	SSM & STM
\ar[l]^-{S\rho} } \quad\quad\quad
\xymatrix{ & TM
\ar[dl]_-{\epsilon M} \ar[d]^-{\rho}\\ M & SM
\ar[l]^-{\epsilon  M} } $$
\end{defn}

\begin{rem}\label{extrarem}
The distributive law $\chi$ is a comonad in the 2-category $\cmd(\cat)$, but a $\chi$-coalgebra is \emph{not} a coalgebra for this comonad. Rather, the composite $TS$ becomes a comonad with comultiplication and counit given by
$$
\xymatrix{
TS \ar[r]^-{\delta\delta} & TTSS \ar[rr]^-{T\chi S} && TSTS,
}
\qquad
\xymatrix{
TS \ar[r]^-{\epsilon\epsilon} & 1
}
$$
respectively~\cite{MR0241502}. There is a morphism of comonads
$$
\xymatrix{
(\cB, TS) \ar[rr]^-{(1,T\epsilon)} & & (\cB, T)
}
$$
which induces a forgetful functor $\cB^{TS} \to \cB^T$. The $\chi$-coalgebra structures $\rho$ on an object $M$ are equivalent to those $TS$-coalgebra structures on $TM$ whose image under this forgetful functor is cofree, see e.g.~\cite[Prop.~1.9]{MR2415479}.\end{rem}

For a fixed category $\cX$, the 2-functor $\cmd \colon \twocat \to \twocat$ maps the 2-functor $[\cX, -] \colon \cat \to \cat$ to the 2-functor $\cmd([\cX, -]) \colon \cmd(\cat) \to \cmd(\cat)$. This sends comonads to comonads, i.e.\ it sends distributive laws to distributive laws. Therefore,
$$
[\cX, \chi] \colon [\cX, T] \circ [\cX, S] \Rightarrow [\cX, S] \circ [\cX, T]
$$
is a distributive law of comonads. Similarly, for a fixed category $\cY$,
$$
[\chi, \cY] \colon [T, \cY] \circ [S, \cY] \Rightarrow [S, \cY] \circ [T, \cY]
$$
is a distributive law of comonads. With this in mind:

\begin{defn}
A \emph{$\chi$-coalgebra structure} on a functor $M \colon \cX \to \cB$ is a natural transformation $\rho \colon TM \Rightarrow SM$ such that $(M, \rho)$ is a $[\cX, \chi]$-coalgebra in $[\cX, \cB]$. \end{defn}

So, in a similar way to Section~\ref{algebrafunc}, it does not really matter whether we talk about $\chi$-coalgebras as objects or as functors since we may choose $\cX = \mathbbm{1}$ to recover Definition~\ref{coalgdef}.

Dually, we have:
\begin{defn}
A \emph{$\chi$-opcoalgebra structure} on a functor $N \colon \cB \to \cY$ is a natural transformation $\lambda \colon NS \Rightarrow NT$ such that $(N, \lambda)$ is a $[\chi, \cY]$-coalgebra in $[\cB, \cY]$. Explicitly, the two diagrams
$$
\xymatrix{
NS  \ar[r]^-{N\delta} \ar[d]_-\lambda & NSS \ar[r]^-{\lambda S} & NTS \ar[d]^-{N \chi} \\
NT \ar[r]_-{N \delta} & NTT & NST \ar[l]^-{\lambda T}
}
\qquad
\xymatrix{
& NS \ar[d]^-{\lambda} \ar[dl]_-{N \epsilon} \\ N  & NT \ar[l]^-{N\epsilon}
}
$$
commute.
\end{defn}

\begin{rem}
Dualising Remark~\ref{extrarem} appropriately, the $\chi$-opcoalgebra structures $\lambda$ on $N$ correspond to $TS$-opcoalgebra structures on $NS$ whose underlying $S$-opcoalgebra is opcofree.
\end{rem}

Now suppose we are in the comonad setting of Section~\ref{mainapp}, so we have an adjunction $F\dashv U$ and a square
$$
\xymatrix{\cB \ar[r]^ U  \ar[d]_ S & \ar@{}[dl]^(.25){}="a"^(.75){}="b" \ar@{=>}_-{\Omega} "a";"b"   \cA \ar[d]^ C\\
	\cB \ar[r]_U  & \cA}
$$
where $(U,\Omega)$ is an iso-opmorphism of comonads. Via Corollary~\ref{arisec}, the square gives rise to a distributive law $\chi \colon TS \Rightarrow ST$. The following characterises $ \chi $-coalgebras
in this case.

\begin{prop}\label{chicoalgprop}
Let $M \colon \cX \rightarrow \cB$
be a functor.
\begin{enumerate}
\item 
Coalgebra structures for $\chi$ on $M$ correspond to
$C$-coalgebra structures $\nabla$ on the
functor $UM \colon \cX \rightarrow \cA$.
\item Let
$S$ and $W$ be two lifts of the functor $C$ through the
adjunction, and let $\chi^S$ and $\chi^W$ denote the
comonad distributive laws determined by the lifts
$S$ and $W$ respectively. Then the composite
$\Gamma^{S,W}$ maps $\chi^S$-coalgebra
structures $\rho^S$ on $M$ bijectively to 
$\chi^W$-coalgebra structures $\rho^W$ on $M$.
\end{enumerate}
\end{prop}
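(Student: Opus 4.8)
The plan is to prove part~1 by exhibiting an explicit bijection between $\chi$-coalgebra structures on $M$ and $C$-coalgebra structures on $UM$, built from the adjunction $F\dashv U$ and the isomorphism $\Omega$, and then to deduce part~2 formally from part~1 together with the explicit description of the Galois map in Proposition~\ref{sunshines}. Since every transformation constructed below is natural in the objects of $\cX$, it suffices to treat the case $\cX=\mathbbm{1}$ (equivalently, to run the whole argument in $[\cX,\cB]$ and $[\cX,\cA]$ with the comonads $[\cX,T]$, $[\cX,S]$, $[\cX,C]$ and the distributive law $[\cX,\chi]$), so below $M$ denotes an object of $\cB$.

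Recall from Corollary~\ref{arisec} and Section~\ref{mainapp} that $T=FU$ is a comonad with counit the adjunction counit $\epsilon$ and comultiplication $F\eta U$, and that $\chi=(\Lambda U)\circ(F\Omega^{-1})$ with $\Lambda=(\epsilon SF)\circ(F\Omega F)\circ(FC\eta)$. Given $\rho\colon TM\to SM$, set $\nabla:=\Omega^{-1}M\circ U\rho\circ\eta UM\colon UM\to USM\to CUM$, i.e.\ transpose $\rho$ across $F\dashv U$ and then apply $\Omega^{-1}$; conversely, given $\nabla\colon UM\to CUM$, set $\rho:=\epsilon SM\circ F\Omega M\circ F\nabla$, the transpose of $\Omega M\circ\nabla$. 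These two assignments are mutually inverse by the triangle identities for $F\dashv U$, naturality of $\eta$ and $\epsilon$, and invertibility of $\Omega$; they give a bijection $\Psi^S$ between all natural transformations $TM\to SM$ and all natural transformations $UM\to CUM$, with no coalgebra conditions yet imposed. It remains to see that $\Psi^S$ carries $\chi$-coalgebra structures to $C$-coalgebra structures and back. The counit axiom is immediate: using the counit compatibility $U\epsilon^S\circ\Omega=\epsilon^C U$ of the comonad opmorphism $(U,\Omega)$, one finds $\epsilon^C UM\circ\nabla=U(\epsilon^S M\circ\rho)\circ\eta UM$, which by the triangle identity $U\epsilon\circ\eta U=1$ equals $1_{UM}$ exactly when $\epsilon^S M\circ\rho$ equals the $T$-counit $\epsilon M$.

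The coassociativity axiom is the one substantive step, and is the main obstacle. Here one must show that $\chi$-coassociativity of $\rho$ — which involves the comultiplication $F\eta U$ of $T$ and the composite $\chi$, hence $\Lambda$ and the adjunction (co)unit — is equivalent to $C$-coassociativity of $\nabla$, which involves the comultiplication of $C$. The route I would take is to transport the $\chi$-coassociativity square along $U$, pre- and post-composing with $\eta$, $\epsilon$ and $\Omega^{\pm1}$; after substituting the defining formula for $\chi$, the whiskered copies of $F\Omega^{-1}M$ cancel against those of $F\Omega M$ by naturality and the triangle identities, and the diagram that remains is precisely the $C$-coassociativity square for $\nabla$, the last ingredient being the comultiplication compatibility $\Omega S\circ C\Omega\circ\delta^C U=U\delta^S\circ\Omega$ of $(U,\Omega)$. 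This is purely formal but order-sensitive; I would carry it out by writing the two squares, extending both by $U$, $\eta$, $\epsilon$ and $\Omega^{\pm1}$, and matching up the resulting $2$-cells one at a time.

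For part~2, apply part~1 to each of the two lifts, getting bijections $\Psi^S$ and $\Psi^W$ from natural transformations $TM\to SM$, resp.\ $TM\to WM$, to natural transformations $UM\to CUM$ — both landing in the \emph{same} target, since $S$ and $W$ lift the same $C$ — each restricting to a bijection on the corresponding coalgebra structures. Composing, $(\Psi^W)^{-1}\circ\Psi^S$ sends $\rho$ to $\epsilon WM\circ F\Phi M\circ F\Omega^{-1}M\circ FU\rho\circ F\eta UM$, which is exactly the formula for $\Gamma^{S,W}(\rho)$ in Proposition~\ref{sunshines}(2) (taken at $X=UM$, $Y=M$). Hence $\Gamma^{S,W}=(\Psi^W)^{-1}\circ\Psi^S$ on $\cB(TM,SM)$, so it restricts to a bijection from $\chi^S$-coalgebra structures on $M$ to $\chi^W$-coalgebra structures on $M$; its inverse is $\Gamma^{W,S}$ by Proposition~\ref{sunshines}(1), which does the reverse assignment by symmetry. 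Proposition~\ref{sunshines}(3), which transports the distributive law $\chi^S$ itself to $\chi^W$, is a further consistency check.
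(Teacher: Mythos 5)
Your proposal is correct and follows essentially the same route as the paper: the paper's (much terser) proof also obtains $\nabla$ by transposing $\rho$ across $F\dashv U$ and composing with $\Omega^{-1}$, and deduces part~2 by observing that $\Gamma^{S,W}$ is exactly the composite of the two adjunction isomorphisms with $\Phi\circ\Omega^{-1}$, i.e.\ your $(\Psi^W)^{-1}\circ\Psi^S$. The only difference is that you spell out (and correctly identify as the one substantive step) the verification that the transposition exchanges the coassociativity and counit axioms, which the paper leaves implicit.
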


\begin{proof} For part (1),
$\chi$-coalgebra structures $\rho \colon FUM
\Rightarrow SM$ are mapped under the adjunction to $\nabla
\colon UM \Rightarrow USM \cong CUM$. Part (2)
follows immediately since $\Gamma^{S,W}$ is
the composition of the adjunction isomorphisms and
$\Phi \circ \Omega^{-1}$.
\end{proof}

Dually, given an adjunction
$V \dashv G$ for the comonad $S$ and an extension
$Q $ of the comonad $T$ through the
adjunction,
$ \chi $-opcoalgebra structures on
$N \colon \cB \rightarrow \cY$ correspond in complete analogy
to $Q$-opcoalgebra structures on $NV$.

\subsection{Twisting by 1-cells}\label{twistcoeff}
Here we show how factorisations of distributive laws as
considered in \cite{2} can be used to obtain new
$ \chi $-coalgebras from old ones.

Generalising the notion of a module over a monoidal category, we make the following definition:
\begin{defn}\label{leftCmodule}
Let $\sC$ be a 2-category. A \emph{left $\sC$-module}  $\sM$ consists of a category $\sM(\cA)$ for each 0-cell $\cA$ in $\sC$ and functors $\rhd_{\cA, \cB} \colon \sC(\cA, \cB) \times \sM(\cA) \to \sM(\cB)$, called \emph{left actions}, such that the diagrams
$$
\xymatrix{
\sC(\cB, \cC) \times \sC(\cA, \cB) \times \sM(\cA) \ar[rr]^-{1 \times \rhd_{\cA, \cB}} \ar[d]_-{\circ \times 1} && \sC(\cB, \cC) \times \sM(\cB) \ar[d]^-{\rhd_{\cB, \cC}} \\
\sC(\cA, \cC) \times \sM(\cA) \ar[rr]_-{\rhd_{\cA, \cC}} && \sM(\cC)
}
$$
and
$$
\xymatrix{
\sM(\cA) \ar[rr]^-{u \times 1} \ar@{=}[drr] && \sC(\cA, \cA) \times \sM(\cA) \ar[d]^-{\rhd_{\cA, \cA}} \\
& &\sM(\cA)
}
$$
commute in $\cat$, for all 0-cells $\cA, \cB, \cC$ in $\sC$.
\end{defn}
Hereafter, we omit the subscripts from the action functors.
\begin{rem}
The functors
$$\rhd \colon \sC(\cA, \cB) \times \sM(\cA) \to \sM(\cB)$$
correspond under the closed symmetric monoidal structure of $\cat$ to functors
$$
\sC(\cA, \cB) \to  [\sM(\cA), \sM(\cB)]
$$
and thus, a left $\sC$-module is nothing more than a 2-functor $\sM \colon \sC \to \cat$.
\end{rem}
Dually, one defines a \emph{right $\sC$-module} $\sN$, with right actions $$\lhd \colon \sN(\cB) \times \sC(\cA, \cB) \to \sN(\cA)$$
which can alternatively be viewed as 2-functor $\sN \colon \sC^* \to \cat$.

For each distributive law $\chi \colon TS \Rightarrow ST$ of comonads in a category $\cB$, we define a category $\sR(\chi)$
as follows. The objects are $\chi$-coalgebras $(M, \cX, \rho)$, i.e.\ $\chi$-coalgebra structures on functors $\cX \to \cB$ where $\cX$ is allowed to vary. The morphisms
$$\xymatrix{(M, \cX, \rho) \ar[rr]^-{(F, \phi)} && (M', \cX', \rho')}$$
 consist of a functor $F \colon \cX \to \cX'$ and a natural transformation $\phi \colon M \Rightarrow M'F$ such that the diagram
$$
\xymatrix{
TM \ar[r]^-{T\phi} \ar[d]_-\rho & TM'F \ar[d]^-{\rho' F} \\
SM \ar[r]_-{S \phi} & SM'F
}
$$
commutes.

Recall that $\dist$ denotes the 2-category $\cmd(\cmd(\cat)^*)^*$ of comonad distributive laws in $\cat$, cf.~Section~\ref{finallyover}

\begin{thm}\label{twist}
The categories $\sR(\chi)$ define a left $\dist$-module.
\end{thm}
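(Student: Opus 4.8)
The plan is to upgrade the assignment $\chi \mapsto \sR(\chi)$ to a 2-functor $\sR \colon \dist \to \cat$; by the Remark following Definition~\ref{leftCmodule} such a 2-functor is precisely the same datum as a left $\dist$-module, and the two coherence diagrams of that definition become exactly strict 2-functoriality. So it suffices to say what $\sR$ does on 1-cells and 2-cells of $\dist$, to check that these assignments are well defined, and to check that they are strictly functorial.

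Given a 1-cell $(\Sigma,\sigma,\gamma) \colon (\cB,\chi,T,S) \to (\cD,\tau,G,C)$ of $\dist$ --- so $\Sigma \colon \cB \to \cD$, with $\sigma \colon G\Sigma \Rightarrow \Sigma T$ the opmorphism-of-comonads structure, $\gamma \colon \Sigma S \Rightarrow C\Sigma$ the morphism-of-comonads structure, and $\sigma,\gamma,\chi,\tau$ linked by the Yang--Baxter hexagon of Section~\ref{finallyoversection} --- I would define $\sR(\Sigma,\sigma,\gamma) \colon \sR(\chi) \to \sR(\tau)$ to send a $\chi$-coalgebra $(M,\cX,\rho)$, with $\rho \colon TM \Rightarrow SM$, to
$$\Bigl(\, \Sigma M,\ \cX,\ G\Sigma M \xrightarrow{\ \sigma M\ } \Sigma T M \xrightarrow{\ \Sigma\rho\ } \Sigma S M \xrightarrow{\ \gamma M\ } C\Sigma M \,\Bigr),$$
and to send a morphism $(F,\phi)$ of $\sR(\chi)$ to $(F,\Sigma\phi)$; note the base category $\cX$ is never disturbed. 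On a 2-cell $\alpha \colon (\Sigma,\sigma,\gamma) \Rightarrow (\Sigma',\sigma',\gamma')$ of $\dist$ --- a natural transformation $\alpha \colon \Sigma \Rightarrow \Sigma'$ with $\sigma' \circ G\alpha = \alpha T \circ \sigma$ and $\gamma' \circ \alpha S = C\alpha \circ \gamma$ --- I would take the component of $\sR(\alpha)$ at $(M,\cX,\rho)$ to be $(\idty_\cX, \alpha M)$.

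First I would verify well-definedness. That $(F,\Sigma\phi)$ is again a morphism of $\sR(\tau)$, and that $(\idty_\cX,\alpha M)$ is a morphism of $\sR(\tau)$, are short diagram chases: whisker $\Sigma$ onto the defining square of $(F,\phi)$ and use the interchange law for $\sigma$ and $\gamma$ in the first case, and in the second case use precisely the two 2-cell identities for $\alpha$ together with interchange. The substantive point, and the step I expect to be the main obstacle, is that the triple displayed above really is a $\tau$-coalgebra. Its counitality is brief: it uses that $\sigma$ and $\gamma$ intertwine the counits of $T,G$ and of $S,C$ respectively, together with the counit axiom for $\rho$. Its coassociativity, however, requires pasting four ingredients: the comultiplication-compatibility of the opmorphism $\sigma$, the comultiplication-compatibility of the morphism $\gamma$, the Yang--Baxter hexagon relating $\sigma,\gamma,\chi$ and $\tau$, and the coassociativity axiom for the original $\rho$ of Definition~\ref{coalgdef}. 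No identity beyond the defining ones of a 1-cell of $\dist$ and of a $\chi$-coalgebra is needed, but assembling the large composite correctly --- in particular using the hexagon to move $\chi M$ past the two copies of $\sigma$ and of $\gamma$ --- is where essentially all the work lies.

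Finally I would check that $\sR$ is a strict 2-functor. Preservation of vertical composition, of identity 2-cells, and of identity 1-cells is immediate: the identity 1-cell of $\dist$ at $(\cB,\chi,T,S)$ is $(\idty_\cB,\idty,\idty)$, whose induced functor is the identity of $\sR(\chi)$, and likewise for 2-cells. Preservation of horizontal composition is also immediate once the composite 1-cell in $\dist = \cmd(\cmd(\cat)^*)^*$ is written out: its components are $(\Sigma'\Sigma,\ \Sigma'\sigma \circ \sigma'\Sigma,\ \gamma'\Sigma \circ \Sigma'\gamma)$, and substituting these into the object formula yields the structure map $\gamma'\Sigma M \circ \Sigma'\gamma M \circ \Sigma'\Sigma\rho \circ \Sigma'\sigma M \circ \sigma'\Sigma M$, which is visibly $\sR(\Sigma',\sigma',\gamma')$ applied to $\sR(\Sigma,\sigma,\gamma)(M,\cX,\rho)$ --- here one only uses that $\Sigma'$ is a functor, so that $\Sigma'$ of a composite is the composite of the $\Sigma'$'s. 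The same bookkeeping handles morphisms and 2-cells, and the 2-functor, hence the left $\dist$-module, is established.
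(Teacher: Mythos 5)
Your proposal is correct and follows essentially the same route as the paper: the twisted structure map $\gamma M \circ \Sigma\rho \circ \sigma M$, the verification of its coassociativity via the Yang--Baxter hexagon together with the comultiplication-compatibilities of $\sigma,\gamma$ and the coassociativity of $\rho$, and the routine functoriality checks all match the paper's argument. Packaging the module as a 2-functor $\dist \to \cat$ rather than as action functors is only a cosmetic difference, since the paper's own remark after Definition~\ref{leftCmodule} identifies the two notions.
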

\begin{proof}
We define the action $\rhd \colon \dist(\chi, \tau) \times \sR(\chi) \to \sR(\tau)$ as follows. For a 1-cell
$$\xymatrix{
(\cB, \chi, T, S) \ar[rr]^-{(\Sigma, \sigma, \gamma)} && (\cD, \tau, G, C)
}
$$
in $\dist$, we  define
$$
(\Sigma, \sigma, \gamma) \rhd (M, \cX, \rho) = (\Sigma M, \cX, \gamma M \circ \Sigma\rho \circ \sigma M)
$$
and on morphisms we define $\alpha \rhd ({\varphi},F)$ to be the pair $(\alpha {\varphi}, F)$. First we check that this assignment is a well-defined functor. Consider the diagram
$$
\xymatrix@C=3.5em{
G \Sigma M \ar[d]_-{\sigma M} \ar[r]^{\delta \Sigma M} & GG\Sigma M \ar[r]^-{G \sigma M} & G\Sigma TM \ar[d]^-{\sigma{TM}} \ar[r]^-{G\Sigma\rho} & G \Sigma SM \ar[d]^-{\sigma SM} \ar[r]^-{G \gamma M} & GC\Sigma M \ar[d]^-{\tau \Sigma M} \\
\Sigma TM \ar[dd]_-{\Sigma \rho} \ar[rr]_-{\Sigma \delta M} & & \Sigma TTM \ar[r]_-{\Sigma T\rho}  & \Sigma TSM \ar[d]^-{\Sigma \chi M} & CG\Sigma M \ar[d]^-{C \sigma M } \\
& & & \Sigma STM \ar[r]^-{\gamma{TM}} \ar[d]_-{\Sigma S \rho}& C\Sigma TM \ar[d]^-{C \Sigma \rho} \\
\Sigma SM \ar[d]_-{\gamma{M}}\ar[rrr]_-{\Sigma \delta M}& & & \Sigma SSM \ar[r]_-{\gamma {SM}}&  C \Sigma SM \ar[d]^-{C \gamma M} \\
C\Sigma M \ar[rrrr]_-{\delta {\Sigma M}}& & & & CC \Sigma M
}
$$
The top-left and bottom rectangles commute because $\sigma, \gamma$ are compatible with comultiplication, the middle-left rectangle commutes because $M$ is a $\chi$-coalgebra, the top-right diagram commutes by the Yang-Baxter condition, and the remaining squares commute by naturality of $\sigma,\gamma$. Therefore the outer rectangle commutes.

Consider the triangle
$$
\xymatrix@=3em{
G\Sigma M \ar[rrd]_-{\epsilon {\Sigma M}} \ar[r]^-{\sigma M} & \Sigma TM \ar[dr]^-{\Sigma \epsilon M} \ar[rr]^-{\Sigma \rho} & & \Sigma SM \ar[dl]_-{\Sigma \epsilon M} \ar[r]^-{\gamma M} & C \Sigma M \ar[dll]^-{\epsilon {\Sigma M}}\\
& & \Sigma M & &
}
$$
The middle triangle commutes because $M$ is a $\chi$-coalgebra, and the other two inner triangles commute by the compatibility of $\sigma, \gamma$ with the counit. Therefore the outer triangle commutes. This shows that $\rhd$ is well-defined on objects.

Let $({\varphi},F) \colon (M,\cX, \rho) \to (M',\cX', \rho')$ be a morphism of $\chi$-coalgebras, and let $\alpha$  be a 2-cell $ (\Sigma, \sigma, \gamma) \Rightarrow (\Sigma', \sigma', \gamma')$ of distributive laws.  Consider the diagram
$$
\xymatrix@=3em{
G \Sigma M \ar[d]_-{\sigma M} \ar[r]^-{G \alpha M} & G \Sigma' M \ar[d]^-{\sigma' M} \ar[r]^-{G \Sigma' {\varphi}} & G \Sigma' M'F \ar[d]^-{\sigma' {M' F}} \\
\Sigma TM \ar[d]_-{\Sigma \rho} \ar[r]^-{\alpha {TM}} & \Sigma' TM \ar[d]^-{\Sigma' \rho} \ar[r]^-{\Sigma' T {\varphi}} & \Sigma' TM'F \ar[d]^-{\Sigma' \rho' F} \\
\Sigma SM \ar[r]_-{\alpha {SM}} & \Sigma' SM \ar[r]_-{\Sigma' S{\varphi}} & \Sigma' S M'F
}
$$
The top-left square commutes since $\alpha$ is a 2-cell, the top-right square commutes by naturality of $\sigma$, the bottom-left square commutes by naturality of $\alpha$, and the bottom-right square commutes since $({\varphi},F)$ is a $\chi$-coalgebra morphism. Thus the outer triangle commutes, which shows that $\alpha \rhd ({\varphi},F)$ is a $\chi$-coalgebra morphism.

It is clear that $\rhd$ respects identities and composition of morphisms (because the vertical and horizontal compositions of natural transformations are compatible with each other), 
so $\rhd$ is well-defined on morphisms. It is also routine to check that $\rhd$ satisfies the required axioms of Definition~\ref{leftCmodule}, thus proving the Theorem.
\end{proof}
The axioms of Definition~\ref{leftCmodule} tell us immediately:
\begin{cor}
For a fixed distributive law $\chi$, the category $\sR(\chi)$ of $\chi$-coalgebras is a strict left module category for the strict monoidal category $\dist(\chi, \chi)$.
\end{cor}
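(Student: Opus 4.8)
The plan is to obtain the Corollary as an immediate specialisation of Theorem~\ref{twist}. The key elementary observation is that in any $2$-category $\sC$, the hom-category $\sC(\cA, \cA)$ of endo-$1$-cells at a fixed $0$-cell $\cA$ is a \emph{strict} monoidal category, with tensor product given by horizontal composition $\circ$, unit object the identity $1$-cell $1_\cA$, and strict associativity and unitality coming directly from the corresponding axioms for $\circ$ in $\sC$. Applying this with $\sC = \dist$ and $\cA$ the $0$-cell $(\cB, \chi, T, S)$ produces the strict monoidal category $\dist(\chi, \chi)$ appearing in the statement.

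Next I would recall, via the Remark preceding Definition~\ref{leftCmodule}, that a left $\dist$-module is precisely a $2$-functor $\sR \colon \dist \to \cat$. Restricting the component of this $2$-functor at the $0$-cell $\chi$ to the endo-hom-category gives a strict monoidal functor $\dist(\chi, \chi) \to [\sR(\chi), \sR(\chi)]$, where the target carries its composition monoidal structure; under the closed symmetric monoidal structure of $\cat$ this transposes back to the action functor $\rhd \colon \dist(\chi, \chi) \times \sR(\chi) \to \sR(\chi)$ supplied by Theorem~\ref{twist}. More concretely, the two commuting diagrams of Definition~\ref{leftCmodule}, taken in the special case $\cA = \cB = \cC = \chi$, assert exactly that $\rhd$ is strictly associative with respect to $\circ$ and strictly unital with respect to $1_\chi$; both are immediate from the explicit formula $(\Sigma, \sigma, \gamma) \rhd (M, \cX, \rho) = (\Sigma M, \cX, \gamma M \circ \Sigma\rho \circ \sigma M)$ established in the proof of Theorem~\ref{twist}, together with the fact that the unit $1$-cell of $\dist(\chi,\chi)$ is $(1_\cB, 1, 1)$.

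Finally, I would conclude by noting that a strict left module category over a monoidal category is nothing more than an action functor that is strictly associative and strictly unital: the associativity and unit constraints of a general module category are here the identity natural transformations, so the pentagon- and triangle-type coherence axioms for module categories hold trivially. Hence $\sR(\chi)$ is a strict left $\dist(\chi, \chi)$-module category. There is essentially no obstacle in this argument, since all the substantive content already resides in Theorem~\ref{twist}; the only task is the bookkeeping of recognising the single-object restriction of Definition~\ref{leftCmodule} as the definition of a strict module category, which is routine.
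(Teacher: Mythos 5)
Your argument is correct and is essentially the paper's own: the corollary is obtained by specialising the left $\dist$-module structure of Theorem~\ref{twist} to the endo-hom-category $\dist(\chi,\chi)$, whose strict monoidal structure comes from horizontal composition, so that the two diagrams of Definition~\ref{leftCmodule} with $\cA=\cB=\cC=\chi$ are precisely strict associativity and unitality of the action. The paper records no further detail, treating this as immediate, and your bookkeeping fills in exactly what is left implicit.
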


This is the main result of~\cite{2}.
Dually, one can construct the category $\sL(\chi)$ of $\chi$-opcoalgebras, and show that $\sL$ is a right $\dist$-module, and that $\sL(\chi)$ is a strict right module category for $\dist(\chi, \chi)$.

In summary, we have shown that for a 1-cell of distributive laws
$$
\xymatrix{(\cB, \chi, T, S) \ar[rr]^-{(\Sigma, \sigma, \gamma)} & & (\cD, \tau, G, C)}
$$
we can canonically twist any $\chi$-coalgebra and any $\tau$-opcoalgebra
$$
(\xymatrix{\cX \ar[r]^-{M} & \cB
}, \xymatrix{ TM \ar@{=>}[r]^-{\rho} & SM }),\qquad
(\xymatrix{\cD \ar[r]^-{N} & \cY
}, \xymatrix{ NC \ar@{=>}[r]^-{\lambda} & NG })
$$
by $(\Sigma, \sigma, \gamma)$ to yield a $\tau$-coalgebra
$$
(
\xymatrix{\cX \ar[r]^-{M} & \cB \ar[r]^-{\Sigma} & \cD},
\xymatrix{
G\Sigma M \ar[r]^-{\sigma M} & \Sigma TM \ar[r]^-{\Sigma \rho} & \Sigma SM \ar[r]^-{\gamma M} & C\Sigma M
}
)
$$
and a $\chi$-opcoalgebra
$$
(
\xymatrix{
\cB \ar[r]^-{\Sigma} & \cD \ar[r]^-N & \cY
},
\xymatrix{
N\Sigma S \ar[r]^-{N\gamma} & NC\Sigma \ar[r]^-{\lambda \Sigma} & NG\Sigma \ar[r]^-{N\sigma} & N\Sigma T
}
).
$$
This will be applied in Section~\ref{twistsec} below in
the context of duplicial functors.

\subsection{From coalgebras for comonads to those for
distributive laws}
In the remainder of this section, we discuss a class of
coefficient objects that
lead
to contractible simplicial objects; see
Proposition~\ref{trivcontract} below.  In the Hopf
algebroid setting, these are the Hopf (or entwined)
modules as studied in \cite{MR3020336,MR1604340}.

Note first that
$T$-coalgebras can be equivalently viewed as
$1$-cells \emph{from} the trivial distributive law, and dually, $T$-opcoalgebras correspond to $1$-cells \emph{to} the trivial distributive law:
\begin{prop}\label{triv1cell}
Let $\chi \colon TS\Rightarrow  ST$ be a comonad distributive law. Then:
\begin{enumerate}
  \item Any $S$-coalgebra $(M, \cX, \nabla^S)$ defines a 1-cell 
    $$
\xymatrix{
(\cX, 1, 1, 1) \ar[rr]^-{(M, \epsilon M, \nabla^S)} && (\cB, \chi, T, S)
}
  $$
  in $\dist$, and all 1-cells $1 \to \chi$ are of this form.
  \item Any $T$-opcoalgebra $(N, \cY, \nabla^T)$
defines a $1$-cell
$$
\xymatrix{
 (\cB, \chi, T, S) \ar[rr]^-{(N,  \nabla^T, N\epsilon)} && (\cY, 1, 1, 1).
}
$$
in $\dist$, and all $1$-cells $\chi \to 1$ are of this form.
\end{enumerate}
\end{prop}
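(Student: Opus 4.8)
The plan is to prove (1) directly and then obtain (2) by the mirror-image (dual) argument. Recall the explicit description of $1$-cells of $\dist = \cDist(\cat)$ recorded after Definition~\ref{finallyover}: a $1$-cell $(\cX,1,1,1)\to(\cB,\chi,T,S)$ is a triple $(M,\sigma,\gamma)$ in which $M\colon\cX\to\cB$ is a functor, $(M,\sigma)\colon(\cX,1)\to(\cB,T)$ is an opmorphism of comonads (so $\sigma\colon TM\Rightarrow M$), $(M,\gamma)\colon(\cX,1)\to(\cB,S)$ is a morphism of comonads (so $\gamma\colon M\Rightarrow SM$), and $\sigma,\gamma$ satisfy the Yang--Baxter hexagon. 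The first thing to observe is that, because the comultiplication and counit of the identity comonad on $\cX$ are themselves identities, the comonad-morphism axioms for $(M,\gamma)$ collapse exactly to coassociativity and counitality of $\gamma$ with respect to $S$, i.e. to $\gamma$ being an $S$-coalgebra structure on $M$ in the sense of Section~\ref{emem}; while the opmorphism axioms for $(M,\sigma)$ force $\sigma=\epsilon M$ through the counit condition and are then automatically satisfied by the counit laws of the comonad $T$. So, modulo the Yang--Baxter equation, $1$-cells $1\to\chi$ already coincide with $S$-coalgebras, the data being precisely $(M,\epsilon M,\nabla^S)$.

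The only genuine computation is the Yang--Baxter hexagon for $(M,\epsilon M,\nabla^S)$, which at the same time supplies the forward direction (that the triple really is a $1$-cell). Specialising the hexagon of Definition~\ref{finallyover} with $\Sigma=M$, source distributive law the identity, target $\chi$, $\sigma=\epsilon M$ and $\gamma=\nabla^S$, the two legs reduce to
$$
\nabla^S\circ\epsilon_M
\qquad\text{and}\qquad
S\epsilon_M\circ\chi_M\circ T\nabla^S .
$$
I would rewrite the second leg using the counit axiom $S\epsilon\circ\chi=\epsilon S$ of the comonad distributive law $\chi$ (the second displayed diagram in Definition~\ref{finallyover}, for the counit $\epsilon$ of $T$), which turns $S\epsilon_M\circ\chi_M$ into $\epsilon_{SM}$, and then apply naturality of the counit $\epsilon\colon T\Rightarrow 1$ to the morphism $\nabla^S\colon M\Rightarrow SM$ to get $\epsilon_{SM}\circ T\nabla^S=\nabla^S\circ\epsilon_M$; the two legs agree. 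For the converse: any $1$-cell $(M,\sigma,\gamma)\colon(\cX,1,1,1)\to(\cB,\chi,T,S)$ automatically satisfies Yang--Baxter, and the first step already pins down $\sigma=\epsilon M$ and exhibits $\gamma$ as an $S$-coalgebra structure on $M$, so every such $1$-cell is of the claimed form.

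Part (2) is dual and I would run the mirror argument: opmorphisms of comonads $(N,\nabla^T)\colon(\cB,T)\to(\cY,1)$ into the trivial comonad are precisely $T$-opcoalgebra structures $\nabla^T\colon N\Rightarrow NT$ (with the target comonad trivial, the opmorphism axioms become the opcoalgebra axioms), the $S$-component is forced to equal $N\epsilon$ (the counit of $S$ whiskered by $N$), and the Yang--Baxter hexagon again reduces to an identity, this time using the counit axiom $\epsilon T\circ\chi=T\epsilon$ of $\chi$ (the fourth displayed diagram in Definition~\ref{finallyover}, for the counit $\epsilon$ of $S$) together with naturality of $\nabla^T$ against the counit of $S$. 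Alternatively one could deduce (2) from (1) by passing to $\cat^*$.

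I do not expect a serious obstacle here: the argument is an unwinding of the nested duals in $\dist=\cmd(\cmd(\cat)^*)^*$ combined with one application each of a distributive-law counit axiom and the naturality of a comonad counit. The only delicate point is the bookkeeping of variances — fixing the directions of $\sigma$, $\gamma$ and of the two hexagon legs correctly under the several opposite-category operations built into $\dist$ — but once the shapes $\sigma\colon TM\Rightarrow M$, $\gamma\colon M\Rightarrow SM$ (for (1)) and $\sigma\colon N\Rightarrow NT$, $\gamma\colon NS\Rightarrow N$ (for (2)) are fixed, everything else is forced.
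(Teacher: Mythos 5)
Your argument is correct and complete: the variances of $\sigma$ and $\gamma$ are identified properly, the counit axioms do force $\sigma=\epsilon M$ in (1) and $\gamma=N\epsilon$ in (2) while the remaining data is exactly an $S$-coalgebra (resp.\ $T$-opcoalgebra) structure, and the Yang--Baxter hexagon does collapse to the counit axiom of $\chi$ combined with naturality of the relevant counit. The paper states this proposition without proof, and your unwinding is precisely the intended verification.
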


Furthermore, these $1$-cells can also be viewed as $ \chi
$-coalgebras:

\begin{prop}\label{triv}
Let $\chi \colon TS \Rightarrow ST$ be a comonad
distributive law. Then:
\begin{enumerate}
\item Any $S$-coalgebra $(M,
\cX, \nabla^S)$ defines a
$\chi$-coalgebra $(M, \cX, \epsilon
\nabla^S)$.  \item Any $T$-opcoalgebra $(N,
\cY, \nabla^T)$ defines a
$\chi$-opcoalgebra $(N, \cY, \nabla^T
\epsilon^S)$.
\end{enumerate}
\end{prop}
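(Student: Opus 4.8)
The plan is to verify, directly, the two commuting squares of Definition~\ref{coalgdef} (and, for part~(2), their duals). For part~(1), let $\epsilon^{T},\delta^{T}$ and $\epsilon^{S},\delta^{S}$ denote the counits and comultiplications of the comonads $T$ and $S$, and put $\rho := \nabla^{S}\circ(\epsilon^{T}M)\colon TM\to SM$; this is the transformation called $\epsilon\nabla^{S}$ in the statement. The counitality square is disposed of at once: $(\epsilon^{S}M)\circ\rho = (\epsilon^{S}M)\circ\nabla^{S}\circ(\epsilon^{T}M) = \epsilon^{T}M$, using only the counit axiom $(\epsilon^{S}M)\circ\nabla^{S}=\idty_{M}$ of the $S$-coalgebra $(M,\cX,\nabla^{S})$.

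For the coassociativity square I would show that both legs equal $(S\nabla^{S})\circ\nabla^{S}\circ(\epsilon^{T}M)$. The first leg is $(\delta^{S}M)\circ\rho=(\delta^{S}M)\circ\nabla^{S}\circ(\epsilon^{T}M)$, which becomes $(S\nabla^{S})\circ\nabla^{S}\circ(\epsilon^{T}M)$ by coassociativity of $\nabla^{S}$. The second leg is $(S\rho)\circ(\chi M)\circ(T\rho)\circ(\delta^{T}M)$; expanding $\rho$ at both ends, the inner composite $(T\epsilon^{T}M)\circ(\delta^{T}M)$ collapses to $\idty_{TM}$ by the counit triangle identity for $T$, then $(S\epsilon^{T}M)\circ(\chi M)=\epsilon^{T}(SM)$ by the counit compatibility $(S\epsilon^{T})\circ\chi=\epsilon^{T}S$ of the distributive law $\chi$, and finally $\epsilon^{T}(SM)\circ(T\nabla^{S})=\nabla^{S}\circ(\epsilon^{T}M)$ by naturality of $\epsilon^{T}$ against $\nabla^{S}$; what survives is precisely $(S\nabla^{S})\circ\nabla^{S}\circ(\epsilon^{T}M)$. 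The one step that genuinely invokes the distributive-law hypothesis --- as opposed to mere comonad and coalgebra axioms and the interchange law --- is the appeal to the counit compatibility of $\chi$, and that is the only point at which care is needed; everything else is routine whiskering bookkeeping, so I do not anticipate a serious obstacle.

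Part~(2) is formally dual. Given a $T$-opcoalgebra $(N,\cY,\nabla^{T})$ with $\nabla^{T}\colon N\Rightarrow NT$, set $\lambda:=\nabla^{T}\circ(N\epsilon^{S})\colon NS\to NT$ --- the transformation $\nabla^{T}\epsilon^{S}$ --- and run the same computation with $T$ and $S$ interchanged and all $2$-cells whiskered on the appropriate side, now using the counit triangle identity for $S$, the other counit compatibility $(\epsilon^{S}T)\circ\chi=T\epsilon^{S}$, and naturality of $\epsilon^{S}$, to obtain the two diagrams defining a $\chi$-opcoalgebra. Alternatively, part~(1) can be deduced from the module structure of Theorem~\ref{twist}: by Proposition~\ref{triv1cell} the $S$-coalgebra $(M,\cX,\nabla^{S})$ determines a $1$-cell $(M,\epsilon M,\nabla^{S})\colon(\cX,1,1,1)\to(\cB,\chi,T,S)$ of $\dist$, the identity endofunctor of $\cX$ carries its unique ($1$-)coalgebra structure, and acting by the former on the latter via $\rhd$ returns, by the explicit formula of Theorem~\ref{twist}, exactly the pair $(M,\cX,\nabla^{S}\circ(\epsilon^{T}M))$; hence it is a $\chi$-coalgebra, and part~(2) follows analogously using the dual right $\dist$-module $\sL$.
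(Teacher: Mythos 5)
Your proof is correct. The paper states Proposition~\ref{triv} without proof, so there is nothing to match line by line; but both of your arguments are sound and both are natural readings of the paper's intent. Your direct verification correctly identifies the only non-routine ingredients: the comonad counit identity $T\epsilon\circ\delta=1$, the counit-compatibility axiom $S\epsilon\circ\chi=\epsilon S$ of the distributive law (respectively $\epsilon T\circ\chi=T\epsilon$ for part~(2)), naturality of $\epsilon$, and the (op)coalgebra axioms of $\nabla^S$ (resp.\ $\nabla^T$) — and the bookkeeping all checks out, including the correct reading of $\epsilon\nabla^S$ as $\nabla^S\circ\epsilon M\colon TM\to M\to SM$. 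Your alternative derivation — acting with the $1$-cell $(M,\epsilon M,\nabla^S)\colon(\cX,1,1,1)\to(\cB,\chi,T,S)$ of Proposition~\ref{triv1cell} on the unique $1$-coalgebra structure on $1_\cX$ via the $\dist$-module structure of Theorem~\ref{twist} — is arguably the argument the paper has in mind, since the proposition is introduced with the phrase ``these $1$-cells can also be viewed as $\chi$-coalgebras''; it buys a one-line proof at the cost of invoking the heavier Theorem~\ref{twist}, whereas your direct check is self-contained. Either argument alone would suffice.
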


Note, however, that there is no known way to associate
a $1$-cell in $\dist$ to an arbitrary $\chi$-(op)coalgebra.

\subsection{Entwined algebras}
Finally, we describe how
$ \chi $-coalgebras as in Proposition~\ref{triv}
are in some sense lifts of
entwined (also called mixed) algebras; throughout,
$\theta \colon BC \Rightarrow CB$ is a mixed
distributive law between a monad $B$ and a
comonad $C$ on a category $\cA$.

\begin{defn}
Let $M$ be an object in $\cA$ equipped with both a
$B$-algebra structure $\beta \colon B M \rightarrow M$ and
a $C$-coalgebra structure $\nabla \colon M \rightarrow
CM$. We say that the $(M, \beta,
\nabla)$ is an \emph{entwined algebra with respect to
$\theta$}, or a $\theta$-\emph{entwined algebra}, if the diagram
\begin{equation}\label{entwinedcondition}		
\begin{array}{c}
\xymatrix{ B M
\ar[d]_-{B \nabla} \ar[r]^-{\beta} & M
\ar[r]^-{\nabla} & CM\\ BC \rM \ar[rr]_{\theta
M} & &
CB M \ar[u]_-{C \beta} }
\end{array}
\end{equation}
commutes.
\end{defn}

As before, we have a version for functors:
\begin{defn}
A functor $M \colon \cX \to \cA$ has the structure of a \emph{$\theta$-entwined algebra} if $M$ is a $[\cX, \theta]$-entwined algebra.
\end{defn}

Once again by choosing $\cX = \mathbbm{1}$ we see that these two definitions are equivalent. Without loss of generality,
we consider entwined
algebras as objects in $\mathcal A$.
With the obvious notion of morphism (given by natural
transformations compatible with $\nabla$ and $ \beta
$), entwined algebras form a category; this is
evidently isomorphic to the category
$(\cA^B)^{C^\theta}$ of
$C^\theta$-coalgebras in $\cA^B$.
Dually we define an entwined opalgebra structure
on a
functor $N \colon \cA \rightarrow \cY$
for a distributive law $CB \Rightarrow BC$.

Suppose again that we are in the comonad setting of Section~\ref{mainapp}, so we have an adjunction $F\dashv U$ and a square
$$
\xymatrix{\cB \ar[r]^ U  \ar[d]_ S & \ar@{}[dl]^(.25){}="a"^(.75){}="b" \ar@{=>}_-{\Omega} "a";"b"   \cA \ar[d]^ C\\
	\cB \ar[r]_U  & \cA}
$$
where $(U,\Omega)$ is an iso-opmorphism of comonads, giving rise to distributive laws
$$\chi \colon TS \Rightarrow ST, \qquad \theta \colon BC \Rightarrow CB$$ via Corollary~\ref{arisec}.
The following proposition explains the relation
between $\theta$-entwined algebras and
$\chi$-coalgebras:

\begin{prop}
Let $M \colon \cX \rightarrow \cB$
be a functor and let $\nabla \colon M \Rightarrow
SM$ be a natural transformation.
\begin{enumerate}
\item If $\nabla$ is an
$S$-coalgebra structure,
then the structure morphisms
$$
\xymatrix{ BUM = UFUM
\ar[r]^-{U \epsilon M} & UM}, \qquad \xymatrix{
UM \ar[r]^-{U \nabla} & USM
\ar[r]^-{\Omega^{-1}} &CUM}
$$
turn $UM$ into an entwined
algebra with respect to $\theta$.
\item If $\cB =
\cA^B$, then the converse of (1) holds.
\end{enumerate}
\end{prop}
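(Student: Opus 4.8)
The plan is to treat the two parts separately, in each case recognising the asserted structure as one the machinery of this chapter already delivers. For part~(1) we are given an $S$-coalgebra structure $\nabla\colon M\Rightarrow SM$, and three things must be checked: that $U\epsilon M$ is a $B$-algebra structure on $UM$, that $\Omega^{-1}M\circ U\nabla$ is a $C$-coalgebra structure on $UM$, and that these two fit into the entwining square~\eqref{entwinedcondition}. The first is immediate, since $(UM,U\epsilon M)$ is precisely the value at $M$ of the comparison functor $U^{U\epsilon}$. For the second I would combine Proposition~\ref{triv}(1), which turns $(M,\nabla)$ into the $\chi$-coalgebra $(M,\rho)$ with $\rho$ the composite $TM\xrightarrow{\epsilon M}M\xrightarrow{\nabla}SM$, with Proposition~\ref{chicoalgprop}(1), whose proof records that the $C$-coalgebra structure on $UM$ attached to $\rho$ is the adjoint transpose of $\rho$ followed by $\Omega^{-1}$; a triangle identity identifies that transpose with $U\nabla$, so the structure is exactly $\Omega^{-1}M\circ U\nabla$.

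The only genuinely new content of part~(1) is the entwining condition. Writing $\beta=U\epsilon M$, $\nabla'=\Omega^{-1}M\circ U\nabla$ and $B=UF$, square~\eqref{entwinedcondition} at $UM$ asks that $\nabla'\circ\beta=C\beta\circ\theta UM\circ UF\nabla'$. Naturality of the counit $\epsilon\colon FU\Rightarrow 1$ rewrites the left-hand side as $\Omega^{-1}M\circ U\epsilon SM\circ UFU\nabla$, and $UF\nabla'=UF\Omega^{-1}M\circ UFU\nabla$ on the right, so it suffices to prove the identity of $2$-cells $\Omega^{-1}M\circ U\epsilon SM=CU\epsilon M\circ\theta UM\circ UF\Omega^{-1}M\colon UFUSM\to CUM$. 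This is an immediate consequence of diagram~\eqref{univmonad} of Proposition~\ref{univprop}, taken in the single-adjunction case of Corollary~\ref{arisec} (where $V=U$, $G=F$) and evaluated at $M$: composing that diagram with $\Omega^{\pm1}M$ on the appropriate sides and cancelling $\Omega M\circ\Omega^{-1}M=1$ yields exactly the required identity. I expect this verification to be the main obstacle, since it is the one place where the explicit shape of $\theta$ in terms of the mate $\Lambda$ really enters, albeit in the convenient packaged form of Proposition~\ref{univprop}.

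For part~(2) the essential point is that when $\cB=\cA^{B}$ the forgetful functor $U=U^{B}$ is faithful and the comparison functor $U^{U\epsilon}\colon\cA^{B}\to\cA^{B}$ is the identity. Thus the comparison $1$-cell of Theorem~\ref{distcomp1cell} becomes $(\idty,1,\tilde\Omega^{-1})\colon(\cA^{B},\chi,\tilde B,S)\to(\cA^{B},\tilde\theta,\tilde B,C^{\theta})$, whose $S$-component $(\idty,\tilde\Omega^{-1})\colon(\cA^{B},S)\to(\cA^{B},C^{\theta})$ is a morphism of comonads with identity underlying functor and invertible $2$-cell; hence $\tilde\Omega\colon C^{\theta}\Rightarrow S$ is an isomorphism of comonads on $\cA^{B}$. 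Now a $\theta$-entwined algebra is by definition an object of $(\cA^{B})^{C^{\theta}}$, and the entwined algebra $(UM,U\epsilon M,\nabla')$ is, regarded as an object of $\cA^{B}$, just $M$ equipped with the $C^{\theta}$-coalgebra structure $\nabla'$ (here using $U^{B}C^{\theta}=CU^{B}$ from Proposition~\ref{everydistlawarises} and the entwining square to see that $\nabla'$ is a $B$-algebra morphism); transporting along the isomorphism $\tilde\Omega$ yields the $S$-coalgebra $(M,\tilde\Omega M\circ\nabla')$. Since $U^{B}\tilde\Omega=\Omega$, the morphism underlying $\tilde\Omega M\circ\nabla'$ is $\Omega M\circ\Omega^{-1}M\circ U\nabla=U\nabla$, so faithfulness of $U^{B}$ forces $\tilde\Omega M\circ\nabla'=\nabla$, and $\nabla$ is an $S$-coalgebra structure. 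Equivalently, one may check the two $S$-coalgebra axioms for $\nabla$ directly after applying the faithful $U^{B}$, using only that $\nabla'$ is a $C$-coalgebra structure together with the comonad-opmorphism compatibilities of $\Omega$; this shows in passing that the hypothesis $\cB=\cA^{B}$ --- that is, faithfulness of $U^{B}$ --- is precisely what makes the converse go through.
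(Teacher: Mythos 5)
Your proof is correct, but it takes a genuinely more hands-on route than the paper's. The paper disposes of both parts in two lines by appealing to Section~\ref{comparisonsection}: the morphism of comonads $(U^{U\epsilon},\tilde\Omega^{-1})\colon(\cB,S)\to(\cA^B,C^\theta)$ lifts to a functor $\cB^S\to(\cA^B)^{C^\theta}$ between Eilenberg--Moore categories, the object map of that lift \emph{is} the construction in part~(1), and when $\cB=\cA^B$ the lift is the identity, giving part~(2). You instead verify the three pieces of data separately, and in particular you establish the entwining square~\ref{entwinedcondition} by reducing it, via naturality of $\epsilon$, to the identity $\Omega^{-1}\circ U\epsilon S=CU\epsilon\circ\theta U\circ UF\Omega^{-1}$, which is exactly diagram~\ref{univmonad} of Proposition~\ref{univprop} conjugated by $\Omega$; this computation is right, and it is precisely the content hidden inside the paper's unproved assertion that the object map of the lifted comparison functor is the construction of part~(1). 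So your approach buys explicitness at the cost of brevity. Your part~(2) is in fact more careful than the paper's: the paper asserts that $\cB=\cA^B$ forces $\Omega=1$ and $S=C^\theta$, which the hypotheses do not literally give (only that $\tilde\Omega\colon C^\theta\Rightarrow S$ is a comonad isomorphism), whereas your transport-of-structure argument along $\tilde\Omega$, combined with $U^B\tilde\Omega=\Omega$ and faithfulness of $U^B$, handles an arbitrary lift $(S,\Omega)$. The one place where you lean on a result that the text itself only sketches is Proposition~\ref{chicoalgprop}(1), used to see that $\Omega^{-1}M\circ U\nabla$ satisfies the $C$-coalgebra axioms; citing it is legitimate, and the direct transport of the coassociativity and counitality squares through the opmorphism $(U,\Omega)$ that you sketch at the end would close even that.
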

\begin{proof}
By the remarks in Section~\ref{comparisonsection}, the comparison functor $U^{U\epsilon} \colon \cB \to \cA^B$ is part of a morphism of comonads
$$
\xymatrix{
(\cB, S) \ar[rr]^-{(U^{U\epsilon}, \tilde\Omega^{-1})} &&(\cA^B, C^\theta)
}
$$ thus inducing a lifting
$$
\xymatrix{
\cB^S \ar[d] \ar[rr]^-{(U^{U\epsilon})^{\tilde \Omega^{-1}}} && (\cA^B )^{C^\theta} \ar[d]\\
\cB \ar[rr]_-{U^{U\epsilon}} && \cA^B
}
$$
where the undecorated arrows denote the appropriate forgetful functors.
The object map of the functor in the top row of this
diagram is the construction in part (1). If $\cB =
\cA^B$, then $U^{U \epsilon} = 1$, $\Omega =
1$ and $S = C^\theta$, so the top functor in the diagram is just the identity, implying part (2).
\end{proof}

Dually, entwined opalgebra structures on a $B$-opalgebra $(N, \cY , \omega)$ are related to
 $\chi$-opcoalgebras if
the codomain $\cY$ of $N$ is a
category with coequalisers. First, we define a functor
$N_B \colon \cA^B \rightarrow \cY$
that takes a $B$-algebra morphism $f
\colon (X, \alpha) \rightarrow (Y, \beta)$ to
$N_B(f)$ defined using coequalisers:
$$
\xymatrix@C=5em{NB X \ar[d]_-{N B f}
\ar@<+.5ex>[r]^-{\omega_X} \ar@<-.5ex>[r]_-{N \alpha} &
\ar[d]_-{N f} N X \ar@{->>}[r]^-{q_{(X,\alpha)}} &
N_B (X, \alpha) \ar@{.>}[d]^-{N_B(f)} \\ NB
Y  \ar@<+.5ex>[r]^-{\omega_Y}
\ar@<-.5ex>[r]_-{N \beta} & N Y \ar@{->>}[r]_-{q_{(Y,
\beta)}} & NB(Y, \beta) }
$$
Thus $ N_B$ generalises the functor
${-}\otimes_B N$ defined by a left module $N$
over a ring $B$ on the category of right $B$-modules.

Suppose that
$\theta$ is invertible, and that $N$ admits the
structure of an entwined $\theta^{-1}$-opalgebra, with
coalgebra structure $\nabla \colon N \Rightarrow
CN$. There
are two commutative diagrams:
$$
	\xymatrix{ NB
	X\ar[d]_-{\nabla_{ B X}} \ar[rr]^-{\omega_X} &&
	 N X
	\ar[dd]^-{\nabla_X} \\  N C B X
	\ar[d]_-{ N \theta^{-1}_X} & & \\  N B C X
	\ar[rr]_-{\omega_{ C X}} & &  N C X
	}\quad\quad\quad
	\xymatrix{  N B X
	\ar[d]_-{\nabla_{ B X}}
	\ar[rr]^-{ N\alpha} & &
	 N X \ar[dd]^-{\nabla_X} \\
	 N C B X
	\ar[d]_-{ N \theta^{-1}_X} & & \\
	 N B C X
	\ar[r]_-{ N \theta_X} &
	 N C B X
	\ar[r]_-{ N C \alpha} &
	 N C X }
$$
Hence, using coequalisers,
$\nabla$ extends to a natural transformation $\tilde
\nabla \colon  N_B  \Rightarrow  N_B
 C^\theta$,  and in fact it gives $ N_B$ the structure
of a $C^\theta$-opcoalgebra. Since
$\tilde{\theta}^{-1} \colon C^\theta
\tilde{B}  \Rightarrow \tilde{B}C^\theta$
is a comonad distributive law on
$\cA^B$, Proposition~\ref{triv} gives examples of
homologically trivial
$ \tilde \theta^{-1}$-opcoalgebras
of the form
$(N_B,
\cY, \tilde{\nabla} \epsilon)$.

\section{Examples in \texorpdfstring{$\set$}{Set}}\label{examplesect}
Our main examples of distributive laws and their coalgebras are homological in flavour, and come later in Chapter~\ref{EXAMPLES}. Here, however, we give some simple concrete examples in $\set$ to illustrate the concepts of the current chapter.
\subsection{Monads and comonads}

\begin{exa}\label{finordset}
Let $C$ be a set. This has a unique coalgebra structure with respect to $\set$ with the monoidal product given by the Cartesian product $\times$. Thus, the functor $C \times{-}$ becomes a comonad on $\set$ with counit and comultiplication given by
\begin{align*}
&\xymatrix@R=1em{C \times X \ar[r]^-\epsilon & X \\ (c,x) \ar@{|->}[r] & x} & &\xymatrix@R=1em{C \times X \ar[r]^-{\delta} & C \times C \times X \\ (c, x) \ar@{|->}[r] & (c, c, x)}
\end{align*}
\end{exa}

\begin{exa}
Assigning to a set $X$ its powerset $P(X)$ forms a monad $P$ on $\set$, where the unit and multiplication are given by
\begin{align*}
&\xymatrix@R=1em{X \ar[r]^-\eta & P(X) \\ x \ar@{|->}[r] & \{x\}} & &\xymatrix@R=1em{P(P(X)) \ar[r]^-{\mu} & P(X) \\
A \ar@{|->}[r] & \bigcup A
}
\end{align*}
\end{exa}

\begin{exa}\label{listmonad}
The functor $L \colon \set \to \set$, defined on objects by the disjoint union
$$
L(X) := \bigsqcup_{n \ge 0} X^n
$$
is part of a monad. We view $L(X)$ as the set of lists (including the empty list) of elements in $X$, and we denote these lists with square brackets as is common in computer science, for example $[x_1, \ldots, x_n]$. The monad structure is given by
\begin{align*}
&\xymatrix@R=1em{X \ar[r]^-\eta & L(X) \\ x \ar@{|->}[r] & [x]} & &\xymatrix@R=1em{L(L(X)) \ar[r]^-{\mu} & L(X) \\  \ar@{|->}[r] [[x_{1,1}, \ldots, x_{1, n_1}],  \ldots] &[x_{1,1}, \ldots, x_{1, n_1}, \ldots] }
\end{align*}
Here $\mu$ is concatenation, i.e.\ removal of the inner brackets.
\end{exa}
\begin{exa}\label{nonemptylistmonad}
The non-empty list functor $L^+ \colon \set \to \set$, given by
$$
L^+(X) := \bigsqcup_{n > 0} X^n
$$
becomes a monad with the same unit and multiplication of the list monad of Example~\ref{listmonad}. However, it also a comonad, with structure given by
\begin{align*}
&\xymatrix@R=1em{L^+(X) \ar[r]^-\epsilon & X \\ [x_1, \ldots, x_n] \ar@{|->}[r] & x_1} & &\xymatrix@R=1em{L^+(X) \ar[r]^-{\delta} & L^+(L^+(X)) \\ [x_1, \ldots, x_n] \ar@{|->}[r] & [ [x_1, \ldots, x_n], [x_2, \ldots, x_n], \ldots, [x_n]]}
\end{align*}
Explicitly, $\delta$ takes a list and removes the first element iteratively until the list is empty, while at each stage storing the result in the output list. This procedure is known as giving the \emph{tails} of the list.
\end{exa}
\begin{exa}\label{monoidcomonad}
Let $M$ be a monoid (with identity element $e$). The functor $S \colon \set \to \set$ given by $S(X) = X^M$ on objects (this is the set of functions from $M$ to $X$) becomes a comonad with structure given by
\begin{align*}
&\xymatrix@R=1em{X^M \ar[r]^-\epsilon & X \\ f \ar@{|->}[r] & f(e)} & &\xymatrix@R=1em{X^M \ar[r]^-{\delta} & (X^M)^M \cong X^{M \times M}\\ f \ar@{|->}[r] & \left((m,n) \mapsto f(mn)\right)}
\end{align*}
\end{exa}

\begin{exa}\label{filter}
Recall that a \emph{filter} $\sF$ on a set $X$ is a non-empty subset of $P(X)$ such that:
\begin{enumerate}
\item The set $X$ is in $\sF$, and $A,B \in \sF \implies A \cap B \in \sF$.
\item The empty set $\emptyset$ is not in $\sF$.
\item If $B\subseteq X$ and there exists $A \in \sF$ with $A \subseteq B$, then $B \in \sF.$
\end{enumerate}
An \emph{ultrafilter} $\sU$ is a filter such that $A \cup B \in \sU \implies A \in \sU$ or $B \in \sU$.
\end{exa}

There is a functor $F \colon \set \to \set$ which assigns to $X$ the set $FX$ of filters on $X$. A function $f \colon X \to Y$ is mapped to the function $Ff \colon FX \to FY$, defined by
$$
F(f)(\sF) = \{ A \subseteq Y \mid f^{-1}(A)  \in \sF\}.
$$
Given $\bG \in FFX$, we define the \emph{Kowalski sum} to be the filter
$$
\mu(\bG) = \{ A \subseteq X \mid \{\sF \in FX \mid A \in \sF \} \in \bG\},
$$
and given $x \in X$, we define the \emph{principal ultrafilter} on $x$ to be the (ultra)filter
$$
\eta(x) = \{ A \subseteq X \mid x \in A\}.
$$
These define natural transformations $\mu \colon FF \Rightarrow F$ and $\eta \colon 1 \Rightarrow F$ turning $F$ into a monad. By restricting to ultrafilters, we get an additional monad $U$ on $\set$. See~\cite[Chapter~II]{MR3307673} for more information on these monads.

\begin{exa}
Given a set $X$, let $DX$ be the set of (countable) probability distributions on $X$, that is,
$$
DX = \left\{ p \in [0,1]^X \ \Big|\  \left|p^{-1}(0, 1]\right| \le \aleph_0,\ \sum_{x \in X} p(x) = 1 \right\}.
$$
Given a function $f \colon X \to Y$, we get a function $Df \colon DX \to DY$ which maps a distribution $p \colon X \to [0,1]$ to the distribution $Y \to [0,1]$ defined by
$$
y \mapsto \displaystyle\sum_{x \in f^{-1}(y)} p(x)
$$
This defines a functor $D \colon \set \to \set$. Let $\eta \colon X \to DX$ be the function which maps $x$ to the characteristic function $\chi_{\{x\}} \colon X \to [0,1]$, and let $\mu \colon DDX \to DX$ be the function that sends a distribution $P \colon DX \to [0,1]$ to the distribution $\mu(P) \colon X \to [0,1]$ defined by
$$
\mu(P) (x) = \sum_{p \in DX} P(p) \cdot p(x).
$$
These define natural transformations $\eta, \mu$ that turn $D$ into a monad.

The $D$-algebras are \emph{superconvex spaces}, i.e.\ sets where one can take convex combinations of countably many elements, see~\cite{MR659884, MR1421176} for more information.
\end{exa}

\subsection{Distributive laws}
Let $C = \{c_0, \ldots, c_n\}$ be a non-empty, finite, totally-ordered set with $c_0 < \cdots < c_n$, and recall the comonad $C \times -$ of Example~\ref{finordset}.
\begin{exa}\label{suppreserving}
For any set $X$, define $\theta \colon P(C \times X) \to C \times PX$ by
$$
\theta (A \subseteq C \times X) = \begin{cases} (\sup \pi_1(A), \pi_2(A) ) & \mbox{ if } A \neq \emptyset \\
(c_0, \emptyset) & \mbox{ otherwise }
\end{cases}
$$
This defines a mixed distributive law $\theta \colon P(C \times{-}) \Rightarrow C \times P{-}$.
\end{exa}
\begin{exa}\label{supsagain}
There is a mixed distributive law $\theta \colon L(C \times {-}) \Rightarrow C \times L{-}$ defined by
$$
\theta [ (c_1, x_1), \ldots, (c_m, x_m) ] = \begin{cases}
(\sup_i c_i, [x_1, \ldots, x_m]) & \mbox{ if } m >0 \\
(c_0, [~]) & \mbox{ if } m = 0
\end{cases}
$$
where $[~]$ denotes the empty list.
\end{exa}

\begin{exa}
Let $\theta \colon D(C \times X) \to C \times DX$ be the map into the product, defined by the two maps
 \begin{align*}
&\xymatrix@R=1em{D(C \times X) \ar[r]^-{\theta_1} & C \\ p  \ar@{|->}[r] &\displaystyle\sup_{\sum_{x \in X} p(c,x) \neq 0} c} & &\xymatrix@R=1em{D(C\times X) \ar[r]^-{\theta_2} & DX\\ p \ar@{|->}[r] & \left( x \mapsto \displaystyle\sum_{c \in C} p(c,x)\right)}
\end{align*}
This defines a distributive law $\theta \colon D(C \times{-}) \Rightarrow C \times D{-}$.

\end{exa}
In Examples~\ref{11}--\ref{15}, let $M$ be a monoid, giving rise to the comonad $S$ of Example~\ref{monoidcomonad} which maps a set $X$ to $X^M$.
\begin{exa}\label{11}
The maps $\theta \colon L(X^M) \to (LX)^M$  given by
$$
\theta[f_1, \ldots, f_n](m) = [f_1(m), \ldots, f_n(m)]
$$
define a mixed distributive law $\theta \colon LS \Rightarrow SL$.
\end{exa}
\begin{exa}\label{13}
We define maps $\theta \colon P(X^M) \to (PX)^M$ as follows. For $A \in P(X^M)$ we let
$$
\theta(A ) (m) = \{ f(m) \mid f \in A\}.
$$
This defines a mixed distributive law $\theta \colon PS \Rightarrow SP$.
\end{exa}
\begin{exa}\label{14} Recall the filter monad $F$ of Example~\ref{filter}.
There is a mixed distributive law $\theta \colon FS \Rightarrow SF$, defined as follows. For each $m \in M$, let $\ev^m \colon X^M \to X$ denote the natural map $f \mapsto f(m)$. Then $\theta \colon F(X^M) \to (FX)^M$ is defined by
$$
\theta(\sF)(m) = F(\ev^m)(\sF).
$$
In a similar way, we get a mixed distributive law $US \Rightarrow SU$ by restricting to ultrafilters.
\end{exa}

\begin{exa}\label{15}
Define maps $\theta \colon D(X^M) \to (DX)^M$ as follows. For a distribution $p \colon X^M \to [0,1]$, $\theta(p)$ should be a function $M \to DX$, and so $\theta(p)(m)$ should be a distribution $X \to [0,1]$. We define this by
$$
\theta(p)(m)(x) = \sum_{\substack{f \in X^M \\ x = f(m)}} p(f).
$$
This defines a mixed distributive law $\theta \colon DS \Rightarrow SD$.

\end{exa}
\subsection{$\tilde\theta$-coalgebras and entwined algebras }
Recall that the mixed distributive laws $\theta$ in the preceding section lift to comonad distributive laws $\tilde\theta$ on the Eilenberg-Moore category of the relevant monad, cf.\ Section~\ref{extremalcase}.
\begin{exa}
Consider the powerset monad $P$. A $P$-algebra $(X, \beta)$ is the same as a partially ordered set such that every subset has a supremum. Indeed, the partial order is defined by
$$
x \le y \iff \beta\{x, y\} = y
$$
and $\beta \colon PX \to X$ points out the supremum of a given subset. Now consider the distributive law $\theta \colon P(C \times{-}) \Rightarrow C \times P{-}$ of Example~\ref{suppreserving}. By Proposition~\ref{chicoalgprop}, $\tilde\theta$-coalgebras in $\set^P$ correspond to $P$-algebras $(X, \beta)$ whose underlying set $X$ admits the structure of a $(C\times{-})$-coalgebra, which means that $X$ comes equipped with a partition of subsets indexed by the `colours' in $C$. We denote this partition by a colour function $\kappa \colon X \to C$. The $\theta$-entwined algebras are $P$-algebras of this form, such that $\kappa$ preserves suprema, i.e.\
$$
\kappa(\sup A) = \sup_{a \in A} \kappa(a)
$$
for all $A \subseteq X$.

\end{exa}

\begin{exa}\label{7}
Consider the list monad $L$. An $L$-algebra $(X, \beta)$ is the same thing as a monoid structure on $X$. The identity element $e$ is given by $\beta([~])$, and the multiplication is defined by $xy := \beta[x,y]$.
Let $\theta \colon L(C\times{-}) \Rightarrow C \times L{-}$ be the distributive law of Example~\ref{supsagain}.
By Proposition~\ref{chicoalgprop}, $\tilde\theta$-coalgebras in $\set^L$ correspond to monoids $X$ whose underlying set admits a partition $\kappa \colon X \to C$. The
$\theta$-entwined algebras are those monoids of this form such that the colour of a product in $X$ is the supremum of the colours in the product, that is
$$
\kappa(x_1 \cdots x_n ) = \sup_i \kappa (x_i).
$$
Note that the empty list must be included, so in particular the colour of the identity is minimal, that is
$$
\kappa(e) = c_0.
$$
The set $C$ becomes a monoid with unit $c_0$ and multipication given by $\sup$, and so $\kappa$ is a monoid map.
\end{exa}
\begin{exa}
Let $\theta \colon LS \Rightarrow SL $ be the distributive law of Example~\ref{11}. Let $X$ be a $L$-algebra (a monoid). By Proposition~\ref{chicoalgprop}, a $\tilde\theta$-coalgebra structure on $X$ is equivalent to a map $\nabla \colon X \to X^M$, or equivalently a map $\nabla_m \colon X \to X$ for each $m \in M$, such that
$\nabla_e = 1 $ and $\nabla_{mn} = \nabla_n \nabla_m$
for all $m,n \in M$. In other words, $\nabla$ defines a monoid map $M \to \set(X,X)^*$. The $\theta$-entwined algebras are those monoids $X$ equipped with such a map $\nabla$ whereby the image of $\nabla$ lies in the monoid endomorphisms of $X$.
\end{exa}
\begin{exa}
Let $\theta \colon PS \Rightarrow SP$ be the distributive law of Example~\ref{13}. By Proposition~\ref{chicoalgprop}, a $\tilde\theta$-coalgebra structure on a $P$-algebra $(X, \beta)$ is equivalent to a map $\nabla \colon X \to X^M$ as in Example~\ref{7}. The set $X^M$ can be given a partial order, defined by $f \le g$ if and only if $f(m) \le g(m)$ for all $m \in M$. This poset is itself a $P$-algebra, since given a subset $H \subseteq X^M$ we can define a function $\sup H \colon M \to X$ given by
$$
\sup H (m) = \sup \{ f(m) \mid f \in H \}
$$
which genuinely is the supremum of $H$. Thus the $\theta$-entwined algebra structures on $P$-algebras $(X, \beta)$ correspond to coassociative coactions $\nabla \colon X \to X^M$ which are sup-preserving.
\end{exa}

%
%

\chapter{Duplicial objects and cyclic homology}\label{CYCLIC}
We begin this chapter by recalling the definitions of simplicial and duplicial objects as well as cyclic homology in Section~\ref{simpmeth}. Following this,
in Section~\ref{cychomol} we recount the method of constructing duplicial objects from~\cite{MR2415479}. We emphasize
the self-duality of the situation by defining in fact two duplicial objects $C_T(N,M)$ and $C^*_S(N,M)$, arising from bar resolutions using comonads $T,S$, as well as suitable functors $M,N$. There is a canonical
pair of morphisms of duplicial objects between these which are mutual inverses if and
only if the two objects are cyclic (Theorem~\ref{cyc}).
In Section~\ref{duplobjsec}, we further develop the process of twisting a pair of coefficient objects
$M, N$ detailed previously in Section~\ref{twistcoeff}. We refrain from giving concrete examples in this chapter, and instead postpone this until Chapter~\ref{EXAMPLES}.

Apart from the basic definitions and cited material, the work in Sections~\ref{cychomol} and~\ref{duplobjsec} is original. The latter section is based on the work of~\cite[\S4.5--4.11]{1} but goes into a bit more detail.

\section{Simplicial methods}\label{simpmeth}
Here we go over the very basics of simplicial homotopy theory, see e.g.~\cite{MR2840650,MR1950475} for a thorough account. We also discuss the duplicial objects of Dwyer and Kan~\cite{MR826872}.
\subsection{The simplicial and augmented simplicial categories}

\begin{defn}\label{simpldef}
The \emph{simplicial category}, or the \emph{topologists' simplicial category}, is the category $\Delta$ whose objects are finite non-zero ordinals
$$
\mathbf n = \{0, 1, \dots, n\}
$$
and whose morphisms are the (weakly) order-preserving maps between these. The \emph{augmented simplicial category}, or the \emph{algebraists' simplicial category} is the category $\Delta_+$ whose objects are the same as $\Delta$ plus an additional object
$$
\mathbf{-1} = \emptyset
$$
and whose morphisms are again the (weakly) order-preserving maps.
\end{defn}
It is straightforward to show that the morphisms of $\Delta, \Delta_+$ are generated by functions
$$
\xymatrix{
\mathbf{n-1} \ar[r]^-{\delta_i} & \mathbf n
},\qquad
\xymatrix{
\mathbf{n + 1} \ar[r]^-{\sigma_i} & \mathbf n
}
$$
for all suitable $n$ and for all $0\le i \le n$, called \emph{face maps} and \emph{degeneracy maps} respectively,  such that the \emph{simplicial identities} are satisfied:
$$
\begin{array}{cc}
     \delta_j  \delta_i
     =
     \delta_i\delta_{j-1}
     &
    \mbox{ if } i < j
     \\
     \sigma_j  \sigma_i
     =
     \sigma_i  \sigma_{j+1}
     &
     \mbox{ if } i \le j
\end{array}
\qquad\qquad
\sigma_j \delta_i = \begin{cases}
\delta_i \sigma_{j-1} & \mbox{ if } i < j \\
1 &\mbox{ if } i = j, j+1 \\
\delta_{i-1} \sigma_j & \mbox{ if } i > j + 1
\end{cases}
$$

\subsection{The duplicial and cyclic categories}
\begin{defn}
The \emph{duplicial} \emph{category} $\Lambda_\infty$ is constructed as follows: we take the category $\Delta$ and adjoin in each degree an abstract morphism $\tau \colon \mathbf n \to \mathbf n$ subject to the relations:$$
\tau \delta_i =
\begin{cases}
\delta_{i-1} \tau & \mbox{ if } 1 \le i \le n \\
\delta_n & \mbox{ if } i = 0
\end{cases}
\qquad
\tau \sigma_j =
\begin{cases}
\sigma_{j-1} \tau & \mbox{ if } 1 \le j \le n \\
\sigma_n \tau^2 & \mbox{ if } j = 0
\end{cases}
$$
We obtain the \emph{cyclic category} $\Lambda$ by adding the extra requirement that in each degree, the morphism $\tau \colon \mathbf n \to \mathbf n$ satisfies the relation $\tau^{n+1} = 1$.

Similarly, we have augmented versions of these categories by considering $\Delta_+$ rather than $\Delta$ in the above definition.

\subsection{Simplicial, duplicial and cyclic objects}
Let $\cY$ be a category.
\begin{defn}
A \emph{simplicial object} in $\cY$ is an object of the functor category $[\Delta^*, \cY]$, so a contravariant functor from $\Delta$ to $\cY$.
\end{defn}

Thus to give a simplicial object $X$ in $\cY$ is to give a sequence $X_0, X_1, \ldots$ of objects in $\cY$
and in each degree $n \ge 0$ morphisms
$$
\xymatrix{
X_n \ar[r]^-{d_i} & X_{n-1}
},\qquad
\xymatrix{
X_n \ar[r]^-{s_i} &  X_{n+1}
}
$$
for all $0\le i \le n$ subject to the conditions:$$
\begin{array}{cc}
     d_i d_j
     =
      d_{j-1}d_i
     &
    \mbox{ if } i < j
     \\
     s_i s_j
     =
     s_{j+1} s_i
     &
     \mbox{ if } i \le j
\end{array}
\qquad\qquad
d_i s_j = \begin{cases}
 s_{j-1} d_i & \mbox{ if } i < j \\
1 &\mbox{ if } i = j, j+1 \\
s_j d_{i-1} & \mbox{ if } i > j + 1
\end{cases}
$$

Similarly, we define \emph{augmented simplicial objects} as functors $\Delta_+^* \to \cY$.

\begin{rem}\label{opsimpobj}
Given a simplicial object $X$, by reversing the order of both the face maps and degeneracy maps we obtain a new simplicial object. We call this the \emph{opsimplicial simplicial object associated to $X$}.
\end{rem}

\begin{defn}[see~\cite{MR826872}]
A \emph{duplicial object} in $\cY$ is an object of the functor category $[\Lambda_\infty^*, \cY]$, so a contravariant functor from $\Lambda_\infty$ to $\cY$.
\end{defn}
Thus, explicitly, a duplicial object $X$ in $\cY$ is a simplicial object $X$ together with a morphism, or \emph{duplicial operator},
$$
\xymatrix{
X_n \ar[r]^-{t} & X_n
}
$$
in each degree subject to the conditions:
$$
d_i t=
\begin{cases}
t d_{i-1} & \mbox{ if } 1 \le i \le n \\
d_n & \mbox{ if } i = 0
\end{cases}
\qquad
 s_j t =
\begin{cases}
 t s_{j-1} & \mbox{ if } 1 \le j \le n \\
  t^2 s_n & \mbox{ if } j = 0
\end{cases}
$$
Similarly again, we have the notion of \emph{cyclic objects}~\cite{MR777584}, which are functors $\Lambda^* \to \cY$, as well as augmented versions.

\begin{rem}
Some authors use term \emph{paracyclic} to mean duplicial in our sense. Beware of this terminology, since it has also been used to describe those duplicial objects where the operator $t$ or $T$ (see Section~\ref{tdef}) is an isomorphism. Furthermore,
the original sense of a duplicial structure~\cite{MR826872} on a simplicial object $X$ is precisely a duplicial structure, in our sense, on the opsimplicial simplicial object associated to $X$, cf.~Remark~\ref{opsimpobj} and~Section~\ref{duplobjsec}.
\end{rem}

\subsection{The Dold-Kan and Dwyer-Kan correspondences}
Now suppose that $\cY$ is an abelian category. Let $\C(\cY)$ denote the category of chain complexes in $\cY$, and let
$\C^+(\cY)$ denote the full subcategory of $\C(\cY)$ consisting of non-negatively graded chain complexes.

\begin{defn}[see~\cite{MR826872,MR883882}]
A \emph{duchain complex} in $\cY$ is a  triple $(X, b, B)$ where $(X, b)$ is a chain complex and $(X, B)$ is a cochain complex:
$$
\xymatrix{
\cdots \ar@<1ex>[r]^-b & \ar@<1ex>[l]^-B X_n \ar@<1ex>[r]^-b & \ar@<1ex>[l]^-B X_{n-1} \ar@<1ex>[r]^-b & \ar@<1ex>[l]^-B \cdots
}
$$
A \emph{mixed complex} is a duchain complex $(X, b, B)$ such that $bB + Bb = 0$.
\end{defn}

\begin{rem}
Since both chain and cochain complexes are involved in the definition of a duchain complex, we point out that whenever we use the terminology \emph{quasi-isomorphism}, we mean a morphism which induces an isomorphism on homology, but not necessarily on cohomology.
\end{rem}

Let $\D(\cY)$ denote the category of duchain complexes in $\cY$, where a morphism is defined to be one which is simultaneously a morphism of chain complexes and of cochain complexes. Let $\M(\cY)$ denote the full subcategory of $\D(\cY)$ given by mixed complexes. Let $\M^+(\cY) \subseteq \D^+(\cY)$ denote the non-negatively graded versions.

The following theorem is known as the \emph{Dold-Kan} correspondence. We sketch the proof, but for a full proof see~\cite[Thm.~8.4.1]{MR1269324} and its following remarks, or see~\cite[Chapter~III.2]{MR2840650}.
\begin{thm}\label{doldkan}
There is an equivalence of categories
$$
[\Delta^*, \cY] \simeq \C^+(\cY).
$$
\end{thm}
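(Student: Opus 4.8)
The plan is to realise the equivalence explicitly through the \emph{normalized chain complex} functor together with an inverse built combinatorially from the simplex category.

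First I would attach to a simplicial object $X \colon \Delta^* \to \cY$ three complexes: the unnormalized one $C_n(X) = X_n$ with boundary $\partial = \sum_{i=0}^{n} (-1)^i d_i$, the degenerate subobject $D_n(X) = \sum_{j=0}^{n-1} \im(s_j)$, and the normalized (Moore) subobject $N_n(X) = \bigcap_{i=0}^{n-1} \ker d_i$, equipped with the differential induced by $d_n$ (up to the usual sign). The simplicial identities show that $D(X)$ and $N(X)$ are honest subcomplexes of $C(X)$ and that $N \colon [\Delta^*, \cY] \to \C^+(\cY)$ is functorial.

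The structural core of the argument --- and the step I expect to be the main obstacle --- is the \emph{Eilenberg--Zilber decomposition}: for $\cY$ abelian one has a natural internal direct sum $X_n = \bigoplus_{\eta \colon \mathbf n \twoheadrightarrow \mathbf k} X(\eta)\bigl(N_k(X)\bigr)$, the index $\eta$ ranging over the order-preserving surjections in $\Delta$ and each summand being the image of $N_k(X) \subseteq X_k$ under the iterated degeneracy $X(\eta) \colon X_k \to X_n$. Collecting the summands with $n - k \geq 1$ yields $C_n(X) = N_n(X) \oplus D_n(X)$ as complexes and exhibits $D(X)$ as contractible, so that $N(X) \hookrightarrow C(X)$ is a chain homotopy equivalence and $N$ computes the homology of a simplicial object. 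Establishing this decomposition needs an inductive analysis of how the degeneracies and their sections interact, using kernels, images, and the splittings of the resulting short exact sequences; this is the one place where abelianness of $\cY$ is genuinely used.

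Next I would construct the inverse $K \colon \C^+(\cY) \to [\Delta^*, \cY]$ by the mirror formula $K(A)_n = \bigoplus_{\mathbf n \twoheadrightarrow \mathbf k} A_k$. The action of a morphism $\phi \colon \mathbf m \to \mathbf n$ of $\Delta$ on the summand indexed by $\eta \colon \mathbf n \twoheadrightarrow \mathbf k$ is read off from the epi--mono factorization $\mathbf m \twoheadrightarrow \mathbf r \xrightarrow{\epsilon} \mathbf k$ of $\eta \phi$: it is the identity of $A_k$ when $\epsilon = \idty$, it is $\pm \partial \colon A_k \to A_{k-1}$ when $\epsilon$ is the coface omitting the top vertex of $\mathbf k$, and it vanishes otherwise. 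Functoriality of $K(A)$ (i.e.\ the simplicial identities) then follows from the combinatorics of epi--mono factorizations in $\Delta$ together with $\partial^2 = 0$. Finally I would exhibit the two natural isomorphisms: $NK(A) \cong A$ is immediate since the nondegenerate part of $K(A)$ in degree $n$ is precisely the $\idty_{\mathbf n}$-summand $A_n$, on which the surviving face acts as $\pm \partial$; and $KN(X) \cong X$ is nothing but the Eilenberg--Zilber decomposition above, now reinterpreted as an isomorphism of simplicial objects, its compatibility with the $\Delta$-action being exactly the recipe by which the $\Delta$-action on $KN(X)$ was defined. Both isomorphisms are visibly natural. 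The only remaining labour is the bookkeeping in these compatibility checks; since the statement asks for a sketch, I would defer the full combinatorial details to~\cite[Thm.~8.4.1]{MR1269324} and~\cite[Ch.~III.2]{MR2840650}.
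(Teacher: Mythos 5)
Your proposal is correct, and it is essentially the proof given in the references the paper itself defers to (\cite[Thm.~8.4.1]{MR1269324}, \cite[Ch.~III.2]{MR2840650}); but it is worth noting where it diverges from the paper's sketch. The paper presents the normalization as the \emph{quotient} $NX_n = X_n / \sum_{i=0}^{n-1}\im s_i$ with differential induced by $\sum (-1)^i d_i$, whereas you present it as the \emph{subobject} $\bigcap_{i=0}^{n-1}\ker d_i$ with differential induced by $\pm d_n$. These are canonically isomorphic --- the composite $\bigcap_i \ker d_i \hookrightarrow X_n \twoheadrightarrow X_n/\sum_j \im s_j$ is an isomorphism, and the Eilenberg--Zilber decomposition $X_n = N_n(X)\oplus D_n(X)$ you invoke is exactly what proves this --- so the two functors agree up to natural isomorphism. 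The subobject model is the more convenient one for your purposes, since the identification $NK(A)\cong A$ and the reconstruction $KN(X)\cong X$ are read off directly from the direct-sum decomposition indexed by surjections in $\Delta$; the quotient model is the one the paper needs later (the complex $C_\bullet(A,M)$ and the descent of the operators $b$ and $B$ in Sections 4.2--4.4 are phrased in terms of quotients by degeneracies). Your proposal also does strictly more than the paper's sketch: the paper only writes down the object map of $N$, while you construct the quasi-inverse $K$ and both natural isomorphisms. The one place where you are slightly glib is the claim that $N_n(K(A))$ is \emph{precisely} the identity-indexed summand $A_n$; that it contains $A_n$ is immediate from the epi--mono recipe, but the reverse inclusion needs a short separate argument (e.g.\ by showing every non-identity summand is hit by some $d_i$ with $i<n$). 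Since both you and the paper defer the combinatorial bookkeeping to the same sources, this is an acceptable level of detail for a sketch.
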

\begin{proof}[Sketch proof]
The equivalence $ N \colon [\Delta^*, \cY] \to \C^+(\cY)$ is defined on objects as follows. A simplicial object $X$ is mapped to the chain complex $NX$ given in degree $n$ by
$$
NX_n = \quotient{X_n}{ \sum_{i=0}^{n-1} \im s_i }
$$
whose differential $NX_{n} \to NX_{n-1}$ is defined by the diagram
$$
\xymatrix{
X_n \ar@{->>}[d] \ar[rr]^-{\sum_{i=0}^{n} (-1)^i d_i} && X_{n-1} \ar@{->>}[d] \\
NX_n \ar@{.>}[rr] && NX_{n-1}
}
$$
\end{proof}
The following theorem is known as the \emph{Dwyer-Kan correspondence}, proved in~\cite{MR826872}. We do not make explicit use of it in later sections, but nevertheless we include it here as we feel it highlights the importance of duplicial objects. We give only a sketch proof.
\begin{thm}\label{dwyerkan}
There is an equivalence of categories
$$
[\Lambda_\infty^*, \cY] \simeq \D^+(\cY).
$$
\end{thm}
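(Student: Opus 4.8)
The plan is to upgrade the Dold-Kan equivalence of Theorem~\ref{doldkan} to the duplicial setting, by showing that equipping a simplicial object with a duplicial operator corresponds, after normalization, precisely to equipping its normalized chain complex with the second differential of a duchain complex. First I would produce a functor $[\Lambda_\infty^*,\cY]\to\D^+(\cY)$. Given a duplicial object $X$, its underlying simplicial object has normalized chain complex $(NX,b)$, with $NX_n=\quotient{X_n}{\sum_{i=0}^{n-1}\im s_i}$ and $b$ induced by the alternating sum of faces exactly as in the sketch proof of Theorem~\ref{doldkan}. Using the duplicial operator I would then define a Connes-type operator $B\colon NX_n\to NX_{n+1}$, assembled in the usual way from $t$, the degeneracies, and a norm-type summation $N_n=\sum_{i=0}^n(-1)^{ni}t^i$, with signs fixed so that what follows works. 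Two things must be checked: that $B$ is well defined on the normalization (it preserves the degenerate subobjects and is trivial modulo them), which follows from the relations $t\sigma_j=\sigma_{j-1}t$ for $j\ge1$ and $t\sigma_0=\sigma_nt^2$ together with the simplicial identities; and that $B^2=0$ on $NX$, which is a combinatorial identity among the generators of $\Lambda_\infty$. A morphism of duplicial objects is a chain map commuting with all faces, degeneracies and the duplicial operator, hence induces a morphism of duchain complexes, so $X\mapsto(NX,b,B)$ is functorial.

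For a quasi-inverse I would start from a duchain complex $(X,b,B)$, apply the Dold-Kan functor $\Gamma\colon\C^+(\cY)\to[\Delta^*,\cY]$ to $(X,b)$, and then build a duplicial operator from $B$. Using the Eilenberg-Zilber decomposition $\Gamma(X,b)_n=\bigoplus_{\mathbf n\twoheadrightarrow\mathbf m}X_m$ over surjections in $\Delta$, the operator $t_n$ on $\Gamma(X,b)_n$ is defined by a denormalization of the formula for $B$, i.e.\ by combining $B$ with the canonical structure maps of $\Gamma(X,b)$; one verifies that the resulting $t_n$ satisfy the $\tau$-relations against the faces and degeneracies of $\Gamma(X,b)$, with $B^2=0$ supplying exactly the identities involving $\tau^2$. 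This yields a functor $\D^+(\cY)\to[\Lambda_\infty^*,\cY]$.

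It remains to see that the two functors are mutually quasi-inverse. On underlying simplicial objects and chain complexes they restrict to the Dold-Kan equivalence and its inverse, so the natural isomorphisms $N\Gamma\cong1$ and $\Gamma N\cong1$ of Theorem~\ref{doldkan} are already at hand; one checks that under the above constructions these are compatible with the $t$- and $B$-structures, which gives $[\Lambda_\infty^*,\cY]\simeq\D^+(\cY)$. I expect the real obstacle to be the bookkeeping in the middle: both directions hinge on pinning down the sign-normalized form of the Connes operator for which $B^2=0$ holds for a general duplicial (not merely cyclic) object, and on matching the summand-indexed combinatorics of $\Gamma$ against the presentation of $\Lambda_\infty$. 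Neither step is conceptually deep, but each is a lengthy verification, which is presumably why only a sketch is intended.
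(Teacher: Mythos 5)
Your overall architecture --- normalise the underlying simplicial object, equip $(NX,b)$ with a codifferential built from $t$, and denormalise via the Dold--Kan inverse $\Gamma$ --- is exactly the shape of the paper's sketch and of Dwyer and Kan's argument. The genuine problem is your choice of codifferential. You assemble a Connes-type operator from $t$, the degeneracies \emph{and} a norm summation $\mathcal N_n=\sum_{i=0}^n\tilde t^{\,i}$. The Dwyer--Kan codifferential has no norm operator: it is the single term induced by $(-1)^n t s_n$ (the ``extra degeneracy''). This is not a sign issue that can be ``fixed so that what follows works''. The content of the theorem is that duplicial operators $t$ on a simplicial object $X$ correspond \emph{bijectively} to codifferentials $B$ on $NX$ subject to nothing but $B^2=0$ (a duchain complex imposes no compatibility between $b$ and $B$), and that bijection is realised precisely by $t\mapsto(-1)^nts_n$, with $t$ recoverable degree by degree from $B$ on the denormalisation $\Gamma(X,b)_n=\bigoplus X_m$. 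Once you insert the sum $\sum_i\tilde t^{\,i}$, the assignment $t\mapsto ts_n\mathcal N$ is a nonlinear expression in $t$ from which $t$ cannot in general be extracted, so the quasi-inverse step of your argument breaks down and the functor fails to be an equivalence. Indeed, Section~\ref{cychomol} of the paper constructs exactly your functor (there called $P$, built from $\hat B=s_{-1}\mathcal N$ on the normalised complex, used to define cyclic homology of a duplicial object) and explicitly warns that it ``is \emph{not} the same as the Dwyer--Kan normalisation functor of Theorem~\ref{dwyerkan}''. The norm-summed operator is the right tool for producing a mixed complex computing cyclic homology; it is the wrong tool for the equivalence of categories.

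With the correct one-term codifferential, the two verifications you flag become short. For $B=(-1)^nts_n$ one computes
$$
B_{n+1}B_n=-\,ts_{n+1}ts_n=-\,t^2s_{n+1}s_n=-\,s_0ts_n,
$$
using $s_{n+1}t=ts_n$, the simplicial identity $s_ns_n=s_{n+1}s_n$, and the duplicial relation $t^2s_{n+1}=s_0t$; this lies in $\im s_0$ and hence vanishes on $NX$. Well-definedness on the normalisation follows similarly from $s_jt=ts_{j-1}$. Your final paragraph on denormalisation is then fine in outline, but only because with a one-term $B$ the operator $t$ can actually be reconstructed on $\Gamma(X,b)$.
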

\begin{proof}[Sketch proof]
The equivalence $ [\Lambda_\infty^*, \cY] \to \D^+(\cY) $ is defined on objects as follows. Given a duplicial object $X$, we define a codifferential on the complex $NX$, where $N$ is the equivalence of Theorem~\ref{doldkan}. The codifferential is defined by the diagram
$$
\xymatrix{
X_n \ar@{->>}[d] \ar[rr]^-{(-1)^n t s_n} && X_{n+1} \ar@{->>}[d] \\
NX_n \ar@{.>}[rr] && NX_{n+1}
}
$$
in degree $n$.
\end{proof}

\section{Cyclic homology}\label{cychomol}
In this section, we define the cyclic homology of various objects and show that the notions coincide in appropriate cases. Throughout, $\cY$ denotes an abelian category.
\subsection{Cyclic homology of a mixed complex}
Given a mixed complex $(X,b,B)$ in $\M^+(\cY)$, we construct a bicomplex in the first quadrant:
$$
\xymatrix{
\vdots \ar[d]_-b & \vdots \ar[d]_-b  & \vdots \ar[d]_-b  \\
X_2\ar[d]_-b&\ar[l]^-B  X_1\ar[d]_-b &\ar[l]^-B  X_0 \\
X_1 \ar[d]_-b&  \ar[l]^-B X_0  \\
X_0
}
$$
This just means that each square anticommutes, so there is an associated total complex $T(X)$ given in degree $n$ by
$$
T(X)_n = \bigoplus_{i \le 0} X_{n + 2i}
$$
(so the first few degrees are $X_0, X_1, X_0 \oplus X_2, X_1 \oplus X_3, X_0 \oplus X_2 \oplus X_4, \ldots$ and one can spot the pattern) with differential $b + B$.

The notion of cyclic homology is due to Connes~\cite{MR823176,MR777584} but the following definition is from~\cite{MR883882}.
\begin{defn}\label{cyclichomology}
The \emph{cyclic homology} of the mixed complex $X$, denoted $\HC(X)$, is the homology of the total complex $T(X)$, i.e.\
$$
\HC(X) := \HH(T(X))
$$
\end{defn}

Thus we have defined a functor
$$
\HC \colon \M^+(\cY) \lto \cY.
$$
Let $U \colon \M^+(\cY) \lto \C^+(\cY)$ denote the obvious forgetful functor to the category of non-negatively graded chain complexes.

The following proposition is a useful one, and we refer the reader to~\cite[Proposition~2.5.15]{MR1600246} for a proof:
\begin{prop}\label{quasi}
Let $f$ be a morphism in $\M^+(\cY)$. Then $Uf$ is a quasi-isomorphism if and only if $\HC(f)$ is an isomorphism.
\end{prop}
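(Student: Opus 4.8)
The plan is to reduce the statement to the standard fact that a map of first-quadrant double complexes inducing a quasi-isomorphism in each column induces a quasi-isomorphism on total complexes, together with the converse that holds here because of the special periodic shape of the cyclic bicomplex. Concretely, let $f\colon (X,b,B)\to (X',b',B')$ be a morphism in $\M^+(\cY)$, so in particular $f$ is a chain map $(X,b)\to (X',b')$; write $\mathrm{Cyc}(X)$ for the first-quadrant double complex of Section~\ref{cychomol} whose columns are copies of $(X,b)$ and whose horizontal differentials are (signed) $B$, and similarly $\mathrm{Cyc}(X')$. Then $f$ defines a morphism of double complexes $\mathrm{Cyc}(f)\colon \mathrm{Cyc}(X)\to\mathrm{Cyc}(X')$, and on total complexes this is exactly $T(f)$, so $\HC(f)=\HH(T(f))$.

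First I would prove the ``only if'' direction. Suppose $Uf$ is a quasi-isomorphism, i.e.\ $f\colon (X,b)\to(X',b')$ induces an isomorphism on homology. Every column of $\mathrm{Cyc}(f)$ is (a shift of) $Uf$, hence a quasi-isomorphism. Since both double complexes are concentrated in the first quadrant, the filtration by columns is bounded below and exhaustive in each total degree, so the associated spectral sequences converge; a morphism of such double complexes that is a column-wise quasi-isomorphism induces an isomorphism on the $E_1$-pages and hence, by the comparison theorem for spectral sequences, an isomorphism on the abutments. Therefore $T(f)$ is a quasi-isomorphism, i.e.\ $\HC(f)$ is an isomorphism. (Alternatively, one can argue directly by an exhaustion/mapping-cone argument on the finite sums $T(X)_n=\bigoplus_{i\le 0}X_{n+2i}$, using that a chain map whose cone is acyclic stays so after applying $U$ and taking finite direct sums, but the spectral sequence phrasing is cleaner.)

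For the ``if'' direction, suppose $\HC(f)$ is an isomorphism, i.e.\ $T(f)$ is a quasi-isomorphism. Here I would exploit the periodicity built into the cyclic bicomplex: deleting the first two columns of $\mathrm{Cyc}(X)$ yields $\mathrm{Cyc}(X)$ shifted up in homological degree by $2$, and the remaining two-column strip has total complex precisely $(X,b)$ (up to the usual degree shift), fitting into a short exact sequence of complexes
$$
0 \lto T(X)[2] \lto T(X) \lto (X,b) \lto 0
$$
compatible with $f$ — this is Connes' Connes' $SBI$-type sequence at the chain level. Applying the five lemma to the morphism of the resulting long exact homology sequences, and using that $T(f)$ and $T(f)[2]$ are quasi-isomorphisms by hypothesis, forces $(X,b)\to(X',b')$ to be a quasi-isomorphism, i.e.\ $Uf$ is a quasi-isomorphism.

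The main obstacle is making the short exact sequence and the degree bookkeeping precise: one must check that the inclusion of the ``columns $\ge 2$'' subcomplex into $T(X)$ really has quotient the single-column complex $(X,b)$ with the correct signs, and that all three maps are natural in $f$ so that the five lemma applies. This is exactly the content of the cited reference~\cite[Proposition~2.5.15]{MR1600246}, so in the write-up I would either reproduce that short exact sequence explicitly or simply invoke it; the spectral sequence argument for the forward direction is entirely standard once convergence in the first quadrant is noted.
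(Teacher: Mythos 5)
The paper does not actually prove this proposition --- it delegates it wholesale to \cite[Prop.~2.5.15]{MR1600246} --- so what you have written is a genuine proof where the paper has only a citation, and your overall strategy (column-wise comparison for one direction, Connes' periodicity sequence plus the five lemma for the other) is exactly the standard argument from that reference. The ``only if'' direction is fine as stated: in each total degree the sum $T(X)_n=\bigoplus_{i\le 0}X_{n+2i}$ is finite, so the column filtration is bounded and the spectral sequence (or mapping-cone) comparison is unproblematic.

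There is, however, a concrete slip in your periodicity sequence. For the $(b,B)$-bicomplex of a mixed complex the horizontal differential $B$ maps column $p$ to column $p-1$, so it is column $0$ that is closed under the total differential; the correct short exact sequence is
$$
0 \lto (X,b) \lto T(X) \lto T(X)[2] \lto 0,
$$
with $(X,b)$ the \emph{sub}complex and the degree-$2$ shift the \emph{quotient} (delete \emph{one} column, not two --- each column already accounts for a total-degree shift of $2$, since the $(p,q)$ entry is $X_{q-p}$). Your version $0\to T(X)[2]\to T(X)\to (X,b)\to 0$ takes columns $p\ge 1$ as a subcomplex, which they are not, and the phrase ``remaining two-column strip has total complex $(X,b)$'' conflates this bicomplex with the two-column-periodic cyclic bicomplex $CC_{**}$ of a cyclic object, where the Hochschild complex appears only up to quasi-isomorphism as the first two columns. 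Once the sequence is written the right way round, the resulting long exact sequence
$$
\cdots \lto \HH_{n-1}(T(X)) \lto \HH_n(X,b) \lto \HH_n(T(X)) \lto \HH_{n-2}(T(X)) \lto \cdots
$$
is natural in $f$ and your five-lemma argument for the ``if'' direction goes through verbatim; indeed the same sequence, together with $\HC_n=0$ for $n<0$, also gives the ``only if'' direction by induction on $n$, if you prefer to avoid the spectral sequence altogether.
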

\subsection{Cyclic homology of a cyclic object}\label{cychomcycobj}
Let $X \colon \Lambda^{*} \to \mathcal Y$ be a cyclic object. Consider the following morphisms, defined in degree $n$:
$$
\tilde t := (-1)^n t \quad \quad \quad s_{-1} = ts_n \quad \quad \quad \mathcal N_n = \sum_{i=0}^n \tilde t^i \quad \quad \quad b_n = \sum_{i=0}^n (-1)^i d_i
$$
By Theorem~\ref{doldkan}, $(X,b)$ is a chain complex. In fact, the morphism $\hat B = s_{-1} \mathcal N$ has the property that $b \hat B + \hat B b = 0$, but unfortunately, we do not have that $\hat B^2 = 0$ and so it is not a codifferential. However, the morphism $B := (1-\tilde t)s_{-1}\mathcal N$ does square to zero, and also anticommutes with $b$. Thus, $(X,b,B)$ is a mixed complex. We have defined a functor
$$
\xymatrix{
[\Lambda^*, \cY] \ar[r]^-K & \M^+(\cY)
}
$$
and we define the cyclic homology of a cyclic object to be the composite of this functor with $\HC$ of Definition~\ref{cyclichomology}. By abuse of notation, we denote this also by $\HC$, and so we have a commutative diagram
$$
\xymatrix{
[\Lambda^*, \cY] \ar[r]^-K \ar[dr]_-{\HC} & \M^+(\cY) \ar[d]^-{\HC} \\
& \cY
}
$$

Alternatively, one could do the following: Let $NX$ be the complex associated to the underlying simplicial object of the cyclic object $X$, i.e.\
$$
( N X)_n = \quotient{X_n}{ \sum_{i=0}^{n-1} \operatorname{im} s_i}
$$
as in the proof of Theorem~\ref{doldkan}. Both the morphisms $b$ and $B$ descends to this quotient, defining a functor
$$
\xymatrix{
[\Lambda^*, \cY] \ar[r]^-{K'} & \M^+(\cY).
}
$$ It is well-known that the quotient morphism $q \colon (X,b) \to (NX, b)$ is a quasi-isomorphism (see e.g.~\cite[Thm.~8.3.8]{MR1269324}), and it descends to the quotient too. Therefore, $q$ is actually a morphism $KX \to K'X$ of mixed complexes, defining a natural transformation
$$q \colon K \Rightarrow K'.$$ By Proposition~\ref{quasi}, this gives an isomorphism after composing with $\HC$. This proves the following:
\begin{prop}
We have a square
$$
\xymatrix{
[\Lambda^*, \cY] \ar[rr]^-K \ar[dd]_-{K'} && \M^+(\cY) \ar@{}[ddll]^(.25){}="a"^(.75){}="b" \ar@{=>}_-{\HC q}^-\cong "a";"b" \ar[dd]^-{\HC} \\
\\
\M^+(\cY) \ar[rr]_-{\HC} && \cY
}
$$
in $\cat$.
\end{prop}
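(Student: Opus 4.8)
The plan is to observe that this square merely packages the natural transformation $q$ already constructed in the preceding discussion, together with the fact, supplied by Proposition~\ref{quasi}, that $\HC q$ is invertible; the only genuine work is to confirm that $q$ really is a natural transformation $K \Rightarrow K'$ of functors into $\M^+(\cY)$, so that $\HC q$ is a well-defined $2$-cell decorating the square.

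First I would make $K'$ precise. For a cyclic object $X$, the normalized object $NX$ is the degreewise quotient $X_n/\sum_{i=0}^{n-1}\im s_i$, and I would check that both $b_n = \sum_i (-1)^i d_i$ and $B = (1-\tilde t)s_{-1}\mathcal N$ carry the degenerate subobject $\sum_i \im s_i$ into itself: for $b$ this is the classical normalization computation underlying Theorem~\ref{doldkan}, and for $B$ it follows from the simplicial and cyclic identities, the key inputs being $s_{-1} = t s_n$ together with the commutation relations of $t$ with the degeneracies $s_j$. Hence $b$ and $B$ descend to $NX$, yielding the mixed complex $K'X$, and functoriality in $X$ is automatic since every map in sight is assembled from the functor-category structure of $[\Lambda^*,\cY]$ and the canonical quotient.

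Next I would note that the degreewise surjections $q_n\colon X_n \twoheadrightarrow NX_n$ commute with $b$ and with $B$ by construction, so $q$ is a morphism $KX \to K'X$ in $\M^+(\cY)$, and naturality of $q\colon K \Rightarrow K'$ is then immediate. Applying the forgetful functor $U\colon \M^+(\cY)\to\C^+(\cY)$, the map $Uq\colon (X,b)\to(NX,b)$ is exactly the normalization quasi-isomorphism (cf.~\cite[Thm.~8.3.8]{MR1269324}). By Proposition~\ref{quasi}, $\HC(q)$ is therefore an isomorphism for every cyclic object $X$, i.e.\ $\HC q\colon \HC\circ K \Rightarrow \HC\circ K'$ is a natural isomorphism, and this is precisely the invertible $2$-cell decorating the square; so the diagram commutes in $\cat$, as claimed.

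The one step requiring care is the descent of the codifferential $B$ to the normalized complex; everything else is either cited (the normalization quasi-isomorphism and Proposition~\ref{quasi}) or purely formal (naturality of $q$ and the identification of the filling $2$-cell), so I anticipate no serious obstacle.
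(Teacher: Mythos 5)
Your proposal is correct and follows essentially the same route as the paper: both arguments reduce the statement to checking that $b$ and $B$ descend to the normalised quotient $NX$, that the degreewise surjection $q$ is then a morphism of mixed complexes $KX \to K'X$ which is a quasi-isomorphism of underlying chain complexes by the classical normalisation theorem, and that Proposition~\ref{quasi} converts this into the invertibility of $\HC q$. Your treatment is slightly more explicit than the paper's about why $B$ preserves the degenerate subobject, but the substance is identical.
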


In summary, we have two equivalent ways of computing the cyclic homology of a cyclic object: either define the structure of a mixed complex on the actual cyclic object, or do so on the normalised complex of the underlying simplicial object.

\subsection{Cyclic homology of a duchain complex}\label{tdef}
Let $(X,b,B)$ be a duchain complex in $\D^+(\cY)$. Define an endomorphism
$$
T = 1 - bB -Bb
$$
in each degree. One easily checks that $b T = Tb$ and $BT = TB$, so clearly both $b$ and $B$ descend to the quotient $B/\im(1-T)$, and in that case $bB + Bb = 0$. This defines a functor $F \colon \D^+(\cY) \lto \M^+(\cY)$, which in fact is a left adjoint to the forgetful functor $U$.
$$
\xymatrix{
\D^+(\cY) \ar@/^1pc/[r]^-F  \ar@{}[r]|\perp& \M^+(\cY)  \ar@/^1pc/[l]^-{U}
}
$$
We define the cyclic homology of a duchain complex to be the composite of $\HC$ with this functor $F$. Note that $FU \cong 1$. Again we abuse notation to give a commutative diagram
$$
\xymatrix{
\D^+(\cY) \ar[dr]_-{\HC} \ar[r]^-F & \M^+(\cY) \ar[d]^-{\HC} \\
& \cY
}
$$  
\subsection{Cyclic homology of a duplicial object}
Let $X \colon \Lambda_\infty^* \to \cY$ be a duplicial object. In light of the above story for cyclic objects, there are two natural ways we could proceed to define cyclic homology of $X$: either construct a natural cyclic object from $X$ and take the cyclic homology of that, or instead, construct a duchain complex and take the cyclic homology of that. Here we show these two definitions are equivalent.
Let $\pi \colon \Lambda_\infty^* \lto \Lambda^*$ denote the quotient map. This induces a forgetful functor $\pi^* \colon [\Lambda^*, \cY]\lto [\Lambda_\infty^*, \cY]$. By the theory of Kan extensions, this functor has a left adjoint $\pi_!$.
$$
\xymatrix{
[\Lambda_\infty^*, \cY] \ar@/^1pc/[r]^-{\pi_!}  \ar@{}[r]|\perp& [\Lambda^*, \cY]  \ar@/^1pc/[l]^-{\pi^*}
}
$$
This left adjoint is explicitly defined by $(\pi_! X)_n = X_n / \im (1 - t^{n+1})$. The face maps and degeneracy maps descend to this quotient. Note that $\pi_! \pi^* \cong 1$. We define the cyclic homology of $X$ to be the cyclic homology of $\pi_! X$, giving a commutative diagram
$$
\xymatrix{
[\Lambda_\infty^*,\cY] \ar[r]^-{\pi_!} \ar[dr]_-{\HC} & [\Lambda^*, \cY] \ar[d]^-{\HC} \\
& \cY
}
$$

On the other hand, we can define morphisms $b,\tilde t, s_{-1}, \mathcal N, B, \hat B$ as before. Again, the morphism $\hat B$ does not square to zero, but neither does it satisfy the relation $b \hat B + \hat B b = 0$. In the duplicial case, the morphism $B$ does not satisfy these relations either. On the normalised complex $(NX, b)$ the morphism $B$ descends to the quotient (the proof in the cyclic case only uses the duplicial structure), is equal to $\hat B$, and satisfies $B^2 = 0$. This only gives us a duchain complex $(NX,b,B)$ since $bB + Bb$ is still not zero in the quotient. This defines a functor
$$
\xymatrix{
[\Lambda_\infty^*, \cY] \ar[r]^-{P} & \D^+(\cY)
}
$$
and it is important to note that this is \emph{not} the same as the Dwyer-Kan normalisation functor of Theorem~\ref{dwyerkan}.


In a similar way to Section~\ref{cychomcycobj}, the quotient morphism defines a map of mixed complexes $q \colon  K \pi_! X \to FPX $ defining a natural transformation
$$
\xymatrix{
[\Lambda_\infty^*, \cY] \ar[rr]^-{\pi_!} \ar[dd]_-{P} && [\Lambda^*, \cY] \ar@{}[ddll]^(.25){}="a"^(.75){}="b" \ar@{=>}_-{q}"a";"b" \ar[dd]^-{K} \\
\\
\D^+(\cY) \ar[rr]_-{F} && \M^+(\cY)
}
$$
which becomes an isomorphism after composing with $\HC \colon \M^+(\cY) \to \cY$, thus proving:
\begin{prop}
We have a square
$$
\xymatrix{
[\Lambda_\infty^*, \cY] \ar[rr]^-{\pi_!} \ar[dd]_-{P} && [\Lambda^*, \cY] \ar@{}[ddll]^(.25){}="a"^(.75){}="b" \ar@{=>}_-{\HC q}^-\cong "a";"b" \ar[dd]^-{\HC} \\
\\
\D^+(\cY) \ar[rr]_-{\HC} && \cY
}
$$
in $\cat$.
\end{prop}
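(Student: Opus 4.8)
The plan is to follow the template of Section~\ref{cychomcycobj}: reduce the claim to the case of cyclic objects already settled there, by identifying the composite $F\circ P$ with $K'\circ\pi_!$, where $K' \colon [\Lambda^*, \cY] \to \M^+(\cY)$ is the normalised version of $K$. Once this identification is in place, the required natural isomorphism is obtained by whiskering the quotient (normalisation) transformation $q \colon K \Rightarrow K'$ of Section~\ref{cychomcycobj} by $\pi_!$ and applying $\HC$.

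First I would check that $\pi_! X$ really is a cyclic object. The operator $t$ descends along the quotient $X_n \twoheadrightarrow X_n/\im(1-t^{n+1}) = (\pi_! X)_n$ because $t(1-t^{n+1}) = (1-t^{n+1})t$, and on the quotient $t^{n+1}$ becomes the identity by construction; the faces and degeneracies descend as already recorded in the text. So $\pi_! X \in [\Lambda^*, \cY]$ and $K'(\pi_! X) = (N(\pi_! X), b, B)$ is a well-defined mixed complex, with underlying object $X_n/(\sum_i \im s_i + \im(1-t^{n+1}))$ in degree $n$.

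The heart of the argument, and the step I expect to be the main obstacle, is the identity
$$
bB + Bb \equiv 1 - t^{n+1} \pmod{\textstyle\sum_i \im s_i}
$$
as endomorphisms of $X_n$; equivalently, $T = 1 - bB - Bb$ and $t^{n+1}$ induce the same operator on the normalised complex $NX$ (note $\tilde t^{\,n+1} = t^{n+1}$ since $n(n+1)$ is even). This is the paracyclic analogue of the classical relation $bB + Bb = 0$ on the normalised complex of a \emph{cyclic} object, and proving it is a direct but somewhat tedious simplicial computation using the duplicial identities together with $B = (1-\tilde t)\,s_{-1}\mathcal{N}$ and $s_{-1} = t s_n$. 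Granting it, one gets $(bB+Bb)(X_n) + \sum_i \im s_i = \im(1-t^{n+1}) + \sum_i \im s_i$ inside $X_n$, so the underlying object of $FPX = (NX/\im(1-T), b, B)$ in degree $n$ — namely $X_n/(\sum_i \im s_i + (bB+Bb)(X_n))$ by the definition of $F$ in Section~\ref{tdef} — agrees with that of $K'(\pi_! X)$. Moreover on both sides the induced differential and the induced Connes operator are visibly just $b$ and $B$ pushed down from $X$ to this common quotient, so $FPX = K'(\pi_! X)$ as mixed complexes, naturally in $X$; that is, $F\circ P = K'\circ\pi_!$.

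It then remains to invoke Section~\ref{cychomcycobj}. There the quotient morphism $q \colon K \Rightarrow K'$ is shown to be a quasi-isomorphism on underlying complexes (the normalisation theorem, cf.\ Section~\ref{cychomcycobj}), hence $\HC\, q$ is an isomorphism by Proposition~\ref{quasi}. Whiskering $q$ by $\pi_!$ and composing with the identification $F\circ P = K'\circ\pi_!$ yields a natural transformation $q \colon K\circ\pi_! \Rightarrow F\circ P$ which is, in each degree, the quotient $X_n/\im(1-t^{n+1}) \twoheadrightarrow N(\pi_! X)_n$ by degeneracies, again a quasi-isomorphism, so $\HC\, q$ is an isomorphism. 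Since the right edge $\HC \colon [\Lambda^*, \cY] \to \cY$ is $\HC \circ K$ and the bottom edge $\HC \colon \D^+(\cY) \to \cY$ is $\HC \circ F$ (Section~\ref{tdef}), this isomorphism $\HC\, q \colon \HC\circ\pi_! \Rightarrow \HC\circ P$ is exactly the $2$-cell filling the square, completing the proof.
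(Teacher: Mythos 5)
Your proposal is correct and follows essentially the same route as the paper: both construct the quotient morphism $q \colon K\pi_! \Rightarrow FP$ of mixed complexes, observe it is a quasi-isomorphism on underlying complexes, and apply Proposition~\ref{quasi}. You are in fact more explicit than the paper in isolating the key point — the congruence $bB+Bb \equiv 1-t^{n+1}$ modulo degeneracies, which is what makes $q$ well defined — and that lemma is indeed true and provable by the direct duplicial computation you indicate (it checks immediately in degree $0$, where $b\hat B = 1-t$).
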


\end{defn}
\section{From distributive laws to duplicial objects}\label{duplobjsec}
Here we develop B\"ohm and {\c S}tefan's work on duplicial objects~\cite{MR2956318,MR2415479}, by showing that some of the involved morphisms extend to simplicial morphisms. We use these morphisms to characterise the cyclicity of the relevant duplicial functor.
\subsection{The bar and opbar resolutions}
Let $\cB$ and $\cX$ be two categories.

\begin{defn}
A \emph{simplicial functor} $\cX \to \cB$ is a simplicial object in the  category $[\cX, \cB]$.
\end{defn}
Similarly, we have notions of \emph{dupicial} and \emph{cyclic functors}, as well as augmented versions of these.

 Let $(T, \delta, \epsilon)$ be a comonad on $\cB$ and let $M \colon \cX \rightarrow \cB$ be a
functor.
\begin{defn}\label{bardeffo}
The \emph{bar resolution of }$M$
is the simplicial functor
$ \rBB (T, M) \colon \cX \rightarrow
\cB$ defined by
$$
	\rBB (T, M)_n =
	T^{n+1}M, \qquad
	d_i =T^i \epsilon
	T^{n-i} M, \qquad
	s_j =T^j \delta T^{n-j}
	M,
$$
where the face and degeneracy maps
above are given in degree $n$. The \emph{opbar
resolution of }$M$,
denoted
$\rBB^*(T, M)$, is the simplicial
functor obtained by taking the op\-sim\-pli\-cial 
sim\-pli\-cial
functor (cf.~Remark~\ref{opsimpobj}) associated to $\rBB (T, M)$. Explicitly:
$$
 	\rBB^*(T, M)_n =
T^{n+1}M, \qquad d_i =T^{n-i} \epsilon T^{i} M,\qquad
	s_j =T^{n-j} \delta T^{j} M.
$$
\end{defn}
Given any functor $\rN \colon \cB \rightarrow
\cY$, we compose it with the above simplicial
functors to obtain new simplicial functors that we
denote by
$$
	\rCC_T(N,M):=N\rBB (T,M), \qquad
\rCC^*_T(N,M):=N\rBB ^*(T, M).
$$

\subsection{The B\"ohm-\c Stefan construction}
\label{evidenziatore1}
Let
$$\xymatrix{(\cB, T) \ar[rr]^-{(\Sigma, \sigma)} && (\cD, G)}$$ be an opmorphism of comonads, so in particular $\sigma $ is a natural transformation $G\Sigma \Rightarrow \Sigma T$. By abuse of notation, we let $\sigma^n$
denote the natural transformation
$G^n \Sigma \Rightarrow \Sigma T^n$ obtained by repeated application of
$\sigma$ (up to horizontal composition of identities),
where $\sigma^0 = 1$. In other words, we define $\sigma^n$ recursively: for all $n \ge 0$, $\sigma^{n+1} \colon G^{n+1}\Sigma \Rightarrow \Sigma T^{n+1}$ is given by the composite
$$
\xymatrix{
G^{n+1}\Sigma \ar[rr]^-{G\sigma^n} && G\Sigma T^n \ar[rr]^-{\sigma T^n} && \Sigma T^{n+1}
}.
$$

\begin{lem}\label{superchiepsilon}
For all $n \ge 0$ and for $0 \le i \le n$, the diagram
$$
\xymatrix{
G^{n+1}\Sigma \ar[rr]^-{\sigma^{n+1}} \ar[d]_-{G^{n-i}\epsilon T^i \Sigma} && \Sigma T^{n+1} \ar[d]^-{\Sigma T^{n-i}\epsilon T^i } \\
G^n\Sigma \ar[rr]_-{\sigma^n} && \Sigma T^n
}
$$
commutes.
\end{lem}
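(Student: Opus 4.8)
The plan is to argue by induction on $n$. The only structural facts about the opmorphism $(\Sigma,\sigma)\colon(\cB,T)\to(\cD,G)$ that enter are the counit compatibility, which in our notation says that $\Sigma\epsilon\circ\sigma=\epsilon\Sigma$ as $2$-cells $G\Sigma\Rightarrow\Sigma$ (the two $\epsilon$'s being the counits of $T$ and of $G$), together with the interchange law in $\sC$ (naturality of whiskering); the comultiplication compatibility is not needed at all. For the base case $n=0$ the only admissible index is $i=0$, and since $\sigma^{1}=\sigma$ and $\sigma^{0}=1$ the square to be checked reads $\Sigma\epsilon\circ\sigma=\epsilon\Sigma$, i.e.\ exactly the counit axiom.

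For the inductive step I would assume the statement at level $n$ and establish it at level $n+1$, fixing $0\le i\le n+1$ and distinguishing whether the left-hand counit acts on an inner copy of $G$ or on the outermost one. If $i\le n$, I write the left vertical map as $G\bigl(G^{n-i}\epsilon G^{i}\Sigma\bigr)$ and use the recursion $\sigma^{n+1}=\sigma T^{n}\circ G\sigma^{n}$, so that $\sigma^{n+1}\circ G\bigl(G^{n-i}\epsilon G^{i}\Sigma\bigr)=\sigma T^{n}\circ G\bigl(\sigma^{n}\circ G^{n-i}\epsilon G^{i}\Sigma\bigr)$. The inductive hypothesis rewrites the bracketed composite as $\Sigma T^{n-i}\epsilon T^{i}\circ\sigma^{n+1}$, and then naturality of $\sigma$ against the $2$-cell $T^{n-i}\epsilon T^{i}\colon T^{n+1}\Rightarrow T^{n}$ lets me slide $\sigma T^{n}$ past $G\Sigma T^{n-i}\epsilon T^{i}$, turning it into $\Sigma T^{n+1-i}\epsilon T^{i}\circ\sigma T^{n+1}$; reassembling the tail via $\sigma^{n+2}=\sigma T^{n+1}\circ G\sigma^{n+1}$ produces $\Sigma T^{(n+1)-i}\epsilon T^{i}\circ\sigma^{n+2}$, which is the left-hand vertical composite at level $n+1$, as desired.

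The remaining case $i=n+1$ is where the counit axiom is invoked a second time: now the left vertical map is $\epsilon G^{n+1}\Sigma$, hitting the outermost $G$, so expanding $\sigma^{n+2}=\sigma T^{n+1}\circ G\sigma^{n+1}$ and applying $(\Sigma\epsilon\circ\sigma)T^{n+1}=\epsilon\Sigma T^{n+1}$ collapses $\Sigma\epsilon T^{n+1}\circ\sigma^{n+2}$ to $\epsilon\Sigma T^{n+1}\circ G\sigma^{n+1}$, which naturality of the counit $\epsilon$ of $G$ against $\sigma^{n+1}$ identifies with $\sigma^{n+1}\circ\epsilon G^{n+1}\Sigma$, the right-hand composite. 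I do not expect any genuine obstacle here: the argument is a bookkeeping exercise in whiskering positions, and the only points requiring care are to set up the case split so that the recursion for $\sigma^{n+1}$ applies cleanly to the inner layer, and to keep straight which of the (identically denoted) counits of $T$ and of $G$ is being used at each step.
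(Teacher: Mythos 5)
Your proof is correct and takes essentially the same route as the paper's: induction on $n$ with the base case being exactly the counit axiom for the opmorphism, and the inductive step split according to whether the counit acts on an inner copy of $G$ (inductive hypothesis plus naturality of $\sigma$ against $T^{n-i}\epsilon T^i$) or the outermost one (counit axiom plus naturality of the counit of $G$ against $\sigma^{n+1}$). The only differences are cosmetic — you argue equationally via whiskering and interchange where the paper pastes commutative squares, and you shift the induction index by one.
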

\begin{proof}
We proceed by induction. For $n  = 0$ and $i = 0$, the result holds since it becomes the compatibility condition of $\sigma$ with $\epsilon \colon T \Rightarrow 1$. Now, suppose the statement holds for $n - 1$, and let $i$ be such that $0 \le i < n$. Consider the diagram
$$
\xymatrix{
G^{n+1} \Sigma \ar[d]_-{G^{n-i} \epsilon G^i \Sigma } \ar[rr]^-{G\sigma^n} && G\Sigma T ^n\ar[d]_-{G\Sigma T^{n-i-1} \epsilon T^i} \ar[rr]^-{\sigma T^n } && \Sigma T^{n+1} \ar[d]^-{\Sigma T^{n-i} \epsilon T^i } \\
G^n \Sigma \ar[rr]_-{G\sigma^{n-1}} && G\Sigma T^{n-1} \ar[rr]_-{\sigma T^{n-1}} && \Sigma T^n
}
$$
The right-hand square commutes by naturality of $\sigma$, and the left-hand square commutes by the inductive hypothesis. For $i = n$, consider the diagram
$$
\xymatrix{
G^{n+1} \Sigma \ar[d]_-{\epsilon G^n \Sigma } \ar[rr]^-{G\sigma^n} && G\Sigma T^n \ar[drr]_-{\epsilon \Sigma T^n \ \ } \ar[rr]^-{\sigma T^n } && \Sigma T^{n+1} \ar[d]^-{\Sigma\epsilon T^n} \\
G^n \Sigma \ar[rrrr]_-{\sigma^n } && &&\Sigma T^n
}
$$
The right-hand triangle commutes by compatibility of $\sigma$ with $\epsilon$ and the left-hand quadrilateral commutes by naturality of $\epsilon$. In each case, the outer diagram commutes, as required.
\end{proof}
There is a similar statement for the coproducts $\delta$. Thus, we have proved:
\begin{prop}\label{supersigma}
The morphisms $\sigma^n$ define a morphism of simplicial functors
$$
\xymatrix{
 \rCC_G(1_{\mathcal D}, \Sigma)  \ar[r]^-{\sigma} & \rCC_T(\Sigma, 1_\mathcal B ).
}
$$
\end{prop}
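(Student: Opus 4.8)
The plan is to verify directly that, placed in simplicial degree $n$, the natural transformation $\sigma^{n+1}\colon G^{n+1}\Sigma \Rightarrow \Sigma T^{n+1}$ commutes with every face and degeneracy operator of the two bar resolutions; since a morphism of simplicial functors is nothing but a degreewise natural transformation compatible with all faces and degeneracies, this settles the proposition. Unwinding Definition~\ref{bardeffo}, in degree $n$ one has $\rCC_G(1_{\mathcal D},\Sigma)_n = G^{n+1}\Sigma$ with $d_i = G^i\epsilon G^{n-i}\Sigma$ and $s_j = G^j\delta G^{n-j}\Sigma$, whereas $\rCC_T(\Sigma,1_{\mathcal B})_n = \Sigma T^{n+1}$ with $d_i = \Sigma T^i\epsilon T^{n-i}$ and $s_j = \Sigma T^j\delta T^{n-j}$ --- the $\epsilon,\delta$ appearing on the left being those of $G$, those on the right being those of $T$. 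Thus the task reduces to showing that, for all $0\le i\le n$,
$$
(\Sigma T^i\epsilon T^{n-i})\circ\sigma^{n+1} = \sigma^{n}\circ(G^i\epsilon G^{n-i}\Sigma),
\qquad
(\Sigma T^i\delta T^{n-i})\circ\sigma^{n+1} = \sigma^{n+2}\circ(G^i\delta G^{n-i}\Sigma).
$$

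The first identity is exactly Lemma~\ref{superchiepsilon} after the harmless reindexing $i\mapsto n-i$ (the lemma writes the face operator as $G^{n-i}\epsilon G^{i}\Sigma$), so nothing need be added there. The second is the ``similar statement for the coproducts $\delta$'' announced immediately after that lemma, and I would establish it by the very same double induction on $n$ and $i$, with $\epsilon$ and the counit compatibility $(\Sigma\epsilon)\circ\sigma = \epsilon\Sigma$ replaced throughout by $\delta$ and the comultiplication compatibility of the opmorphism $(\Sigma,\sigma)$, namely $(\Sigma\delta)\circ\sigma = (\sigma T)\circ(G\sigma)\circ(\delta\Sigma) = \sigma^{2}\circ(\delta\Sigma)$. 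Concretely, the base case $n=i=0$ is this axiom; in the inductive step one writes $\sigma^{n+1}=(\sigma T^{n})\circ(G\sigma^{n})$ and separates the boundary index $i=0$, where $\delta G^{n}\Sigma$ doubles the outermost copy of $G$ and the step runs on the comultiplication axiom together with naturality of $\delta$, from the indices $i\ge 1$, where $G^{i}\delta G^{n-i}\Sigma$ is $G$ whiskered onto a degeneracy of degree $n-1$, so that the inductive hypothesis applies after whiskering by $G$ and the resulting square is transported past $\sigma T^{n+1}$ by naturality of $\sigma$. With both identities in hand for every $n$, the family $\{\sigma^{n+1}\}_{n\ge 0}$ is a morphism of simplicial functors, as claimed.

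The argument is entirely formal, so I anticipate no genuine obstacle; the only point needing care is the index bookkeeping --- reconciling the convention of Lemma~\ref{superchiepsilon} with that of Definition~\ref{bardeffo}, and organising the $\delta$-induction so that the single boundary index (here $i=0$) is precisely the one that consumes the comonad-opmorphism axiom, while every other square in both inductions is an instance of naturality of $\sigma$, $\epsilon$ or $\delta$ combined with the recursion $\sigma^{n+1}=(\sigma T^{n})\circ(G\sigma^{n})$.
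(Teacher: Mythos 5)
Your proof is correct and follows essentially the same route as the paper: the face-map compatibility is exactly Lemma~\ref{superchiepsilon} (up to the reindexing $i\mapsto n-i$ between that lemma and Definition~\ref{bardeffo}), and the degeneracy case is the ``similar statement for the coproducts'' that the paper leaves unstated, proved by the same double induction with the counit axiom replaced by the comultiplication axiom $(\Sigma\delta)\circ\sigma=\sigma^{2}\circ(\delta\Sigma)$. You have in fact supplied more detail than the paper, which records only the $\epsilon$-lemma and asserts the $\delta$-analogue.
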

Dually, given a morphism of comonads
$$
\xymatrix{
(\cB, S) \ar[rr]^-{(\Sigma, \gamma)} & & (\cD, C),
}
$$
the natural transformation $\gamma \colon \Sigma S \Rightarrow C\Sigma$ extends to a morphism of simplicial functors
$$
\xymatrix{
\rCC_S^*(\Sigma,1_\cB) \ar[r]^-\gamma & \rCC_C^*(1_\cD, \Sigma).
}
$$

Now consider a distributive law $\chi \colon TS \Rightarrow ST$ of comonads on $\cB$. This is equivalent to saying that
$$
\xymatrix{
(\cB, T) \ar[rr]^-{(S,~\chi)} & & (\cB, T),
}$$
is an opmorphism of comonads, and
$$
\xymatrix{
(\cB, S) \ar[rr]^-{(T,~\chi)} & & (\cB, S)
}
$$
is a morphism of comonads. Therefore, by Lemma~\ref{superchiepsilon} and its dual version, $\chi$ extends in two ways to simplicial functors
$$
\xymatrix{
\rCC_T(1_\cB, S) \ar[r]^-\chi & \rCC_T(S, 1_\cB),} \qquad
\xymatrix{\rCC_S^*(T, 1_\cB) \ar[r]^-\chi & \rCC_S^*(1_\cB, T).}
$$
Some abuse of notation occurs here, as we implicitly use the symbol $\chi^n$ to denote natural transformations $T^nS \Rightarrow ST^n$ and $TS^n \Rightarrow S^n T$.
\begin{rem}\label{superchimorphism}
A proof similar to that for Lemma~\ref{superchiepsilon} shows that for all $n \ge 0$,
$$
\xymatrix{
(\cB, T) \ar[rr]^-{(S^n,~\chi^n)} & & (\cB, T),
}$$
is an opmorphism of comonads, and
$$
\xymatrix{
(\cB, S) \ar[rr]^-{(T^n,~\chi^n)} & & (\cB, S)
}
$$ is a morphism of comonads.
\end{rem}

Now, let $(M, \cX, \rho)$ be a $\chi$-coalgebra, and let $(N, \cY, \lambda)$
be a $\chi$-opcoalgebra. We  define
natural transformations
$$
	t_T \colon
	\rCC_T(N,M)_n \Rightarrow \rCC_T(N,M)_n,
	\quad t_S \colon
	\rCC_S^*(N,M)_n \Rightarrow \rCC_S^*(N,M)_n
$$
by the diagrams
$$
	\xymatrix@C=3.5em{ NT^{n} SM
	\ar[r]^-{N  \chi^n M} &
	 N  S T^n  M
	\ar[d]^-{\lambda T^n  M}
	\\
	 N  T^{n+1} M
	\ar[u]^-{ N  T^n  \rho}
	\ar@{.>}[r]_-{t_T}  &  N  T^{n+1}  M }
	\quad\quad\quad
	\xymatrix@C=3.5em{  N T S^n M
	\ar[r]^-{ N \chi^n  M} &  N S^n  T  M
	\ar[d]^-{ N  S^n \rho}
	\\  N S^{n+1}  M \ar[u]^-{\lambda  S^n  M}
	\ar@{.>}[r]_-{t_S} & N S^{n+1}  M }
$$

\begin{thm}[B\"ohm and {\c S}tefan]\label{dup}
The simplicial functors
$
	 \rCC_T( N, M)
$
and
$
	\rCC^*_S( N,  M)
$
become duplicial functors with duplicial
operators given by $t_T$ and $t_S$ respectively.
\end{thm}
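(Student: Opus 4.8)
The plan is to check directly that $t_T$ and $t_S$ satisfy the duplicial identities listed just after the definition of a duplicial object; the underlying simplicial structures of $\rCC_T(N,M)$ and $\rCC_S^*(N,M)$ are already available from Definition~\ref{bardeffo}. The construction is self-dual: $\rCC_S^*(N,M)$ with operator $t_S$ is obtained from $\rCC_T(N,M)$ with operator $t_T$ by reversing the simplicial order as in Remark~\ref{opsimpobj} and interchanging the roles of the comonads $T$ and $S$, of the structures $\rho$ and $\lambda$, and of the $\chi$-coalgebra and $\chi$-opcoalgebra axioms. So I would prove only that $t_T$ makes $\rCC_T(N,M)$ duplicial, i.e.\ verify, in degree $n$ (where $d_i = NT^i\epsilon T^{n-i}M$, $s_j = NT^j\delta T^{n-j}M$ and $t_T = (\lambda T^nM)(N\chi^nM)(NT^n\rho)$), that
\[
d_i\, t_T = \begin{cases} t_T\, d_{i-1} & \text{if } 1 \le i \le n,\\ d_n & \text{if } i = 0, \end{cases}
\qquad
s_j\, t_T = \begin{cases} t_T\, s_{j-1} & \text{if } 1 \le j \le n,\\ t_T^{\,2}\, s_n & \text{if } j = 0. \end{cases}
\]

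The tools are all in hand. By Remark~\ref{superchimorphism} and Lemma~\ref{superchiepsilon} (and its dual), $(S^n,\chi^n)$ is an opmorphism of comonads and $(T^n,\chi^n)$ a morphism of comonads on $\cB$, so $\chi^n$ is compatible with $\epsilon_T$, $\epsilon_S$, $\delta_T$ and $\delta_S$; in addition there are the four axioms of the distributive law $\chi$ from Definition~\ref{finallyover}, the $\chi$-coalgebra axioms of $(M,\rho)$, the $\chi$-opcoalgebra axioms of $(N,\lambda)$, and naturality of $\epsilon$, $\delta$, $\rho$, $\lambda$, $\chi$. Each duplicial identity then becomes a pasting of these commuting cells in $[\cX,\cY]$, and I would organize the bookkeeping of the many $T$- and $S$-strands with string diagrams in $\cat$. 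For instance, $d_0 t_T = d_n$ because the counit axiom of the $\chi$-opcoalgebra $(N,\lambda)$ turns the leftmost $\epsilon_T$ into $\epsilon_S$, the counit compatibility of the comonad morphism $(T^n,\chi^n)$ turns $\epsilon_S\circ\chi^n$ into $T^n\epsilon_S$, and the counit axiom of the $\chi$-coalgebra $(M,\rho)$ turns $\epsilon_S\circ\rho$ into $\epsilon_T$, leaving $NT^n\epsilon_TM = d_n$. For $1\le i\le n$ the internal $\epsilon_T$ of $d_i$ is pushed to the left through $\lambda$ (naturality), through $\chi^n$ (the opmorphism compatibility, which at the same time changes the exponent from $n$ to $n-1$) and through $\rho$ (naturality), yielding $t_T\, d_{i-1}$; the identities $s_j t_T = t_T s_{j-1}$ for $1\le j\le n$ are entirely parallel, with $\delta_T$ and the comultiplication compatibilities in place of $\epsilon_T$ and the counit ones.

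The real obstacle is the last identity, $s_0 t_T = t_T^{\,2}\, s_n$. Inserting $\delta_T$ at position $0$ and recognising the outcome as a two-fold application of the rotation forces simultaneous use of the two $\delta$-compatibility axioms of $\chi$, the coassociativity square of the $\chi$-coalgebra $(M,\rho)$, the coassociativity square of the $\chi$-opcoalgebra $(N,\lambda)$, and the recursive factorisation $\chi^{n+1} = (\chi T^n)(T\chi^n)$. I would write $s_0 t_T$ and $t_T^{\,2} s_n$ as the two boundary paths of one large pasting diagram in $[\cX,\cY]$, subdivide that diagram into cells each commuting by one of the facts just listed — this is precisely where the Yang--Baxter-type content of $\chi$ is consumed — and then verify the subdivision. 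Keeping the interleaved $T$- and $S$-strands correctly aligned through this subdivision is where essentially all the work lies. Once this identity is secured, the dual computation gives $\rCC_S^*(N,M)$ the duplicial structure with operator $t_S$, completing the proof.
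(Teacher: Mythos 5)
Your proposal and the paper part ways at the very first step: the paper does not prove Theorem~\ref{dup} at all, but simply observes that the statement for $t_T$ is the case treated in B\"ohm and {\c S}tefan's original article and that the statement for $t_S$ follows by a slight modification. What you propose is, in effect, a reconstruction of that cited proof inside the present framework, and your reconstruction is sound where it is carried out. The identities $d_0 t_T = d_n$ and $d_i t_T = t_T d_{i-1}$ ($1\le i\le n$) go exactly as you say: the counit axioms of $(M,\rho)$ and $(N,\lambda)$, the counit compatibility of $(T^n,\chi^n)$ from Remark~\ref{superchimorphism}, Lemma~\ref{superchiepsilon} for the opmorphism $(S,\chi)$, and naturality of $\lambda$ and $\rho$ are precisely the cells needed, and the degeneracy identities for $1\le j\le n$ are the $\delta$-analogues. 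Your reduction of the $t_S$ case to the $t_T$ case by opsimplicial reversal and the interchange $T\leftrightarrow S$, $\rho\leftrightarrow\lambda$ is also the "slight modification" the paper alludes to. The one caveat is that for $s_0 t_T = t_T^{\,2} s_n$ you name the correct ingredients — both $\delta$-compatibilities of $\chi$ from Definition~\ref{finallyover}, the coassociativity squares of $\rho$ and $\lambda$, the recursion $\chi^{n+1}=(\chi T^n)(T\chi^n)$, and repeated naturality to commute the two copies of $\rho$ past the two copies of $\lambda$ — but you stop at the level of a plan for the pasting diagram rather than exhibiting it; since this identity is the only place where the duplicial (as opposed to merely simplicial) content of the theorem actually lives, a self-contained write-up would need that diagram spelled out. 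What your route buys over the paper's is an internal proof using only Lemma~\ref{superchiepsilon}, Remark~\ref{superchimorphism} and the axioms already stated, at the cost of the lengthy bookkeeping the paper deliberately outsources.
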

\begin{proof}
The first operator being duplicial is exactly the case considered
in \cite{MR2415479}, and the second follows from a slight
modification of their proof.  \end{proof}
At the end of Section~\ref{decalage} in the next chapter, we give a new way of deriving the duplicial functor $\rCC_T(N,M)$ of Theorem~\ref{dup}, using ideas from Hochschild (co)homology. Following this, we also show that the theorem can in fact be deduced directly from the twisting procedure given in Section~\ref{twistcoeff}.
\subsection{Cyclicity}
\label{evidenziatore2}
For each $n \ge 0$, we define a morphism
$\rho_n \colon T^{n+1} M \Rightarrow S^{n+1} M$
in the following way. For
each $0 \le i \le n$, let $\rho_{i,n}$ denote the morphism
$$
	\xymatrix{     S^i  T^{n-i + 1}  M \ar[rr]^-{     S^i
 T^{n-i} \rho} &&     S^i T^{n-i} S  M \ar[rr]^-{   S^i
\chi^{n-i}  M} & &     S^{i+1} T^{n-i}  M.}
$$
Then set $\rho_n$ to be the vertical composite
$$
	\rho_n := \rho_{n,n} \rho_{n-1,n}  \cdots \rho_{1,n} \rho_{0,n}.
$$
As above, the morphisms $\rho_{n+1} \colon T^{n+2}M \Rightarrow S^{n+2}M$ (for $n \ge 1$) can be defined recursively by either of the composites
\begin{align*}
\xymatrix{
T^{n+2}M \ar[rr]^-{T^{n+1} \rho} && T^{n+1} SM \ar[rr]^-{\chi^{n+1}M} && ST^{n+1}M \ar[rr]^-{S \rho_n} && S^{n+2}M
} \\
\xymatrix{
T^{n+2}M \ar[rr]^-{T\rho_n} && T S^{n+1} M \ar[rr]^-{\chi^{n+1}M} && S^{n+1}TM \ar[rr]^-{S^{n+1} \rho} && S^{n+2}M
}
\end{align*}
Completely symmetrically, we have morphisms $\lambda_{i,n} \colon NT^{n-i} S^{i+1} \Rightarrow NT^{n-i + 1} S^i$ given by
$$
\xymatrix{
N T^{n-i} S^{i+1} \ar[rr]^-{N \chi^{n-i} S^i} && NST^{n-i}S^i \ar[rr]^-{\lambda T^{n-i} S^i} && NT^{n- i + 1}
}
$$
which define a morphism $\lambda_n \colon NS^{n+1} \Rightarrow NT^{n+1}$ given by the composite
$$
\lambda_n := \lambda_{0,n} \lambda_{1,n} \cdots \lambda_{n-1, n} \lambda_{n,n}.
$$
\begin{lem}\label{superrhoepsilon}
For all $n \ge 0$ and for $0 \le i \le n + 1$, the diagram
\begin{equation}\label{superrho}
\begin{array}{cc}
\xymatrix{
T^{n+2}M \ar[rr]^-{\rho_{n+1}} \ar[d]_-{T^{n + 1 - i}\epsilon T^{i} M} && S^{n+2} M \ar[d]^-{S^{i}\epsilon S^{n+1-i }M} \\
T^{n+1}M \ar[rr]_-{\rho_n} && S^{n+1} M
}
\end{array}
\end{equation}
commutes.
\begin{proof}
Again, we proceed by induction. Suppose that $n = 0$ and $i = 1$. Naturality of $\epsilon$ and compatibility of $\rho$ and $\epsilon$ tells us that each inner square of the diagram
$$
\xymatrix{
TTM \ar[d]_-{\epsilon TM} \ar[rr]^-{T\rho} && TSM\ar[d]^-{\epsilon SM} \ar[rr]^-{\chi M} && STM \ar[d]^-{S\epsilon M} \ar[rr]^-{S \rho M} && SSM \ar[d]^-{S \epsilon M} \\
TM \ar[rr]_-{\rho} && SM \ar@{=}[rr] && SM \ar@{=}[rr] && SM
}
$$
commutes, and thus so does the outer diagram. Similarly, when $i = 0$, we have a commutative diagram
$$
\xymatrix{
TTM \ar[d]_-{T \epsilon M} \ar[rr]^-{T\rho} && TSM\ar[d]^-{T\epsilon M} \ar[rr]^-{\chi M} && STM \ar[d]^-{ \epsilon TM} \ar[rr]^-{S \rho M} && SSM \ar[d]^-{ \epsilon SM} \\
TM \ar@{=}[rr] && TM \ar@{=}[rr] && TM \ar[rr]_-\rho && SM
}
$$
proving that diagram~\ref{superrho} commutes in the base case.

Now suppose that the statement holds for $n-1$. Let $i$ be such that $0 < i \le n+1$, and write $j = i - 1$, so that $0 \le j \le n$. Consider the diagram
$$
\xymatrix{
T^{n+2}M \ar[rr]^-{T^{n+1} \rho} \ar[d]_-{T^{n-j} \epsilon T^{j+1} M} && T^{n+1} SM \ar[d]_-{T^{n-j} \epsilon T^j SM} \ar[rr]^-{\chi^{n+1} M} && ST^{n+1}M \ar[d]^-{ST^{n-j} \epsilon T^j M} \ar[rr]^-{S \rho_n} && S^{n+2}M \ar[d]^-{S S^j \epsilon S^{n-j} M} \\
T^{n+1}M \ar[rr]_-{T^n \rho} && T^nSM \ar[rr]_-{\chi^n M} && ST^nM \ar[rr]_-{S \rho_{n-1}} && S^{n+1}M
}
$$
The right-hand square commutes by inductive hypothesis, the middle square commutes by Lemma~\ref{superchiepsilon}, and the left-hand square commutes by naturality of $\epsilon$. The case for $i = 0$ is similarly proved, using compatibility of $\chi^n$ with $\epsilon \colon S \Rightarrow 1$ from Remark~\ref{superchimorphism}.
\end{proof}
\end{lem}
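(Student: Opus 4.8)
The plan is to prove the lemma by induction on $n$, reducing the commutativity of the large diagram~(\ref{superrho}) in each case to three elementary facts: naturality of $\epsilon$, the counit axiom for the $\chi$-coalgebra $(M,\rho)$ (the second diagram of Definition~\ref{coalgdef}), and Lemma~\ref{superchiepsilon} together with its dual. Before starting I would record the two recursive descriptions of $\rho_{n+1}$ displayed just above the lemma, together with the observation that for $n\ge 1$ one may compute $\rho_{n+1}$ either as $(S\rho_n)\circ(\chi^{n+1}M)\circ(T^{n+1}\rho)$ or as $(S^{n+1}\rho)\circ(\chi^{n+1}M)\circ(T\rho_n)$; the freedom to pick whichever of these places the counit $\epsilon$ on the \emph{outside} of the $\chi$-block is precisely what makes the two ranges $i=0$ and $0<i\le n+1$ tractable.

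For the base case $n=0$ I would expand $\rho_1=(S\rho M)\circ(\chi M)\circ(T\rho)$ and, for each of $i=0$ and $i=1$, draw the $2\times 3$ ladder whose top row is $T^2M\to TSM\to STM\to SSM$ and whose vertical arrows are the appropriate counit maps. Every inner square commutes either by naturality of $\epsilon$ or by compatibility of $\rho$ with $\epsilon$, so the outer rectangle commutes; this is exactly the pair of ladders one expects here.

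For the inductive step, assume the claim for $n-1$. Fix $i$ with $0<i\le n+1$ and put $j=i-1$, so $0\le j\le n$. Using $\rho_{n+1}=(S\rho_n)\circ(\chi^{n+1}M)\circ(T^{n+1}\rho)$, I would form the $2\times 4$ ladder with top row $T^{n+2}M\to T^{n+1}SM\to ST^{n+1}M\to S^{n+2}M$ and the counit map $T^{n-j}\epsilon T^{j+1}M$ (and its siblings) on the vertical arrows. Its left square commutes by naturality of $\epsilon$; its middle square is an instance of Lemma~\ref{superchiepsilon} applied to the opmorphism $(S^{n+1},\chi^{n+1})$ of Remark~\ref{superchimorphism}; and its right square is the inductive hypothesis with $S$ prepended. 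For $i=0$ I would argue symmetrically with the other recursion $\rho_{n+1}=(S^{n+1}\rho)\circ(\chi^{n+1}M)\circ(T\rho_n)$, now invoking the compatibility of $\chi^n$ with $\epsilon\colon S\Rightarrow 1$ (the dual of Lemma~\ref{superchiepsilon} for the morphism of comonads $(T^n,\chi^n)$), naturality of $\epsilon$, and the inductive hypothesis.

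I expect no genuine obstacle: the entire content is bookkeeping, namely choosing the recursion for $\rho_{n+1}$ so that the counit acts on the outside of the $\chi$-block, and matching indices across the four columns. Once the ladders are drawn correctly, each square commutes for one of the three reasons above, and pasting them gives the claim.
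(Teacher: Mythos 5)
Your proof is correct and follows essentially the same route as the paper: induction on $n$, with the base case handled by the two counit ladders for $i=0,1$ and the inductive step by a three-square ladder whose left square is naturality of $\epsilon$, whose middle square is Lemma~\ref{superchiepsilon} (resp.\ the compatibility of $\chi^n$ with $\epsilon\colon S\Rightarrow 1$ when $i=0$), and whose right square is the inductive hypothesis. The only cosmetic difference is that you spell out the use of the second recursive description of $\rho_{n+1}$ in the $i=0$ case, which the paper leaves as ``similarly proved.''
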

There is a similar statement regarding the coproducts $\delta$. Thus, we have proved the following:
\begin{prop}\label{simplicialrho}
The morphisms $\rho_n$ define a morphism of simplicial functors
$$
\xymatrix{
\rBB(T,M) \ar[r]^-{\rho} & \rBB^*(S,M).
}$$
\end{prop}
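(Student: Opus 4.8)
The plan is to show that the family $(\rho_n)_{n\ge 0}$ is a morphism in $[\Delta^*, [\cX, \cB]]$, i.e.\ that it is compatible with all the face and degeneracy maps of the two simplicial objects. Each $\rho_n$ is a composite of the natural transformations $S^iT^{n-i}\rho$ and $S^i\chi^{n-i}M$, hence automatically a natural transformation $T^{n+1}M \Rightarrow S^{n+1}M$, so the only thing to check is simpliciality; and since both $\rBB(T,M)$ and $\rBB^*(S,M)$ are genuine simplicial objects, it suffices to verify compatibility with the generating maps $d_i$ and $s_j$.

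For the face maps, recall that $d_i = T^i\epsilon T^{n-i}M$ on $\rBB(T,M)_n$ whereas $d_i = S^{n-i}\epsilon S^iM$ on $\rBB^*(S,M)_n$. The required identity $\rho_n\circ d_i = d_i\circ\rho_{n+1}$ is, after the substitution $i\mapsto n+1-i$, precisely the commutativity of the square of Lemma~\ref{superrhoepsilon}, so this half needs nothing further. For the degeneracy maps one needs the $\delta$-analogue of that lemma: for all $n\ge 0$ and $0\le i\le n$ the square
$$
\xymatrix{
T^{n+1}M \ar[rr]^-{\rho_n} \ar[d]_-{T^{n-i}\delta T^i M} && S^{n+1}M \ar[d]^-{S^i\delta S^{n-i}M} \\
T^{n+2}M \ar[rr]_-{\rho_{n+1}} && S^{n+2}M
}
$$
commutes; since $T^{n-i}\delta T^iM = s_{n-i}$ on $\rBB(T,M)_n$ and $S^i\delta S^{n-i}M = s_{n-i}$ on $\rBB^*(S,M)_n$, this gives $\rho_{n+1}\circ s_j = s_j\circ\rho_n$ for all $j$. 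I would prove it by the same induction on $n$ as in Lemma~\ref{superrhoepsilon}: unfold $\rho_{n+1}$ via its recursive description $\rho_{n+1} = S\rho_n\circ(\chi^{n+1}M)\circ T^{n+1}\rho$ (or the mirror form using $S^{n+1}\rho\circ(\chi^{n+1}M)\circ T\rho_n$), and split the resulting rectangle into a left square that commutes by naturality of $\delta$, a middle square that commutes by the $\delta$-version of Lemma~\ref{superchiepsilon} (the compatibility of $\chi^n$ with the comultiplications, noted just after that lemma and in Remark~\ref{superchimorphism}), and a right square that commutes by the inductive hypothesis; the base case and the extreme slot use the compatibility of the $\chi$-coalgebra structure $\rho$ with $\delta$ together with $\chi^n$'s compatibility with $\delta\colon S\Rightarrow SS$. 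Combining the two squares yields the proposition.

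I do not anticipate any conceptual obstacle: the substance of the statement already lives in Lemmas~\ref{superchiepsilon} and~\ref{superrhoepsilon} and their $\delta$-duals, and the comonad and Yang--Baxter axioms do all the real work there. The one delicate point is index bookkeeping: the opbar resolution reverses the positions of $\epsilon$ and $\delta$ relative to the bar resolution, so one must check that $\rho_n$ intertwines $d_i$ with $d_i$ (not with $d_{n-i}$). This is exactly compensated by the shape of $\rho_n = \rho_{n,n}\cdots\rho_{0,n}$, which threads $\rho$ from the innermost slot outward while turning each $T$ into an $S$; keeping the intermediate diagrams of the inductive step aligned with these conventions is the only place that demands care.
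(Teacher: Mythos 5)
Your proposal is correct and follows the paper's route exactly: the face-map compatibility is precisely Lemma~\ref{superrhoepsilon} (with the index substitution you note), and the degeneracy case is handled by the $\delta$-analogue of that lemma, proved by the same induction using the coassociativity axiom of the $\chi$-coalgebra and the $\delta$-compatibility of $\chi^n$ --- which is all the paper itself says (``There is a similar statement regarding the coproducts $\delta$''), so if anything you have supplied more detail than the original.
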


\begin{lem}\label{cycliclem}
For all $i,j,k$ with $i \ge 1$, $k \ge i$ and $j \ge 0$, the diagram
\begin{equation}\label{cycliclemma}
\begin{array}{c}
\xymatrix{
NT^jS^i T^{k-i+1} M \ar[dd]_-{\lambda_{i-1, i+j - 1} T^{k-i+1}M} \ar[rrr]^-{NT^j \rho_{i, k} } &&& NT^jS^{i+1}T^{k-i} M \ar[dd]^-{\lambda_{i, i+j} T^{k-i}M} \\
\\
NT^{j+1} S^{i-1} T^{k-i+1} M\ar[rrr]_-{NT^{j+1} \rho_{i-1, k-1}} & && NT^{j+1} S^i T^{k-i} M
}
\end{array}
\end{equation}
commutes.
\end{lem}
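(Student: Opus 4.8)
The plan is to reduce the square to a single instance of the interchange law for $2$-cells in $\cat$, after rewriting its four arrows in terms of one $\rho$-block and one $\lambda$-block that act in disjoint regions of the string.

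First I would record two bookkeeping identities, each immediate from the relevant definition together with functoriality of horizontal composition. Since the composite defining $\rho_{p,n}$ involves only the exponents $p$ and $n-p$, and $(k-1)-(i-1)=k-i$, the $2$-cell $\rho_{i,k}$ is obtained from $\rho_{i-1,k-1}\colon S^{i-1}T^{k-i+1}M\Rightarrow S^{i}T^{k-i}M$ simply by whiskering on the left with $S$; hence the top arrow of the square is $NT^{j}S\rho_{i-1,k-1}$ and the bottom arrow is $NT^{j+1}\rho_{i-1,k-1}$. Next, put $L:=(\lambda T^{j})(N\chi^{j})\colon NT^{j}S\Rightarrow NT^{j+1}$, where $\chi^{j}\colon T^{j}S\Rightarrow ST^{j}$ is the iterate of $\chi$ appearing in Remark~\ref{superchimorphism}. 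The composite defining $\lambda_{p,n}$ depends only on $p$ and $n-p$, and in both $\lambda_{i-1,i+j-1}$ and $\lambda_{i,i+j}$ one has $n-p=j$; hence $\lambda_{i-1,i+j-1}=LS^{i-1}$ and $\lambda_{i,i+j}=LS^{i}$, so the left arrow is $L(S^{i-1}T^{k-i+1}M)$ and the right arrow is $L(S^{i}T^{k-i}M)$.

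With these substitutions the square becomes exactly the naturality square of the $2$-cell $L\colon NT^{j}S\Rightarrow NT^{j+1}$ at the $2$-cell $\rho_{i-1,k-1}\colon S^{i-1}T^{k-i+1}M\Rightarrow S^{i}T^{k-i}M$, that is, the identity
\[
(NT^{j+1}\rho_{i-1,k-1})\circ\bigl(L\,S^{i-1}T^{k-i+1}M\bigr)=\bigl(L\,S^{i}T^{k-i}M\bigr)\circ(NT^{j}S\rho_{i-1,k-1}),
\]
which holds because horizontal composition of $2$-cells in $\cat$ is functorial (equivalently, by the middle-four interchange law): $L$ modifies the left-hand block $NT^{j}S(-)\Rightarrow NT^{j+1}(-)$, while $\rho_{i-1,k-1}$ modifies the argument $(-)$, and the two act in disjoint regions. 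I would then note that the hypotheses $i\ge 1$, $k\ge i$, $j\ge 0$ are precisely what guarantees that every exponent occurring is non-negative and that $\rho_{i-1,k-1}$ is defined (one needs $0\le i-1\le k-1$).

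The only real obstacle here is notational: the heavy indexing disguises the fact that $\rho_{i,k}$ and $\rho_{i-1,k-1}$ are the same $2$-cell up to a whiskering by $S$, and that $\lambda_{i-1,i+j-1}$ and $\lambda_{i,i+j}$ are the same $2$-cell $L$ up to a whiskering by $S^{i-1}$ respectively $S^{i}$; once this is seen there is no computation left. As an alternative one could instead expand all four arrows into their defining composites and chase the resulting diagram, whose inner faces commute by naturality of $\lambda$, naturality of $\chi^{k-i}$, and bifunctoriality of horizontal composition — but the \emph{interchange} argument above is shorter and is the one I would write up.
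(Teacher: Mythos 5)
Your proof is correct. The two bookkeeping identities do hold: from the defining formula $\rho_{p,n}=(S^p\chi^{n-p}M)\circ(S^pT^{n-p}\rho)$ one gets $\rho_{i,k}=S\rho_{i-1,k-1}$, and from $\lambda_{p,n}=(\lambda T^{n-p}S^{p})\circ(N\chi^{n-p}S^{p})$ one gets $\lambda_{i-1,i+j-1}=LS^{i-1}$ and $\lambda_{i,i+j}=LS^{i}$ with $L=(\lambda T^{j})\circ(N\chi^{j})$; after these substitutions the square is indeed the interchange square of $L\colon NT^{j}S\Rightarrow NT^{j+1}$ against $\rho_{i-1,k-1}$. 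The paper instead takes the route you sketch as an alternative: it expands the four arrows into their defining composites, obtaining a $2\times 2$ grid whose upper squares commute by naturality of $\chi^{j}$ and whose lower squares commute by naturality of $\lambda$. The content is the same --- naturality of $\chi^{j}$ and of $\lambda$ together is exactly naturality of your composite $L$ --- but your packaging makes visible the structural fact, hidden by the indexing, that the top and bottom arrows are the same $2$-cell up to whiskering by $S$, and likewise for the two vertical arrows; this shortens the argument to a single appeal to the interchange law at the cost of two lines of index bookkeeping.
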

\begin{proof}
Consider the diagram
$$
\xymatrix@C=2.31em{
NT^jS^iT^{k-i + 1} M \ar[dd]^-{N \chi^jS^{i-1} T^{k-i+1}} \ar[rr]^-{NT^j S^i T^{k-i}\rho} & & NT^j S^i T^{k-i} SM \ar[dd]^-{N\chi^j S^{i-1}T^{k-i}M} \ar[rr]^-{NT^jS^i \chi^{k-i} M} && NT^j S^{i+1}T^{k-i}M \ar[dd]_-{N\chi^j S^i T^{k-i}M}\\
\\
NST^j S^{i-1} T^{k-i + 1} M \ar[dd]^-{\lambda T^j S^{i-1} T^{k-i+1} M} \ar[rr]_-{NST^j S^{i-1}T^{k-i} \rho} && NST^j S^{i-1}T^{k-i} SM \ar[rr]_-{NST^jS^{i-1}\chi^{k-i} M} \ar[dd]_-{\lambda T^{j}S^{i-1} T^{k-i}SM} && NST^j S^i T^{k-i} M \ar[dd]_-{\lambda T^jS^iT^{k-i}M} \\
\\
NT^{j+1}S^{i-1}T^{k-i+1} M  \ar[rr]_-{NT^{j+1}S^{i-1}T^{k-i}\rho} && NT^{j+1}S^{i-1}T^{k-i}SM \ar[rr]_-{NT^{j+1}S^{i-1}\chi^{k-i} M} && NT^{j+1}S^{i} T^{k-i} M
}
$$
The upper squares commute by naturality of $\chi^j$, and the lower squares commute by naturality of $\lambda$. Therefore, the outer square, which is the same as the square~\ref{cycliclemma}, commutes.
\end{proof}

Let $R_n$ denote the morphism $N \rho_n \colon NT^{n+1}M \Rightarrow N S^{n+1}M$, and let $L_n$ denote the morphism $\lambda_n M \colon   N S^{n+1}  M  \Rightarrow
 N T^{n+1}  M$.
\begin{thm}\label{cyc}
The above construction defines
two morphisms
\begin{align*}
	\xymatrix{ \rCC_T(N,M) \ar[r]^-{R }  &
\rCC_S^*(N,M),} \qquad
	\xymatrix{\rCC_S^*(N,M)
	\ar[r]^-{L } & \rCC_T(N,M)}
\end{align*}
of duplicial functors. Furthermore, $L
R  = 1$ if and only if $\rCC_T
(N,M)$ is cyclic, and $R  L  = 1$
if and only if $ \rCC_S^*(N,M)$ is cyclic.
\end{thm}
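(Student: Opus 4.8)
The plan is to argue in three stages — first that $R$ and $L$ are morphisms of \emph{simplicial} functors, then that they respect the duplicial operators, and finally that $L_nR_n$ and $R_nL_n$ are the appropriate powers of those operators — after which the two equivalences fall out formally. For the first stage, note that since $R_n = N\rho_n$, the morphism $R$ is the whiskering by $N\colon\cB\to\cY$ of the morphism of simplicial functors $\rho\colon\rBB(T,M)\to\rBB^*(S,M)$ furnished by Proposition~\ref{simplicialrho}, and whiskering a morphism of simplicial functors by a functor again gives one; dually, $L = \lambda M$ is the whiskering by $M\colon\cX\to\cB$ of the morphism of simplicial functors built from the $\chi$-opcoalgebra structure $\lambda$ exactly as $\rho$ builds the one of Proposition~\ref{simplicialrho} (using the dual of Lemma~\ref{superrhoepsilon} and its $\delta$-analogue). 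Hence $R$ and $L$ are simplicial.

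For the second stage one must check $t_SR_n = R_nt_T$ and $L_nt_S = t_TL_n$ in every degree $n$. Unwinding the definitions of $t_T$, $t_S$, $R_n=N\rho_n$ and $L_n=\lambda_nM$, each side becomes a composite of whiskerings of $\rho$, $\lambda$ and the iterated distributive laws $\chi^k$, and the two sides are matched by systematically applying the commuting square of Lemma~\ref{cycliclem} — which slides a $\lambda$-piece past a $\rho$-piece — together with naturality of $\rho$, $\lambda$ and the $\chi^k$.

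The third and decisive stage is the identity $L_nR_n = t_T^{n+1}$ and $R_nL_n = t_S^{n+1}$ in each degree $n$. The base case $n=0$ is direct: there $\rho_0=\rho$, $\lambda_0=\lambda$ and $\chi^0=1$, so $L_0R_0 = \lambda M\circ N\rho = t_T$ and $R_0L_0 = N\rho\circ\lambda M = t_S$. For general $n$ I would expand $L_nR_n = (\lambda_nM)(N\rho_n)$ as the $\rho$-sweep $\rho_{0,n},\dots,\rho_{n,n}$ followed by the $\lambda$-sweep $\lambda_{n,n},\dots,\lambda_{0,n}$, and then commute the $\lambda$-pieces leftwards through the $\rho$-pieces by iterated use of Lemma~\ref{cycliclem}; after this reordering the composite decomposes into $n+1$ consecutive copies of the composite defining $t_T$ in degree $n$, giving $L_nR_n = t_T^{n+1}$. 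Equivalently, the reordering can be organised as an induction on $n$ via the two recursive presentations $\rho_{n+1} = (S\rho_n)(\chi^{n+1}M)(T^{n+1}\rho) = (S^{n+1}\rho)(\chi^{n+1}M)(T\rho_n)$ and their $\lambda$-analogues, which tacitly uses that these two presentations coincide — a consequence of the Yang-Baxter equation and the $\chi$-(op)coalgebra axioms. The identity $R_nL_n = t_S^{n+1}$ is proved by the dual argument.

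Finally, a duplicial functor is cyclic precisely when its duplicial operator satisfies $t^{n+1}=1$ in every degree $n$; hence $\rCC_T(N,M)$ is cyclic iff $t_T^{n+1}=1$ for all $n$ iff $L_nR_n=1$ for all $n$ iff $LR=1$, and symmetrically $\rCC_S^*(N,M)$ is cyclic iff $RL=1$. The main obstacle is the combinatorial bookkeeping in the two diagram chases — the second stage and the reordering in the third — where the named pieces $\rho_{i,n}$, $\lambda_{i,n}$ of the two sweeps must be commuted past one another in the correct order using Lemma~\ref{cycliclem}; once that is set up, everything else is formal.
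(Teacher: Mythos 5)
Your proposal is correct and follows essentially the same route as the paper's proof: simpliciality of $R$ and $L$ via Proposition~\ref{simplicialrho} and its dual, compatibility with $t_T$ and $t_S$ by a diagram chase using the recursive presentations of $\rho_n$ and naturality, and the key identities $(LR)_n = t_T^{n+1}$, $(RL)_n = t_S^{n+1}$ obtained by sliding the $\lambda$-sweep past the $\rho$-sweep through iterated applications of Lemma~\ref{cycliclem} and recognising each resulting diagonal piece as a copy of $t_T$ (which needs the small extra observation that $N\chi^{n-k}T^kM\circ NT^{n-k}\chi^kM = N\chi^nM$). The only quibble is your attribution of the agreement of the two recursive presentations of $\rho_{n+1}$ to the Yang--Baxter equation; for a single distributive law this is just naturality of $\chi$ together with the definition of $\chi^{n+1}$, but this does not affect the argument.
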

\begin{proof}
We show that $R$ is a morphism of duplicial functors, and remark that the proof for $L$ is similar. It is clear that $R$ is a morphism of simplicial objects, so it remains to show that it commutes with the duplicial operators. Let $n \ge 0$ and consider the diagram
$$
\xymatrix@C=0.799em{
 & & & NS^{n+1} M \ar[dr]^-{\ \lambda S^n M}  \ar@/^6.5ex/[dddrrr]^-{t_S }& & & \\
 & & NST^n M \ar[ddr]_-{\lambda T^n M}\ar[ur]^-{NS\rho_{n-1}}& &  NTS^n M \ar[dr]^-{N \chi^n M} & &  \\
& NT^n SM \ar[ur]^-{N \chi^n M} & & & & NS^n TM \ar[dr]_-{N S^n \rho} & \\
\ar@/^6.5ex/[uuurrr]^-{R_n }NT^{n+1}M \ar[ur]_-{NT^n \rho} \ar[rrr]_-{t_T} & & & NT^{n+1} M \ar[uur]_-{NT\rho_{n-1}} \ar[rrr]_-{R_n} & & & NS^{n+1}M
}
$$
The left-hand triangle commutes by definition of $t_T$, and the right-hand semicircle commutes by definition of $t_S$. The right-hand triangle and left-hand semicircle commute by the recursive definitions of $\rho_n$. The middle kite commutes by naturality of $\lambda$, and thus the outer diagram commutes, proving that $R$ is a morphism of duplicial functors.

We finish the proof of the theorem by showing that $(LR)_n = t_T^{n+1}$, which is the identity if and only if $\C_T(N,M)$ is cyclic.
Consider the following two triangles:
$$
\xymatrix@=2.9em{
\smxy{NT^{n+1}M} \ar[r]^-{\smxy{N\rho_{0, n}}} \ar[dr]_-{\smxy{t_T}} & \smxy{NST^n M} \ar[d]^-{\smxy{\lambda_{0,0} T^n M}}\ar[r]^-{\smxy{N\rho_{1,n}}} & \smxy{NS^2T^{n-1}M} \ar[d]^-{\smxy{\lambda_{1,1} T^{n-1}M}}\ar@{.}[r]& \smxy{NS^nTM} \ar[r]^-{\smxy{N\rho_{n,n}}} \ar[d]^-{\smxy{\lambda_{n-1,n-1}TM}} & \smxy{NS^{n+1}M} \ar[d]^-{\smxy{\lambda_{n,n}M}} \\
 & \smxy{NT^{n+1}M} \ar@{.}[dr]
  \ar[r]_-{\smxy{NT\rho_{0, n-1}}} & \smxy{NTST^{n-1}M} \ar@{.}[r] \ar@{.}[d] & \smxy{NTS^{n-1} TM} \ar@{.}[d]  \ar[r]_-{\smxy{NT\rho_{n-1,n-1}}} & \ar@{.}[d] \smxy{NTS^n M}\\
 & & \smxy{NT^{n+1}M} \ar@{.}[r] \ar@{.}[dr]& \smxy{NT^{n-1} STM} \ar[d]_-{\smxy{\lambda_{0, n-1}M}}\ar[r]_-{\smxy{NT^{n-1} \rho_{1,1}}} & \smxy{NT^{n-1}S^2M} \ar[d]^-{\smxy{\lambda_{1,n} M}} & \\
 NT^{n+1}M \ar[drr]_-{t_T} \ar[rr]^-{NT^{n-k} \rho_{0,k}}& &  {NT^{n-k}ST^k M} \ar[d]^-{\lambda_{0,n-k}T^kM} & \smxy{NT^{n+1}M} \ar[r]^-{\smxy{NT^n \rho_{0,0}}}\ar[dr]_-{\smxy{t_T}}& \smxy{NT^n SM} \ar[d]^-{\smxy{\lambda_{0,n}M}} \\
 & &  NT^{n+1}M  & & \smxy{NT^{n+1}M}
}
$$
Applying Lemma~\ref{cycliclem} a total of $\frac{n}{2}(n+1)$ times shows that each square of the right-hand triangle commutes. Each triangle along the diagonal is of the form of the lower-left triangle, for some $k$ such that $0 \le k \le n$. This lower-left triangle can also be written as the outer triangle of
$$
\xymatrix@=3em{
NT^nM \ar[ddrrr]_-{t_T}\ar[r]^-{NT^n \rho} & NT^n SM \ar[rrd]_-{N\chi^n M\ }\ar[rr]^-{NT^{n-k} \chi^k M} && NT^{n-k} ST^k M \ar[d]^-{N\chi^{n-k}T^kM} \\  && & NST^n M  \ar[d]^-{\lambda T^n M} \\
& &  & NT^{n+1}M
}
$$
which commutes since the inner shapes do. Therefore the very large outer triangle above commutes, which means exactly that $(LR)_n = t_T^{n+1}$. A similar argument shows that $(RL)_n = t_S^{n+1}$, completing the proof.
\end{proof}

\subsection{Homologically trivial $ \chi $-coalgebras}
Let $\cY$ be an abelian category.
\begin{defn}
Let $X$ be a non-negative chain complex in $\cY$. We say that $X$ is \emph{contractible} if $X$ is homotopy-equivalent to the zero chain complex.
\end{defn}
A complex $X$ is contractible if and only if the identity chain morphism $1\colon X \to X$ is homotopic to $0$. Explicitly this means that there is a contracting homotopy, i.e.\ a morphism $h \colon X \to X$ of degree 1, such that $hb + bh = 1$, where $b$ denotes the differential of $X$. This implies that the homology objects $\HH^n(X)$ are trivial for $n > 0$.

As we had announced above, $ \chi $-coalgebras as in
Proposition~\ref{triv}
lead to contractible chain complexes:

\begin{prop}
\label{trivcontract}
Let $\chi \colon
TS \Rightarrow ST$ be a comonad distributive
law on a category $\cB$, and let $(M, \cX,
\rho)$ and $(N, \cY, \lambda)$ be a $\chi$-coalgebra and $\chi$-opcoalgebra, respectively. If either of
$(N,Y, \lambda),(M, \cX, \rho)$ arises as in
Proposition~\ref{triv},
then the chain complexes associated to both
$\rCC_T (N, M)$ and $\rCC_S^* (N, M)$
under the Dold-Kan correspondence are contractible.
\end{prop}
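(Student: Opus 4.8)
The strategy is to exhibit, on each of the two simplicial functors, an \emph{extra degeneracy} and then appeal to the standard fact that a simplicial object with an extra degeneracy has contractible associated augmented chain complex, with explicit contracting homotopy built from that degeneracy. Since the whole construction is self-dual --- interchanging $T$ with $S$, $\rBB$ with $\rBB^*$, $\cB$ with $\cB^{\op}$, and $(M,\cX,\rho)$ with $(N,\cY,\lambda)$, which swaps the two clauses of Proposition~\ref{triv} --- it suffices to treat the case where $(N,\cY,\lambda)$ arises as in Proposition~\ref{triv}, so that $N$ carries a $T$-opcoalgebra structure $\nabla\colon N\Rightarrow NT$ and $\lambda=\nabla\circ N\epsilon^S$; the case where $(M,\cX,\rho)$ comes from an $S$-coalgebra is then obtained by dualising word for word.

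For $\rCC_T(N,M)=N\rBB(T,M)$ the opcoalgebra structure is enough on its own: in degree $n$ set $s_{-1}\colon NT^{n+1}M\Rightarrow NT^{n+2}M$ equal to $\nabla T^{n+1}M$. The face $d_0$ of $\rBB(T,M)$ deletes the copy of $T$ adjacent to $N$, so $d_0s_{-1}=1$ by counitality of $\nabla$; naturality of $\nabla$ and the comonad identities for $T$ give the remaining extra-degeneracy relations $d_{i+1}s_{-1}=s_{-1}d_i$ and $s_{j+1}s_{-1}=s_{-1}s_j$. Hence $\rCC_T(N,M)$, augmented over $NM$ by $N\epsilon^T M$, is contractible.

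The substantive case is $\rCC^*_S(N,M)=N\rBB^*(S,M)$, whose simplicial structure only involves $S$, so $\nabla$ must be routed through the distributive law. I would take $s'\colon NS^{n+1}M\Rightarrow NS^{n+2}M$ to be the composite
\[
NS^{n+1}M\xrightarrow{\nabla S^{n+1}M}NTS^{n+1}M\xrightarrow{N\chi^{n+1}M}NS^{n+1}TM\xrightarrow{NS^{n+1}\rho}NS^{n+2}M,
\]
where $\chi^{n+1}\colon TS^{n+1}\Rightarrow S^{n+1}T$ is the iterated distributive law and $\rho$ the $\chi$-coalgebra structure of $M$; in effect $s'$ inserts a new copy of $S$ adjacent to $M$. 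The identity $d_0s'=1$ (where $d_0$ deletes exactly that copy) is where everything is consumed at once: first $NS^{n+1}\epsilon^S M\circ NS^{n+1}\rho=NS^{n+1}\epsilon^T M$ by the counit axiom $\epsilon^S M\circ\rho=\epsilon^T M$ of the $\chi$-coalgebra $(M,\rho)$ (Definition~\ref{coalgdef}); then $S^{n+1}\epsilon^T\circ\chi^{n+1}=\epsilon^T S^{n+1}$, the iterated counit-compatibility of $\chi^{n+1}$ that holds because $(S^{n+1},\chi^{n+1})$ is an opmorphism of comonads $(\cB,T)\to(\cB,T)$ (Remark~\ref{superchimorphism}); and finally $N\epsilon^T\circ\nabla=1$ by counitality of $\nabla$. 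This makes $\rCC^*_S(N,M)$, augmented over $NM$, contractible as well, and the dual argument settles the case in which $(M,\cX,\rho)$ arises from Proposition~\ref{triv}.

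The main obstacle is the remaining, purely formal, bookkeeping: one still has to verify $d_{i+1}s'=s'd_i$ and $s_{j+1}s'=s'd_j$, i.e.\ that the three-fold composite $s'$ slides correctly past every face and degeneracy of $\rBB^*(S,M)$. This uses naturality of $\nabla$, $\rho$ and the $\chi^n$ together with the coassociativity and Yang--Baxter-type compatibilities packaged in Remark~\ref{superchimorphism}; none of it is deep, but it is the part that needs care, whereas the homotopical conclusion is immediate once $s'$ is known to be a genuine extra degeneracy.
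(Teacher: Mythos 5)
Your proposal is correct and is essentially the paper's own proof: the map $s'$ you write down for $\rCC^*_S(N,M)$ is exactly the contracting homotopy $h_n = NS^{n+1}\rho \circ N\chi^{n+1}M \circ \nabla S^{n+1}M$ used there, verified via the same relations $d_0 h_n = 1$ and $d_{i+1}h_n = h_{n-1}d_i$ (the paper just computes $bh+hb=1$ directly rather than invoking the extra-degeneracy formalism, and likewise uses $\nabla T^{\bullet}M$ for $\rCC_T(N,M)$ and duality for the $S$-coalgebra case). The only blemish is the typo $s_{j+1}s'=s'd_j$, which should read $s_{j+1}s'=s's_j$; in any case the degeneracy compatibilities are not needed for chain-level contractibility.
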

\begin{proof}
Let $\nabla \colon N \Rightarrow NT$ be a  $T$-opcoalgebra
structure  on $N$.
The morphisms
$$
	\xymatrix@=4em{h_n \colon NS^{n+1} M
	\ar[r]^-{\nabla S^{n+1} M}
	&NTS^{n+1}M
	\ar[r]^-{N\chi^{n+1}M} &NS^{n+1} T M \ar[r]^-{NS^{n+1}
\rho} &NS^{n+2} M }
$$
provide a contracting homotopy
for the complex associated to $\rCC_S^*
(N,M)$. To see this, consider the following diagram, where $n >0$ and $i$ is such that $0 \le i \le n$:
$$
\xymatrix@C=3.5em{
NS^{n+1}M \ar[r]^-{\nabla S^{n+1}M} \ar[d]_-{NS^{n-i}\epsilon S^i M}& NTS^{n+1}M \ar[d]_-{NTS^{n-i}\epsilon S^i M}\ar[r]^-{N\chi^{n+1}M} & NS^{n+1}TM \ar[d]^-{NS^{n-i}\epsilon S^i T M} \ar[r]^-{NS^{n-1}\rho M} & \ar[d]^-{NS^{n-i}\epsilon S^{i+1}M}NS^{n+2}M \\
NS^nM \ar[r]_-{\nabla S^n M} & NTS^n M \ar[r]_-{N\chi^n M} & NS^n TM \ar[r]_-{NS^n \rho M} & NS^{n+1}M
}
$$
The left-hand square commutes by naturality of $\nabla$, the right-hand square commutes by naturality of $\epsilon$, and the middle square commutes by the version of Lemma~\ref{superchiepsilon} for $\chi^n \colon TS^n \Rightarrow S^n T$. Therefore, the outer rectangle commutes, so we have the relation $h_{n-1}d_i = d_{i+1}h_n$, where $d$ denotes the face morphisms of $\rCC_S^*(N,M)$. Similarly, we have $d_0 h_n = 1$. Therefore, we have
\begin{align*}
bh_n &=  \sum_{i=0}^{n+1} (-1)^i d_i h_n \\
&= d_0 h_n + \sum_{i=0}^n (-1)^{i+1} d_{i+1} h_n \\
&= 1 + \sum_{i=0}^n (-1)^{i+1} h_{n-1}d_i \\
&= 1 - h_{n-1}b,
\end{align*}
so $hb + bh = 1$, as required.

A similar proof shows that the morphisms $\nabla T^n M \colon NT^{n+1}M \Rightarrow
NT^{n+2}M$ provide a contracting homotopy for the
complex associated to $\rCC_T (N, M)$. The case when $M$ is an $S$-coalgebra is proved similarly.
\end{proof}

\subsection{Twisting by $1$-cells}
\label{twistsec}
Applying the twisting procedure described in
Section~\ref{twistcoeff},
a $1$-cell
$$
\xymatrix{
(\cB, \chi, T, S) \ar[rr]^-{(\Sigma, \sigma, \gamma)} & & (\cD, \tau, G, C)
}
$$
in the $2$-category $\dist$, together with a $\chi$-coalgebra $M$ and a $\tau$-opcoalgebra $N$, give rise to morphisms
between duplicial functors of the form considered
above: Theorems~\ref{dup} and~\ref{twist} yield
duplicial structures on the simplicial functors $$
\rCC_T(N\Sigma,M),\quad \rCC_S^*(N\Sigma,M),
\quad \rCC_G(N,\Sigma M),
\quad \rCC_C^*(N,\Sigma M),
$$
and from Proposition~\ref{cyc} we obtain morphisms
\begin{align*}
	&\xymatrix{ \rCC_T(N\Sigma,M)
	\ar[r]^{R^\chi}  &
	\rCC_S^*(N\Sigma,M), } &
	\xymatrix{ \rCC_S^*(N\Sigma,M)
	\ar[r]^{L^\chi} & \rCC_T(N\Sigma,M)} \\
	&\xymatrix{ \rCC_G(N,\Sigma M) \ar[r]^-{R^\tau}
& \rCC_C^*(N,\Sigma M),} & \xymatrix{
\rCC_C^*(N,\Sigma M) \ar[r]^-{L^\tau} &
\rCC_G(N,\Sigma M)}
\end{align*}
of duplicial
objects which determine the cyclicity of each functor.

We now prove that a generalised Yang-Baxter equation holds:
\begin{lem}\label{superyb}
For all $n > 0$, the diagram
$$
\xymatrix@C=3em{
\ar[d]_-{\sigma^n S}G^n\Sigma S\ar[r]^-{G^n \gamma} & G^n C\Sigma \ar[r]^-{\tau^n \Sigma} & CG^n \Sigma \ar[d]^-{C\sigma^n} \\
\Sigma T^n S \ar[r]_-{\Sigma \chi^n} & \Sigma ST^n \ar[r]_-{\gamma T^n} & C\Sigma T^n
}
$$
commutes.
\end{lem}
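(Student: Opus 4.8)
The plan is to prove this by induction on $n$, taking the Yang-Baxter hexagon from the definition of a $1$-cell of $\dist$ (Section~\ref{finallyoversection}) as the only nontrivial input; everything else is naturality. It is cleanest to allow $n=0$ as a trivial base case, where $G^{0},T^{0},\sigma^{0},\chi^{0},\tau^{0}$ are all identities and the square degenerates to $\gamma=\gamma$. Then the single inductive step $n\to n+1$ subsumes the essential case: when $n=0$ the whiskered regions described below disappear and what is left of the step is precisely the hexagon.

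Before the induction I would record the routine fact, proved by a short induction from naturality just as for the morphisms $\rho_{n}$ in Section~\ref{evidenziatore2}, that each iterate admits two interchangeable recursive presentations, for instance
$$\sigma^{n+1}=(\sigma T^{n})\circ(G\sigma^{n})=(\sigma^{n}T)\circ(G^{n}\sigma),$$
and similarly for $\chi^{n+1}\colon T^{n+1}S\Rightarrow ST^{n+1}$ and $\tau^{n+1}\colon G^{n+1}C\Rightarrow CG^{n+1}$. This lets me split the square for $n+1$ along whichever copy of the comonads is convenient at each stage.

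For the inductive step I assume commutativity of the square for $n$ and exhibit the square for $n+1$ as a pasting of three kinds of region. First, a $G^{n}$-whiskered copy of the base hexagon transports $\gamma$ past the outermost layer of comonads. Second, a suitably whiskered copy of the inductive hypothesis (whiskered by $G$ or $C$ on the left and by $T$ on the right) transports $\gamma$ past the remaining inner $n$ layers and accounts for the iterates $\sigma^{n},\chi^{n},\tau^{n}$. Third, naturality squares for $\sigma,\gamma,\tau,\chi$ shuffle the single-copy $2$-cells past the iterated ones (for example, naturality of $\sigma$ realises the displayed identity above inside the grid, and naturality of $\tau$ and $\gamma$ does the corresponding job on the $C$-side), so that the boundaries of the first two regions match. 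Reading the two extreme paths of the resulting grid gives $(\gamma T^{n+1})\circ(\Sigma\chi^{n+1})\circ(\sigma^{n+1}S)$ on one side and $(C\sigma^{n+1})\circ(\tau^{n+1}\Sigma)\circ(G^{n+1}\gamma)$ on the other, which is the claim for $n+1$.

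The only genuine difficulty is bookkeeping: at each stage one must choose which recursive presentation of $\sigma^{n+1},\chi^{n+1},\tau^{n+1}$ to use and which intermediate object (of one of the shapes $G^{a}\Sigma T^{b}S$, $G^{a}CG^{c}\Sigma T^{b}$, or $CG^{a}\Sigma T^{b}$) to route through, so that every sub-square is manifestly an instance of naturality, of the base hexagon, or of the inductive hypothesis. Once the grid is laid out correctly there is nothing further to compute, and commutativity of the outer square is the assertion of the lemma.
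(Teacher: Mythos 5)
Your proposal is correct and is essentially the paper's own argument: the paper also proceeds by induction, with the base case being the Yang-Baxter hexagon and the inductive step a pasting of a whiskered copy of the inductive hypothesis, naturality squares for $\sigma$ and $\tau$, and a whiskered Yang-Baxter hexagon. The only cosmetic differences are that the paper starts at $n=1$ rather than a degenerate $n=0$, and it peels the hexagon off on the $T^n$-whiskered (inner) side rather than the $G^n$-whiskered (outer) side; both choices yield the same grid up to symmetry.
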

\begin{proof}
When $n = 1$, this is the Yang-Baxter equation. If the diagram commutes for $n$, consider the diagram
$$
\xymatrix@C=3em{
G^{n+1} \ar[d]_-{G\sigma^n S} \Sigma S \ar[r]^-{G^{n+1}\gamma} & G^{n+1}C\Sigma \ar[r]^-{G\tau^n \Sigma} & GCG^n\Sigma \ar[r]^-{\tau G^n\Sigma} \ar[d]_-{GC\sigma^n} & CG^{n+1} \Sigma \ar[d]^-{CG \sigma^n} \\
G\Sigma T^n S \ar[d]_-{\sigma T^n S} \ar[r]^-{G\Sigma \chi^n} & \ar[d]_-{\sigma ST^n} G\Sigma ST^n \ar[r]^-{G\gamma T^n} & GC\Sigma T^n \ar[r]^-{\tau \Sigma T^n}& CG\Sigma T^n \ar[d]^-{C\sigma T^n} \\
\Sigma T^{n+1}S \ar[r]_-{\Sigma T\chi^n}& \Sigma TST^n \ar[r]_-{\Sigma \chi T^n}& \Sigma ST^{n+1}  \ar[r]_-{\gamma T^{n+1}}& C\Sigma T^{n+1}
}
$$
The upper-right square commutes by naturality of $\tau$, the lower-left square commutes by naturality of $\sigma$, the lower-right rectangle commutes by the Yang-Baxter equation, and the top left-diagram commutes by inductive hypothesis. Therefore, the outer diagram commutes, proving the lemma.
\end{proof}
By applying the functors $M,N$ to the simplicial morphisms $\sigma,\gamma$ of Proposition~\ref{supersigma} and its dual, we obtain two simplicial morphisms
$$
\xymatrix{ \rCC_G(N,\Sigma M) \ar[rr]^-{N\sigma M} &&
\rCC_T(N\Sigma, M),} \quad \quad \quad
\xymatrix{ \rCC_S^*(N\Sigma, M) \ar[rr]^-{N\gamma M} &&
\rCC_C^*(N,\Sigma M).}
$$
\begin{prop}\label{rhocyclic}
The morphisms $N\sigma M$, $N\gamma M$ are morphisms of duplicial functors.
\end{prop}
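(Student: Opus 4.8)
The plan is to check directly that $N\sigma M$ and $N\gamma M$ commute with the relevant duplicial operators; by Proposition~\ref{supersigma} and its dual they are already morphisms of simplicial functors, so this is all that remains.

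Consider first $N\sigma M \colon \rCC_G(N,\Sigma M) \to \rCC_T(N\Sigma,M)$. Unwinding Theorem~\ref{dup}, the duplicial operator of $\rCC_G(N,\Sigma M)$ in degree $n$ is $t_G = (\lambda G^n\Sigma M)(N\tau^n\Sigma M)(NG^n\rho^{\Sigma M})$, where $\rho^{\Sigma M} = (\gamma M)(\Sigma\rho)(\sigma M)$ is the twisted $\tau$-coalgebra structure on $\Sigma M$, and the duplicial operator of $\rCC_T(N\Sigma,M)$ is $t_T = (\lambda^{N\Sigma} T^n M)(N\Sigma\chi^n M)(N\Sigma T^n\rho)$, where $\lambda^{N\Sigma} = (N\sigma)(\lambda\Sigma)(N\gamma)$ is the twisted $\chi$-opcoalgebra structure on $N\Sigma$ (both read off from Section~\ref{twistcoeff}). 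Starting from $(N\sigma^{n+1}M)\circ t_G$, I would rewrite it in three moves. First, using the recursion $\sigma^{n+1} = (\sigma T^n)(G\sigma^n)$ and naturality of $\lambda$, the outer piece $(N\sigma^{n+1}M)(\lambda G^n\Sigma M)$ becomes $(N\sigma T^n M)(\lambda\Sigma T^n M)(NC\sigma^n M)$. Second, the block $(NC\sigma^n M)(N\tau^n\Sigma M)(NG^n\gamma M)$ is precisely $N(-)M$ applied to the generalised Yang--Baxter identity of Lemma~\ref{superyb}, hence equals $(N\gamma T^n M)(N\Sigma\chi^n M)(N\sigma^n SM)$ (with the degenerate case $n=0$ trivial). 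Third, naturality of $\sigma^n$ with respect to $\rho$ slides $\sigma^n SM$ past $G^n\Sigma\rho$, and recombining $(\sigma^n TM)(G^n\sigma M) = \sigma^{n+1}M$ (the two recursive forms of $\sigma^{n+1}$ agree by an immediate induction) turns the whole composite into $t_T\circ(N\sigma^{n+1}M)$, which is the desired commutation.

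The morphism $N\gamma M \colon \rCC_S^*(N\Sigma,M) \to \rCC_C^*(N,\Sigma M)$ is treated in complete analogy, now using the dual of Proposition~\ref{supersigma}, naturality of $\lambda$ again, and the same Lemma~\ref{superyb} applied to the appropriate iterate, but in the opsimplicial picture. The main obstacle is pure bookkeeping: there are many whiskered copies of $\sigma,\gamma,\lambda,\rho,\chi,\tau$ to keep aligned. The content of the argument is the single observation that the twisted coefficient data $\rho^{\Sigma M}$ and $\lambda^{N\Sigma}$ are assembled in exactly the pattern that Lemma~\ref{superyb} resolves, so that $t_G$ and $t_T$ (respectively the operators on $\rCC_S^*$ and $\rCC_C^*$) differ only by one application of the generalised Yang--Baxter equation sandwiched between naturality squares; once that is seen, the verification is forced. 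Alternatively, granting the twisting description of these duplicial operators promised in Section~\ref{decalage}, the proposition follows from Theorem~\ref{twist} directly, but the computation just outlined is self-contained.
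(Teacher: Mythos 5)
Your proof is correct and follows essentially the same route as the paper's: the paper also verifies commutation with the duplicial operators by a single three-part diagram whose pieces commute, in order, by naturality of $\sigma^n$, by Lemma~\ref{superyb}, and by naturality of $\lambda$, followed by pre- and post-composition with $NG^n\sigma M$ and $N\sigma T^n M$ — exactly your three moves written diagrammatically rather than equationally. Your identification of the twisted structures $\rho^{\Sigma M}$ and $\lambda^{N\Sigma}$ and the reduction to the generalised Yang--Baxter identity is precisely the content of the paper's argument.
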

\begin{proof}
We prove this only for $N\sigma M$. It suffices to show this commutes with the duplicial operators $t_T, t_G$. Recall, from Section~\ref{twistcoeff}, that the (op)coalgebra structure morphisms on $\Sigma M$ and $N \Sigma$  are respectively given by the composites
$$
\xymatrix@R=1em{
G\Sigma M \ar[r]^-{\sigma M} & \Sigma TM \ar[r]^-{\Sigma\rho} & \Sigma S M \ar[r]^-{\gamma M} & C \Sigma M,
\\
N \Sigma S \ar[r]^-{N \gamma} & N C \Sigma \ar[r]^-{\lambda \Sigma} & NG\Sigma \ar[r]^-{N \sigma} & N \Sigma T.
}
$$
Let $n \ge 0$. In the diagram
$$
\xymatrix@C=2.74em{
\smmxy{NG^n\Sigma TM} \ar[d]^-{N \sigma^n TM} \ar[r]^-{NG^n \Sigma \rho} & \smmxy{NG^n\Sigma SM} \ar[d]^-{N\sigma^n SM }\ar[r]^-{NG^n \gamma M} & \smmxy{NG^nC\Sigma M} \ar[r]^-{N\tau^n \Sigma M} & \smmxy{NCG^n\Sigma M} \ar[d]_-{NC\sigma^n M} \ar[r]^-{\lambda G^n \Sigma M} & \smmxy{NG^{n+1}\Sigma M} \ar[d]_-{NG\sigma^n M}\\
\smmxy{N\Sigma T^{n+1} M} \ar[r]_-{N\Sigma T^n \rho} & \smmxy{N\Sigma T^n SM} \ar[r]_-{N\Sigma \chi^n M} & \smmxy{N\Sigma ST^n M} \ar[r]_-{N\gamma T^n M} & \smmxy{NC\Sigma T^n M} \ar[r]_-{\lambda \Sigma T^n M} & \smmxy{NG\Sigma T^n M}
}
$$
the middle square commutes by Lemma~\ref{superyb}, the right-hand square commutes by naturality of $\lambda$, and the left-hand square commutes by naturality of $\sigma^n$. The outer diagram commutes therefore, and after pre-composing with $NG^n \sigma M$ and post-composing with $N\sigma T^n M$, we have the result.
\end{proof}
\begin{exa}
Let
$$
\xymatrix{
\cA \ar@{}[rr]|-{\perp}\ar@/^0.5pc/[rr]^-F & & \ar@/^0.5pc/[ll]^-U \cB
}
$$
be an adjunction, generating a monad $B = UF$ on $\cA$ and a comonad $T = FU$ on $\cB$. Let $$
\xymatrix{\cB \ar[r]^ U  \ar[d]_ S & \ar@{}[dl]^(.25){}="a"^(.75){}="b" \ar@{=>}_-{\Omega} "a";"b"   \cA \ar[d]^ C\\
	\cB \ar[r]_U  & \cA}
$$
be a square where $(U, \Omega) \colon (\cB, S) \to (\cA, C)$ is an iso-opmorphism of comonads, giving rise to a mixed distributive law $\theta$ and a comonad distributive law $\chi$ by Corollary~\ref{arisec}. By Theorem~\ref{distcomp1cell}, the comparison functor $\cB \to \cA^B$ gives rise to a 1-cell
$$\xymatrix{(\cB, \chi, T, S) \ar[rrr]^-{(U^{U\epsilon}, 1, \tilde\Omega^{-1})} & & & (\cA^B, \tilde\theta, \tilde B, C^\theta)
}
$$
in $\dist$. Given any $\chi$-coalgebra $M$ and $\tilde\theta$-opcoalgebra $N$, by Proposition~\ref{rhocyclic} we have isomorphisms of duplicial functors
$$
C_{\tilde B}\left(N, U^{U\epsilon} M\right) \cong C_T\left(NU^{U\epsilon}, M\right), \qquad
C_S^*\left(NU^{U\epsilon}, M\right) \cong C_{C^\theta}^* \left(N, U^{U\epsilon} M\right).
$$
Since $\sigma = 1$ in this example, the left-hand isomorphism is actually an equality.
\end{exa}
\subsection{Cyclicity and the reflection equation}
We conclude this chapter by observing that there is an interesting connection between the cyclicity of duplicial functors of the above form, and the reflection equation of physics. Let $\chi \colon TS \Rightarrow ST$ be a distributive law of comonads, and let $M$ and $N$ be a $\chi$-coalgebra and a $\chi$-opcoalgebra respectively. Let us represent the natural transformations diagrammatically as
$$
\chi = \begin{array}{c}
\begin{tikzpicture}
\draw [thick] (-1, 1) node[anchor=south]{$T$}  -- (0, -1) node[anchor=north]{$T$};
\draw [thick] (0, 1) node[anchor=south]{$S$} -- (-1,- 1) node[anchor=north]{$S$};
\end{tikzpicture}
\end{array},\qquad \qquad
\rho =
\begin{array}{c}
\begin{tikzpicture}
\draw [thick] (-1, 1) node[anchor=south]{$T$}  -- (0, -0);
\draw [thick](0, 1) node[anchor=south]{$M$} -- (0,- 1) node[anchor=north]{$M$};
\draw[thick] (0,0) -- (-1, -1) node[anchor=north]{$S$};
\end{tikzpicture}
\end{array},\qquad \qquad
\lambda =
\begin{array}{c}
\begin{tikzpicture}
\draw [thick](0,1) node[anchor=south]{$N$} -- (0, -1) node[anchor=north]{$N$};
\draw [thick](1,1) node[anchor=south]{$S$} -- (0,0);
\draw [thick](0,0) -- (1,-1) node[anchor=north]{$T$};
\end{tikzpicture}
\end{array}
$$
Using graphical calculus and reading from top to bottom, in degree 1, the two morphisms $t_T^2, t_S^2$ look like
$$
\begin{array}{c}
\begin{tikzpicture}
\draw [thick](0, 5) node[anchor=south]{$N$} -- (0, 0) node[anchor=north]{$N$};
\draw [thick](1,5) node[anchor=south]{$T$} -- (3,3) -- (0,1) --(1,0) node[anchor=north]{$T$};
\draw [thick](2,5) node[anchor=south]{$T$} -- (3,4) --(0,2) -- (2,0) node[anchor=north]{$T$};
\draw [thick](3,5) node[anchor=south]{$M$} -- (3,0) node[anchor=north]{$M$};
\end{tikzpicture}
\end{array},
\qquad
\begin{array}{c}
\begin{tikzpicture}
\draw [thick](0, 5) node[anchor=south]{$N$} -- (0, 0) node[anchor=north]{$N$};
\draw[thick](1,5) node[anchor=south]{$S$} -- (0,4) -- (3,2) --(1,0) node[anchor=north]{$S$};
\draw[thick](2,5) node[anchor=south]{$S$} -- (0,3) -- (3,1) --(2,0) node[anchor=north]{$S$};
\draw[thick](3,5) node[anchor=south]{$M$} -- (3,0) node[anchor=north]{$M$};
\end{tikzpicture}
\end{array}
$$
We can view $N,M$ as walls and the two inner strands may be viewed as the trajectories of two distinct particles bouncing between them. Now, instead of viewing the particles as bouncing off the right-hand walls, let us ignore the wall and see the particle's trajectory as a straight line. In other words, we remove the right-hand wall, straighten out each of the right-hand kinks, but \emph{preserve} the points where the lines cross. Heuristically, as long as a particle $T$ is furthest to the right, it may undergo a `state-change' and turn into the particle $S$ which allows us to use the distributive law $\chi$ to cross the particles. Ignoring the labels, if we redraw the above diagrams with this viewpoint, we have:
$$
\begin{array}{c}
\begin{tikzpicture}
\useasboundingbox (0,4) rectangle (1,0);
\clip (-1, 4) rectangle (1,0);
\draw [thick] (0,4) -- (0,0);
\draw [thick] (2, 4) -- (0,3);
\draw [red] [thick] (1,4) -- (0,2) -- (1,0);
\draw [thick] (0,3) -- (2,2);
\end{tikzpicture}
\end{array}, \qquad
\begin{array}{c}
\begin{tikzpicture}
\useasboundingbox (0,5) rectangle (1,1);
\clip (-1, 5) rectangle (1,1);
\draw [thick] (0,5) -- (0,1);
\draw [thick] (2, 3) -- (0,2);
\draw [red] [thick] (1,5) -- (0,3) -- (1,1);
\draw [thick] (0,2) -- (2,1);
\end{tikzpicture}
\end{array}
$$
Each diagram is one side of the reflection equation, which describes the trajectories of two particles with different velocities bouncing off a wall in different orders. We use the colour red to distinguish between the particles in the physical interpretation.

We may construct similar pairs of diagrams for $n > 1$, as well as reflection equations for more than two particles. For example, the reflection equation for $n = 2$ (i.e.\  for 3 particles) is
$$
\begin{array}{c}
\begin{tikzpicture}
\useasboundingbox (0,6) rectangle (2,0);
\clip	(-1,6) rectangle (2,-1);
\draw [thick] (0,6) -- (0,0);
\draw [thick] (3,6) -- (0,5);
\draw [thick] [red] (2,6) -- (0,4);
\draw [thick] [blue] (1,6) -- (0,3) -- (1, 0);
\draw [thick] [red] (0,4) -- (3,1);
\draw[thick] (0,5) -- (3,4);
\end{tikzpicture}
\end{array} \quad = \quad
\begin{array}{c}
\begin{tikzpicture}
\useasboundingbox (0,6) rectangle (2,0);
\clip(-1, 6) rectangle (2, -1);
\draw [thick] (0,6) -- (0,0);
\draw[thick] [blue] (1,6) -- (0,3);
\draw[thick] [red] (3,5) -- (0,2);
\draw [thick] (3, 2) -- (0, 1);
\draw[thick] (0,1) -- (3,0);
\draw[thick] [red] (0, 2) -- (3,-1);
\draw[thick] [blue] (0,3) -- (1,0);
\end{tikzpicture}
\end{array}
$$Therefore, we immediately obtain another characterisation of cyclicity.
\begin{prop}
The duplicial functor $\rCC_T(N,M)$ is cyclic if and only if the appropriate side of the reflection equation in each degree is equal to the identity.
\end{prop}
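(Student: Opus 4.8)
The plan is to deduce this directly from Theorem~\ref{cyc}. Recall from the proof of that theorem that, in each degree $n$, one has $(LR)_n = t_T^{n+1}$, and that $\rCC_T(N,M)$ is cyclic precisely when $LR=1$, i.e.\ precisely when $t_T^{n+1}$ is the identity for every $n$. Hence the proposition reduces to checking that the graphical manipulation sketched above — erasing the right-hand wall $M$ and straightening the kinks produced by $\rho$ while preserving the crossings — genuinely computes the natural transformation $t_T^{n+1}$, so that the resulting picture is exactly what we are calling the appropriate side of the reflection equation in degree $n$.

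Concretely, I would first unwind the definition of $t_T$ in degree $n$ from Theorem~\ref{dup}: it is the composite $(\lambda T^n M)\circ(N\chi^n M)\circ(NT^n\rho)$, which in the graphical calculus is the diagram in which a single $T$-strand travels to the right wall $M$ (via $\rho$, changing colour to $S$), crosses the $n$ parallel $T$-strands (via $\chi^n$), and returns to the left wall $N$ (via $\lambda$, changing back to $T$). Composing $n+1$ such pictures vertically produces the ``bouncing particle'' diagram displayed above for $n=1$. The bookkeeping step is then to verify, using the recursive description of $\rho_n$ and the coherence of the $\rho_n$ (Proposition~\ref{simplicialrho}), that after erasing the $M$-wall and straightening the right-hand kinks one obtains precisely the first of the two pictures shown as the sides of the reflection equation; the second picture is $t_S^{n+1}$, obtained by the symmetric argument, and requiring it to equal the identity is equivalent to cyclicity of $\rCC_S^*(N,M)$.

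The only content beyond Theorem~\ref{cyc} is turning the phrase ``straighten the kinks but preserve the crossings'' into an honest equality of natural transformations: one must realise the claimed isotopy as a finite sequence of naturality squares for $\chi$, $\rho$, $\lambda$ together with the counit/coalgebra compatibilities — the same ingredients used in Lemmas~\ref{superchiepsilon} and~\ref{cycliclem} — so that the ``reflection-equation side'' in degree $n$ is literally identified with $t_T^{n+1}$ (resp.\ $t_S^{n+1}$). I expect this translation to be the main obstacle: it is entirely routine but notationally heavy. Once it is in place, the biconditional is immediate from $(LR)_n = t_T^{n+1}$ and the cyclicity criterion of Theorem~\ref{cyc}. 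I would also remark that if one simply \emph{defines} the appropriate side of the reflection equation in degree $n$ to be the morphism $t_T^{n+1}$, as the pictures suggest, then the proposition becomes a direct restatement of Theorem~\ref{cyc} and requires no further argument.
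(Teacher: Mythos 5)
Your proposal is correct and matches the paper's (essentially implicit) argument: the proposition is stated as an immediate consequence of the graphical identification of $t_T^{n+1}$ in degree $n$ with the appropriate side of the reflection equation, combined with the fact that cyclicity of a duplicial object means precisely $t^{n+1}=1$ in each degree. Your detour through Theorem~\ref{cyc} and $(LR)_n=t_T^{n+1}$ is harmless but unnecessary, since $t_T^{n+1}=1$ is already the definition of cyclicity; your closing remark that the statement is essentially definitional once the pictures are identified is exactly the spirit in which the paper presents it.
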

Of course, there is a similar statement for the duplicial functor $C_S^*(N,M)$.
\chapter{Hochschild viewpoint of dupliciality}\label{AUSTRALIA}
The purpose of this chapter is to explain that Hochschild homology and cohomology of algebras can be imported to the world of 2-categories, in turn giving an insight into the nature of duplicial structure. We begin by recounting the classical case (Section~\ref{hochclassic}). In Sections~\ref{HOCHLAX} and~\ref{duplapp} we construct an upgraded version of Hochschild (co)homology and see that we recover the duplicial object $C_T(N,M)$ of Chapter~\ref{CYCLIC}. We also characterise duplicial structure on the nerve of a category $\cC$ in terms of left adjoint functors from groupoids into $\cC$.
The work in these latter sections is original and is based on that carried out in~\cite{woohoo}.

\section{The classical case}\label{hochclassic}
Let $k$ be a commutative ring and let $A$ be a (unital, associative) $k$-algebra. We denote the monoidal product $\otimes_k$ of $\kmod$ by an unadorned tensor product $\otimes$. Let $A^*$ denote the opposite algebra to $A$, and let $\Ae$ denote the enveloping algebra $A \otimes A^*$. If we denote the category of $(A,A)$-bimodules (with symmetric action of $k$) by $\amoda$, there are isomorphisms
$$
\amoda \cong \aemod \cong \modae.
$$
and we therefore identify all of these.

Consider the endofunctor $B = {-}\otimes A$ on $\amod$ which takes a left $A$-module $X$ to the module $X \otimes A$ with left $A$-action given by $a\cdot(x \otimes a') := ax \otimes a'$.
The natural maps
\begin{align*}
&\xymatrix@R=1em{X \otimes A \otimes A \ar[r]^-{\mu} & X \otimes A \\ x\otimes a\otimes a' \ar@{|->}[r] &  x\otimes aa'} & &\xymatrix@R=1em{ X \ar[r]^-{\eta} & X \otimes A \\ x \ar@{|->}[r] & x\otimes 1}
\end{align*}
turn the functor $B = {-}\otimes A$ into a monad on $\amod$, which lifts to a comonad $\tilde B$ on the Eilenberg-Moore category $\amod^B$. In fact, there is another isomorphism
$$
\amod^B \cong \modae
$$
so we view $\tilde B$ as a comonad on the category of bimodules, defined by the morphisms
\begin{align*}
&\xymatrix@R=1em{M \otimes A  \ar[r]^-{\delta} & M \otimes A \otimes A \\ m\otimes a \ar@{|->}[r] &  m\otimes 1 \otimes a}
& &\xymatrix@R=1em{ M \otimes A \ar[r]^-{\epsilon} & M \\ m \otimes a \ar@{|->}[r] & ma.}
\end{align*}
So, for every bimodule $M$, we have that $\tilde B(M) = M \otimes A$ becomes a bimodule with actions given by
$$
a\cdot(m \otimes a') \cdot a'' := am \otimes a'a''.
$$
\subsection{Hochschild homology and cohomology}\label{hochsub}
Viewing $A$ as a left $\Ae$-module, there is a functor $${-} \otimes_{\Ae}A \colon \modae \to \kmod.$$
Also, let $M$ be any $(A,A)$-bimodule, viewed as a right $\Ae$-module.
\begin{defn}\label{hochdef}
The \emph{Hochschild homology of $A$ with coefficients in $M$}, denoted by $\rH_\bullet(A,M)$,  is the graded $k$-module given by the homology of the chain complex associated to the simplicial $k$-module $\rCC_{\tilde B}({-}\otimes_{\Ae} A, M)$.
\end{defn}
The above simplicial object is seen again in Chapter~\ref{EXAMPLES}. However, there is a less complicated simplicial object which may be used to define Hochschild homology. For every left $A$-module $P$, there are $k$-module isomorphisms
$$
\xymatrix@R=1em{
(P \otimes A)\otimes_{\Ae} A \ar[r]^-{\cong} & P \\
(p \otimes a) \otimes_{\Ae} a' \ar@{|->}[r] & aa'p,}
$$ and so we have isomorphisms for $n \ge 0$
$$
\rCC_{\tilde B}(- \otimes_{\Ae} A, M)_n = \left(M \otimes A^{\otimes(n+1)}\right) \otimes_{\Ae} A \cong M \otimes A^{\otimes n}.
$$
Thus, we get a simplicial $k$-module $C_\bullet(A, M)$ defined by $C_n(A, M) = M \otimes A^{\otimes n}$, with face maps and degeneracy maps
$$d_i \colon C_n(A,M) \to C_{n-1}(A,M), \qquad s_i \colon C_{n}(A,M) \to C_{n+1}(A,M)$$ given by
\begin{align*}
d_i(m \otimes a_1 \otimes \cdots \otimes a_n) &=
\begin{cases}
a_n m \otimes a_1 \otimes \cdots \otimes a_{n-1} &\mbox{ if } i = 0 \\
m \otimes a_1 \otimes \cdots \otimes a_{n-i}a_{n-i+1} \otimes \cdots \otimes a_n & \mbox{ if } 1 \le i < n \\
ma_1 \otimes a_2 \otimes \cdots \otimes a_n & \mbox{ if } i = n
\end{cases} \\
s_i(m \otimes a_1 \otimes \cdots \otimes a_n) &=
m \otimes \cdots \otimes a_{n-i} \otimes 1 \otimes a_{n-i+1} \otimes \cdots \otimes a_n
\end{align*}
The homology of the chain complex associated to this complex is therefore isomorphic to $\rH_\bullet(A, M)$.  The zeroth Hochschild homology is given by
 $$
 \rH_0(A, M) \cong {M}/\langle am - ma \rangle
 $$
where $\langle am - ma \rangle$ denotes the submodule of $M$ generated by all expressions of the form $ma - am$ for $m \in M$, $a \in A$.

Consider the contravariant functor
$$
\Hom_\Ae({-}, M) \colon \modae \to \kmod
$$
which assigns to a bimodule $N$ the $k$-module of $(A,A)$-bimodule maps $N \to M$.
\begin{defn}
The \emph{Hochschild cohomology of $A$ with coefficients in $M$}, which we denote by $\rH^\bullet(A,M)$,  is the graded $k$-module given by the cohomology of the cochain complex associated to the cosimplicial $k$-module $\rCC_{\tilde B}(\Hom_\Ae({-}, M), A).$
\end{defn}
The zeroth Hochschild cohomology is given by the centre of $M$, explicitly:
$$
\rH^0(A, M) \cong ZX = \{ m \in M \mid ma = am \mbox{ for all } m \in A\}.
$$

Note that both $\rH_0 (A, {-} )$ and $\rH^0(A, {-})$ define functors $\amoda \to \kmod$.
We do not delve into Hochschild cohomology further, as we do not discuss anything beyond this zeroth homology module.
\subsection{Universal properties}\label{hochunivprop}
For any $k$-module $X$, the set
$$
[M,X] := \Hom_k(M,X)
$$
becomes an $(A,A)$-bimodule with actions given by
$$
(a \cdot f \cdot b )  (m) = f(bma).
$$
We now present a universal coefficients theorem:
\begin{thm}\label{hochuniv}
There is an isomorphism of $k$-modules
$$
[\rH_0(A, M) , X] \cong \rH^0(A, [M, X] ),
$$
natural in $M$ and $X$.
\end{thm}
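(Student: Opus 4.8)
The plan is to identify both sides of the claimed isomorphism with the \emph{same} $k$-submodule of $\Hom_k(M,X)$; equivalently, to recognise the statement as an instance of the tensor--hom adjunction. I would work directly from the explicit descriptions of $\rH_0$ and $\rH^0$ recorded above, so that no further homological input is needed.

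First I would treat the left-hand side. Since $\rH_0(A,M)\cong M/\langle am-ma\rangle$, the universal property of the quotient $k$-module identifies $[\rH_0(A,M),X]=\Hom_k\bigl(M/\langle am-ma\rangle,X\bigr)$ with the $k$-submodule of $\Hom_k(M,X)$ consisting of those $f$ with $f(am)=f(ma)$ for all $a\in A$ and $m\in M$. This identification is induced by the quotient map $M\twoheadrightarrow\rH_0(A,M)$, and is natural in $M$ (contravariantly) and $X$ (covariantly) by functoriality of $\Hom_k$ and of the cokernel.

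Next I would unpack the right-hand side. By definition $\rH^0(A,N)\cong ZN=\{\,n\in N\mid na=an\ \text{for all }a\in A\,\}$, so $\rH^0(A,[M,X])$ is the set of $f\in[M,X]$ with $a\cdot f=f\cdot a$ for all $a\in A$. Using the bimodule structure $(a\cdot f\cdot b)(m)=f(bma)$, one computes $(a\cdot f)(m)=(a\cdot f\cdot 1)(m)=f(ma)$ and $(f\cdot a)(m)=(1\cdot f\cdot a)(m)=f(am)$, so $f$ lies in $Z([M,X])$ exactly when $f(ma)=f(am)$ for all $a,m$ --- precisely the condition cutting out $[\rH_0(A,M),X]$ inside $\Hom_k(M,X)$ from the previous step. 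Hence the two subsets coincide, and this common identification is the desired isomorphism.

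It then remains to verify naturality of the composite isomorphism $[\rH_0(A,-),{-}]\Rightarrow\rH^0(A,[-,{-}])$ in $M$ and $X$; since each of the two identifications above is the canonical one (the universal property of a cokernel, respectively the inclusion of the centre of a bimodule), this is a routine diagram chase inside the square of $k$-modules $\Hom_k(M,X)$ attached to a pair of morphisms $M\to M'$, $X\to X'$. I do not expect a genuine obstacle here: the only real subtlety is keeping the handedness straight in the formula $(a\cdot f\cdot b)(m)=f(bma)$, so that the ``commutator'' condition emerges with matching orientation on the two sides. If a conceptual formulation is preferred, one notes that $\rH_0(A,M)\cong M\otimes_{\Ae}A$ and $\rH^0(A,N)\cong\Hom_{\Ae}(A,N)$, while $[M,X]\cong\Hom_k(M,X)$ carries exactly the left $\Ae$-module structure dual to the right $\Ae$-module structure on $M$; the theorem is then literally the adjunction isomorphism $\Hom_k(M\otimes_{\Ae}A,X)\cong\Hom_{\Ae}\bigl(A,\Hom_k(M,X)\bigr)$, natural in all arguments.
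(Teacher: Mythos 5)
Your proposal is correct and takes essentially the same route as the paper: both sides are identified with the submodule of $\Hom_k(M,X)$ of maps $f$ satisfying $f(am)=f(ma)$, via the universal property of the quotient on one side and the explicit centre condition $af=fa$ on the other. The closing tensor--hom reformulation is a pleasant extra, but the core argument coincides with the paper's.
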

\begin{proof}
By the above remarks, to prove this is equivalent to proving that
$$
\left[{M}/{\langle am - ma \rangle}, X \right] \cong Z([M,X]).
$$
Given $f \in Z([M,X])$, we have that $a f = f a$ for all $a \in A$, which means exactly that $f(ma) = f(am)$ for all $a \in A, m \in M$. Therefore, $f$ descends to a map $\bar f$ on the quotient:
$$
\xymatrix{
M \ar@{->>}[r] \ar[dr]_-{f} & M / \langle am-ma\rangle \ar[d]^-{\bar f} \\
& X
}
$$
The assignment $f \mapsto \bar f$ defines the required isomorphism, whose inverse is given by composing with the quotient map. Naturality follows easily.
\end{proof}
\begin{thm}
We have an adjunction
$$
\xymatrix{
\amoda \ar@{}[rr]|-{\perp}\ar@/^0.5pc/[rr]^-{\rH_0(A, {-})} & & \ar@/^0.5pc/[ll]^-{[A, {-}]} \kmod.
}
$$
\end{thm}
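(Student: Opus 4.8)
The plan is to exhibit the hom-set bijection $\Hom_{\kmod}(\rH_0(A,M),X)\cong\Hom_{\amoda}(M,[A,X])$ directly, reusing the two explicit descriptions already on the page: $\rH_0(A,M)\cong M/\langle am-ma\rangle$ from Section~\ref{hochsub}, and the $(A,A)$-bimodule structure $(a\cdot f\cdot b)(c)=f(bca)$ on $[A,X]=\Hom_k(A,X)$ from Section~\ref{hochunivprop}. First I would set up the map $\Phi$: given a $k$-linear $g\colon\rH_0(A,M)\to X$, write $\bar g=g\circ q\colon M\to X$ for the composite with the canonical surjection $q$, so that $\bar g$ annihilates every commutator, i.e. $\bar g(xa)=\bar g(ax)$ for $x\in M$, $a\in A$; then define $\Phi(g)\colon M\to[A,X]$ by $\Phi(g)(m)(a):=\bar g(am)$.

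The first substantive check is that $\Phi(g)$ is a morphism of $(A,A)$-bimodules. Evaluating at $c\in A$, one has $\Phi(g)(a'ma'')(c)=\bar g(ca'ma'')$ on one side and $(a'\cdot\Phi(g)(m)\cdot a'')(c)=\Phi(g)(m)(a''ca')=\bar g(a''ca'm)$ on the other; writing $y=ca'm$, these are $\bar g(ya'')$ and $\bar g(a''y)$, equal since $\bar g$ kills $ya''-a''y$. Conversely, given a bimodule map $\phi\colon M\to[A,X]$, set $\psi(m):=\phi(m)(1)$; from $\phi(ma)=1\cdot\phi(m)\cdot a$ and $\phi(am)=a\cdot\phi(m)\cdot1$ together with the formula for the action one computes $\psi(ma)=\phi(m)(a)=\psi(am)$, so $\psi$ factors through $q$ and yields $\Psi(\phi)\colon\rH_0(A,M)\to X$. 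A short calculation with the explicit formulas gives $\Psi\Phi=\mathrm{id}$ and $\Phi\Psi=\mathrm{id}$ (indeed $\Psi(\Phi(g))$ is induced by $m\mapsto\Phi(g)(m)(1)=\bar g(m)$, and $\Phi(\Psi(\phi))(m)(a)=\psi(am)=\phi(m)(a)$), and naturality of the bijection in both $M$ and $X$ follows by direct substitution. This establishes the adjunction $\rH_0(A,-)\dashv[A,-]$.

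No step is a genuine obstacle; the only delicate point is the "twist" in the bimodule action on $[A,X]$, which forces the commutator-killing property of $\bar g$ to be invoked at precisely the right place in the bimodule-map verification. If a more conceptual derivation is preferred, the same result drops out of the ordinary tensor--hom adjunction: under the identification $\amoda\cong\modae$ one has $\rH_0(A,M)\cong M\otimes_{\Ae}A$ and $[A,X]\cong\Hom_k(A,X)$ with $A$ regarded as a left $\Ae$-module, whence $\Hom_{\amoda}(M,[A,X])\cong\Hom_k(M\otimes_{\Ae}A,X)\cong\Hom_{\kmod}(\rH_0(A,M),X)$; the only things to verify there are that the $\Ae$-action on $\Hom_k(A,X)$ coming from tensor--hom agrees with the one of Section~\ref{hochunivprop}, and that $M\otimes_{\Ae}A\cong M/\langle am-ma\rangle$, both of which are routine.
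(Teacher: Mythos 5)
Your proposal is correct, and your primary route differs from the paper's: the paper's entire proof is the one-line observation that $\rH_0(A,{-}) \cong {-}\otimes_{\Ae} A$ and then a citation of the tensor--hom adjunction, i.e.\ exactly the ``more conceptual derivation'' you relegate to your closing paragraph. Your main argument instead builds the hom-set bijection $\Hom_{\kmod}(\rH_0(A,M),X)\cong\Hom_{\amoda}(M,[A,X])$ by hand, and the computations check out: the key step, that $\Phi(g)(a'ma'')(c)=\bar g(ca'ma'')$ agrees with $(a'\cdot\Phi(g)(m)\cdot a'')(c)=\bar g(a''ca'm)$ precisely because $\bar g$ kills commutators, is where the twisted action $(a\cdot f\cdot b)(c)=f(bca)$ earns its keep, and your inverse $\psi(m)=\phi(m)(1)$ together with the verification $\psi(ma)=\phi(m)(a)=\psi(am)$ is exactly what is needed for $\psi$ to descend to the quotient. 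What the explicit approach buys is transparency about why the commutator quotient and the particular bimodule structure on $[A,X]$ are forced on you; what the paper's approach buys is brevity and the fact that it reuses the identification $\rH_0(A,M)\cong M\otimes_{\Ae}A$ already established in Section~\ref{hochsub}. Either version is a complete proof; if you keep the explicit one, the only housekeeping left implicit is the (routine) $k$-linearity of $\Phi(g)$ and $\Phi(g)(m)$ and the naturality checks, which you correctly flag as direct substitution.
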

\begin{proof}
We have that $\rH_0(A,{-}) \cong {-} \otimes_\Ae A$ which is left adjoint to $[A,{-}]$ by the tensor-hom adjunction (see e.g.~\cite[Theorem 2.75]{MR2455920}).
\end{proof}
Let us now apply the cohomology functor $\rH^0(A, -)$ to the unit of the adjunction $\rH_0(A, {-} ) \dashv [A, {-}]$, giving a morphism
$$
\xymatrix{
\rH^0(A, M) \ar[rr]^-{\rH^0(A, \eta)} && \rH^0 (A, [A, \rH_0(A, M)]) \ar[r]^-{\cong} & [\rH_0(A,A), \rH_0 (A, M) ]
}
$$
where the isomorphism is that of Theorem~\ref{hochuniv}. Under the closed monoidal structure of $\kmod$, this morphism corresponds to one
$$
\xymatrix{
\rH^0(A, M) \otimes \rH_0(A, A) \ar[r]^-\cap &\rH_0 (A, M),
}
$$
which we call the \emph{cap product.}
\begin{rem}
The reason for this terminology is that it is a special case of the cap product in homological algebra (see~\cite{MR1731415} and~\cite{MR3281654} for a more general version for Hopf algebroids).
\end{rem}
\section{The lax categorical case}\label{HOCHLAX}
We now carry out similar constructions to those of Section~\ref{hochclassic} in the context of categories. Prior to this, we remark that what follows can probably be done with reference to an arbitrary monoid in a symmetric monoidal closed bicategory (so in particular Section~\ref{hochclassic} becomes a special case of this section), but it is beyond the scope of this thesis.

Throughout, let $\cA$ be a monoidal category (playing the r\^ole of $A$ in the previous section) with tensor product $\otimes$ and unit $I$. For simplicity, we assume that $\cA$ is strict monoidal, and any module categories upon which it acts are strict also.
\subsection{The 2-category $\AmodA$}
Let $\cx$ be a category which is both a left-module category and a right-module category for $\cA$, with actions
$$
\xymatrix{
\cA \times \cx \ar[r]^-{\rhd} & \cx, \qquad \cx \times \cA \ar[r]^-{\lhd} & \cx
}
$$
that are \emph{laxly compatible}, in the sense that we have morphisms
$$
\xymatrix{
A \rhd (M \lhd B) \ar[r]^-\chi & (A \rhd M) \lhd B
}
$$
natural in $A,M,B$ satisfying some coherence conditions: namely, the diagrams
\begin{align*}
\xymatrix@C=2.6em{
(A \otimes A') \rhd (M \lhd B) \ar[rr]^-{\chi} \ar@{=}[d] & & ( (A \otimes A') \rhd M ) \lhd B \ar@{=}[d] \\
A \rhd (A' \rhd (M \lhd B)) \ar[r]_-{A \rhd \chi} & A \rhd((A\ \rhd M) \lhd B) \ar[r]_-\chi & (A \rhd(A' \rhd M)) \lhd B
} \\
\xymatrix@C=2.6em{
A \rhd (M \lhd (B \otimes B') ) \ar[rr]^-\chi \ar@{=}[d]  & & (A \rhd M) \lhd (B \otimes B') \ar@{=}[d] \\
A \rhd((M \lhd B) \lhd B') \ar[r]_-{\chi} & (A \rhd (M \lhd B)) \lhd B' \ar[r]_-{\chi \rhd B'} & ((A \rhd M) \lhd B) \lhd B'
}
\end{align*}
commute, and such that $\chi = 1$ whenever $A$ or $B$ is the unit $I$. A 1-cell $P \colon \cx \to \mathcal N$ of such categories is defined to be a functor $P$ such that $PM \lhd B = P(M \lhd B)$ naturally, together with morphisms
$$
\xymatrix{
A \rhd PM \ar[r]^-{r} & P(A  \rhd M)
}
$$
natural in $A,M$, such that the three diagrams
$$
\begin{array}{c}
\xymatrix@C=2.5em{
(A \otimes B) \rhd PM \ar[rr]^-r \ar@{=}[d] &&\ar@{=}[d] P( (A \otimes B) \rhd M) \\
A \rhd(B \rhd PM) \ar[r]_-{A \rhd r} & A \rhd P(B \rhd M) \ar[r]_-r & P(A \rhd (B \rhd M))
}
\\
\xymatrix@C=2.5em{
A\rhd P(M \lhd B) \ar@{=}[d] \ar[rr]^-r && \ar@{=}[d] P(A \rhd (M \lhd B) ) \\
A \rhd(PM \lhd B) \ar[r]_-\chi & ( A \rhd PM) \lhd B \ar[r]_-{r \lhd B} & P(A \rhd M) \lhd B
}
\\
\xymatrix{
I \rhd PM \ar@{=}[dr] \ar[r]^-r & P(I \rhd M) \ar@{=}[d] \\
& PM
}
\end{array}
$$
commute. In other words, $P$ preserves the right action, but only laxly preserves the left action. A 2-cell $\alpha \colon (P, r) \Rightarrow (P', r')$ is a natural transformation $\alpha \colon P \Rightarrow P'$ such that the diagram
$$
\xymatrix{
A \rhd PM \ar[r]^-r \ar[d]_-{A \rhd \alpha} & P(A \rhd M) \ar[d]^-\alpha \\
A \rhd P'M \ar[r]_-{r'} & P'(A \rhd M)
}
$$
commutes. These structures constitute a 2-category, which we denote by $\AmodA$.
\subsection{(Co)homology}\label{cohomo}
We now define the analogous structures to the zeroth Hochschild (co)homology modules of Section~\ref{hochclassic}. Let $\cx$ be a $0$-cell in $\AmodA$. We begin with cohomology, as it is slightly simpler.
\begin{defn}
The category $\rH^0(\cA, \cx)$ has as objects pairs $(M, \rho)$ where $M$ is an object of $\cx$, and $\rho \colon A \rhd M \Rightarrow M \lhd A$ is a natural morphism such that $\rho \colon I \rhd X \to X \lhd I$ is the identity and the diagram
$$
\xymatrix@C=3em{
(A \otimes B)  \ar[d]_-\rho \rhd M \ar@{=}[r] & A \rhd(B \rhd M) \ar[r]^-{A \rhd \rho} & A \rhd(M \lhd B) \ar[d]^-\chi\\
M \rhd (A \otimes B) \ar@{=}[r] & (M \lhd A) \lhd B & \ar[l]^-{\rho \lhd B} (A \rhd M) \lhd B
}
$$
commutes. A morphism $f \colon (M, \rho) \to (M', \rho')$ is a morphism $f \colon M \to M'$ in $\cx$ such that the diagram
$$
\xymatrix@C=3em{
A\rhd M \ar[d]_-\rho \ar[r]^-{A \rhd f} & A \rhd M' \ar[d]^-{\rho '} \\
M \lhd A \ar[r]_-{f \lhd A} & M' \rhd A
}
$$
commutes.
\end{defn}
The category $\rH^0(\cA, \cM)$ can also be described as a \emph{lax descent object}, see~\cite{MR1935980, MR903151, woohoo}.
\begin{defn}
The category $\rH_0(\cA, \cM)$ is constructed as follows: it has the same objects as the category $\cM$, but the morphisms are given by taking the morphisms of $\cM$ and adjoining morphisms
$$
\xymatrix{
M \lhd A \ar[r]^-{\phi} & A \rhd M
}
$$
natural in $A, M$, such that $ \phi \colon M \lhd I \to  I \rhd M$ is the identity and the diagram
$$
\xymatrix@C=2.5em{
M \lhd (A \otimes B) \ar@{=}[r]  \ar[d]_-\phi& (M \lhd A) \lhd B \ar[r]^-\phi & B \rhd( M \lhd A) \ar[d]^-\chi \\
(A \otimes B) \rhd M \ar@{=}[r] & A \rhd (B \rhd M) & \ar[l]^-{\phi} (B \rhd M) \lhd A
}
$$
commutes.
\end{defn}
Dually to cohomology, we can describe the category $\rH_0(\cA, \cM)$ as a \emph{lax codescent object}.
Both $\rH_0(\cA, {-})$ and $\rH^0(\cA, {-})$ define 2-functors $\AmodA \to \cat$.
\subsection{Universal properties}
For any category $\cY$, the functor category $[\cM, \cY]$ becomes a 0-cell in $\AmodA$, with left and right actions on a functor $F \colon \cM \to \cY$ given by
$$
(A \rhd F \lhd B) (M) = F(B \rhd M \lhd A)
$$
on objects. This defines a 2-functor
$$
\xymatrix{
\cat \ar[rr]^-{[\cA,{-}]} && \AmodA.
}$$
Given a category $\cX$, the functor category $[\cX, \cM]$ becomes a 0-cell in $\AmodA$ in an analagous way.
\begin{thm}\label{hochgen}
There is an isomorphism of categories
$$
[\rH_0(\cA, \cM) , \cY] \cong \rH^0(\cA, [\cM, \cY] ),
$$
natural in $\cM$ and $\cY$.
\end{thm}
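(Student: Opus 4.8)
The plan is to mimic the proof of Theorem~\ref{hochuniv}: there, both sides were computed to be the same $k$-module $\{f\colon M\to X\mid f(am)=f(ma)\}$, with the isomorphism implemented by descent to the quotient. Here the analogous move is to unwind both $[\rH_0(\cA,\cM),\cY]$ and $\rH^0(\cA,[\cM,\cY])$ into one and the same body of explicit data and then observe that the two categories are literally equal; naturality in $\cM$ and $\cY$ will then be essentially automatic.

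First I would use the description of $\rH_0(\cA,\cM)$ as the category obtained from $\cM$ by freely adjoining a family of morphisms $\phi\colon M\lhd A\to A\rhd M$ subject to the listed relations (naturality in $A,M$, the condition $\phi_{I,M}=\idty$, and the hexagon involving the lax-compatibility $\chi$ of $\cM$). By the universal property of this construction, a functor $G\colon\rH_0(\cA,\cM)\to\cY$ is precisely a functor $F\colon\cM\to\cY$ — the restriction of $G$ along the identity-on-objects inclusion $\cM\hookrightarrow\rH_0(\cA,\cM)$ — together with a choice of morphisms $\rho_{A,M}:=G\phi_{A,M}\colon F(M\lhd A)\to F(A\rhd M)$, natural in $A$ and $M$, with $\rho_{I,M}=\idty$ and satisfying the hexagon obtained by applying $F$ to the $\rH_0$-coherence diagram. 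Likewise, a natural transformation $G\Rightarrow G'$ is a natural transformation $F\Rightarrow F'$ of the restrictions which is in addition compatible with the adjoined morphisms $\phi$.

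Next I would unwind the right-hand side. An object of $\rH^0(\cA,[\cM,\cY])$ is a pair $(F,\rho)$ with $F\in[\cM,\cY]$ and $\rho\colon A\rhd F\Rightarrow F\lhd A$ a natural morphism with $\rho_I=\idty$ obeying the $\rH^0$-coherence diagram. The key computation is to read off the $\cA$-$\cA$-bimodule structure of the $0$-cell $[\cM,\cY]$ from $(A\rhd F\lhd B)(M)=F(B\rhd M\lhd A)$: the component of $\rho$ at $M$ is a morphism $F(M\lhd A)\to F(A\rhd M)$, the whiskerings $A\rhd\rho$ and $\rho\lhd B$ are index-shifts of $\rho$, and — crucially — the structure morphism $\chi$ of the $0$-cell $[\cM,\cY]$ is $F$ applied to the structure morphism $\chi$ of $\cM$. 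With this dictionary the $\rH^0$-coherence diagram for $(F,\rho)$ becomes verbatim the hexagon found on the left, and the two unit conditions coincide; the same bookkeeping matches morphisms of $\rH^0(\cA,[\cM,\cY])$ with natural transformations of the corresponding functors $G$. Hence $[\rH_0(\cA,\cM),\cY]$ and $\rH^0(\cA,[\cM,\cY])$ have the same objects and the same morphisms, and $G\mapsto(G|_\cM,\,G\phi_{-,-})$ is the claimed isomorphism of categories.

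Finally, for naturality: the isomorphism is manifestly compatible with postcomposition by functors $\cY\to\cY'$ (naturality in $\cY$, i.e.\ with respect to the $2$-functor $[\cM,-]$) and with $1$-cells $P$ of $\AmodA$ into or out of $\cM$, via $\rH_0(\cA,P)$ on one side and precomposition with $P$ on the other — here one uses that $\rH_0(\cA,-)$ and $\rH^0(\cA,-)$ are $2$-functors and that precomposition with a lax morphism of $\cA$-$\cA$-bimodule categories is again such a morphism. I expect the only genuine obstacle to be the coherence bookkeeping of the third paragraph: correctly identifying the two occurrences of $\chi$ (the one built into $\rH_0(\cA,\cM)$, coming from the lax compatibility on $\cM$, versus the one appearing when $\rH^0$ is applied to the functor $0$-cell $[\cM,\cY]$, which must be computed to be $F\chi$) and keeping track of the index-shifts in the whiskerings; once these are lined up, no limits, colimits or completeness hypotheses enter — in contrast to the lax (co)descent descriptions of $\rH_0$ and $\rH^0$ — and everything else is a direct translation.
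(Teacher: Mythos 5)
Your proposal is correct and follows essentially the same route as the paper: the isomorphism is given on objects by $F\mapsto (F|_{\cM},F\phi)$, using that a functor out of $\rH_0(\cA,\cM)$ is exactly a functor out of $\cM$ together with a coherent choice of images for the adjoined morphisms $\phi$, which is precisely an object of $\rH^0(\cA,[\cM,\cY])$. You supply more detail than the paper does (the explicit matching of the two occurrences of $\chi$ and the naturality discussion), but the underlying argument is the same.
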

\begin{proof}
We only define the isomorphisms on objects as their action on morphisms is clear. The functor $$\xymatrix{ [\rH_0(\cA, \cM) , \cY] \ar[r]^-\Phi & \rH^0(\cA, [\cM, \cY] )}$$ is defined as follows. Given a functor $F \colon \rH_0(\cA, \cM) \to \cY$, we define $\Phi(F)$ to be the pair $( F, \rho)$ where the natural morphisms
$$\rho \colon A\rhd F \Rightarrow F \lhd A$$ are defined by
$$
\xymatrix{
F(M \lhd A) \ar[rr]^-{F\phi} && F(A \rhd M).
}
$$
The inverse functor
$$
\xymatrix{  \rH^0(\cA, [\cM, \cY] ) \ar[r]^-\Theta & [\rH_0(\cA, \cM) , \cY] }
$$
maps a pair $(G, \rho)$ to the functor $G^\rho \colon \rH_0(\cA, \cM) \to \cY$ given by $G$ on the morphisms of $\cA$, and on the extra morphisms
$$\xymatrix{
X \lhd A \ar[r]^-{\phi} & A \rhd X }
$$
by
$$
\xymatrix{
G(X \lhd A) = (A \rhd G)(X) \ar[r]^-{\rho} & (G \lhd A)(X) = G(A \rhd X).
}
$$
\end{proof}
\begin{thm}
There is a 2-adjunction
$$
\xymatrix{
\AmodA \ar@{}[rrr]|-{\perp}\ar@/^0.7pc/[rrr]^-{\rH_0(\cA, {-})} & & & \ar@/^0.7pc/[lll]^-{[\cA, {-}]} \cat.
}
$$
\end{thm}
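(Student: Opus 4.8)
The plan is to establish the $2$-adjunction by exhibiting a $2$-natural isomorphism of hom-categories
$$
\AmodA(\cM, [\cA, \cY]) \;\cong\; \cat\bigl(\rH_0(\cA, \cM), \cY\bigr),
$$
$2$-natural in the $0$-cell $\cM$ of $\AmodA$ and in the category $\cY$; here $[\cA, \cY]$ denotes the $0$-cell produced by the construction of Section~\ref{HOCHLAX} applied to $\cA$ itself, regarded as a bimodule over itself via $\otimes$. Note that the right-hand side is by definition the functor category $[\rH_0(\cA, \cM), \cY]$, so Theorem~\ref{hochgen} already rewrites it as $\rH^0\bigl(\cA, [\cM, \cY]\bigr)$; this gives a useful consistency check, but I would nevertheless construct the displayed isomorphism directly, generalising the tensor--hom adjunction behind the classical statement $\rH_0(A,-) \dashv [A,-]$ in Section~\ref{hochclassic}.

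In the forward direction, start from a $1$-cell $(P, r) \colon \cM \to [\cA, \cY]$, and write $(PM)(C) \in \cY$ for the value of the functor $PM \colon \cA \to \cY$ at $C \in \cA$. The strictness condition $PM \lhd B = P(M \lhd B)$ unwinds to $(PM)(B \otimes C) = (P(M \lhd B))(C)$, whence in particular $(PM)(A) = (P(M \lhd A))(I)$. I would then define a functor $Q \colon \rH_0(\cA, \cM) \to \cY$ by $Q(M) := (PM)(I)$ on objects and $Q(f) := (Pf)(I)$ on the morphisms of $\cM$, sending the adjoined morphism $\phi_{A,M} \colon M \lhd A \to A \rhd M$ to the component of $r$ at the unit object $I \in \cA$,
$$
Q(M \lhd A) \;=\; (PM)(A) \;\xrightarrow{\;r\;}\; \bigl(P(A \rhd M)\bigr)(I) \;=\; Q(A \rhd M).
$$
Naturality of $r$ in $A$ and $M$ yields naturality of $Q\phi$, the normalisation ``$r = 1$ when $A = I$'' yields $Q(\phi_{I,M}) = 1$, and the coherence diagrams in the definition of a $1$-cell of $\AmodA$ are exactly what is needed for $Q$ to respect the coherence relation imposed on $\phi$ in the construction of $\rH_0(\cA, \cM)$; a $2$-cell of $\AmodA$ is sent to its $I$-component.

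Conversely, from a functor $Q \colon \rH_0(\cA, \cM) \to \cY$ I would build $P \colon \cM \to [\cA, \cY]$ by $(PM)(C) := Q(M \lhd C)$, with structure map $r$ whose value at $C$ is the image under $Q$ of the composite $(M \lhd C) \lhd A \xrightarrow{\phi} A \rhd (M \lhd C) \xrightarrow{\chi} (A \rhd M) \lhd C$ in $\rH_0(\cA, \cM)$; strictness in the right variable comes from associativity of the right $\cA$-action, and the three $1$-cell coherence diagrams follow from functoriality of $Q$ together with the relations defining $\phi$ and the normalisation of $\chi$. Since $I$ is the monoidal unit and $M \lhd I = M$, these assignments are mutually inverse, and inspection of the formulas shows that they are $2$-natural in $\cM$ and $\cY$. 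Transporting the identity $1$-cells across the isomorphism produces the unit $\eta_\cM \colon \cM \to [\cA, \rH_0(\cA, \cM)]$, given by $(\eta_\cM M)(C) = M \lhd C$, and the counit $\epsilon_\cY \colon \rH_0(\cA, [\cA, \cY]) \to \cY$, given by evaluation at $I$.

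The substance of the argument is purely bookkeeping: one must match, term by term, the lax coherence axioms for $1$-cells and $2$-cells of $\AmodA$ against the coherence relations built into $\rH_0(\cA, \cM)$ (and conversely), and keep the left/right variance conventions of Section~\ref{HOCHLAX} consistent throughout. The point I expect to require the most care --- and where the verification is most delicate --- is checking that the hom-isomorphism is \emph{strictly} $2$-natural rather than merely pseudonatural; this rests on the strictness hypotheses on $\cA$ and its module categories, which collapse all associativity and unit constraints to identities and so keep the two constructions above mutually and $2$-naturally inverse on the nose.
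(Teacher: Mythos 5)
Your proposal is correct, and it proves the theorem via the other standard characterisation of a 2-adjunction: you construct the 2-natural isomorphism of hom-categories $\AmodA(\cM,[\cA,\cY]) \cong \cat(\rH_0(\cA,\cM),\cY)$ directly, whereas the paper exhibits the unit and counit as explicit 2-natural transformations and leaves the triangle identities as a routine check. The two presentations are interderivable, and the underlying data coincide: your forward map $Q(M)=(PM)(I)$ is ``apply $\rH_0(\cA,-)$, then compose with the counit (evaluation at $I$)'', and your backward map $(PM)(C)=Q(M\lhd C)$ is ``apply $[\cA,-]$, then precompose with the unit'', so the coherence bookkeeping you describe is exactly the content of the paper's triangle identities. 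One point in your favour on the variance question you flag as delicate: your unit $(\eta_\cM M)(C)=M\lhd C$ is built from the right action, while the paper's is $\eta(M)(A)=A\rhd M$; with the paper's convention $(A\rhd F\lhd B)(M)=F(B\rhd M\lhd A)$ for the actions on functor categories, it is your choice for which the strict 1-cell axiom $PM\lhd B=P(M\lhd B)$ holds on the nose (the paper's displayed computation instead verifies strict compatibility with the \emph{left} action), so your bookkeeping is the more faithful to the stated definition of 1-cells in $\AmodA$. Your remark that the right-hand side is $[\rH_0(\cA,\cM),\cY]\cong\rH^0(\cA,[\cM,\cY])$ by Theorem~\ref{hochgen} is a genuine consistency check but, as you say, does not replace the direct construction.
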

\begin{proof}
Let $\cM$ be an object of $\AmodA$. For any object $M$ in $\cM$, consider the functor $\eta(M) \colon \cA \to \rH_0(\cA, \cM)$ which maps an object $A$ to $A\rhd{M }$. We have the functorial equalities
\begin{align*}
(A \rhd \eta(M)) (B) :&= \eta(M)(B \otimes A) \\
&= (B \otimes A) \rhd M \\
&= B \rhd (A \rhd M) \\
&= \eta(A \rhd M) (B)
\end{align*}
thus proving that $(A \rhd \eta(M) ) = \eta( A \rhd M)$. This means that $\eta$ itself is a 1-cell in $\AmodA$, so we have defined a 2-natural transformation $$\eta \colon 1 \Rightarrow [\cA, \rH_0(\cA, {-})].$$

For any category $\cY$, we define a functor
$$
\xymatrix{
\rH_0(\cA, [\cA, \cY]) \ar[r]^-\epsilon & \cY
}
$$
on objects by $F \mapsto F(I)$. On a morphism of $[\cA, \cY]$, i.e.\ a natural transformation, $\epsilon$ maps this to the component at the unit object $I$. The extra morphisms
$$\xymatrix{
F \lhd A \ar[r]^-\phi & A \rhd F
}$$
are mapped to the identity morphism
$$
(F \lhd A) (I) = F(A \otimes I) = F(I \otimes A) = (A\rhd F)(I).
$$
We have thus defined a 2-natural transformation
$$
\epsilon \colon \rH_0 (\cA, [\cA, {-} ] ) \Rightarrow 1
$$
and it is routine to check that $\eta, \epsilon$ satisfy the triangle identities.
\end{proof}
For an object $\cM$ in $\AmodA$, we take the unit $\eta \colon \cM \to [\cA, \rH_0(\cA, \cM)]$ and apply the functor $\rH^0(\cA,{-})$ as well as the isomorphism of Theorem~\ref{hochgen} to construct a morphism
$$
\xymatrix{
\rH^0(\cA, \cM) \ar[rr]^-{\rH^0(\cA, \eta)} & & \rH^0(\cA, [\cA, \rH_0(\cA, \cM)]) \ar[r]^-{\cong} & [\rH_0(\cA, \cA), \rH_0(\cA, \cM)].
}
$$
which we call the \emph{cap product} (cf.\ Section~\ref{hochunivprop}) which we denote by $\cap$.
\section{Application to duplicial objects}\label{duplapp}
We now focus on a special case of the constructions in the previous section. Consider the augmented simplicial category $\Delta_+$ (cf.~Definition~\ref{simpldef}). This becomes a monoidal category with tensor product $\oplus$ defined on objects by
$$
\mathbf{m \oplus n = m + n+1},
$$
and the tensor product of two morphisms $f \colon \mathbf{m} \to \mathbf{m'}$, $g \colon \mathbf{n} \to \mathbf{n'}$ is given by
$$
(f \oplus g )(i) =  \begin{cases} f(i) & \mbox{ if } 0 \le i \le m \\ g(i - m - 1) &\mbox{ if } m < i \le m+n+1\end{cases}
$$ The unit is given by $\mathbf{-1} = \emptyset$.
\subsection{Actions}
We now set $\cA = \Delta_+^*$ for the remainder of this chapter, and apply the results of Section~\ref{HOCHLAX}.
Let $\cx$ be a category.
\begin{prop}\label{leftaction}
Strict left actions
$$\xymatrix{
\cA \times \cx \ar[r] & \cx}
$$
correspond to comonads on $\cx$.
\end{prop}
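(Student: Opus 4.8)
The plan is to reduce the statement to the classical universal property of the algebraists' simplicial category. First I would use the closed monoidal structure of $\cat$ (cf.\ the remark following Definition~\ref{leftCmodule}) to rephrase a strict left action: giving $\rhd \colon \cA \times \cx \to \cx$ subject to the axioms of Definition~\ref{leftCmodule} is the same as giving a strict monoidal functor
$$
\Phi \colon (\cA, \oplus, \mathbf{-1}) \longrightarrow \big([\cx, \cx], \circ, \idty_\cx\big)
$$
into the strict monoidal category of endofunctors of $\cx$ under composition, via $\Phi(\mathbf n) = \mathbf n \rhd {-}$; the associativity axiom for $\rhd$ corresponds to preservation of $\oplus$, and the unit axiom to $\Phi(\mathbf{-1}) = \idty_\cx$. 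So it suffices to match strict monoidal functors $\cA = \Delta_+^* \to [\cx,\cx]$ with comonads on $\cx$.

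Secondly I would invoke the fact that $(\Delta_+, \oplus, \mathbf{-1})$ is the free strict monoidal category containing a monoid, the generic monoid being $\mathbf 0$ with multiplication the unique map $\mathbf 0 \oplus \mathbf 0 = \mathbf 1 \to \mathbf 0$ and unit the unique map $\mathbf{-1} \to \mathbf 0$ (see~\cite[\S VII.5]{MR1712872}). Reversing all morphisms — which is precisely the passage to $\Delta_+^* = \cA$ and leaves $\oplus$ and $\mathbf{-1}$ untouched — turns this into the statement that $(\cA, \oplus, \mathbf{-1})$ is the free strict monoidal category containing a comonoid, with generic comonoid $\mathbf 0$ carrying the comultiplication $\delta \colon \mathbf 0 \to \mathbf 1$ and counit $\epsilon \colon \mathbf 0 \to \mathbf{-1}$ obtained by reversing the two structure maps above. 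Since a comonoid in $([\cx,\cx], \circ, \idty_\cx)$ is by definition exactly a comonad on $\cx$, this freeness property yields the desired bijection between strict monoidal functors $\cA \to [\cx,\cx]$ and comonads on $\cx$.

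Unwinding the correspondence along these identifications, a comonad $(T, \delta, \epsilon)$ produces the action with $\mathbf n \rhd M = T^{n+1}M$, on which the morphisms of $\cA$ obtained by reversing the cofaces, respectively codegeneracies, of $\Delta_+$ act by the whiskered copies $T^i \epsilon T^{n-i}$ of the counit, respectively $T^j \delta T^{n-j}$ of the comultiplication — that is, fixing $M$, the functor $\mathbf n \mapsto \mathbf n \rhd M$ is the bar resolution $\rBB(T,M)$ of Definition~\ref{bardeffo} (augmented by $\mathbf{-1} \rhd M = M$); conversely an action restricts along the inclusion of the object $\mathbf 0$ to its generic comonoid, whose image $T = \mathbf 0 \rhd {-}$ is a comonad. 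The only real input is the universal property of $\Delta_+$, which is standard; the part I expect to require a little care — the mild obstacle — is being scrupulous that \emph{strictness} throughout (strict monoidal $\cA$ and $[\cx,\cx]$, strict action) is exactly what upgrades the usual equivalence to an honest bijection, so that the generators-and-relations description of $\Delta_+$ by faces, degeneracies and the simplicial identities matches the associativity and unit equations of a monoid on the nose, with no coherence isomorphisms left over.
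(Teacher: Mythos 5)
Your argument is correct and is essentially the paper's own proof: the paper also curries the action into a strict monoidal functor $\Delta_+^* \to [\cx,\cx]$ and then identifies such functors with comonoids in $([\cx,\cx],\circ)$, i.e.\ comonads on $\cx$, via the (dualised) universal property of $\Delta_+$ as the free strict monoidal category on a monoid. You have merely spelled out in detail what the paper compresses into one sentence, and your explicit unwinding ($T = \mathbf 0 \rhd {-}$, counit and comultiplication from $d_0$ and $s_0$) matches the description the paper gives immediately after the proposition.
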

\begin{proof}
An action $\cA \times \cx \to \cx$ corresponds to a strict monoidal functor $\cA \to [\cx, \cx]$, which in turn corresponds to a comonad in the monoidal category $[\cx, \cx]$ which finally corresponds to a comonad in $\cx$.
\end{proof}

Explicitly, given a left action $\rhd \colon \cA \times \cx \to \cx$, the corresponding comonad $T$ on $\cx$
is defined as $T = \mathbf 0 \rhd{-}$, with counit $\epsilon$ given on components $X$ by
$$
\xymatrix{
T(X) = \mathbf 0 \rhd X \ar[rr]^-{d_0 \rhd X} && \mathbf{-1} \rhd X = X
}
$$
where $d_0 \colon \mathbf{0 \to -1}$ is the unique face map in $\cA$. The coproduct $\delta$ is defined on components by
$$
\xymatrix{
T X = \mathbf{0} \rhd X \ar[rr]^-{s_0 \rhd X} & & \mathbf{1} \rhd X = \mathbf 0 \rhd (\mathbf 0 \rhd X) = T T X
}
$$

The augmented simplicial category $\Delta_+$ becomes a monoidal category with the reverse tensor product, given explicitly on objects by
$$
\mathbf{m \oplus^{\operatorname{rev}} n = n \oplus m}.
$$
We denote this by $\Delta_+^{\operatorname{rev}}$. There is an isomorphism $\Delta_+^{\operatorname{rev}} \cong \Delta_+$ of monoidal categories~\cite{woohoo}, which of course carries over to the dual categories $\cA^{\operatorname{rev}} \cong \cA$. Right actions of $\cA$ correspond by to right actions of $\cA^{\operatorname{rev}}$ by the monoidal isomorphism, and these in turn correspond to left actions of $\cA$, which correspond to comonads by Proposition~\ref{leftaction}.
Thus we have:
\begin{prop}
Strict right actions
$$\xymatrix{
\cx \times \cA \ar[r] & \cx
}$$
correspond to comonads on $\cx$.
\end{prop}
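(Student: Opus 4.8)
The plan is to reduce this statement to Proposition~\ref{leftaction} by a short sequence of purely formal identifications, exploiting the self-duality of $\Delta_+$ under reversal of the tensor product. There is no combinatorial or analytic content to speak of; the entire argument is the composition of three bijections.

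First I would record the general fact that, for a strict monoidal category $\cA$, a strict right action of $\cA$ on $\cx$ is precisely the same datum as a strict left action of $\cA^{\operatorname{rev}}$ on $\cx$. Given $\lhd \colon \cx \times \cA \to \cx$, one sets $A \rhd M := M \lhd A$; the strict associativity constraint $M \lhd (A \otimes B) = (M \lhd A) \lhd B$ then reads $(A \otimes^{\operatorname{rev}} B) \rhd M = A \rhd (B \rhd M)$, since $A \otimes^{\operatorname{rev}} B = B \otimes A$, and unitality is untouched because $I$ is the unit of both $\otimes$ and $\otimes^{\operatorname{rev}}$. As everything is on the nose (strictness has been assumed throughout the section), this is a bijection, and it is evidently compatible with the relevant $1$- and $2$-cells.

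Next I would transport along the monoidal isomorphism $\Delta_+^{\operatorname{rev}} \cong \Delta_+$ of~\cite{woohoo}. Since $\cA = \Delta_+^*$, and forming the dual of a category is independent of reversing its monoidal product, this gives a monoidal isomorphism $\cA^{\operatorname{rev}} = (\Delta_+^{\operatorname{rev}})^* \cong \Delta_+^* = \cA$. Restriction along a strict monoidal isomorphism is again a bijection on (strict) module structures, so strict left $\cA^{\operatorname{rev}}$-actions on $\cx$ correspond to strict left $\cA$-actions on $\cx$. Finally I would invoke Proposition~\ref{leftaction}, that strict left $\cA$-actions on $\cx$ correspond to comonads on $\cx$, and compose the three bijections. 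If one wishes, the resulting comonad can be displayed explicitly: its underlying endofunctor is ${-} \lhd \mathbf 0$, with counit induced by $d_0 \colon \mathbf 0 \to \mathbf{-1}$ and comultiplication by $s_0 \colon \mathbf 0 \to \mathbf 1$, exactly mirroring the left-action description given just above the statement.

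The only point requiring care — and the closest thing to an ``obstacle'' — is the bookkeeping of the three layers of op/reverse: one must check that $(\Delta_+^*)^{\operatorname{rev}}$ genuinely coincides with $(\Delta_+^{\operatorname{rev}})^*$, and that the isomorphism cited from~\cite{woohoo} is \emph{strictly monoidal} (so that it preserves the comonoid-in-$[\cx,\cx]$, i.e.\ comonad, correspondence) rather than merely an equivalence of the underlying categories. Once these variances are tracked, the proof is immediate.
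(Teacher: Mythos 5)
Your argument is correct and follows essentially the same route as the paper: the paper likewise reduces right $\cA$-actions to left $\cA$-actions via the monoidal isomorphism $\Delta_+^{\operatorname{rev}} \cong \Delta_+$ and then invokes Proposition~\ref{leftaction}, merely performing the right-to-left swap and the transport along the isomorphism in the opposite order. Your explicit description of the resulting comonad as ${-}\lhd\mathbf 0$ matches the paper's remark that it is defined analogously to the left-action case.
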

Given a right action $\lhd$, the corresponding comonad $S$ is explicitly defined in an analagous way to comonads corresponding to left actions.
\begin{lem}\label{whenwhen} Distributive laws of comonads correspond to 0-cells of $\AmodA$.
\end{lem}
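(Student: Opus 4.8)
The plan is to show that a $0$-cell of $\AmodA$ is nothing more than a pair of comonads together with one distinguished component of its lax structure map, and that this component is exactly a comonad distributive law in the sense of Definition~\ref{finallyover}; the proof then consists of two mutually inverse constructions.

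First I would unpack a $0$-cell $\cM$ of $\AmodA$. By Proposition~\ref{leftaction} its strict left action is a comonad $T = \mathbf 0 \rhd {-}$ on $\cM$, whose counit $\epsilon_T$ is induced by the face map $d_0 \colon \mathbf 0 \to \mathbf{-1}$ of $\cA = \Delta_+^*$ and whose comultiplication $\delta_T$ is induced by the degeneracy $s_0 \colon \mathbf 0 \to \mathbf 1$; dually, the strict right action is a comonad $S = {-} \lhd \mathbf 0$. Since every object of $\cA$ is an $\oplus$-power $\mathbf m = \mathbf 0^{\oplus(m+1)}$ of $\mathbf 0$, with $\mathbf{-1} = I$, we have $\mathbf m \rhd {-} = T^{m+1}$ and ${-} \lhd \mathbf n = S^{n+1}$, so the lax structure map has components $\chi_{\mathbf m, M, \mathbf n} \colon T^{m+1}S^{n+1}M \to S^{n+1}T^{m+1}M$. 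Set $\bar\chi := \chi_{\mathbf 0, {-}, \mathbf 0} \colon TS \Rightarrow ST$. The four axioms of Definition~\ref{finallyover} for $\bar\chi$ then drop out: the two counit axioms are the naturality squares of $\chi$ in the first, respectively second, $\cA$-variable with respect to $d_0$, after using the unit condition $\chi_{\mathbf{-1}, M, \mathbf n} = \chi_{\mathbf m, M, \mathbf{-1}} = 1$ to collapse the bottom edge; the two comultiplication axioms are the naturality squares with respect to $s_0$, combined with the two coherence hexagons of $\AmodA$, which identify $\chi_{\mathbf 1, M, \mathbf 0}$ with $(\bar\chi T)(T\bar\chi)$ and $\chi_{\mathbf 0, M, \mathbf 1}$ with $(S\bar\chi)(\bar\chi S)$.

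Conversely, from comonads $T, S$ and a distributive law $\bar\chi \colon TS \Rightarrow ST$ on a category $\cM$, I would take the left and right actions corresponding to $T$ and $S$ via Proposition~\ref{leftaction} and the dual statement for right actions, and define $\chi_{\mathbf m, {-}, \mathbf n}$ by iterating $\bar\chi$: as in Chapter~\ref{CYCLIC}, $\bar\chi$ assembles into natural transformations $\bar\chi^{\,p} \colon T^p S \Rightarrow S T^p$ making $(S, \bar\chi^{\,p})$ an opmorphism and $(T^p, \bar\chi^{\,p})$ a morphism of the relevant comonads (cf.\ Remark~\ref{superchimorphism}), and iterating once more in the $S$-slot yields $\chi_{\mathbf m, {-}, \mathbf n}$ as a distributive law of $T^{m+1}$ with $S^{n+1}$; for $m = -1$ or $n = -1$ it is the identity. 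Naturality in $M$ is automatic; naturality in the two $\cA$-variables reduces to compatibility with the face and degeneracy maps, which is precisely Lemma~\ref{superchiepsilon} and its $\delta$-analogue applied in both comonad directions; the coherence hexagons and the unit conditions hold by construction of the iterates.

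Finally I would check that the two constructions are mutually inverse. Extracting $\chi_{\mathbf 0, {-}, \mathbf 0}$ from the $0$-cell built out of $\bar\chi$ returns $\bar\chi$ on the nose. In the other direction, a given $0$-cell determines $\cM$, $T$ and $S$ exactly, and an induction on $m+n$ — peeling off a $\mathbf 0$ from the left or the right summand and invoking the corresponding coherence hexagon, with the unit conditions as base case — shows that $\chi_{\mathbf m, M, \mathbf n}$ is forced to equal the iterate of $\chi_{\mathbf 0, M, \mathbf 0}$, so the $0$-cell is recovered from its distributive law. I expect the only real difficulty to be bookkeeping: tracking which $\oplus$-decomposition and which of the two hexagons is in play at each inductive step, and confirming that naturality against the simplicial generators $d_i, s_j$ suffices for naturality against all morphisms of $\Delta_+^*$. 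All the substantive content is already supplied by Proposition~\ref{leftaction}, its dual, and the iteration lemmas of Chapter~\ref{CYCLIC}.
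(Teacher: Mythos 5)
Your proposal is correct and follows essentially the same route as the paper: extract $\bar\chi = \chi_{\mathbf 0,-,\mathbf 0}$ from a $0$-cell and read the four axioms of Definition~\ref{finallyover} off naturality in the $\cA$-variables together with the unit conditions and coherence hexagons, then conversely iterate a distributive law \`a la Section~\ref{evidenziatore1} to rebuild the lax structure. Your added verification that the two constructions are mutually inverse (the induction forcing $\chi_{\mathbf m,M,\mathbf n}$ to be the iterate of $\bar\chi$) is a detail the paper leaves implicit but is exactly what justifies calling this a correspondence.
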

\begin{proof}
Suppose that $\cM$ is an object in $\AmodA$, so that we have natural morphisms
$$
\xymatrix{
\mathbf{m} \rhd (M \lhd \mathbf n) \ar[r]^-{\chi} & (\mathbf{m} \rhd M) \lhd \mathbf n.
}
$$
By choosing $\mathbf{m} = \mathbf{n} = \mathbf 0$, we get morphisms
$$
\xymatrix{
T S(M)  \ar[r]^-\chi & ST(M)
}
$$
natural in $M$. The naturality of $\chi$ in $\mathbf{m},\mathbf n$, combined with compatibility with the unit object $\mathbf{-1}$, tells us that $\chi$ is compatible with the comultiplication and counits of both comonads, realising it as a distributive law.

Conversely, let $\chi$ be a distributive law with components
$$
\xymatrix{
T S(M)  \ar[r]^-\chi & ST(M).
}
$$
These morphisms can easily be upgraded by iterating $\chi$ up to horizontal composition of identities, giving morphisms $T^nS^m (M) \to S^mT^n(M)$ for any $m,n \ge 0$ \`a la Section~\ref{evidenziatore1}. By definition, the coherence conditions required for $\chi$ to turn $\cM$ into an object of $\AmodA$ are satisfied.
 \end{proof}
 \subsection{(Co)homology}
 For this subsection, fix a distributive law $\chi \colon TS\Rightarrow ST$. Let $\cM$ denote the $0$-cell of $\AmodA$ corresponding to $\chi$ under Lemma~\ref{whenwhen}.
 \begin{lem}\label{chicoalghoch}
 An object of $\rH^0(\cA, \cM)$ corresponds to a $\chi$-coalgebra in $\cM$.
 \end{lem}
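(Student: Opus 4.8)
The plan is to obtain the correspondence by restricting an $\rH^0$-datum to its value at the generating object $\mathbf 0 \in \cA = \Delta_+^*$, and conversely by reconstructing such a datum from a $\chi$-coalgebra via the iterate $\rho_n$ of Section~\ref{evidenziatore2}. Throughout I would use the identifications recorded above: $T = \mathbf 0 \rhd {-}$ and $S = {-}\lhd \mathbf 0$, with $\epsilon^T = d_0 \rhd {-}$ and $\delta^T = s_0\rhd{-}$ (and dually for $S$), together with $\mathbf n \rhd M = T^{n+1}M$, $M \lhd \mathbf n = S^{n+1}M$, $\mathbf{-1}\rhd M = M = M\lhd\mathbf{-1}$, and the fact (from the proof of Lemma~\ref{whenwhen}) that the structure morphisms $\mathbf m \rhd (M \lhd \mathbf n) \to (\mathbf m \rhd M)\lhd \mathbf n$ of $\cM$ are the iterates of $\chi$. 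Under these identifications the face and degeneracy maps of the functors $\mathbf n\mapsto\mathbf n\rhd M$ and $\mathbf n\mapsto M\lhd\mathbf n$ are those of the bar resolution $\rBB(T,M)$ and the opbar resolution $\rBB^*(S,M)$ respectively.

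For the forward direction, given $(M,\rho)$ in $\rH^0(\cA,\cM)$ I would set $\bar\rho := \rho_{\mathbf 0}\colon TM \to SM$. Naturality of $\rho$ with respect to the unique face map $d_0\colon \mathbf 0 \to \mathbf{-1}$ of $\cA$, together with the normalisation $\rho_{\mathbf{-1}} = 1$, yields $\epsilon^S M \circ \bar\rho = \epsilon^T M$, which is the counit axiom of Definition~\ref{coalgdef}. Instantiating the hexagon in the definition of $\rH^0(\cA,{-})$ at $A = B = \mathbf 0$ gives $\rho_{\mathbf 1} = S\bar\rho \circ \chi M \circ T\bar\rho$, while naturality of $\rho$ with respect to $s_0\colon \mathbf 0\to\mathbf 1$ gives $\rho_{\mathbf 1}\circ \delta^T M = \delta^S M\circ \bar\rho$; combining these two is exactly the coassociativity square of Definition~\ref{coalgdef}. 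Hence $(M,\bar\rho)$ is a $\chi$-coalgebra.

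For the converse, given a $\chi$-coalgebra $(M,\bar\rho)$ I would define $\rho_{\mathbf n}\colon T^{n+1}M \to S^{n+1}M$ to be $\rho_n$ (and $\rho_{\mathbf{-1}} = 1$). The normalisation condition, and more generally the requirement that $\chi = 1$ whenever an argument is the unit, hold by construction. Since the morphisms of $\Delta_+$ are generated by cofaces and codegeneracies, naturality of the family $\{\rho_{\mathbf n}\}$ in $A$ reduces to compatibility with the bar/opbar face and degeneracy maps, which is precisely Proposition~\ref{simplicialrho} (equivalently Lemma~\ref{superrhoepsilon} and the analogous statement for the coproducts $\delta$ noted there). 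The hexagon for general $\mathbf m,\mathbf n$ unwinds to the expected interleaving identity relating $\rho_m$, $\rho_n$ and the iterated $\chi$, which I would prove by a short induction from the two recursive descriptions of $\rho_{n+1}$ in Section~\ref{evidenziatore2}. The two assignments are then mutually inverse: $\rho_0 = \bar\rho$ on one side, and the hexagon (just verified) forces each $\rho_{\mathbf n}$ to be the iterate $\rho_n$ on the other; morphisms match by the same bookkeeping.

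The hard part is really only the full naturality in the converse direction --- compatibility of the iterates $\rho_n$ with \emph{all} cofaces and codegeneracies, not merely $d_0$ and $s_0$ --- but this has essentially been done already as Proposition~\ref{simplicialrho}, so the remaining work is to line up the $\AmodA$ coherence data against the face and degeneracy maps of the (op)bar resolutions.
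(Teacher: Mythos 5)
Your argument is correct and follows the same route as the paper: the forward passage from a $\chi$-coalgebra to an $\rH^0(\cA,\cM)$-object is exactly the extension of $\rho$ to the family $\rho_n$ via Proposition~\ref{simplicialrho} (augmented by the identity in degree $-1$), and the converse is restriction to degree $\mathbf 0$. The only difference is that you spell out the verification of the hexagon and the counit/coassociativity axioms, which the paper compresses into ``by construction''.
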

 \begin{proof}
 Let $\rho \colon TM \Rightarrow SM$ be a $\chi$-coalgebra structure on an object $M$ in $\cM$. By Proposition~\ref{simplicialrho}, $\rho$ extends to a morphism of simplicial objects
 $$
\xymatrix{
\rBB(T,M) \ar[r]^-{\rho} & \rBB^*(S,M),
}$$
and in fact extends to a morphism of augmented simplicial objects by setting $\rho$ to be $1 \colon M \to M$ in degree $-1$. Then, by construction, the pair $(M,\rho)$ is an object of $\rH^0(\cA, \cM)$.

Conversely, given an object $(M, \rho)$ in $\rH^0(\cA, \cM)$, the morphism $\rho$ in degree 0 gives a $\chi$-coalgebra structure on $M$.
 \end{proof}
 Similarly, we have the following:
 \begin{lem}
Functors $\rH_0(\cA, \cM) \to \cY$ correspond to $\chi$-opcoalgebras $\cM \to \cY$. \end{lem}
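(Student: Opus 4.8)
The plan is to exploit that, by construction, $\rH_0(\cA,\cM)$ is the category obtained from $\cM$ by adjoining the morphisms $\phi_{A,M}\colon M\lhd A\to A\rhd M$ ($A\in\cA$, $M\in\cM$) subject only to naturality in $A$ and $M$, the normalisation $\phi_{I,-}=1$, and the hexagon coherence with $\chi$. Hence giving a functor $F\colon\rH_0(\cA,\cM)\to\cY$ is the same as giving a functor $N:=F\iota\colon\cM\to\cY$ (where $\iota\colon\cM\to\rH_0(\cA,\cM)$ is the identity-on-objects inclusion) together with morphisms $F\phi_{A,M}\colon N(M\lhd A)\to N(A\rhd M)$ natural in $A$ and $M$ and compatible with $\phi_{I,-}=1$ and with the hexagon. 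First I would use the identifications $T=\mathbf 0\rhd{-}$ and $S={-}\lhd\mathbf 0$ of Proposition~\ref{leftaction} and its right-handed analogue to extract from $F$ the candidate structure $\lambda:=F\phi_{\mathbf 0,-}\colon NS\Rightarrow NT$, and then check that $(N,\lambda)$ is a $\chi$-opcoalgebra: the counit axiom is $F$ applied to the naturality square of $\phi$ along the unique face map $d_0\colon\mathbf 0\to\mathbf{-1}$ of $\cA$ (using $\phi_{\mathbf{-1},-}=1$ and $d_0\rhd{-}=\epsilon$, ${-}\lhd d_0=\epsilon$), while the $\chi$-compatibility hexagon of a $\chi$-opcoalgebra is obtained by combining the naturality square of $\phi$ along the degeneracy $s_0\colon\mathbf 0\to\mathbf 1$ (which introduces the comultiplications $\delta$) with the hexagon coherence of $\phi$ instantiated at $A=B=\mathbf 0$. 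This yields a well-defined passage from functors $\rH_0(\cA,\cM)\to\cY$ to $\chi$-opcoalgebras $\cM\to\cY$.

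For the converse I would start from a $\chi$-opcoalgebra $(N,\lambda)$ and reconstruct $F$. On objects and on the morphisms of $\cM$ it must be $N$; on the adjoined morphism $\phi_{\mathbf n,M}\colon S^{n+1}M\to T^{n+1}M$ I set $F\phi_{\mathbf n,M}:=(\lambda_n)_M$, the iterate of $\lambda$ and $\chi$ from Section~\ref{evidenziatore2}, and $F\phi_{\mathbf{-1},M}:=1_M$. It then remains to verify that this assignment respects the defining relations of $\rH_0(\cA,\cM)$. Naturality in $M$ is immediate, each $\lambda_n$ being a natural transformation; naturality in $A$ reduces (every morphism of $\cA=\Delta_+^*$ factoring through faces and degeneracies) to compatibility of the $\lambda_n$ with all $d_i$ and $s_i$, which is the $\chi$-opcoalgebra counterpart of Lemma~\ref{superrhoepsilon} and Proposition~\ref{simplicialrho} --- equivalently, the statement that the $\lambda_n M$ assemble into a morphism of augmented simplicial functors $\rCC^*_S(N,M)\to\rCC_T(N,M)$, as in the construction of $L$ in Theorem~\ref{cyc}; the generator $d_0\colon\mathbf 0\to\mathbf{-1}$ again requires the counit axiom. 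The hexagon coherence for the $F\phi$'s is exactly the iterated compatibility of $\lambda$ with $\chi$ (Remark~\ref{superchimorphism} together with the recursive formulas for $\lambda_n$), and $\phi_{I,-}=1$ holds by fiat. Finally, the two passages are mutually inverse: a functor $F$ is determined by $N=F\iota$ and $\lambda=F\phi_{\mathbf 0,-}$ once the coherences are imposed (they force $F\phi_{\mathbf n,-}=(\lambda_n)$), and starting from $(N,\lambda)$ the reconstruction returns it verbatim.

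The hard part will not be any single diagram chase --- each is dual to one already carried out for Lemma~\ref{chicoalghoch} --- but the bookkeeping forced by the reverse tensor product $\oplus^{\operatorname{rev}}$ used in Lemma~\ref{whenwhen} to encode the right action, hence $S$: one must be scrupulous about which side of $\phi$, and which iterate $\chi^n$, corresponds to which of the comonads $T$ and $S$, so that the variances match those in the definition of a $\chi$-opcoalgebra. Once the dictionary of Proposition~\ref{leftaction} and Lemma~\ref{whenwhen} is fixed, the remaining verifications are a routine dualisation of the argument for $\rH^0$.
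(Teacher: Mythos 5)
Your proposal is correct and follows exactly the route the paper intends: the lemma is stated with no proof beyond ``similarly'' after Lemma~\ref{chicoalghoch}, and your argument is the evident dualisation of that proof, correctly extracting $\lambda=F\phi_{\mathbf 0,-}\colon NS\Rightarrow NT$, recovering the counit axiom from naturality along $d_0$ and the opcoalgebra hexagon from naturality along $s_0$ combined with the coherence hexagon at $A=B=\mathbf 0$, and reconstructing $F$ from the iterates $\lambda_n$ of Section~\ref{evidenziatore2}. The verifications you defer to are indeed the ones already established (the dual of Lemma~\ref{superchiepsilon}/Proposition~\ref{simplicialrho} and Remark~\ref{superchimorphism}), so nothing is missing.
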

\subsection{The d\'ecalage comonads}\label{decalage}

Let $X$ be an augmented simplicial object in a category $\cY$.
\begin{defn}
The \emph{right d\'ecalage} (French for \emph{shift}) of $X$, denoted $\Decr(X)$, is the simplicial object given in degree $n$ by
$$\Decr(X)_n = X_{n+1},$$ whose faces are given by discarding the last face of $X$, and similarly for the degeneracies.
\end{defn}
See~\cite{MR2399898,MR3065943} for  more on d\'ecalage.
Pictorially (ignoring the degeneracies), the right d\'ecalage of an augmented simplicial object
$$
\xymatrix{
\cdots X_2 \ar@<2.5ex>[r]^-{d_0} \ar[r]^-{d_1}  \ar@<-2.5 ex>[r]^-{d_2}& X_1 \ar@<1.25ex>[r]^-{d_0} \ar@<-1.25ex>[r]^-{d_1} & X_0 \ar[r]^-{d_0} & X_{-1}
}
$$
looks like
$$
\xymatrix{
\cdots X_3 \ar@<2.5ex>[r]^-{d_0} \ar[r]^-{d_1}  \ar@<-2.5 ex>[r]^-{d_2}& X_2 \ar@<1.25ex>[r]^-{d_0} \ar@<-1.25ex>[r]^-{d_1} & X_1  \ar[r]^-{d_0} & X_{0}
\\ \\}
$$
In fact, $\Decr$ is a comonad on $[\cA, \cY]$, where the counit $\epsilon  \colon \Decr(X) \to X$ is given in each degree by the missing face map, and the comultiplication $\delta \colon \Decr(X) \to \Decr \Decr(X)$ is given by the missing degeneracy.

In a similar way, we define the left d\'ecalage of an augmented simplicial set by discarding the zeroth face and degeneracy maps. We denote this by $\Decl$. Since $\Decr\Decl$ is naturally equal to $\Decl \Decr$, we get the following:

\begin{prop}\label{decalagep}
  The identity natural transformation defines a distributive law $$\xymatrix{{\Decr} {\Decl} \ar@{=>}[r]^-\chi &  {\Decl} {\Decr}.}$$
\end{prop}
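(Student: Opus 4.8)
The plan is to show that the identity $2$-cell satisfies the four axioms of Definition~\ref{finallyover} with $T=\Decr$ and $S=\Decl$. The prerequisite — which is exactly the point recorded just before the statement — is that $\Decr\Decl$ and $\Decl\Decr$ are literally the \emph{same} endofunctor of $[\cA,\cY]$: written degreewise, $\Decr$ re-indexes an augmented simplicial object $X$ by $X_n\mapsto X_{n+1}$ and discards the top face $d_{n+1}$ and top degeneracy $s_{n+1}$, while $\Decl$ re-indexes $X_n\mapsto X_{n+1}$ and discards the bottom face $d_0$ and bottom degeneracy $s_0$. Since these two operations act on disjoint ends of each degree — equivalently, since $\Decr$ and $\Decl$ are precomposition with the translation endofunctors $({-})\oplus\mathbf 0$ and $\mathbf 0\oplus({-})$ of $\Delta_+$ and $\oplus$ is associative — the composite in either order sends $X$ to the augmented simplicial object with $n$-th term $X_{n+2}$, faces $d_1,\dots,d_{n+1}$ and degeneracies $s_1,\dots,s_{n+1}$. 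Hence $\chi:=1$ is a well-defined natural transformation $\Decr\Decl\Rightarrow\Decl\Decr$; moreover all the functors occurring as domains and codomains in Definition~\ref{finallyover} coincide (for instance $\Decr\Decr\Decl=\Decl\Decr\Decr$ follows by rewriting $\Decr\Decl=\Decl\Decr$ twice), so the whiskered $2$-cells $T\chi$, $\chi T$, $\chi S$, $S\chi$ are all identities.

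Granting this, the four diagrams of Definition~\ref{finallyover} collapse, in the order listed there, to the four equalities of natural transformations $\delta^{\mathrm r}\Decl=\Decl\delta^{\mathrm r}$, $\epsilon^{\mathrm r}\Decl=\Decl\epsilon^{\mathrm r}$, $\Decr\delta^{\mathrm l}=\delta^{\mathrm l}\Decr$ and $\Decr\epsilon^{\mathrm l}=\epsilon^{\mathrm l}\Decr$, where $\epsilon^{\mathrm r},\delta^{\mathrm r}$ (resp.\ $\epsilon^{\mathrm l},\delta^{\mathrm l}$) are the counit and comultiplication of $\Decr$ (resp.\ $\Decl$). Each of these I would verify by unwinding the explicit description from Section~\ref{decalage}: $\epsilon^{\mathrm r}$ is in each degree the top face and $\delta^{\mathrm r}$ the top degeneracy, whereas $\epsilon^{\mathrm l}$ is the bottom face and $\delta^{\mathrm l}$ the bottom degeneracy. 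Whiskering by the other décalage merely shifts the simplicial degree by one, and — this is the crux — it does not change which operator is the top one (for $\Decl$, which only discards bottom data) nor which is the bottom one (for $\Decr$), so the two whiskerings are the same morphism in every degree. For example, for the second equality both $\epsilon^{\mathrm r}_{\Decl X}$ and $\Decl(\epsilon^{\mathrm r}_X)$ are, in degree $n$, the face map $d_{n+2}\colon X_{n+2}\to X_{n+1}$; the other three go the same way, the underlying fact being that ``drop the top'' and ``drop the bottom'' are instances of the commuting, far-apart cases of the simplicial identities $d_id_j=d_{j-1}d_i$ $(i<j)$, $s_js_i=s_is_{j+1}$ $(i\le j)$, and so on.

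The only genuine work is the index bookkeeping — keeping track that $\Decr$ and $\Decl$ each shift the degree, and of how the retained faces and degeneracies of a composite such as $\Decl\Decr X$ are renumbered — but once the functor identity $\Decr\Decl=\Decl\Decr$ is in place so that $\chi=1$ typechecks, the four checks are short and mechanical. More conceptually, one can note that $({-})\oplus\mathbf 0$ and $\mathbf 0\oplus({-})$ are the right and left translation monads attached to the monoid $\mathbf 0$ of $(\Delta_+,\oplus)$, that the identity is a distributive law between them (again by associativity of $\oplus$), and then transport this along the precomposition $2$-functor that turns these monads on $\Delta_+$ into the comonads $\Decr,\Decl$ on $[\cA,\cY]$; the hands-on argument above, however, is self-contained and avoids spelling out that last transport.
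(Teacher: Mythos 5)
Your proposal is correct and follows essentially the same route as the paper, which simply records that $\Decr\Decl$ and $\Decl\Decr$ are naturally equal and lets the identity serve as the distributive law. You merely fill in the detail the paper leaves implicit — that the four axioms of Definition~\ref{finallyover} reduce to equalities of whiskered (co)units and (co)multiplications, each verified by the index bookkeeping you describe.
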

Now, let us view this distributive law on $[\cA, \cY]$ as an object of $\AmodA$.
\begin{thm}\label{ultimate}
The category $\rH^0(\cA, [\cA, \cY] )$ is isomorphic to the category of augmented duplicial objects in $\cY$.
\end{thm}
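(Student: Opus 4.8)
The plan is to unwind both sides of the claimed isomorphism and match them degree by degree, using the machinery already assembled. By Lemma~\ref{whenwhen}, the $0$-cell of $\AmodA$ corresponding to the d\'ecalage distributive law of Proposition~\ref{decalagep} is $[\cA,\cY]$ itself, with the left action given by $\Decr$ and the right action by $\Decl$. By Lemma~\ref{chicoalghoch}, an object of $\rH^0(\cA,[\cA,\cY])$ is exactly a $\chi$-coalgebra $(X,\rho)$ for $\chi\colon\Decr\Decl\Rightarrow\Decl\Decr$, i.e.\ an augmented simplicial object $X$ in $\cY$ together with a morphism $\rho\colon\Decr X\Rightarrow\Decl X$ of augmented simplicial objects satisfying the two $\chi$-coalgebra axioms of Definition~\ref{coalgdef}. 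So the first step is to observe that the theorem reduces to the following concrete claim: such a $\rho$ is precisely the data of a duplicial operator $t$ on $X$, compatibly on morphisms.

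Next I would make the translation explicit. Since $\chi=1$, the comonad $\Decr\Decl$ has comultiplication built from the two missing degeneracies and counit from the two missing faces; concretely $(\Decr X)_n=(\Decl X)_n=X_{n+1}$ in each degree, so $\rho$ amounts to a family of morphisms $t_n\colon X_{n+1}\to X_{n+1}$, which after re-indexing I will write as $t\colon X_n\to X_n$ for $n\ge 0$ (with $\rho=1$ in degree $-1$, forced by the requirement $\rho\colon I\rhd M\to M\lhd I$ being the identity). The condition that $\rho$ is a morphism of (augmented) simplicial objects $\Decr X\to\Decl X$ unwinds: commuting with the faces of $\Decr$ resp.\ $\Decl$ gives $d_i t=t d_{i-1}$ for $1\le i\le n$ together with $d_0 t=d_n$; commuting with the degeneracies gives $s_j t=t s_{j-1}$ for $1\le j\le n$. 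The remaining duplicial identity $s_0 t=t^2 s_n$ must then come from the two $\chi$-coalgebra axioms. I would verify that the counit axiom of Definition~\ref{coalgdef} (the triangle) is automatically satisfied once $\rho=1$ in degree $-1$ and the face compatibilities hold, while the hexagon/coassociativity axiom (the pentagon-shaped square relating $\delta$, $t$ and $\chi=1$) is exactly what forces the relation $s_0 t=t^2 s_n$ — this is the one place where iterating the operator twice appears, matching the definition of $\Lambda_\infty$. This bookkeeping is the heart of the argument, and the main obstacle: the indexing conventions between d\'ecalage (which shifts degrees by one and drops the $d_0$ or the top face) and the presentation of $\Lambda_\infty$ must be lined up so that the missing-degeneracy comultiplication produces precisely $t^2 s_n$ and not some off-by-one variant.

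Finally I would check functoriality. A morphism in $\rH^0(\cA,[\cA,\cY])$ is a natural transformation $\alpha\colon X\Rightarrow X'$ of augmented simplicial objects compatible with $\rho$ and $\rho'$, which unwinds exactly to $\alpha$ commuting with the duplicial operators $t,t'$, i.e.\ a morphism of augmented duplicial objects; and the assignment $(X,\rho)\mapsto(X,t)$ visibly respects identities and composites. Conversely, given an augmented duplicial object, reading off $\rho$ from $t$ reverses the construction, so the two assignments are mutually inverse functors, giving the asserted isomorphism of categories. I expect the verification that all the duplicial identities and only those are recovered — in both directions — to be the part that requires care, but no genuinely new idea beyond Lemma~\ref{whenwhen}, Lemma~\ref{chicoalghoch}, and Proposition~\ref{decalagep}.
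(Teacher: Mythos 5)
Your overall strategy is exactly the paper's: identify objects of $\rH^0(\cA,[\cA,\cY])$ with $\chi$-coalgebras for the d\'ecalage distributive law via Lemma~\ref{chicoalghoch}, then read off the duplicial identities from the three pieces of data (simpliciality of $\rho$, the counit triangle, the coassociativity axiom). The degeneracy side of your dictionary is correct: simpliciality gives $s_jt=ts_{j-1}$ for $j\ge 1$, and the coassociativity axiom, with $\chi=1$ and the comultiplications given by the two missing degeneracies, gives $s_0t=t^2s_n$.

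The face side, however, contains a genuine error. You claim that $d_0t=d_n$ is part of ``$\rho$ commuting with the faces'' and that the counit axiom of Definition~\ref{coalgdef} is then automatic. Neither is true. In degree $n$ the retained faces of $\Decr X$ are $d_0,\dots,d_n$ of $X_{n+1}$ and those of $\Decl X$ are $d_1,\dots,d_{n+1}$; matching the $i$-th face with the $i$-th face, simpliciality of $\rho$ yields only the relations $d_it=td_{i-1}$ for $i\ge 1$ and says nothing about $d_0\circ t$. The two \emph{discarded} faces, $d_{n+1}$ of $\Decr$ and $d_0$ of $\Decl$, are precisely the counits of the two comonads, so the relation $d_0t=d_n$ (after re-indexing) is exactly the content of the counit triangle $\epsilon^{\Decl}\circ\rho=\epsilon^{\Decr}$ --- the axiom you propose to dismiss. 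Your justification for dismissing it rests on a conflation: the requirement that $\rho\colon I\rhd M\to M\lhd I$ be the identity concerns the component of the family $\rho_{\mathbf n}$ at the unit object $\mathbf{-1}$ of $\cA$, not the degree-$(-1)$ component of the single natural transformation $\rho_{\mathbf 0}\colon\Decr X\to\Decl X$; the latter is $t\colon X_0\to X_0$, which is not the identity in general (e.g.\ for the nerve of a category with a nontrivial coreflector as in Theorem~\ref{auspara}). Had you carried out your plan literally, you would find that simpliciality does not deliver $d_0t=d_n$, and discarding the counit axiom would leave you with a strictly weaker structure than a duplicial object. Once the counit axiom is reinstated as the source of $d_0t=d_n$, the rest of your bookkeeping and the functoriality check go through as in the paper.
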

\begin{proof}
By Lemma~\ref{chicoalghoch}, to give an object $(X, t)$ of $\rH^0(\cA, [\cA, \cY] )$ is the same as to give a $\chi$-coalgebra in $[\cA, \cY]$. This is a morphism $t \colon \Decr X \to \Decl X$, so we get an operator
$$
\xymatrix{
X_n \ar[r]^-t & X_n
}
$$ in each degree. That $t$ commutes with the faces and degeneracies is precisely that the equations
\begin{align*}
d_i t = t d_{i-1}, \qquad s_i t = ts_{i-1}
\end{align*}
hold, and that $t$ is compatible with $\epsilon$ and $\delta$ as a $\chi$-coalgebra structure is precisely that the equations
$$
d_i t = d_n, \qquad s_it = t^2s_n
$$
respectively hold; that is, $t$ is a duplicial operator.
\end{proof}
\begin{rem}\label{remrem}
The category $\Delta_+$ becomes both a left-module and right-module over itself with actions given by the monoidal product $\oplus$. These actions are strictly compatible, and moreover they restrict to the simplicial category $\Delta$. This carries over to the dual categories, and so both $\Delta^*$ and $[\Delta^*, \cY]$ become $0$-cells in $\AmodA$. We can then consider the (co)homology categories
$$
\rH_0(\cA, \Delta^*), \qquad \rH_0(\cA, [\Delta^*, \cY]),
$$
the latter of which is isomorphic to the category of unaugmented duplicial objects in $\cY$.
 \end{rem}
Thus, we have another way of deriving the duplicial functor at the heart of B\"ohm and {\c S}tefan's Theorem~\ref{dup}. Indeed, let $M$ be a $\chi$-coalgebra in $\cM$, viewed as an object of $\rH^0(\cA, \cM)$. Let $N \colon \cM \to \cY$ be a $\chi$-opcoalgebra, viewed as a functor $\rH_0(\cA, \cM) \to \cY$. Applying the cap product
$$
\xymatrix{
\rH^0(\cA, \cM) \ar[r]^-{\cap} & [\rH_0(\cA, \cA), \rH_0(\cA, \cM)]
}
$$
to $M$ gives us a functor
$$
\xymatrix{ \rH_0(\cA, \cA) \ar[rr]^-{\cap(M)} && \rH_0(\cA, \cM)}
$$
that we may compose with $N$ to obtain a functor
$$
\xymatrix{ \rH_0(\cA, \cA) \ar[rr]^-{X} && \cY}.
$$
This is of course an object in $[\rH_0(\cA, \cA), \cY]$ which is isomorphic to $\rH^0(\cA, [\cA, \cY])$ by Theorem~\ref{hochgen}. By Theorem~\ref{ultimate}, this is in turn isomorphic to the category of augmented duplicial objects in $\cY$. By replacing $\cM$ with $[\cX, \cM]$ for a general category $\cX$, we recover the duplicial functor $\rCC_T(N,M)$ of Theorem~\ref{dup}.

\subsection{Duplicial objects from twisting}\label{bonusect}
Here, we show that Theorem~\ref{dup} can be deduced directly from the twisting procedure given in Section~\ref{twistcoeff}. Recall from Proposition~\ref{decalagep} that for an arbitrary category $\cY$, the identity defines a distributive law $1\colon \Decr\Decl \Rightarrow \Decl\Decr$.

Suppose we have a distributive law $\chi \colon TS \Rightarrow ST$ of comonads on a category $\cB$, together with a $\chi$-opcoalgebra $(N, \lambda) \colon \cB \to \cY$.
Consider the functor
$$
\xymatrix{
\cB \ar[rr]^-{\rC_T(N,{-})} && [\cA, \cY]
}
$$
that assigns to every object $X$, the simplicial object $\rC_T(N,X)$ of Section~\ref{duplobjsec}.

For every $n\ge 0$, we have
$$
C_T(N, SX)_n = NT^{n+1} SX, \qquad \Decl \rC_T(N, X)_n = \rC_T(N, X)_{n+1} = NT^{n+2} X
$$
The morphisms
$$
\xymatrix{
NT^{n+1}SX \ar[rr]^-{N \chi^{n+1}_X} & & NST^{n+1}X \ar[rr]^-{\lambda_{T^{n+1} X}}  & & NT^{n+2}X
}
$$
then define a natural transformation
$$
\xymatrix{
\rC_T(N,{-}) \circ S \ar@{=>}[r]^-{\gamma} & {\Decl} \circ~\rC_T(N, {-}).
}
$$
We also have
$$
{\Decr} \rC_T(N, X)_n = \rC_T(N, X)_{n+1} = NT^{n+2}X, \qquad \rC_T(N, TX)_n = NT^{n+2} X
$$
and it turns out that the identity defines a natural transformation
$$
\xymatrix{
{\Decr}\circ~\rC_T(N,{-}) \ar@{=>}[r]^-{1} & \rC_T(N, {-}) \circ T.
}
$$

It is routine to check the following:
\begin{prop}\label{ultimate1}
We have that
$$
\xymatrix{
(\cB, \chi, T, S) \ar[rrr]^-{(\rC_T(N, -), 1, \gamma)} & & & ([\cA, \cY], 1, \Decr, \Decl)
}
$$
is a 1-cell in $\dist$.
\end{prop}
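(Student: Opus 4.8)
The plan is to check directly the three defining conditions of a $1$-cell of $\dist$ listed in Section~\ref{finallyover}: that $\bigl(\rC_T(N,{-}),1\bigr)\colon(\cB,T)\to([\cA,\cY],\Decr)$ is an opmorphism of comonads, that $\bigl(\rC_T(N,{-}),\gamma\bigr)\colon(\cB,S)\to([\cA,\cY],\Decl)$ is a morphism of comonads, and that the Yang-Baxter hexagon commutes. Throughout I write $\Sigma=\rC_T(N,{-})$, and I argue degreewise using the faces and degeneracies of $\rC_T(N,X)=N\rBB(T,X)$ from Definition~\ref{bardeffo} together with the description (Section~\ref{decalage}) of the counit and comultiplication of $\Decr$, resp.\ $\Decl$, as the missing \emph{last}, resp.\ \emph{zeroth}, face and degeneracy maps.

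The first condition is essentially a reformulation of the definition of $\Decr$. In degree $n$ one has $\Decr\rC_T(N,X)_n=\rC_T(N,X)_{n+1}=NT^{n+2}X=\rC_T(N,TX)_n$, and under this identification the faces $d_0,\dots,d_n$ and degeneracies $s_0,\dots,s_n$ of $\Decr\rC_T(N,X)$ are literally those of $\rC_T(N,TX)$; hence the identity is a natural isomorphism $\Decr\Sigma\cong\Sigma T$ of simplicial functors, playing the r\^ole of $\sigma$. Compatibility with the comonad structures reduces to the observations that the missing last face of $\rC_T(N,X)$, which defines the counit of $\Decr$, is $\rC_T(N,\epsilon_X)$ in each degree, and likewise the missing last degeneracy is $\rC_T(N,\delta_X)$; both are immediate from functoriality of $\rC_T(N,{-})$. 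So $(\Sigma,1)$ is an opmorphism of comonads.

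The substantive step is the morphism-of-comonads condition for $\gamma$, whose degree-$n$ component is $\lambda_{T^{n+1}X}\circ N\chi^{n+1}_X\colon NT^{n+1}SX\to NT^{n+2}X$. First I would check that $\gamma_X$ is a morphism of augmented simplicial objects, naturally in $X$: its compatibility with the faces and degeneracies follows from naturality of $\lambda$ and of the iterates $\chi^{n}$ together with Lemma~\ref{superchiepsilon}, its $\delta$-dual, and Remark~\ref{superchimorphism} applied to $\chi$, plus the counit and comultiplication axioms for the $\chi$-opcoalgebra $\lambda$ --- exactly the ingredients that realise $\chi$ as a simplicial morphism in Section~\ref{evidenziatore1} and that prove Proposition~\ref{simplicialrho}. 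Then I would verify the two comonad axioms. For the counit: the counit of $\Decl$ on $\rC_T(N,X)$ is the missing zeroth face $d_0=N\epsilon^{T}_{T^{n+1}X}$, and $N\epsilon^{T}_{T^{n+1}X}\circ\lambda_{T^{n+1}X}=N\epsilon^{S}_{T^{n+1}X}$ by the counit axiom for $\lambda$, while $N\epsilon^{S}_{T^{n+1}X}\circ N\chi^{n+1}_X=NT^{n+1}\epsilon^{S}_X$ because $(T^{n+1},\chi^{n+1})$ is a morphism of comonads (Remark~\ref{superchimorphism}); composing these gives $d_0\circ\gamma_n=\rC_T(N,\epsilon^{S}_X)_n$, which is the counit axiom for $(\Sigma,\gamma)$. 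The comultiplication axiom is entirely analogous, using the missing zeroth degeneracy $s_0=N\delta^{T}_{T^{n+1}X}$ of $\Decl$ together with the comultiplication axiom for $\lambda$ and the comultiplication compatibility of $\chi^{n+1}$. This is the routine but fiddly index bookkeeping the statement refers to; keeping track of which iterate $\chi^{k}$ occurs and at which slot $\epsilon^{T}$ or $\delta^{T}$ acts after d\'ecalage is the step I expect to be the main obstacle, though nothing conceptual is involved.

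Finally, because $\sigma$ and $\tau$ are both identity $2$-cells, the Yang-Baxter hexagon of Section~\ref{finallyover} collapses to the single equation $\gamma T\circ\Sigma\chi=\Decr\gamma$, under the identifications $\Decr\Sigma\cong\Sigma T$ and $\Decr\Decl\Sigma=\Decl\Decr\Sigma$. Evaluated at $X$ in degree $n$ this reads
$$
\lambda_{T^{n+2}X}\circ N\chi^{n+1}_{TX}\circ NT^{n+1}(\chi_X)\;=\;\lambda_{T^{n+2}X}\circ N\chi^{n+2}_X ,
$$
so it suffices to prove the purely formal identity $\chi^{n+1}_{TX}\circ T^{n+1}(\chi_X)=\chi^{n+2}_X$ of natural transformations $T^{n+2}S\Rightarrow ST^{n+2}$. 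This follows by an immediate induction on $n$ from the recursive definition $\chi^{k+1}_Y=\chi_{T^{k}Y}\circ T(\chi^{k}_Y)$ of Section~\ref{evidenziatore1}, using only functoriality and the interchange law (and not the distributive-law axioms). Combining the three verifications yields the proposition.
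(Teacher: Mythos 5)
Your verification is correct and is precisely the direct check that the paper declares ``routine'' without carrying out: you identify the right ingredients in each of the three conditions (the literal identity $\Decr\rC_T(N,{-})=\rC_T(N,{-})\circ T$ with its counit/comultiplication given by the missing last face and degeneracy; Lemma~\ref{superchiepsilon}, its $\delta$-dual, Remark~\ref{superchimorphism} and the $\chi$-opcoalgebra axioms for the comonad-morphism condition on $\gamma$; and the collapse of the Yang--Baxter hexagon to $\gamma T\circ\Sigma\chi=\Decr\gamma$). The only point needing a word beyond citation is the identity $\chi^{n+1}T\circ T^{n+1}\chi=\chi^{n+2}$, which peels the recursion from the opposite end to the paper's definition $\chi^{k+1}=\chi T^{k}\circ T\chi^{k}$, and your observation that this follows by an elementary induction using only whiskering and interchange is exactly right.
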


Now let $M$ be a $\chi$-coalgebra in $\cB$. The 1-cell in Proposition~\ref{ultimate1} acts on this to give a $\chi$-coalgebra in $[\cA, \cY]$ which is precisely an augmented duplicial object in $\cY$ by Lemma~\ref{chicoalghoch}. Given a $\chi$-coalgebra structure on a functor $M \colon \cX \to \cB$, we recover the more general duplicial functor of Theorem~\ref{dup} by carrying out the replacements
 $$
\xymatrix@=1em{
\cB \ar@{|->}[r] & [\cX, \cB], & \cY \ar@{|->}[r] & [\cX, \cY], & N \ar@{|->}[r] & [\cX, N], \\
& T \ar@{|->}[r] & [\cX, T], & S \ar@{|->}[r] & [\cX, S]
}
$$
in the above arguments.

\subsection{Restriction to $\cat$}\label{nerve}
In this final section of the chapter, we view $\cat$ as a 0-cell of $\AmodA$ via the nerve functor, and study what it means to be an object of the cohomology category $\rH^0(\cA, \cat)$. Throughout, $\cC$ denotes a (small) category.

\begin{defn}
The \emph{nerve of $\cC$}, denoted by $N \cC$, is the simplicial set where $(N\cC)_n$ is the set of composable $n$-tuples of morphisms. In degree $n$, the faces and degeneracies, defined on an $n$-tuple
$$
\xymatrix{
A_0 \ar[r]^-{f_0} & A_1 \ar[r]^-{f_1} & \cdots  \ar[r]^-{f_{n-2}} & A_{n-1} \ar[r]^-{f_{n-1}} & A_{n}
}
$$
are given by
\begin{align*}
d_i (f_{n-1}, \ldots, f_{0} ) &= \begin{cases} (f_{n-1}, \ldots, f_1) & \mbox{ if } i=0 \\
(f_{n-1}, \ldots, f_i f_{i-1}, \ldots, f_0) & \mbox{ if } 0 < i < n \\
(f_{n-2}, \ldots, f_0) & \mbox{ if } i = n \end{cases} \\
s_i (f_{n-1}, \ldots, f_{0} ) &= (f_{n-1}, \cdots, f_i, 1, f_{i-1}, \cdots, f_0 ).
\end{align*}
\end{defn}
The nerve actually defines a fully faithful functor $N \colon \cat \to [\Delta^*, \set]$ (see e.g.~\cite[II.4.22]{MR1950475}) so we may view $\cat$ as a full subcategory of the category of simplicial sets. We do not give precise details, but the actions of $\cA$ on $[\Delta^*, \set]$ mentioned in Remark~\ref{remrem} restrict to actions on $\cat$, and so $\cat$ becomes a 0-cell in $\AmodA$ in its own right (for a more detailed explanation, see~\cite{woohoo}). We are then able to consider the cohomology category $\rH^0(\cA, \cat)$. Explicitly, an object of this category is a (small) category $\cC$ together with a duplicial structure on its nerve $N\cC$.

To give a duplicial structure on $N\cC$ amounts to giving the following structure on $\cC$:
\begin{itemize}
\item for each object $A$, an object $tA$
\item for each morphism $f\colon A \to B$ a morphism $tf \colon tB \to A$
\end{itemize}
such that $t^2(1_A) = 1_{tA}$ for objects $A$ of $\cC$, and $t$ maps any commutative triangle of morphisms as
$$
\begin{array}{c}
\xymatrix{
A \ar[r]^-f \ar[dr]_-{h} & B \ar[d]^-g \\
& C
}\end{array}
\mapsto
\begin{array}{c}
\xymatrix{
tC \ar[r]^-{th} \ar[dr]_-{tg} & A \ar[d]^-f \\
& B
}
\end{array}
$$
\begin{defn}\label{corefdef}
We call a morphism $t$ as described above a \emph{coreflector}.

\end{defn}
The following theorem characterises such structures, and Corollary~\ref{motivation} makes clear the motivation for the terminology in Definition~\ref{corefdef}.
\begin{thm}\label{auspara}
A category $\cC$ has a duplicial structure on its nerve if and only if there exist a groupoid $\cG$ and an adjunction
$$
\xymatrix{
\cG \ar@{}[rr]|-{\perp}\ar@/^0.5pc/[rr]^-I & & \ar@/^0.5pc/[ll]^-R \cC.
}$$
\end{thm}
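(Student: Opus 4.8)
The plan is to translate the combinatorial "coreflector" structure of Definition~\ref{corefdef} into the data of an adjunction, and conversely. The key observation is that a coreflector $t$ on $\cC$ is almost a functor $\cC^* \to \cC$ that is idempotent up to the relation $t^2(1_A)=1_{tA}$; iterating $t$ twice should land us back inside a well-behaved subcategory. So the first step is to analyse the endofunctor-like assignment $A \mapsto tA$, $f \mapsto tf$ carefully: using the relations $d_it = d_n$ and $s_it = t^2 s_n$ from Theorem~\ref{auspara}'s proof sketch (i.e.\ Theorem~\ref{ultimate}), I would check that $t$ sends composable pairs to composable pairs with reversed order and that $t$ applied to a composite $gf$ equals $tf\circ tg$ (this is exactly the "commutative triangle" condition spelled out before Definition~\ref{corefdef}), so $t\colon \cC^*\to\cC$ is genuinely a functor, and likewise $t\colon\cC\to\cC^*$, giving an endofunctor $t^2\colon\cC\to\cC$. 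The condition $t^2(1_A)=1_{tA}$ together with functoriality forces $t^2$ to be idempotent on objects, and one checks it is idempotent as a functor.

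Second, for the "only if" direction: let $\cG$ be the full subcategory of $\cC$ on the objects in the image of $t$ (equivalently the fixed objects of $t^2$), with $I\colon\cG\hookrightarrow\cC$ the inclusion. I would show $\cG$ is a groupoid by exhibiting, for a morphism $f\colon tB\to tA$ in $\cG$, the morphism $t^2 f$ (or an appropriate composite of $t$ applied twice) as a two-sided inverse, using $t^2(1)=1$ and functoriality of $t$ to kill the composites. Then define $R\colon\cC\to\cG$ on objects by $RA = t^2 A$ (which lies in $\cG$) and on morphisms by $Rf = t^2 f$; the unit $\eta_A\colon A\to I t^2 A = t^2 A$ should be the image under suitable iterates of $t$ of identity morphisms — concretely $\eta_A$ is "$t$ applied to $1_{tA}$", exploiting that $tf\colon tB\to A$ has codomain $A$, so $t(1_{tA})\colon t^2A \to A$... (I'll need to be careful about variance here and may instead take $\eta$ pointing the other way, i.e.\ show $I\dashv R$ with counit $It^2A\to A$). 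The triangle identities then reduce to the relation $t^2(1)=1$ applied in each degree.

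Third, the "if" direction: given $\cG$ a groupoid and $I\dashv R$ with counit $\varepsilon\colon IR\Rightarrow 1_\cC$, define the coreflector by $tA := IR A$ on objects and $tf := $ the composite reversing direction built from $R f$, the inverse isomorphism structure of the groupoid $\cG$ (every morphism in $\cG$ is invertible, so $Rf$ has an inverse $(Rf)^{-1}$), and the counit $\varepsilon$; explicitly for $f\colon A\to B$ set $tf\colon tB=IRB \xrightarrow{I(Rf)^{-1}} IRA \xrightarrow{\varepsilon_A} A$. One then verifies the coreflector axioms: $t^2(1_A)=1_{tA}$ follows from $R I\cong 1_\cG$ (since $I$ fully faithful, as a right adjoint with groupoid source... actually I should check $I$ is fully faithful — this needs $RI\cong1$, which may require $\cG$ to be replaced by the image; I'll address this by noting we may assume WLOG $I$ is the inclusion of a replete full subcategory, or handle the general case via the comparison with the coreflective-subcategory picture) and the commutative-triangle condition is functoriality of $t$, which follows from functoriality of $R$ and of inversion in the groupoid. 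Finally I'd invoke Theorem~\ref{ultimate}, via Theorem~\ref{auspara}'s identification, to conclude that having a coreflector is the same as having a duplicial structure on $N\cC$.

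\textbf{Main obstacle.} The delicate point is the interplay between the two directions regarding \emph{fully faithfulness} of $I$ and the precise variance of $t$. In the forward direction nothing forces the inclusion $\cG\hookrightarrow\cC$ to be full-on-the-nose in a way matching an arbitrary adjunction, and in the backward direction an arbitrary adjunction $I\dashv R$ need not have $I$ fully faithful, so $tf$ as defined might not be strictly idempotent. I expect the resolution is that the relation $t^2(1_A)=1_{tA}$ is exactly the extra rigidity that makes $IR$ an idempotent \emph{comonad} whose coalgebras form a coreflective subcategory which \emph{is} a groupoid; so the real content is showing $t^2(1)=1$ $\iff$ the idempotent monad/comonad generated splits through a groupoid, and getting the coherence (triangle identities $\leftrightarrow$ duplicial relations) to match on the nose rather than just up to isomorphism. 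I would lean on \cite{woohoo} for the finicky coherence bookkeeping.
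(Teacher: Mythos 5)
Your ``if'' direction matches the paper's: $tA := IRA$, $tf := \epsilon_A \circ I(Rf)^{-1}$, with $t^2(1)=1$ coming from the triangle identities. Your worry about fully-faithfulness there is a non-issue: the unit $\eta_{RA}$ and $R\epsilon_A$ live in the groupoid $\cG$, so $R(\epsilon_A)^{-1}=\eta_{RA}$ follows from one triangle identity, and the other gives $\epsilon_{IRA}\circ I\eta_{RA}=1$ on the nose; no replete-subcategory manoeuvre is needed. (Also note two small misstatements in your first step: $t$ is \emph{not} a functor $\cC^* \to \cC$, since $tf\colon tB \to A$ has codomain $A$ rather than $tA$; and $t^2$ is not idempotent on objects --- the comonad one extracts is idempotent only in the sense that its comultiplication is invertible.)

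The genuine gap is in the ``only if'' direction, at the point where you must show your candidate $\cG$ is a groupoid. Your proposed inverse of $f\colon tB \to tA$, namely $t^2f$, does not type-check: $t^2f$ is a morphism $t(tB) \to t(tA)$, and $t(tA)$ is not $tA$ in general. More importantly, the substantive content of this direction --- which your proposal omits entirely --- is the proof that $t^2f$ is invertible for \emph{every} morphism $f$ of $\cC$. This does not follow formally from ``$t^2(1)=1$ and functoriality''; the paper's argument sets $\epsilon_A := t(1_A)$, checks naturality of $\epsilon$, and then applies $t$ to the triangle $tA \xrightarrow{\epsilon_A} A \xrightarrow{f} B$ twice to get $t(f\epsilon_A)\circ t^2f = t(\epsilon_A) = t^2(1_A) = 1$, and once more to the triangle expressing $f\epsilon_A = \epsilon_B\circ t^2f$ to get $t^2f\circ t(f\epsilon_A) = t(\epsilon_B) = 1$. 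Only with this lemma in hand does one get a groupoid: the paper takes $G = (A\mapsto tA,\ f\mapsto t^2f)$ with counit $\epsilon$, shows $G$ is a well-copointed idempotent comonad inverting all morphisms, and takes $\cG = \cC^G$ with the canonical adjunction; your full subcategory on the image of $t$ would also work (every such object has invertible counit, and any morphism between such objects factors as $\epsilon\circ Gf\circ\epsilon^{-1}$), but again only \emph{after} the invertibility lemma is established. Without it, the ``only if'' direction does not go through.
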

\begin{proof}
Suppose that we are given such an adjunction, with unit $\eta$ and counit $\epsilon$. For an object $A$ in $\cC$ we define $tA  := IRA$, and for a morphism $f\colon A \to B$ in $\cC$ we define $tf \colon tB \to A$ to be the composite
$$
\xymatrix{
IRB \ar[rr]^-{I(Rf)^{-1}} & &I RA \ar[r]^-{\epsilon_A} & A.
}
$$
Note that $tf$ is well defined because $Rf$ is invertible, being a morphism in a groupoid.

Consider an identity morphism $1 \colon A\to A$. We have that $t^2(1)$ is equal to
$$
\xymatrix{
IRA \ar[rr]^-{IR(\epsilon_A)^{-1}} && IRIRA \ar[rr]^-{\epsilon_{IRA}} & & IRA.
}
$$
One triangle identity for $I \dashv R$ implies that the diagram
$$
\xymatrix{RA \ar@{=}[dr] \ar[r]^-{\eta_{RA}} & RIRA \ar[d]^-{R\epsilon_A} \\
& RA}
$$
commutes, which
tells us that $R(\epsilon_A)^{-1} = \eta_{RA}$. Therefore, using the other triangle identity, we have that the diagram
$$
\xymatrix@C=3.5em{
IRA \ar@{=} [dr]\ar[r]^-{IR(\epsilon_A)^{-1}}   & IRIRA \ar[d]^-{\epsilon_{IRA}} \\
& RA
}
$$
commutes, that is $t^2(1) = 1$.

Now suppose that we have a commutative triangle
$$\xymatrix{
A \ar[r]^-f \ar[dr]_-{h} & B \ar[d]^-g \\
& C
}$$
Since $h = gf$ we have $(Rh)^{-1} = (Rf)^{-1} (Rg)^{-1}$ and so $Rf (Rh)^{-1} = (Rg)^{-1}$. In the diagram
$$
\xymatrix@C=3.5em{
IRC \ar[r]^-{I(Rh)^{-1}} \ar[dr]_-{I(Rg)^{-1}\ } & IRA \ar[d]^-{IRf}  \ar[r]^-{\epsilon_A} & A \ar[d]^-f \\
& IRB \ar[r]_-{\epsilon_B} & B
}
$$
the inner triangle commutes by the aforementioned relation, and the right-hand square commutes by naturality of $\epsilon$. The outer diagram commutes, which says that $tg = f(th)$, completing one direction of the proof.

Conversely, suppose that we have a coreflector $t$ giving rise to a duplicial structure on $N\cC$. There is an induced functor $G \colon \cC \to \cC$, defined on objects and morphisms respectively by
$$
G(A) := tA, \qquad G\!\xymatrix{(A \ar[r]^-f & B)} := \xymatrix{tA \ar[r]^-{t^2 f} & tB.}
$$
Let $f\colon A \to B$ be any morphism in $\cC$. By applying $t$ to two commutative triangles as follows
$$
\begin{array}{c}
\xymatrix{
A \ar[dr]_-f \ar[r]^-f & B \ar[d]^-1 \\
& B
}
\end{array}
\mapsto
\begin{array}{c}
\xymatrix{
tB \ar[dr]_-{t1} \ar[r]^-{tf} & A \ar[d]^-f \\
& B
}
\end{array}, \qquad
\begin{array}{c}
\xymatrix{
tB \ar[dr]_-{tf} \ar[r]^-{tf} & A \ar[d]^-1 \\
& A
}
\end{array}
\mapsto
\begin{array}{c}
\xymatrix{
tA \ar[dr]_-{t1} \ar[r]^-{t^2f} & B \ar[d]^-{tf} \\
& B
}
\end{array}
$$
we see that the diagram
$$
\xymatrix{
tA\ar[d]_-{t^2 f} \ar[r]^-{t1} & A \ar[d]^-f \\
tB\ar[ur]_-{tf} \ar[r]_-{t1} & B
}
$$
 commutes. By defining $\epsilon_A := t\!\xymatrix{(A \ar[r]^-1 & A)}$, we therefore obtain a natural transformation $\epsilon \colon G \Rightarrow 1$. Also, together with $\epsilon$, $G$ becomes a well-copointed endofunctor: that is, for any $A$ we have
$$
G(\epsilon_A) = t^2(\epsilon_A) = t^3(1_A) = t(1_{tA} ) = \epsilon_{GA}.
$$

Let $f\colon A \to B$ be any morphism. By applying $t$ twice:
$$
\begin{array}{c}
\xymatrix{
t A \ar[dr]_-{f\epsilon_A} \ar[r]^-{\epsilon_A} & A \ar[d]^-f \\
& B
}
\end{array}
\mapsto
\begin{array}{c}
\xymatrix{
tB \ar[dr]_-{tf} \ar[r]^-{t(f\epsilon_A)} & tA \ar[d]^-{\epsilon_A} \\
& A
}
\end{array}
\mapsto
\begin{array}{c}
\xymatrix{
tA  \ar[dr]_-{t(\epsilon_A)} \ar[r]^-{t^2 f} & tB \ar[d]^-{t(f\epsilon_A)} \\
& t A
}
\end{array}
$$
we have that $$t(f\epsilon_A)\circ t^2f = t(\epsilon_A) = 1.$$ Furthermore, by naturality of $\epsilon$ we have $$f\epsilon_A = \epsilon_B \circ t^2 f$$ and applying $t$ once more:
$$
\begin{array}{c}
\xymatrix{
tA  \ar[dr]_-{f\epsilon_A} \ar[r]^-{t^2 f} & tB \ar[d]^-{\epsilon_B} \\
& B
}
\end{array}
\mapsto
\begin{array}{c}
\xymatrix{
tB \ar[dr]_-{t(\epsilon_B)} \ar[r]^-{t(f\epsilon_A)} & tA \ar[d]^-{t^2 f} \\
& tB
}
\end{array}
$$
tells us that $$
t^2f \circ t(f\epsilon_A) = t^2 f \circ t(\epsilon_B \circ t^2 (f) ) = 1.
$$
Thus $Gf = t^2f$ is invertible for any morphism $f$. It follows that $G$ becomes an (idempotent) comonad with counit $\epsilon$ and comultiplication $ \delta := G(\epsilon)^{-1}$. The groupoid $\cG$ that we require is the Eilenberg-Moore category $\cC^G$, together with the canonical associated adjunction (cf.\ Section~\ref{EM2cat}), which is indeed a groupoid since $G$ inverts morphisms.
\end{proof}
Later, in Section~\ref{finally}, we see that this example of a duplicial structure can be derived from a special case of one involving enriched functor categories.

\begin{cor}\label{motivation}
A category $\cC$ admits a duplicial structure on its nerve if and only if $\cC$ has a coreflective subcategory which is a groupoid.
\end{cor}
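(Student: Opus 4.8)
The plan is to read this off Theorem~\ref{auspara} together with a little more care about the adjunction it produces. For the ``if'' direction, suppose $\cC$ has a coreflective subcategory $\cG$ which happens to be a groupoid. By definition of coreflective subcategory, the inclusion $I\colon \cG \hookrightarrow \cC$ admits a right adjoint $R$, so $\cG$ and this adjunction are exactly the data required by Theorem~\ref{auspara}; hence $N\cC$ admits a duplicial structure. Nothing more is needed here.

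For the ``only if'' direction I would not invoke the statement of Theorem~\ref{auspara} as a black box but rather reuse the construction in its proof. A duplicial structure on $N\cC$ produces a coreflector $t$ (in the sense of Definition~\ref{corefdef}), and from $t$ the proof extracts an endofunctor $G$ on $\cC$ which is an \emph{idempotent} comonad, with counit $\epsilon$ and comultiplication $\delta = G(\epsilon)^{-1}$, and which moreover inverts every morphism of $\cC$. Instead of passing to the Eilenberg--Moore category $\cC^G$, I would take $\cG$ to be the full subcategory of $\cC$ spanned by those objects $A$ for which $\epsilon_A\colon GA \to A$ is an isomorphism, and verify two things. First, $\cG$ is a groupoid: given $f\colon A\to B$ with $A,B\in\cG$, naturality of $\epsilon$ gives $f = \epsilon_B \circ Gf \circ \epsilon_A^{-1}$, and the right-hand side is invertible since $\epsilon_A,\epsilon_B$ are isomorphisms and $G$ inverts $f$. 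Second, $\cG$ is coreflective, with coreflector the corestriction of $G$: idempotency forces $\epsilon_{GA} = G\epsilon_A$, which is invertible because $G$ inverts the morphism $\epsilon_A$, so $GA\in\cG$; and for $X\in\cG$, $A\in\cC$ a morphism $g\colon X\to A$ factors through $\epsilon_A$ via $Gg\circ\epsilon_X^{-1}$, uniquely, using the triangle identity $\epsilon_{GA}\circ\delta_A = 1$ and invertibility of $\epsilon_X$. This exhibits $\cG$ as the required coreflective groupoid.

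The main obstacle is the coreflectivity check in the second part: one has to confirm that the full subcategory of $\epsilon$-fixed objects is genuinely closed under $G$ and that $G$ corestricted to it is right adjoint to the inclusion. This is the standard fact that for an idempotent comonad the coalgebra category is equivalent to the full subcategory of ``$G$-fixed'' objects, and it comes down to the unique factorisation property just described; everything else is immediate from what was already established in the proof of Theorem~\ref{auspara}.
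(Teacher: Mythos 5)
Your proof is correct, but the converse direction takes a genuinely different route from the paper's. The paper deduces the corollary from the \emph{statement} of Theorem~\ref{auspara} plus one abstract observation: since $\cG$ is a groupoid, the unit $\eta \colon 1 \Rightarrow RI$ of the adjunction $I \dashv R$ has components living in $\cG$ and is therefore automatically invertible, so $I$ is full and faithful and its image is a coreflective full subcategory of $\cC$ equivalent to $\cG$, hence a groupoid. You instead re-open the \emph{proof} of Theorem~\ref{auspara} and realise the coreflective subcategory concretely as the full subcategory of $\epsilon$-fixed objects of the idempotent comonad $G$ constructed there; your verifications (that this subcategory is a groupoid via $f = \epsilon_B \circ Gf \circ \epsilon_A^{-1}$, that it is closed under $G$ via well-copointedness $\epsilon_{GA} = G\epsilon_A$ and the fact that $G$ inverts morphisms, and the unique factorisation giving the adjunction) all check out — in particular uniqueness follows since $G(\epsilon_A \circ h) = \epsilon_{GA} \circ Gh$ with $\epsilon_{GA}$ invertible determines $Gh$, and $h = \epsilon_{GA} \circ Gh \circ \epsilon_X^{-1}$. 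The paper's argument is shorter and identifies the subcategory only up to equivalence; yours is self-contained, avoids the citation for full faithfulness of $I$, and pins down the coreflective groupoid explicitly as the objects where $t(1_A)$ is invertible, which is a small bonus in concreteness.
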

\begin{proof}
Given two categories $\cG, \cC$ and a left-adjoint functor $I \colon \cG \to \cC$, the unit $\eta \colon 1 \Rightarrow RI$ is necessarily an isomorphism so $I$ is full and faithful~\cite[Prop.~3.4.1]{MR1291599}. Thus, the corollary follows immediately from Theorem~\ref{auspara}.
\end{proof}

\begin{rem}\label{catrem}
Suppose that we have two groupoids $\cG$ and $\cG'$, as well as two adjunctions
$$
\xymatrix{
\cG \ar@{}[rr]|-{\perp}\ar@/^0.5pc/[rr]^-I & & \ar@/^0.5pc/[ll]^-R \cC,
}\qquad
\xymatrix{
\cG' \ar@{}[rr]|-{\perp}\ar@/^0.5pc/[rr]^-{I'} & & \ar@/^0.5pc/[ll]^-{R'} \cC.}
$$
The functor
$$
\xymatrix{
\cG \ar[r]^-{I} & \cC \ar[r]^-{R'} & \cG' 
}
$$
defines an equivalence of categories $\cG \simeq \cG'$. In fact, in the context of Theorem~\ref{auspara}, we may always choose $\cG$ to be the fundamental groupoid $\Pi_1(\cC)$ (this is the localisation of $\cC$ at all morphisms~\cite{MR0210125}, see also~\cite{woohoo}). 
\end{rem}
Thus we have another way to state Theorem~\ref{auspara}:
\begin{cor}
A category $\cC$  admits a duplicial structure on its nerve if and only if the universal functor $\pi\colon \cC \to \Pi_1(\cC)$ has a left adjoint.
\end{cor}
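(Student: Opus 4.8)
The plan is to read this off from Theorem~\ref{auspara} together with Remark~\ref{catrem}, the only real work being to arrange that the right adjoint appearing there is the \emph{canonical} localisation functor $\pi$ rather than just some functor to some groupoid.

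First I would dispose of the ``if'' direction. If $\pi \colon \cC \to \Pi_1(\cC)$ has a left adjoint $L$, then since $\Pi_1(\cC)$ is a groupoid the adjunction $L \dashv \pi$ is exactly of the shape demanded by Theorem~\ref{auspara} (take $\cG = \Pi_1(\cC)$, $I = L$, $R = \pi$), so $N\cC$ carries a duplicial structure.

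For the converse I would begin from a duplicial structure on $N\cC$ and apply Theorem~\ref{auspara} to obtain a groupoid $\cG$ and an adjunction $I \dashv R$ with $I \colon \cG \to \cC$. The key observation, already contained in the proof of that theorem, is that the induced comonad $IR = G$ satisfies $Gf = t^2 f$ invertible for every morphism $f$, so $R \colon \cC \to \cG$ sends all morphisms to isomorphisms and hence factors through the localisation as $R = \bar R \circ \pi$ for a unique $\bar R \colon \Pi_1(\cC) \to \cG$. Next I would check that $\bar R$ is an equivalence: the unit of $I \dashv R$ is invertible because $I$ is fully faithful, which gives $\bar R(\pi I) = R I \cong 1_\cG$; whiskering the counit $\epsilon \colon IR \Rightarrow 1_\cC$ by $\pi$ produces a natural isomorphism $(\pi I \bar R)\pi \cong \pi$, each $\pi \epsilon_X$ being invertible in the groupoid $\Pi_1(\cC)$, and since $\pi$ is a localisation this descends to $\pi I \bar R \cong 1_{\Pi_1(\cC)}$. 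Transporting the adjunction $I \dashv \bar R \pi$ across the equivalence $\bar R$ then yields an adjunction $(I \bar R^{*}) \dashv \pi$ with $\bar R^{*}$ a quasi-inverse of $\bar R$, so $\pi$ has a left adjoint. Alternatively, one can appeal to Remark~\ref{catrem} directly, which already records that in the setting of Theorem~\ref{auspara} the groupoid may be taken to be $\Pi_1(\cC)$.

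The step I expect to be the main obstacle is precisely this last piece of the converse: verifying that the abstract groupoid furnished by Theorem~\ref{auspara} can be swapped for $\Pi_1(\cC)$ compatibly with the adjunction, that is, that $\bar R$ is genuinely an equivalence and that transporting the adjunction along it leaves the right adjoint equal to $\pi$. The two implications themselves, and the invocation of Theorem~\ref{auspara}, are then immediate.
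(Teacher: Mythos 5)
Your argument is correct and is exactly the route the paper intends: the corollary is stated there as an immediate consequence of Theorem~\ref{auspara} and Remark~\ref{catrem}, and your converse direction simply fills in the details (factoring $R$ through $\pi$, checking that $\bar R$ is an equivalence via the $2$-dimensional universal property of the localisation, and transporting the adjunction) that the remark leaves implicit. Two cosmetic points: the transported left adjoint should be $I \circ \bar R \colon \Pi_1(\cC) \to \cC$ rather than $I\bar R^{*}$, since $\bar R^{*}$ has codomain $\Pi_1(\cC)$ and so cannot be postcomposed with $I$; and $R$ inverts all morphisms simply because its codomain $\cG$ is a groupoid (likewise the unit of $I \dashv R$ is invertible because its components live in $\cG$), with no need to invoke $t^2 f$.
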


\begin{cor}
A category $\cC$ admits a cyclic structure on its nerve if and only if $\cC$ is a groupoid.
\end{cor}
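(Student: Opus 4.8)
The plan is to obtain both implications from Theorem~\ref{auspara} together with an inspection of the cyclic operator in low degrees. Recall that a cyclic structure on the nerve $N\cC$ is precisely a duplicial structure whose operator $\tau$ additionally satisfies $\tau^{n+1}=1$ in every degree $n\ge 0$, and that, in the concrete description of duplicial structures on nerves used above, the operator in degrees $0$ and $1$ is $\tau_0(A)=tA$ and $\tau_1(f)=tf$ (with $f\colon A\to B$ and $tf\colon tB\to A$) for the associated coreflector $t$.

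For necessity, suppose $N\cC$ carries a cyclic structure. It is in particular duplicial, so by Theorem~\ref{auspara} it comes from a coreflector $t$ on $\cC$ (equivalently, from an adjunction $I\dashv R$ with $I\colon\cG\to\cC$ and $\cG$ a groupoid). The relation $\tau^{0+1}=1$ forces $tA=A$ for every object $A$, and then $\tau^{1+1}=1$ forces $t^2f=f$ for every morphism $f$. But the proof of Theorem~\ref{auspara} shows that $t^2f$ is invertible for every $f$ (with explicit two-sided inverse $t(f\epsilon_A)$), so every morphism of $\cC$ is invertible and $\cC$ is a groupoid. Equivalently, $tA=A$ says $IRA=A$ for all $A$, so $I$ is surjective on objects; being fully faithful, $I$ identifies each hom-set of $\cC$ with one of the groupoid $\cG$, which consists of isomorphisms.

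For sufficiency, let $\cC$ be a groupoid. Then $\cC$ is its own coreflective groupoid via the identity adjunction, and the coreflector produced by Theorem~\ref{auspara} is $tA=A$, $tf=f^{-1}$. A direct induction on $n$ — using the duplicial relations $d_0\tau=d_n$, $d_i\tau=\tau d_{i-1}$, $s_i\tau=\tau s_{i-1}$, $s_0\tau=\tau^2 s_n$ and the fact that an $n$-simplex of a nerve is reconstructible from its spine — identifies the resulting duplicial operator with the cyclic shift
$$\tau_n(f_{n-1},\dots,f_0)=\bigl(f_{n-2},\dots,f_0,\,(f_{n-1}\cdots f_0)^{-1}\bigr),$$
where $(f_{n-1},\dots,f_0)$ denotes a composable string $f_0\colon A_0\to A_1,\ \dots,\ f_{n-1}\colon A_{n-1}\to A_n$; the base case is the coreflector itself, and the consistency condition feeding the induction (already visible in degree $2$) is exactly the coreflector triangle axiom $f_0\circ t(f_1f_0)=tf_1$. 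Since the $n+1$ arrows $f_0,\dots,f_{n-1},(f_{n-1}\cdots f_0)^{-1}$ form a closed loop whose total composite is the identity and $\tau_n$ merely advances the chosen basepoint of this loop by one step, we get $\tau_n^{\,n+1}=1$ in every degree, so the duplicial structure is cyclic. (Alternatively, for this direction one may bypass Theorem~\ref{auspara} entirely and verify directly that $(N\cC,d_i,s_i,\tau_n)$ satisfies the relations of the cyclic category; in the language of the preceding corollary, a groupoid is exactly a category for which $\cC\to\Pi_1(\cC)$ is an equivalence, which is what the triviality of $\tau_0$ amounts to.)

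The main obstacle is identifying the duplicial operator attached to a coreflector with the explicit cyclic shift above: in the excerpt this operator is produced abstractly through the d\'ecalage distributive law and Theorem~\ref{ultimate} rather than by a formula, so one must either unwind that correspondence or argue, as indicated, that the coreflector together with the duplicial relations already pins down $\tau$ in all degrees. Once the shift formula is in hand, the order computation $\tau_n^{n+1}=1$ is immediate.
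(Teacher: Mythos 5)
Your proof is correct and follows essentially the same route as the paper: necessity is the identical argument (degree-$0$ and degree-$1$ cyclicity force $tA=A$ and $t^2f=f$, and the proof of Theorem~\ref{auspara} shows $t^2f$ is invertible), and sufficiency likewise applies Theorem~\ref{auspara} to the identity adjunction on a groupoid. The only difference is that you explicitly verify that the resulting duplicial operator is the cyclic shift $(f_{n-1},\dots,f_0)\mapsto(f_{n-2},\dots,f_0,(f_{n-1}\cdots f_0)^{-1})$ and has order $n+1$ — a check the paper leaves implicit, and which your computation carries out correctly.
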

\begin{proof}
Suppose that a category $\cC$ is equipped with a cyclic structure on its nerve, induced by a coreflector $t$. By assumption, $t$ is the identity on objects, and $t^2$ is the identity on morphisms. By the proof of Theorem~\ref{auspara}, for all morphisms $f$ in $\cC$, the morphism $t^2 f = f$ is invertible and thus $\cC$ is a groupoid.

Conversely, if $\cC$ is a groupoid, then we apply Theorem~\ref{auspara} to the the identity functor $1\colon \cC \to \cC$, which is trivially a left-adjoint functor.
\end{proof}
\begin{rem}
Note that a groupoid $\cC$ may have multiple cyclic structures on its nerve. Indeed,  given a natural transformation
$$
\xymatrix{
\cC \rrtwocell^1_{1}{\ \alpha} &  & \cC
}
$$
we define a coreflector $t$ on objects and morphisms respectively by
$$
tA := A, \qquad t\!\xymatrix{(A \ar[r]^-f & B)} := \xymatrix{B \ar[r]^-{\alpha_B} & B \ar[r]^-{f^{-1}} & A.}
$$
\end{rem}
\begin{rem}
In~\cite{woohoo} we also characterise duplicial structure on the nerve of a bicategory~\cite{MR920944,MR1897816} (and so in particular, on the nerve of a monoidal category).
\end{rem}
\chapter{Examples}\label{EXAMPLES}
In this chapter we give examples with an algebraic flavour.
In Section~\ref{cychomalg} we explain how the cyclic homology of an associative algebra~\cite{MR823176,MR777584} arises as an instance of Theorem~\ref{dup}. We also show that an algebra map $\sigma$ induces a 1-cell in $\dist$ which acts on cyclic homology to give $\sigma$-twisted cyclic homology~\cite{MR1943179}. Next, in Section~\ref{brugsec} we show that Hopf-cyclic homology of bialgebroids (as in~\cite{MR2803876}) arises from Theorem~\ref{dup} and analyse this phenomenon in terms of opmonoidal adjunctions and their opmodules~\cite{MR3020336}. In Section~\ref{hopfywopfy} we study the various notions of bimonad and Hopf monad~\cite{MR3020336,MR2793022,MR1942328,MR3175323,MR1887157}, as well as give a new example of a bimonad that is not a Hopf monad (Section~\ref{newbimonad}). We conclude the chapter in Section~\ref{finally} by showing that the category of enriched functors from a Hopf category~\cite{Hop, MR1458415} contains a duplicial object under certain conditions.

The work contained herein is original; Section~\ref{cychomalg} is based on~\cite[\S4]{2} and Sections~\ref{brugsec} and~\ref{hopfywopfy} are based on\cite[\S5--6]{1}.
\section{Cyclic homology of algebras}\label{cychomalg}

\subsection{Flat connections}
Let $B$ be a monad on a category $\cA$, and let
$$
	\xymatrix{ \cA
\ar@/^{0.5pc}/[rr]^-F \ar@{}[rr]|-{\perp}&&
\ar@/^0.5pc/[ll]^U\cA^B
}
$$
be the canonical adjunction (cf.\ Sections~\ref{EM2cat}, \ref{emcatsect}). As before, let $\tilde B$ denote the comonad $FU$ on $\cA^B$ generated by the adjunction, and let $\Sigma \colon \cA^B \to \cA^B$ be an endofunctor.

For any $B$-algebra $(X, \beta)$, we have natural isomorphisms
$$
\cA^B ( \tilde B \Sigma (X,\beta), \Sigma \tilde B (X, \beta) ) \cong \mathcal \cA ( U\Sigma (X, \beta) , U \Sigma \tilde B (X, \beta) )
$$
given by the adjunction, so there is a one-to-one correspondence between natural transformations $\sigma \colon \tilde B \Sigma \Rightarrow \Sigma \tilde B$  and natural transformations $\nabla \colon U \Sigma \Rightarrow U \Sigma \tilde B$. In fact, $(\Sigma, \sigma)$ is an opmorphism of comonads if and only if the diagrams
$$
\xymatrix{
U\Sigma \ar[r]^-{\nabla} \ar[d]_-\nabla & U\Sigma \tilde B \ar[d]^-{\nabla \tilde B} \\
U \Sigma \tilde B \ar[r]_-{U \Sigma \tilde \delta} & U \Sigma \tilde B \tilde B
}
\quad\quad\quad
\xymatrix{
U \Sigma \ar[r]^-\nabla \ar@{=}[dr] & U \Sigma \tilde B \ar[d]^-{U \Sigma \tilde \epsilon} \\
& U \Sigma
}
$$
commute, that is, if $(U\Sigma, \nabla)$ is a $\tilde B$-opcoalgebra.
\begin{defn}
We say that the natural transformation $\sigma$ is a \emph{connection} if
$\epsilon $ is compatible with $\sigma$,
i.e.\ the second diagram above commutes for the
corresponding natural transformation $\nabla$. We say
that a connection $\sigma$ is \emph{flat} if $
\delta $ is compatible with $\sigma$, i.e.\ $(\Sigma,\sigma)$ is an opmorphism of comonads, or equivalently, both diagrams above commute.
\end{defn}

The terminology is motivated by the special case
discussed in detail in Section~\ref{connectexample}.

\subsection{Cyclic and twisted cyclic homology}
Let $A$ be a unital associative algebra over a commutative ring $k$. Consider the simplicial $k$-module $C_\bullet(A,A)$ of Section~\ref{hochsub}, defined degreewise by $$C_n(A,A) = A^{\otimes (n+1)}$$ which gives rise to the Hochschild homology module $\rH_\bullet(A, A)$. This simplicial object is in fact a cyclic object. Indeed, in degree $n$ we define a cyclic operator $t$ on $A^{\otimes (n+1)}$ by
$$
t(a_0 \otimes \cdots \otimes a_n) = a_1 \otimes \cdots \otimes a_n \otimes a_0.
$$
\begin{defn}
The \emph{cyclic homology of $A$}, denoted $\HC(A)$, is the cyclic homology of the cyclic object $C_\bullet(A,A)$.
\end{defn}

Now let $\sigma \colon A \to A$ be an algebra map. We define a simplicial module $C_\bullet (A, A)_\sigma$ as follows:
take the degeneracies and faces of the Hochschild simplicial object $C_\bullet(A,A)$ but in each degree, we change the last face $d_n \colon A^{\otimes(n+1)} \to A^{\otimes n}$ to the map $d^\sigma_n$ defined by
$$
d^\sigma_n (a_0 \otimes \cdots \otimes a_n) = (a_0 \sigma(a_1) , a_2, \cdots, a_n).
$$
In fact, this is a duplicial object with duplicial operator $t^\sigma$ defined by
$$
t^\sigma(a_0 \otimes \cdots \otimes a_n) = \sigma(a_1) \otimes \cdots \otimes a_n \otimes a_0.
$$
This is a cyclic object if and only if $\sigma = 1$.
\begin{defn}
The \emph{cyclic homology of $A$ twisted by $\sigma$}, denoted $\HC_\sigma(A)$, is the cyclic homology of the duplicial object $C_\bullet(A,A)_\sigma$.
\end{defn}
It is the aim of this section to show that the assignment
$$
\HC (A)\mapsto \HC_\sigma(A)
$$
can be realised in the abstract framework of the previous chapters.

\subsection{$(A,A)$-bimodules}\label{connectexample}
Let $\cA= \amod$ be the category of left $A$-modules and let $B$ be the comonad $= {-}\otimes A$, so that $\cA^B$ is the category of $(A,A)$-bimodules as in Section~\ref{hochclassic}.

The functor $C= A \otimes  {-} \colon \cA \to \cA$, together with the natural morphisms
\begin{align*}
\delta \colon A \otimes X &\lto A \otimes A \otimes X & \epsilon \colon A \otimes X & \lto X \\
a \otimes x &\lmapsto  a \otimes 1 \otimes x & a \otimes x & \lmapsto ax
\end{align*}
defines a comonad on $\cA^B$. There is a mixed distributive law $\theta \colon BC \Rightarrow CB$ given by rebracketing on components
$$
\theta \colon (A \otimes X) \otimes A \to A \otimes (X \otimes A)
$$
so by Corollary~\ref{arisec}, this lifts to a comonad distributive law $\tilde\theta \colon \tilde B C^\theta \Rightarrow C^\theta \tilde B$ on $\cA^B$.

Let $N$ be an $(A,A)$-bimodule and $\Sigma \colon
\cA^B \to \cA^B$ be the endofunctor defined by $\Sigma (M) = M \otimes_A N$. We have that $\Sigma  C^\theta = C^\theta \Sigma$ so that
$$
\xymatrix{
(\cA^B, C^\theta) \ar[rr]^-{(\Sigma, 1)} && (\cA^B, C^\theta)
}
$$
is a morphism of comonads.

The component of a natural transformation $\nabla \colon U \Sigma \Rightarrow U \Sigma \tilde B$ is given by a left $A$-linear map
\begin{align*}
  \nabla_M \colon M \otimes_A N & \to (M \otimes A) \otimes N \cong M \otimes N
\end{align*}
The natural transformation $\nabla$ defines a
connection if and only if each $\nabla_M$ splits the
quotient map $M \otimes N \to M \otimes_A N$.
Taking $M=A$ yields an $A$-linear splitting of the action $A
\otimes  N \to N$, so $N$ is $k$-relative
projective~\cite[Definition~8.6.5]{MR1269324}. Conversely, given a splitting
$n \mapsto n_{(-1)} \otimes n_{(0)}$ of the action, we
obtain $\nabla_M$ as
$\nabla_M(m \otimes_A n)=mn_{(-1)} \otimes n_{(0)}$.
Thus we have:

\begin{prop}
The functor $\Sigma$ admits a connection $ \sigma $ if and only if $N$ is
$k$-relative projective as a left $A$-module.
\end{prop}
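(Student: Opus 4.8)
The plan is to reduce the statement to a question about $A$-linear sections of the multiplication map, exploiting the correspondence already set up between connections $\sigma$ on $\Sigma$ and natural transformations $\nabla \colon U\Sigma \Rightarrow U\Sigma\tilde B$. Under the adjunction bijection the component of such a $\nabla$ at a bimodule $M$ is a left $A$-linear map $\nabla_M \colon M\otimes_A N \to (M\otimes A)\otimes_A N \cong M\otimes N$, natural in $M$, and $\sigma$ is a connection precisely when the counitality square commutes, i.e.\ $U\Sigma\tilde\epsilon \circ \nabla = \mathrm{id}$. The first key observation is that, under the identification $(M\otimes A)\otimes_A N \cong M\otimes N$, the map $U\Sigma\tilde\epsilon$ at $M$ is exactly the canonical quotient $q_M \colon M\otimes N \twoheadrightarrow M\otimes_A N$; hence ``$\sigma$ is a connection'' is equivalent to ``each $\nabla_M$ is a natural, left $A$-linear section of $q_M$''.

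For the forward implication I would simply evaluate $\nabla$ at the bimodule $M = A$. Since $A\otimes_A N \cong N$ and $q_A$ becomes the action map $A\otimes N \to N$, the component $\nabla_A \colon N \to A\otimes N$ is a left $A$-linear section of the action, which is by definition the assertion that $N$ is $k$-relative projective as a left $A$-module (cf.~\cite[Definition~8.6.5]{MR1269324}).

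For the converse I would begin with a left $A$-linear section $s$ of the action $A\otimes N \to N$, write $s(n) = n_{(-1)}\otimes n_{(0)}$, and define $\nabla_M(m\otimes_A n) := m n_{(-1)}\otimes n_{(0)}$, where $m n_{(-1)}$ uses the right action of the bimodule $M$. The remaining verifications are routine: well-definedness on $M\otimes_A N$ follows from left $A$-linearity of $s$ (which lets one push a scalar across the balanced tensor), left $A$-linearity of $\nabla_M$ and naturality in $M$ follow from the bimodule axioms and the fact that $s$ is independent of $M$, and $q_M\nabla_M = \mathrm{id}$ is immediate since $s$ is a section. Transporting $\nabla$ back across the adjunction isomorphism then produces the desired connection $\sigma$.

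I expect the only real obstacle to be bookkeeping: pinning down the dictionary between $\cA^B(\tilde B\Sigma(X,\beta),\Sigma\tilde B(X,\beta)) \cong \cA(U\Sigma(X,\beta),U\Sigma\tilde B(X,\beta))$, the isomorphism $(M\otimes A)\otimes_A N \cong M\otimes N$, and the identification $U\Sigma\tilde\epsilon \cong q_M$, so that the counitality square is seen to be literally the splitting condition. Once these identifications are fixed, both directions are short.
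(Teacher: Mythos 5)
Your proposal is correct and follows essentially the same route as the paper: translate connections into natural left $A$-linear splittings $\nabla_M$ of the quotient $M\otimes N \twoheadrightarrow M\otimes_A N$, evaluate at $M=A$ to get a section of the action (hence $k$-relative projectivity), and conversely set $\nabla_M(m\otimes_A n)=mn_{(-1)}\otimes n_{(0)}$ from a splitting $n\mapsto n_{(-1)}\otimes n_{(0)}$. The identifications you flag as ``bookkeeping'' are exactly the ones the paper also relies on, and they check out.
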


Composing $\nabla_A$ with the noncommutative De Rham
differential
$$
	\operatorname d \colon A \lto
	\Omega^1_{A,k},\quad
	a \lmapsto 1 \otimes a-a \otimes 1
$$
gives the notion of connection in noncommutative
geometry \cite[III.3.5]{MR1303779}.

If $N$ is not just $k$-relative projective but
$k$-relative free, i.e.\
$N \cong A \otimes V$ as left $A$-modules, for some
$k$-module $V$, then the assignment $\nabla_M ( m \otimes_A (a \otimes v) ) = ma \otimes (1 \otimes v)$ defines a flat connection. Explicitly, the flat connection $\sigma \colon\tilde B \Sigma \Rightarrow \Sigma \tilde B$ is given by
\begin{align*}
\sigma_M \colon (M \otimes_A N) \otimes A &\lto (M \otimes A) \otimes_A N \cong M \otimes N \\
 (m \otimes_A n) \otimes b &\lmapsto \nabla_M (m \otimes_A n)b.
\end{align*}

Thus we have:
\begin{prop}\label{bimoduletwist}
The triple
$(\Sigma, \sigma, 1)$ defines a 1-cell
$$
\xymatrix{
(\cA^B, \tilde\theta, \tilde B, C^\theta) \ar[rr]^-{(\Sigma, \sigma, 1)}  & & (\cA^B, \tilde\theta, \tilde B, C^\theta)
}
$$
in $\dist$.
\end{prop}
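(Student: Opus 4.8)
The plan is to verify that the triple $(\Sigma, \sigma, 1)$ satisfies the three conditions required of a $1$-cell in $\dist$ (cf.\ Section~\ref{finallyover}): namely, that $(\Sigma, 1) \colon (\cA^B, C^\theta) \to (\cA^B, C^\theta)$ is a morphism of comonads, that $(\Sigma, \sigma) \colon (\cA^B, \tilde B) \to (\cA^B, \tilde B)$ is an opmorphism of comonads, and that the Yang-Baxter hexagon relating $\sigma$, $1$, $\tilde\theta$ holds. The first of these is already recorded in the text immediately preceding the statement: since $\Sigma C^\theta = C^\theta \Sigma$ (as $N \cong A \otimes V$ makes $M \otimes_A N \cong M \otimes V$ and $C^\theta$ acts on the $M$-slot), the identity $2$-cell exhibits $(\Sigma, 1)$ as a morphism of comonads. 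The second follows from the preceding discussion of flat connections: by the displayed characterisation, $(\Sigma, \sigma)$ is an opmorphism of comonads precisely when the corresponding $\nabla$ makes $(U\Sigma, \nabla)$ a $\tilde B$-opcoalgebra, and the explicit formula $\nabla_M(m \otimes_A (a \otimes v)) = ma \otimes (1 \otimes v)$ is readily checked to be coassociative and counital — indeed this is exactly the assertion that the connection $\sigma$ is \emph{flat}, which was built into the hypothesis that $N$ is $k$-relative free.

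Thus the substance of the proof is the Yang-Baxter condition. First I would write out the hexagon explicitly: one composite is
$$
\xymatrix@C=2.5em{
\Sigma \tilde B C^\theta \ar[r]^-{\Sigma \tilde\theta} & \Sigma C^\theta \tilde B \ar[r]^-{1 \tilde B} & C^\theta \Sigma \tilde B
}
$$
precomposed with $\sigma C^\theta$, and the other is $\tilde B \Sigma C^\theta \xrightarrow{\tilde B \cdot 1} \tilde B C^\theta \Sigma \xrightarrow{\tilde\theta \Sigma} C^\theta \tilde B \Sigma \xrightarrow{C^\theta \sigma} C^\theta \Sigma \tilde B$. Since the comonad morphism component here is the identity, several of the maps in the hexagon are simply the equality $\Sigma C^\theta = C^\theta \Sigma$, so the hexagon collapses to the assertion that $\sigma$ commutes past $\tilde\theta$ in the expected way — concretely, that the two ways of moving a free $\tilde B$-factor and applying the rebracketing distributive law $\tilde\theta$ agree. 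I would verify this at the level of components by unwinding everything through the adjunction $F \dashv U$ to the category $\amod$, where $\tilde B = {-}\otimes A$, $C^\theta = A \otimes {-}$, $\tilde\theta$ is rebracketing, and $\sigma_M$ is the explicit formula above; then the hexagon becomes a routine (if slightly lengthy) check that two iterated rebracketings of $(A \otimes (M \otimes_A N)) \otimes A$-type expressions coincide, which holds by the strict associativity of $\otimes$ over $k$.

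The main obstacle, such as it is, is purely bookkeeping: correctly tracking the three tensor slots ($A$ on the left from $C^\theta$, the bimodule $M$, and the copy of $A$ on the right from $\tilde B$) together with the $\otimes_A$ versus $\otimes_k$ distinctions, and checking that the isomorphism $M \otimes_A N \cong M \otimes V$ used to justify $\Sigma C^\theta = C^\theta \Sigma$ is compatible with all the structure maps. I do not anticipate any genuine difficulty, since all the needed compatibilities are instances of the coherence already present in $\dist$ combined with associativity of the tensor product; the flatness hypothesis on the connection is exactly what is needed to make the $\tilde B$-related squares commute, and the commutativity of the $C^\theta$-related squares is automatic from the equality $\Sigma C^\theta = C^\theta \Sigma$. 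Hence the proof will consist of citing the two comonad-(op)morphism facts from the surrounding text and then a short componentwise verification of the hexagon.
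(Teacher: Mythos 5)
Your proposal is correct and follows essentially the same route as the paper, which in fact states the proposition with no explicit proof beyond the preceding discussion: the equality $\Sigma C^\theta = C^\theta\Sigma$ gives the comonad morphism $(\Sigma,1)$, flatness of the connection gives the opmorphism $(\Sigma,\sigma)$, and the Yang--Baxter hexagon is left as the routine componentwise rebracketing check you describe. Your write-up simply makes explicit the verification that the paper omits.
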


In particular, let $ \sigma \colon A \to A$ be an
algebra map and let $N=A_\sigma$, the $(A,A)$-bimodule
which is $A$ as a left $A$-module with right action of $a
\in A$ given by right multiplication by $ \sigma
(a)$.
Then we have
$\Sigma (M) = M \otimes_A A_\sigma \cong M_\sigma $.
Since $A_\sigma$ is free as a left $A$-module we get a 1-cell
$(\Sigma, \sigma,1)$ by Proposition~\ref{bimoduletwist},
where $\sigma \colon \tilde{B} \Sigma \Rightarrow \Sigma\tilde B$ is the flat connection defined on components by
\begin{align*}
\sigma_M \colon M_\sigma \otimes_k A & \lto (M \otimes_k A)_\sigma \\
m \otimes a &\lmapsto m \otimes \sigma(a).
\end{align*}
Note that we use $ \sigma $ to denote both the algebr
map and the flat connection it induces.
\subsection{Twisting by 1-cells}
From the general theory developed in
Section~\ref{twistcoeff} we obtain therefore an action of
the group of endomorphisms of $A$ on the categories $\sR(\tilde\theta)$ and $\sL(\tilde\theta)$ of $\tilde\theta$-coalgebras and $\tilde\theta$-opcoalgebras, respectively.
In particular, we can act
on the standard cyclic object $C_\bullet(A,A)$ associated to $A$, which corresponds to
the following data.

Consider $A$ as an object of $\cA^B$. Since $\tilde B A = C^\theta A = A \otimes A$ we have a morphism $\rho = 1 \colon \tilde B A \to C^\theta A$. It follows that $(A, \rho)$ is a $\tilde\theta$-coalgebra in $\cA^B$.

Considering $(A,A)$-bimodules as either left or
right
$\Ae = A \otimes A^*$-modules, we view the zeroth
Hochschild homology as a functor
$H = - \otimes_{\Ae} A \colon
\cA ^ B \to \kmod$. Then, as in Section~\ref{hochclassic}, we have an isomorphism of simplicial objects
$$
C_\bullet(A,A) \cong \rC_{\tilde B}(H, A).
$$
We define a natural transformation
$\lambda \colon H  C^\theta  \Rightarrow H \tilde B$ by
\begin{align*}
\lambda_M \colon ( A\otimes M) \otimes_{\Ae} A &\lto (M \otimes A )\otimes_{\Ae} A \cong M \\
 (a \otimes m) \otimes_{\Ae} b  &\lmapsto mba
\end{align*}
Therefore, by Theorem~\ref{dup}, $C_\bullet(A,A)$ becomes a duplicial $k$-module, and indeed is
 the cyclic object defining the cyclic homology
$\HC(A)$.

By acting on the $\tilde\theta$-coalgebra $M$ with the $1$-cell $(\Sigma, \sigma, 1)$ and applying Theorem~\ref{dup}, we obtain a duplicial structure on the simplicial $k$-module
 $$
 C_\bullet(A,A)_\sigma
 $$
 whose cyclic homology is the $\sigma$-twisted cyclic homology of $A$.
Thus the action of 1-cells in $\dist$
generalises this twisting procedure.

\section{Hopf-cyclic homology}\label{brugsec}
In this section we assume that the monoidal categories mentioned and their actions on other categories are strict, with the symbols
$
I, \otimes,  \rhd
$
used to denote the unit, tensor product, and left action respectively.
\subsection{Opmonoidal adjunctions}\label{notsure}

Let $\cE, \cH$ be monoidal categories.
\begin{defn}
An \emph{opmonoidal functor} $\cE \to \cH$ is a triple $(P, \Xi, \Xi_0)$ where $P \colon \cE \to \cH$ is a functor,
$$\Xi \colon P({-}_{1} \otimes {-}_{2} ) \Rightarrow P({-}_{1}) \otimes P({-}_{2})$$
is a natural transformation, and $\Xi_0 \colon PI \to I$ is a morphism, such that the three diagrams
$$\xymatrix{
P(X \otimes Y \otimes Z) \ar[rr]^-{\Xi} \ar[d]_-{\Xi} && P(X \otimes Y) \otimes P(Z) \ar[d]^-{\Xi \otimes PZ} \\
PX \otimes P(Y \otimes Z) \ar[rr]_-{PX \otimes {\Xi}} && PX \otimes PY \otimes PZ
}$$
$$
\xymatrix{
P(X \otimes I) \ar[r]^-\Xi \ar@{=}[dr] & PX \otimes PI \ar[d]^-{PX \otimes \Xi_0} \\
& PX
} \qquad
\xymatrix{
P(I \otimes X) \ar[r]^-\Xi \ar@{=}[dr] & PI \otimes PX \ar[d]^-{\Xi_0 \otimes PX} \\
& PX
}
$$
commute.
\end{defn}
Let $(P, \Xi, \Xi_0)$ and $(G, \Phi, \Phi_0)$ be two monoidal functors $\cE \to \cH$.

\begin{defn}
An \emph{opmonoidal natural transformation} $\alpha \colon (P, \Xi, \Xi_0) \Rightarrow (G, \Phi, \Phi_0)$ is a natural
transformation $\alpha \colon P \Rightarrow G$ such that the two diagrams
$$
\xymatrix{
P(A \otimes B) \ar[r]^-{\Xi} \ar[d]_-\alpha & PA \otimes PB \ar[d]^-{\alpha \otimes \alpha} \\
G(A \otimes B) \ar[r]_-{\Phi} & GA \otimes GB
}\qquad
\xymatrix{
PI \ar[r]^-{\alpha} \ar[dr]_-{\Xi_0} & GI \ar[d]^-{\Phi_0} \\
& I
}
$$
commute.
\end{defn}

This defines a 2-category $\mathbf{OpMonCat}$ of monoidal categories, opmonoidal functors, and opmonoidal natural transformations~\cite[Example~2.4]{MR2664622}. One example of Corollary~\ref{arisec} is provided by an adjunction in this 2-category. Explicitly:

\begin{defn}
An \emph{opmonoidal adjunction}
$$
	\xymatrix{ \cE
\ar@/^{0.5pc}/[rr]^-{(P, \Xi, \Xi_0)} \ar@{}[rr]|-{\perp}&&
\ar@/^0.5pc/[ll]^-{(Q, \Psi, \Psi_0)} \cH
}
$$
consists of an adjunction $P \dashv Q$ between the underlying categories, such that the unit and counit are opmonoidal natural transformations.
\end{defn}

Opmonoidal adjunctions are a special case of doctrinal adjunctions, so it follows that $\Psi$ and $\Psi_0$ as above are in fact isomorphisms~\cite[Theorem~1.4]{MR0360749}. Some authors call opmonoidal adjunctions \emph{comonoidal adjunctions} or
\emph{bimonads}. We refer {e.g.} to~\cite{MR3020336,MR2793022,MR1942328,MR3175323,MR1887157}
for more information.

It follows that
$$
	PI \otimes {-}
	\qquad QPI
	\otimes {-}
$$
form a compatible pair of comonads
as in Corollary~\ref{arisec}
whose comonad structures are induced by the natural
coalgebra (comonoid) structures on $I$.

\subsection{Opmodule adjunctions}\label{opmoduleadj}

The examples we are more interested in are
given by opmodule adjunctions, as defined below. They were introduced under the
name comodule adjunctions in
\cite[Definition~4.1.1]{MR3020336}.

Let $(P, \Xi, \Xi_0) \colon \cE
\rightarrow \cH$ be an
opmonoidal functor and let $\cA$ and
$\cB$ be (left)
$\cE$-module $\cH$-module categories, respectively.

\begin{defn}

A $(P, \Xi, \Xi_0)$-\emph{opmodule} is a functor
$F \colon \cA \rightarrow \cB$ together with a
natural transformation
$$
\Theta \colon F({-}_1 \rhd {-}_2) \Rightarrow
P ({-}_1) \rhd F({-}_2)
$$ such that the diagrams
$$
	\xymatrix{
	\rF((X \otimes Y) \rhd  Z)\ar@{=}[d]
\ar[rr]^-\Theta & & P(X \otimes Y) \rhd FZ \ar[d]^-{\Xi \otimes FZ} \\
F(X \rhd (Y \rhd Z) )  \ar[d]_-\Theta & & (PX \otimes PY ) \rhd FZ \ar@{=}[d] \\
	PX \rhd F(Y \rhd Z)  \ar[rr]_-{PX \rhd \Theta} & &  PX \rhd (PY \rhd FZ) \\
}
$$
and
$$
	\xymatrix{	
	F(I \rhd Z) \ar[r]^-\Theta
	\ar@{=}[d] &
	PI \rhd F Z \ar[d]^-{\Xi_0
\rhd FZ}\\
	 FZ \ar@{=}[r] &
	I \rhd F Z}
$$
commute.
\end{defn}
Let $(P, \Xi, \Xi_0)$ and $(G, \Phi, \Phi_0)$ be two opmonoidal functors $\cH \to \cE$, and let $(F, \Theta)$ and $(K, \kappa)$ be two opmodules $\cA \to \cB$ over $(P, \Xi, \Xi_0)$ and $(G, \Phi, \Phi_0)$ respectively.
\begin{defn}
An \emph{opmodule morphism} $(\alpha, \beta) \colon (F, \Theta) \Rightarrow (K, \kappa)$ consists of an opmonoidal natural transformation
$$\alpha \colon (P, \Xi, \Xi_0) \Rightarrow (G, \Phi, \Phi_0) $$
and a natural transformation
$$
\beta \colon F \Rightarrow K
$$
 such that the diagram
 $$
 \xymatrix{
 F(X \rhd Z) \ar[r]^-{\Theta} \ar[d]_-\beta \ar[r]^-{\Theta} & PX \rhd FZ \ar[d]^-{\alpha \rhd \beta} \\
 K(X \rhd Z) \ar[r]_-{\kappa} & GX \rhd KZ
 }
 $$
 commutes.
\end{defn}
    This defines a 2-category $\mathbf{OpMod}$ whose 0-cells are actions of a monoidal category on another category, whose 1-cells are opmodules over opmonoidal functors, and whose 2-cells are opmodule morphisms~\cite[Remark~4.3]{MR3020336}.  We now give an explicit definition of adjunctions in this 2-category:
\begin{defn}
An \emph{opmodule
adjunction}
$$
\xymatrix{ \cA
\ar@/^{0.5pc}/[rr]^-{(F,\Theta)} \ar@{}[rr]|-{\perp}&&
\ar@/^0.5pc/[ll]^-{(U, \Omega)} \cB
}
$$
over an opmonoidal adjunction
$$
\xymatrix{ \cE
\ar@/^{0.5pc}/[rr]^-{(P, \Xi, \Xi_0)} \ar@{}[rr]|-{\perp}&&
\ar@/^0.5pc/[ll]^-{(Q, \Psi, \Psi_0)} \cH
}
$$
 is an adjunction $F
\dashv U$ such that
$$
	\xymatrix{
	X \rhd Z
	\ar[d]_-{\eta}
	\ar[rr]^-{\eta \rhd \eta}
	 & &
	QP X \rhd U F Z \\
	UF (X\rhd Z) \ar[rr]_-{U \Theta} & &
	U(P X \rhd F Z) \ar[u]_-{\Omega}}
$$
and
$$
	\xymatrix{
	P Q L \rhd FU M
	\ar[rr]^-{\varepsilon \rhd
	\varepsilon} & &
	L \rhd M\\
	F(Q L \rhd U M) \ar[u]^\Theta & &
	F U (L \rhd M) \ar[ll]^-{F \Omega}
	\ar[u]_-{\varepsilon}}
$$
commute.
\end{defn}

By the theory of doctrinal adjunctions, it follows that
$\Omega$ is an isomorphism
(see \cite[Proposition~4.1.2]{MR3020336} and again
\cite[Theorem~1.4]{MR0360749}).

Now any coalgebra $D$ in $\cH$
defines a compatible pair of comonads
$$
	S= D \rhd{-},\quad
	C=Q D \rhd{-}$$
on $\cB$
and $\cA$ respectively. It is such an
instance of Corollary~\ref{arisec} that provides the
monadic generalisation of the setting from
\cite{MR2803876}, see Section~\ref{coeffsforhopf}.

\subsection{Bialgebroids and Hopf algebroids}
Opmonoidal
adjunctions can be seen as categorical
generalisations of bialgebras and more generally
(left) bialgebroids. We briefly recall the definitions
but refer to \cite{MR2553659,MR1458415,MR2803876,MR1984397} for further
details and references.

\begin{defn}
If $E$ is a $k$-algebra,
then an \emph{$E$-ring} is a $k$-algebra map $ \eta : E
\rightarrow H$.
\end{defn}

In particular, when $E=\Ae:=A \otimes A^*$ is the
enveloping algebra of a $k$-algebra $A$, then
$H$ carries two left actions $\lact, \blact$ and two right actions $\ract, \bract$ of $A$, given by
$$
	a \lact h \ract
	b:=\eta(a \otimes b)h,\quad
	a \blact h \bract b:=
	h \eta(b \otimes a).
$$
These actions give rise to four $A$-bimodule structures on $H$, and so we use the actions as subscripts to make clear which of these structures is under discussion. For example, ${}_\blact H_{\ract}$ denotes the $A$-bimodule $H$, with left $A$-action $a \blact h$ and right $A$-action $h \ract b$.

Recall that $\aemod$ is a monoidal category, with tensor product $\otimes_A$ and unit $A$.
\begin{defn}[see~\cite{MR0506407}]
A \emph{bialgebroid} is an $\Ae$-ring
$ \eta : \Ae \rightarrow H$
for which ${}_\lact H_\ract$
is a coalgebra in $\aemod$
whose coproduct
$
	\Delta \colon {}_\lact H_\ract \rightarrow {}_\lact H_\ract \otimes_A
	{}_\lact H_\ract
$
satisfies
$$
	a \blact \Delta(h) =\Delta (h) \bract a,\quad
	\Delta
	(gh)=\Delta(g)\Delta(h),
$$
and whose counit
$	
\varepsilon \colon {}_\lact H_\ract \rightarrow A
$
defines a unital $H$-action on $A$ given by
$h(a):=\varepsilon (a \blact h)$.
\end{defn}

Finally, by a Hopf algebroid we mean \emph{left}
rather than \emph{full} Hopf algebroid, so there is
in general no antipode \cite{3}:

\begin{defn}[see~\cite{MR1800718}]\label{hadef}
A \emph{Hopf algebroid} is a bialgebroid with bijective
\emph{Galois map}
$$
	\beta \colon {}_\blact H_\ract \otimes_{A^*}
{}_\lact H_\ract  \rightarrow {}_\blact H_\ract \otimes_A {}_\lact H_\ract,\quad
g \otimes_\Aop h \mapsto \Delta (g)h.  $$ \end{defn}

As usual, we abbreviate
\begin{equation}
	\label{hunger}
	\Delta (h) =: h_{(1)} \otimes_A
	h_{(2)},\qquad
	\beta^{-1}(h \otimes_A 1) =: h_+
	\otimes_\Aop h_-.
\end{equation}
This symbolic notation we use here resembles \emph{Sweedler notation} for Hopf algebras (see e.g.~\cite[\S 1.4]{MR1243637}), and is fully explained in
~\cite[\S 2.3]{MR3281654}.

\subsection{The opmonoidal adjunction}\label{bimfromhopf}
Every $E$-ring $H$ defines a forgetful functor
$$
	Q \colon H\cMod \rightarrow E\cMod
$$
with left
adjoint $P=H \otimes_E -$.
In the next section, we abbreviate
$\cH:=H\cMod$ and $\cE:=E\cMod$.
If $H$ is a bialgebroid (so $E = \Ae$)
then $\cH$ is monoidal with tensor product
$K \otimes_\cH L$ of
two left $H$-modules $K$ and $L$ given by the tensor
product $K \otimes_A L$ of the underlying
$A$-bimodules whose $H$-module structure is given by
$$
	h(k \otimes_\cH l):=
	h_{(1)}(k) \otimes_A h_{(2)}(l).
$$
So by definition, we have $Q(K \otimes _\cH L) =
Q K \otimes_A Q L$. The opmonoidal structure $\Xi$
on $P$ is defined by the map \cite{MR2793022,MR3020336}
\begin{align*}
	P(X \otimes_A Y)=H \otimes_\Ae (X \otimes_A Y)
	&\rightarrow P X \otimes_\cH P Y= (H \otimes_\Ae
	X) \otimes_A (H \otimes_\Ae Y), \\
	h \otimes_\Ae (x \otimes_A y) &\mapsto
	(h_{(1)} \otimes_\Ae x) \otimes_A
	(h_{(2)} \otimes_\Ae y).
\end{align*}

Schauenburg proved that this establishes a bijective
correspondence between bialgebroid structures on $H$
and monoidal structures on $H\cMod$
\cite[Theorem~5.1]{MR1629385}:
\begin{thm}
The following data are equivalent for an $\Ae$-ring
$ \eta \colon \Ae \rightarrow H$:
\begin{enumerate}
\item A bialgebroid structure on $H$.
\item A monoidal structure on ${H\cMod}$
such that the adjunction
$$
	\xymatrix{\aemod \hspace{-10mm}&
	\ar@/^2mm/[r] & \ar@/^2mm/[l] & \hspace{-10mm}
	H\cMod}
$$
induced by $ \eta $ is
opmonoidal.
\end{enumerate}
\end{thm}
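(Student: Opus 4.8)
The plan is to establish the two implications separately and then check that the passages between the two kinds of structure are mutually inverse, upgrading the equivalence to an honest bijection. Throughout, write $\cH := H\cMod$, $\cE := \aemod$, $P := H \otimes_\Ae {-} \colon \cE \to \cH$ for the induction functor and $Q \colon \cH \to \cE$ for restriction along $\eta$, so that $P \dashv Q$.

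\textbf{From a bialgebroid to a monoidal structure.} Given a bialgebroid structure $(\Delta, \varepsilon)$ on $H$, define the tensor product of $K, L \in \cH$ to be the $A$-module tensor product $K \otimes_A L$ (formed with the $\lact, \ract$-actions coming from $\eta$) equipped with the action $h(k \otimes_A l) := h_{(1)}(k) \otimes_A h_{(2)}(l)$. The first point to verify is that this action is well defined on $K \otimes_A L$: the tensor relation $ka \otimes_A l = k \otimes_A al$ is respected precisely because $a \blact \Delta(h) = \Delta(h) \bract a$, i.e.\ because $\Delta$ takes values in the Takeuchi $\times_A$-product. Coassociativity of $\Delta$ then yields associativity of $\otimes_\cH$; the monoidal unit is $A$ with the $H$-action $h(a) := \varepsilon(a \blact h)$, and the left and right unit constraints are read off from the counit axioms, while multiplicativity of $\Delta$ and $\varepsilon$ is exactly what makes the associativity and unit constraints $H$-linear. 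Finally, equip $P$ with the opmonoidal structure $\Xi$ displayed in Section~\ref{bimfromhopf} together with the evident $\Xi_0 \colon P(A) \to A$: the opmonoidal coherence diagrams for $\Xi$ follow once more from coassociativity and counitality of $\Delta$, and a short computation shows that the unit and counit of $P \dashv Q$ are opmonoidal natural transformations --- the counit $\pi_M \colon h \otimes_\Ae m \mapsto hm$ is opmonoidal precisely by the multiplicativity of $\Delta$, and the unit by that of $\varepsilon$. By construction $Q(K \otimes_\cH L) = QK \otimes_A QL$ strictly, so $Q$ is in fact a strict monoidal functor.

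\textbf{From a monoidal structure to a bialgebroid.} Conversely, suppose $\cH$ carries a monoidal structure for which $P \dashv Q$ is opmonoidal. Since every opmonoidal adjunction is a doctrinal adjunction, the right adjoint $Q$ inherits a \emph{strong} monoidal structure: the mates $\Psi, \Psi_0$ of $\Xi, \Xi_0$ are invertible by \cite[Theorem~1.4]{MR0360749}, exactly as already recorded in Section~\ref{notsure}. In particular $Q$ preserves the monoidal unit, so the underlying $\Ae$-module of $I_\cH$ must be $A$; since $A$ is cyclic over $\Ae$, generated by $1$, the $H$-action on $I_\cH$ is determined by a single $\Ae$-linear map $\varepsilon \colon H \to A$, $\varepsilon(h) := h \cdot 1_A$, and the coherence of $I_\cH$ as a monoidal unit translates into the counit axiom, including the statement that $h \mapsto \varepsilon(a \blact h)$ is a unital $H$-action on $A$. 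To recover the coproduct, I would use the canonical isomorphism $H \cong P(\Ae)$ of left $H$-modules (with $\Ae$ carrying the regular right $\Ae$-module structure) together with the isomorphism $Q\bigl(P(\Ae) \otimes_\cH P(\Ae)\bigr) \cong H \otimes_A H$ of $A$-bimodules supplied by the strong monoidal structure of $Q$; transporting the monoidal product structure of $\cH$ back along these identifications produces a map $\Delta \colon H \to H \otimes_A H$ of the required $\lact, \ract$-bimodule type, which automatically lands in the Takeuchi product because of how it was extracted. One then checks the bialgebroid axioms: coassociativity and counitality come from the associativity and unit coherences of $\otimes_\cH$ and of $\Xi$, and multiplicativity of $\Delta$ is the opmonoidality of the counit of $P \dashv Q$.

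\textbf{Mutual inverseness and the main obstacle.} Finally, tracking the canonical isomorphisms $H \cong P(\Ae)$, $QI_\cH \cong A$ and $Q(- \otimes_\cH -) \cong Q(-) \otimes_A Q(-)$ shows that the two passages are inverse: extracting $(\Delta, \varepsilon)$ from the monoidal structure built out of a bialgebroid returns the original data, and rebuilding the tensor product from the extracted bialgebroid returns the original monoidal structure up to the unique monoidal isomorphism fixing $Q$. The genuinely delicate step is the production of $\Delta$ in the direction (2)$\Rightarrow$(1): making its source and target bimodule structures, and its landing in the Takeuchi product, canonical and coherent. Everything there rests on the doctrinal-adjunction fact that $Q$ is \emph{strong} monoidal --- without the identification $Q(P\Ae \otimes_\cH P\Ae) \cong H \otimes_A H$ the construction cannot even be stated --- after which the remaining work is careful but routine bookkeeping of the four actions $\lact, \ract, \blact, \bract$ of $A$ on $H$.
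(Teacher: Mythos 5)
The paper itself does not prove this statement: it is quoted from Schauenburg \cite[Theorem~5.1]{MR1629385}, so there is no in-text argument to compare yours against. Your outline is, as far as it goes, the standard proof of that theorem and the direction (1)$\Rightarrow$(2) is essentially complete at sketch level: the Takeuchi condition $a\blact\Delta(h)=\Delta(h)\bract a$ is indeed exactly what makes the diagonal action descend to $K\otimes_A L$, and the rest is the bookkeeping you describe. The one place where your write-up stops short of an actual construction is the extraction of $\Delta$ in (2)$\Rightarrow$(1): ``transporting the monoidal product structure back along these identifications'' does not by itself hand you a map $H\to H\otimes_A H$. What it hands you is a left $H$-module $P\Ae\otimes_\cH P\Ae$ whose underlying $\Ae$-module is identified, via the invertible mate $\Psi$ of $\Xi$, with ${}_\lact H_\ract\otimes_A{}_\lact H_\ract$; the coproduct is then the concrete element-level recipe $\Delta(h):=h\cdot(1_H\otimes_A 1_H)$ in that module. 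Once this is written down, your remaining claims do follow: $\Delta$ is a morphism of $A$-bimodules ${}_\lact H_\ract\to{}_\lact H_\ract\otimes_A{}_\lact H_\ract$ because the $\Ae$-action on $P\Ae\otimes_\cH P\Ae$ is the restriction along $\eta$ of the $H$-action; it lands in the Takeuchi product because for each $a$ the two elements $a\blact\Delta(h)$ and $\Delta(h)\bract a$ are the images of $h$ under the two $H$-linear maps $H\to H\otimes_\cH H$ determined by $1\otimes_A\eta(1\otimes a)\cdot 1$ and $\eta(1\otimes a)\cdot 1\otimes_A 1$, which coincide in $H\otimes_A H$; coassociativity and multiplicativity come from the associativity coherence and from $\Delta$ being evaluated against a module action, exactly as you say. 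Two smaller imprecisions worth fixing: the opmonoidality of the \emph{unit} of $P\dashv Q$ encodes $\Delta(1)=1\otimes_A 1$ and $\varepsilon(1)=1$ rather than any multiplicativity of $\varepsilon$ (which is not an axiom for bialgebroids); and you should note explicitly that $Q(I_\cH)\cong A$ only as $\Ae$-modules, so that $\varepsilon(h):=h\cdot 1_A$ requires the observation $h\cdot a=\varepsilon(a\blact h)$ to recover the full action, which you do state. With the formula for $\Delta$ supplied, the proposal is a correct rendering of Schauenburg's argument.
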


Consequently,
we obtain an opmonoidal monad
$$
	QP={}_\blact H_\bract
	\otimes_\Ae -
$$
on $\cE=\Ae\cMod$. This
takes the unit object $I = A$ to the cocentre $H
\otimes_\Ae A$ of the $A$-bimodule ${}_\blact
H_\bract$, and the comonad $PI
\otimes_A -$ is given by
$$
	(H \otimes_\Ae A) \otimes_A -,
$$
where the $A$-bimodule structure on the
cocentre is given by the actions
$\lact,\ract$ on $H$.

The lift to $\cH={H\cMod}$ takes a left
$H$-module $L$ to
$
	(H \otimes_\Ae A)
	\otimes_A L
$
with action
$$
	g \cdot ((h \otimes_\Ae 1) \otimes_A l)=
	(g_{(1)}h \otimes_\Ae 1) \otimes_A g_{(2)}l,
$$
and the
distributive law resulting from Corollary~\ref{arisec} is
given by
$$
	\chi \colon g \otimes_\Ae ((h \otimes_\Ae 1)
	\otimes_A l) \mapsto (g_{(1)}h \otimes_\Ae 1) \otimes_A
(g_{(2)} \otimes_\Ae l).
$$
That is, it is the map
induced by the \emph{Yetter-Drinfel'd braiding}
$$
	H_\bract \otimes_A {}_\lact H \to H_\ract \otimes_A
	{}_\lact H	,\quad g \otimes_A h \mapsto g_{(1)}h
	\otimes_A g_{(2)}.
$$

For $A=k$, that is, when $H$ is a Hopf algebra, and
also trivially when $H=\Ae$, the monad and the comonad
on $\Ae\cMod$ coincide and are also a bimonad in the sense of
Mesablishvili and Wisbauer,
cf.\ Section~\ref{wisbimonad}.  An example where the two
are different is the Weyl algebra, or more generally,
the universal enveloping algebra of a
Lie-Rinehart algebra \cite{MR1625610}. In these
examples, $A$ is commutative but
not central in $H$ in general.

\subsection{Doi-Koppinen data}
The instance of Corollary~\ref{arisec}
that we are most interested in is an opmodule
adjunction associated to the following structure:
\begin{defn}
Following e.g.~\cite{MR1877862}, a \emph{Doi-Koppinen datum} is a triple
$(H,D,W)$ of an $H$-module coalgebra $D$ and an $H$-comodule
algebra $W$ over a bialgebroid $H$.
\end{defn}

This means that $D$ is a coalgebra in
the monoidal category ${H\cMod}$.
Similarly, the category $H\cComod$ of left $H$-comodules is
also monoidal (see {e.g.}~\cite[Section~3.6]{MR2553659}), and this defines the notion of a
comodule algebra. Explicitly, $W$ is an $A$-ring
$
	\eta_W \colon A \rightarrow W
$
together with a coassociative coaction
$$
	\delta \colon W \rightarrow H_\ract \otimes_A
W,\quad b
	\mapsto b_{(-1)} \otimes_A b_{(0)},
$$
which is counital and an algebra map,
$$
	\eta _W (\varepsilon
	(b_{(-1)}))b_{(0)}=b,\quad
	(b d)_{(-1)} \otimes (b d)_{(0)} =
	b_{(-1)} d_{(-1)}
	\otimes b_{(0)} d_{(0)}.
$$
Here again we use a Sweedler-esque notation to denote the coaction, as in~\cite[\S 2.5]{MR1243637}. Similarly, as in the
definition of a bialgebroid itself, for this condition
to be well-defined one must also require
$$
	b_{(-1)}
	\otimes_A b_{(0)} \eta_W(a)= a \blact b_{(-1)}
	\otimes_A b_{(0)}.
$$

The key example that reproduces
\cite{MR2803876} is the following.

\subsection{The opmodule
adjunction}\label{associatedsec}
For any Doi-Koppinen datum $(H,D,W)$, the
$H$-coaction $ \delta $ on $W$ turns the Eilenberg-Moore adjunction
\!\!
$\xymatrix{{A\cMod} \ar@/^{0.3pc}/[r] &
\ar@/^0.3pc/[l] {W\cMod}}$
\!\!
for the monad $ B:=W \otimes_A - $ into an opmodule
adjunction for the opmonoidal adjunction
$\xymatrix{\cE \ar@/^{0.3pc}/[r]
& \ar@/^0.3pc/[l] \cH}$ defined in
Section~\ref{bimfromhopf}.  The $\cH$-module
category structure of $W\cMod$ is given by the left
$W$-action
$$
	b(l \otimes_A m):=
	b_{(-1)}l \otimes_A
	b_{(0)}m,
$$
where $b \in W$,
$l \in L$ (an $H$-module), and $m \in
M$ (a $W$-module).

Hence, as explained in Section~\ref{opmoduleadj},
$D$ defines a compatible pair of
comonads $ D \otimes_A - $ on ${W\cMod}$ and ${A\cMod}$. The
distributive law resulting from Corollary~\ref{arisec}
generalises the Yetter-Drinfel'd braiding, as it is
given for a $W$-module $M$ by
\begin{eqnarray*}
	\chi \colon W \otimes_A (D
	\otimes_A M) &\rightarrow& D \otimes_A (W \otimes_A M),
\\
	b \otimes_A (c \otimes_A m) &\mapsto& b_{(-1)}c
\otimes _A (b_{(0)} \otimes_A m).
\end{eqnarray*}

\subsection{The main example}\label{coeffsforhopf}
If $H$ is a bialgebroid, then
$D:=H$ is a module coalgebra with
left action given by multiplication and coalgebra
structure given by that of $H$.
If $H$
is a Hopf algebroid, then $W:=H^*$ is
a comodule algebra with unit map $\eta_W(a):=\eta(1
\otimes_k a)$ and coaction
$$
 	\delta \colon H^* \rightarrow
	H_\ract \otimes_A {}_\blact H^*, \quad
 b \mapsto b_- \otimes_A b_+.
$$
In the sequel we write $B$ as $-
\otimes_\Aop H$ rather than $H^* \otimes_A -$
to work with $H$ only.  Then the distributive law
becomes
\begin{eqnarray*}
\nonumber
	\chi \colon (H \otimes_A M) \otimes_\Aop H
&\rightarrow& H \otimes_A (M \otimes_\Aop H),
\\
(c \otimes_A m) \otimes_\Aop b &\mapsto& b_- c \otimes _A (m
\otimes_\Aop b_+),
\end{eqnarray*}
for $b, c \in H$.

Proposition~\ref{chicoalgprop} completely
characterises the $ \chi $-coalgebras: in this
example, they are given by right $H$-modules and left
$H$-comodules $M$ with $ \chi $-coalgebra
structure
$$
	\rho: m \otimes_\Aop h \mapsto
	h_-m_{(-1)}
	\otimes_A m_{(0)} h_+.
$$
In general, the characterisation of
$ \chi $-opcoalgebras mentioned after Proposition~\ref{chicoalgprop}
does not provide us with such an explicit description.
Note, however, that
one obtains $ \chi $-opcoalgebras from
(left-left) Yetter-Drinfel'd modules:

\begin{defn}
A \emph{Yetter-Drinfel'd module} over $H$ is a left
$H$-comodule and left $H$-module $N$ such that
for all $ h\in H,n \in N$, one has
$$
	(hn)_{(-1)} \otimes_A (hn)_{(0)}=
	h_{+(1)} n_{(-1)} h_{-} \otimes_A
	h_{+(2)}n_{(0)}.
$$
\end{defn}

Indeed, each such Yetter-Drinfel'd
module defines a $ \chi$-opcoalgebra
$$
	- \otimes_H N \colon H^*\cMod \rightarrow
	k\cMod
$$
whose $ \chi $-opcoalgebra structure is given by
\begin{equation*}
\lambda: (h \otimes_A x) \otimes_H n
	\mapsto (xn_{(-1)+} h_+ \otimes_\Aop h_- n_{(-1)-})
	\otimes_H n_{(0)}.
\end{equation*}
The resulting duplicial object
$\rCC_T({-}\otimes_H N,M)$ is the one
studied in \cite{MR2803876, Kow:GABVSOMOO}.

Identifying
$(- \otimes_\Aop H) \otimes_H N \cong - \otimes_\Aop N$,
the $ \chi $-opcoalgebra structure
  becomes
\begin{equation*}
\lambda: (h \otimes_A x) \otimes_H n
	\mapsto
	xn_{(-1)+} h_+ \otimes_\Aop h_- n_{(-1)-} n_{(0)}.
\end{equation*}
Using this identification, we give
explicit expressions of the operators $L$ and $R$
as well as $t_\mathbb{T}$ that appeared in
Sections~\ref{evidenziatore1} and~\ref{evidenziatore2}:
first of all,
observe that the right $H$-module structure on
$
	S M := {}_\lact H_\ract \otimes_A M
$
is given by
$$
	(h \otimes_A m)g :=
	g_- h \otimes_A mg_+,
$$
whereas the right $H$-module structure on
$
T M := M \otimes_\Aop {}_\blact H_\ract
$
is given by
$$
	(m \otimes_\Aop h)g :=
	m \otimes_\Aop hg.
$$
The cyclic operator from Section \ref{evidenziatore1}
then results as
\begin{equation*}
\begin{split}
&t_T
(m  \otimes_\Aop h^1 \otimes_\Aop \cdots \otimes_\Aop h^n \otimes_\Aop n)
\\
&=
m_{(0)} h^1_+ \otimes_\Aop h^2_+ \otimes_\Aop \cdots \otimes_\Aop h^n_+ \\
& \qquad
\otimes_\Aop (n_{(-1)} h^n_- \cdots h^1_- m_{(-1)})_+
\otimes_\Aop (n_{(-1)} h^n_- \cdots h^1_- m_{(-1)})_-
n_{(0)},
\end{split}
\end{equation*}
and for the operators $L$ and $R$ from Section~\ref{evidenziatore2}
 one obtains with the help of the properties
\cite[Prop.~3.7]{MR1800718} of the translation map
(\ref{hunger}):
\begin{equation*}
\begin{split}
L:  (h^1 & \otimes_A \cdots \otimes_A h^{n+1}
\otimes_A m) \otimes_H n
\mapsto \\
& (mn_{(-1)+} h^1_+ \otimes_\Aop
h^1_- h^2_+ \otimes_\Aop \cdots \otimes_\Aop h^{n+1}_- n_{(-1)-})
\otimes_H n_{(0)},
\end{split}
\end{equation*}
along with
\begin{equation*}
\begin{split}
R: (m & \otimes_\Aop h^1
\otimes_\Aop \cdots \otimes_\Aop h^n \otimes_\Aop 1)
\otimes_H n
\mapsto \\
&
(m_{(-n-1)}  \otimes_A m_{(-n)}h^1_{(1)} \otimes_A
m_{(-n+1)}h^1_{(2)}h^2_{(1)}  \otimes_A \cdots \\
& \quad
\otimes_A m_{(-1)} h^1_{(n)}
h^2_{(n-1)} \cdots h^n_{(1)}
\otimes_A m_{(0)}) \otimes_H h^1_{(n+1)} h^2_{(n)} \cdots h^n_{(2)} n.
\end{split}
\end{equation*}
Compare these maps with those obtained in \cite[Lemma~4.10]{MR2803876}.
Hence, one has:
\begin{equation*}
\begin{split}
(L &\circ R)\big((m  \otimes_\Aop h^1
\otimes_\Aop \cdots \otimes_\Aop h^n \otimes_\Aop 1)
	\otimes_H n\big)
= \\
&
m_{(0)} (h^1_{(n+1)}h^2_{(n)} \cdots h^n_{(2)} n)_{(-1)+} m_{(-n-1)+}
\otimes_\Aop m_{(-n-1)-}  m_{(-n)+}  h^1_{(1)+}
\\
&
\qquad
\otimes_\Aop
h^1_{(1)-} m_{(-n)-}  m_{(-n+1)+}  h^1_{(2)+} h^2_{(1)+} \otimes_\Aop
\cdots
\\
&
\qquad
\otimes_\Aop h^n_{(1)-} \cdots h^1_{(n)-} m_{(-1)-} (h^1_{(n+1)} \cdots h^n_{(2)} n)_{(-1)-} (h^1_{(n+1)} \cdots h^n_{(2)} n)_{(0)}
\\
&
=
m_{(0)} \big((h^1_{(2)} \cdots h^n_{(2)} n)_{(-1)} m_{(-1)}\big)_+
\otimes_\Aop h^1_{(1)+} \otimes_\Aop \cdots
\\
&
\quad
\otimes_\Aop h^n_{(1)+}
\otimes_\Aop h^n_{(1)-} \cdots h^1_{(1)-} \big((h^1_{(2)} \cdots h^n_{(2)} n)_{(-1)} m_{(-1)} \big)_- (h^1_{(2)} \cdots h^n_{(2)} n)_{(0)}.
\end{split}
\end{equation*}
Finally, if $M \otimes_\Aop N$ is a stable anti
Yetter-Drinfel'd module \cite{MR2415479},
that is, if
$$
m_{(0)}(n_{(-1)}m_{(-1)})_+ \otimes_\Aop (n_{(-1)}m_{(-1)})_- n_{(0)} = m \otimes_\Aop n
$$
holds for all $n \in N$, $m \in M$,
we conclude by observing that
\begin{equation*}
\begin{split}
(L \circ R)(m & \otimes_\Aop h^1 \otimes_\Aop \cdots
\otimes_\Aop h^n \otimes_\Aop n)
\\
&
=
m \otimes_\Aop h^1_{(1)+} \otimes_\Aop \cdots \otimes_\Aop h^n_{(1)+}
\otimes_\Aop h^n_{(1)-} \cdots h^1_{(1)-} h^1_{(2)} \cdots h^n_{(2)} n
\\
&
=
m \otimes_\Aop h^1 \otimes_\Aop \cdots \otimes_\Aop h^n
\otimes_\Aop n.
\end{split}
\end{equation*}
Observe that in \cite{Kow:GABVSOMOO} this
cyclicity condition was obtained for a different complex
which, however, computes the same homology.

\subsection{The antipode as a $1$-cell}
\label{viviverde}
If $A=k$, then the four actions
$\lact,\ract,\blact,\bract$ coincide and
$H$ is a Hopf algebra with antipode
$S \colon H \rightarrow H$ given by
$S(h)=\varepsilon (h_+)h_-$. The
aim of this brief section is to remark that
this defines a 1-cell that connects the two instances
of Corollary~\ref{arisec} provided by the opmonoidal
adjunction and the opmodule adjunction considered
above.

Indeed, in this case we have $\Ae\cMod\cong A\cMod=k\cMod$,
but (unless $H$ is commutative) $H^*\cMod \neq H\cMod$. However, $S$ defines
a morphism of monads
$$\xymatrix{
(\kmod, H \otimes -) \ar[rr]^-{(1, \sigma)}  && (\kmod, - \otimes H)
}
$$ where $\sigma \colon{-} \otimes H \Rightarrow H \otimes{-}$ is given in components by
$$
		\sigma \colon X \otimes H
	\rightarrow H \otimes X, \quad
	x \otimes h \mapsto S(h) \otimes x.
$$
The fact that this $(1, \sigma)$ is a morphism of monads is
equivalent to the fact that $S$ is an algebra
anti-homomorphism.
Also, the lifted comonads agree and are given by
$H \otimes{-}$ with comonad structure given by the
coalgebra structure of $H$;
clearly, $\gamma = 1
\colon  H \otimes {-} \Rightarrow H \otimes {-}$
defines an opmorphism of monads
$$
\xymatrix{
(\kmod, H \otimes{-}) \ar[rr]^-{(1, \gamma)} & & (\kmod, H \otimes{-})
}
$$ Furthermore, the Yang-Baxter
condition is satisfied, so we have that $(1,
\sigma, \gamma)$ is a $1$-cell in the $2$-category of mixed
distributive laws. If we apply the $2$-functor of Corollary~\ref{2funcmixdist} to
this, we get a $1$-cell $(\Sigma, \tilde \sigma, \tilde
\gamma)$ between a comonad distributive law on the
category of left $H$-modules and one on the category of
right $H$-modules. The identity lifts to the functor
$\Sigma \colon H\mbox{-}\mathbf{Mod} \rightarrow
\mathbf{Mod}\mbox{-}H$ which sends a left $H$-module
$X$ to the right $H$-module with right action given by
$$
x \cdot h := S(h) x.
$$
\section{Hopf monads \`a la Mesablishvili-Wisbauer}\label{hopfywopfy}
\label{wisbimonad}
\subsection{Bimonads}
A \emph{bimonad} in the sense of
\cite[Def.~4.1]{MR2787298} is a sextuple
$(B,\mu,\eta,\delta, \varepsilon,\theta)$,
where $B
\colon \cA \rightarrow \cA$ is a functor, $(B, \mu,\eta)$ is
a monad, $(B,\delta,\varepsilon)$ is a comonad and
$\theta \colon BB \Rightarrow BB$ is a mixed distributive
law satisfying a list of compatibility conditions. A \emph{Hopf monad} as in \cite[Def.~5.2]{MR2787298} is a bimonad $B$ equipped with
a natural transformation $B \Rightarrow B$, called the \emph{antipode}, satisfying various compatibility conditions mirroring those
for Hopf algebras.

In particular, for a bimonad $B$, the multiplication $ \mu $ and comultiplication $ \delta $ are required to
be compatible in the sense that there is a commutative
diagram \begin{equation}\label{wisbauerdiagram}
\begin{array}{c} \xymatrix{BB \ar[d]_{B
{\delta}}
\ar[r]^\mu & B \ar[r]^{\delta} & BB\\
BBB
\ar[rr]_{\theta B} & & BBB \ar[u]_{B\mu}} \end{array}
\end{equation}
The other defining conditions govern the
compatibility between the unit and the counit with each
other and with $ \mu $ and $ \delta$ respectively, see
\cite{MR2787298} for the details.

It follows immediately that we also obtain an instance
of Corollary~\ref{arisec} in this situation: if we take
$\cB=\cA^B$ to be the Eilenberg-Moore
category of the monad $B$ as in
Section~\ref{extremalcase}, then the mixed distributive
law $\theta$ defines a lift
$W$ of the comonad
$B$ to $\cB$.

Note that in general, neither $\cA$ nor $\cB$ need to
be monoidal, so $B$ is in general not an opmonoidal
monad. Conversely, recall that for the examples of
Corollary~\ref{arisec} obtained from opmonoidal monads,
$B$ need not equal $C$ as functors.

\subsection{Examples from bialgebras}\label{whygalois}
In the main
example of bimonads in the above sense, we in fact do
have $B=C$ and we are in the situation of
Section~\ref{bimfromhopf} for a bialgebra $H$ over
$A=k$.  The commutativity of (\ref{wisbauerdiagram})
amounts to the fact that the coproduct is an algebra
map.

This setting provides an instance of
Proposition~\ref{sunshines} since there are two lifts
of $B=C$ from $\cA={k\cMod}$ to $\cB={H\cMod}$: the
canonical lift $S=T=FU$ which takes a left
$H$-module $L$ to the $H$-module $H \otimes L$ with
$H$-module structure given by multiplication in the
first tensor component, and the lift $W$ which takes
$L$ to $H \otimes L$ with $H$-action given by the
codiagonal action
$
	g(h \otimes y)=
	g_{(1)}h \otimes
	g_{(2)}y,
$
that is, the one defining the monoidal
structure on $\cB$.

In this example, the map $ \beta $ from
Proposition~\ref{wisga} is given by
$$
	H \otimes L \rightarrow H \otimes
	L,\quad
	g\otimes y \mapsto g_{(1)} \otimes
	g_{(2)}y
$$
which for $L=H$ is the Galois map
from Definition~\ref{hadef}. This is bijective for all
$L$ if and only if it is so for $L=H$, which is also
equivalent to $H$ being a Hopf algebra. However,
this Galois map should not just be viewed as a $k$-linear
map, but as a natural $H$-module morphism between the two
$H$-modules $ TL$ and $WL$, and this is
the natural transformation $ \Gamma ^{T,W}(1)$ from
Section~\ref{galoismapsct}.

As shown in \cite[Theorem~5.8(c)]{MR3320218}, this
characterisation of Hopf algebras in terms of the
bijectivity of the Galois map extends straightforwardly
to Hopf monads.

\subsection{An example not from bialgebras}\label{newbimonad}
Another
example of a bimonad is the \emph{nonempty list monad}
$L^+$ on $\cSet$ (see Example~\ref{nonemptylistmonad}), which assigns to a
set $X$ the set $L^+X$ of all nonempty lists
of elements in $X$, denoted $[x_1, \ldots, x_n]$.  The mixed
distributive law
$$
	\theta \colon L^+ L^+ \Rightarrow
	L^+ L^+
$$
is defined as follows: given a list
$$
	[ [ x_{1,1}, \ldots, x_{1,
	n_1} ] , \ldots, [x_{m,1}, \ldots, x_{m, n_m}]]
$$
in
$L^+L^+ X$,
its image under $\theta$ is the list with $$
\sum_{i=1}^m n_i (m-i+1) $$ terms, given by 
 \begin{align*}
\Big[[x_{1,1}, x_{2,1}, x_{3,1} \ldots, x_{m,1}],
\ldots, [x_{1, n_1}, x_{2,1}, x_{3,1}, \ldots,
x_{m,1}]&, \\ [x_{2,1}, x_{3,1} \ldots, x_{m,1}],
\ldots, [x_{2, n_2}, x_{3,1}, \ldots x_{m,1} ]&,\\
\ldots&,  \\  [ x_{m,1} ] , [ x_{m,2} ], \ldots,
[x_{m,n_m} ] \Big].&
\end{align*}

One verifies straightforwardly:

\begin{prop} $L^+$ becomes a bimonad on $\set$ whose (monad) Eilenberg-Moore category is
$\cSet^{L^+} \cong \mathbf{SemiGrp}$, the
category of (nonunital) semigroups.  \end{prop}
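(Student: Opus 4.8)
The plan is to prove the two assertions separately: first that the Eilenberg--Moore category of the monad $L^+$ is $\mathbf{SemiGrp}$, and then that the data $(L^+,\mu,\eta,\delta,\varepsilon,\theta)$ satisfy the axioms of a bimonad in the sense of~\cite[Def.~4.1]{MR2787298}. The monadic statement is the cleaner of the two and is logically independent of the comonad and distributive-law structure, so I would carry it out first.

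For the Eilenberg--Moore category: by Definition~\ref{algebra}, an $L^+$-algebra is a set $X$ together with $\beta\colon L^+X\to X$ satisfying $\beta[x]=x$ and $\beta\circ\mu=\beta\circ L^+\beta$. Given such a $\beta$, set $x\cdot y:=\beta[x,y]$. Then $\cdot$ is associative: both $(x\cdot y)\cdot z$ and $x\cdot(y\cdot z)$ equal $\beta[x,y,z]$, using $\beta[u]=u$ together with $\beta\circ\mu=\beta\circ L^+\beta$ applied to $[[x,y],[z]]$ and to $[[x],[y,z]]$ respectively. Conversely, a semigroup $(X,\cdot)$ determines $\beta[x_1,\dots,x_n]:=x_1\cdot x_2\cdots x_n$, which is well defined by associativity and manifestly satisfies the two algebra axioms (unitality is the $n=1$ case; associativity of $\beta$ says that multiplying out a concatenation equals multiplying the partial products, again an instance of associativity of $\cdot$). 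These two constructions are mutually inverse, and $f\colon(X,\beta)\to(X',\beta')$ is an $L^+$-algebra morphism exactly when $f\circ\beta=\beta'\circ L^+f$, which unwinds to $f(x_1\cdots x_n)=f(x_1)\cdots f(x_n)$, i.e.\ $f$ is a semigroup homomorphism. Hence $\set^{L^+}\cong\mathbf{SemiGrp}$; it is precisely the exclusion of the empty list that prevents a unit from being forced, distinguishing this from the monoid case of the full list monad of Example~\ref{listmonad}.

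For the bimonad structure I would first record all the structure maps explicitly: the monad $(\mu,\eta)$ from Example~\ref{nonemptylistmonad} (concatenation and singleton), the comonad $(\delta,\varepsilon)$ from the same example ($\varepsilon$ takes the head, $\delta$ takes the list of tails), and $\theta$ as given by the displayed combinatorial formula. The verification then splits into: (i) naturality of $\theta$, which is immediate since the formula refers only to the list and indexing structure and merely reorders and duplicates the original entries, so it commutes with $L^+L^+f$; (ii) the four mixed-distributive-law axioms relating $\theta$ to $\mu,\eta,\delta,\varepsilon$; and (iii) the remaining bimonad compatibilities of~\cite[Def.~4.1]{MR2787298}, most notably the $\mu$--$\delta$ compatibility diagram~\eqref{wisbauerdiagram}, together with the straightforward unit/counit conditions. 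The axioms involving only $\eta$ and $\varepsilon$ reduce to short computations --- for instance $\theta\circ\eta L^+=L^+\eta$ since both send $[x_1,\dots,x_n]$ to $[[x_1],\dots,[x_n]]$, and $\varepsilon L^+\circ\theta=L^+\varepsilon$ since the first inner list in the output of $\theta$ is precisely the list of heads $[x_{1,1},x_{2,1},\dots,x_{m,1}]$ --- so these present no difficulty.

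The main obstacle is the remaining combinatorial bookkeeping: the $\mu$- and $\delta$-compatibility axioms of the mixed distributive law and the diagram~\eqref{wisbauerdiagram} all involve triple composites such as $\theta B$ and $B\theta$ evaluated on $L^+L^+L^+X$, and one must track how the formula for $\theta$, whose output has $\sum_i n_i(m-i+1)$ inner entries, interacts with concatenation at both the inner and outer level and with the ``tails'' comultiplication. My recommended strategy is to check each such identity on a generic element $[[x_{i,1},\dots,x_{i,n_i}]]_{i=1}^{m}$ (and on $[[x_{i,j,k}]]$ for the triple-composite diagrams), comparing the two sides entry by entry; the computation is long but entirely mechanical, and a small instance such as $[[a,b],[c]]$ already displays the pattern making each diagram commute. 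Once these are in place, the remaining coherence conditions of~\cite[Def.~4.1]{MR2787298} follow in the same routine way, completing the proof.
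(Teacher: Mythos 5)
Your proposal is correct and takes essentially the same approach as the paper, which in fact offers no written proof beyond the remark that ``one verifies straightforwardly'': the identification $\set^{L^+}\cong\mathbf{SemiGrp}$ via $x\cdot y=\beta[x,y]$ and the element-by-element verification of the mixed distributive law and bimonad axioms are exactly the intended routine checks, and your sample computations (e.g.\ $\theta\circ\eta L^+=L^+\eta$ and $\varepsilon L^+\circ\theta=L^+\varepsilon$) are accurate.
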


The second lift $W \colon \mathbf{SemiGrp \to SemiGrp}$ of the comonad $L^+$ that
one obtains from the bimonad structure is as follows. Given a semigroup $X$,
we have $W X = L^+ X$ as sets, but the binary
operation is given by \begin{align*} W X \times W X
&\rightarrow W X \\ [x_1, \ldots, x_m][y_1, \ldots, y_n] &:=
[x_1y_1, \ldots, x_my_1, y_1, \ldots, y_n]. \end{align*}

Following Proposition~\ref{chicoalgprop}, given a
semigroup $X$, the unit turns the underlying set of $X$ into an
$L^+$-coalgebra and hence we get a $
\chi$-coalgebra structure on $X$. Explicitly, $\rho \colon T X \rightarrow W X$ is given by
$$
\rho[ x_1, \ldots, x_n] = [x_1 \cdots x_n, x_2 \cdots
x_n, \ldots, x_n].
$$
The image of $\rho$ is known as
the \emph{left machine expansion} of $X$
\cite{MR745358}.

\begin{prop}
The only $\theta$-entwined algebra is the trivial semigroup $\emptyset$.
\end{prop}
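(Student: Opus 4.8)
The plan is to reformulate a $\theta$-entwined algebra as a coalgebra for the lifted comonad $C^\theta = W$ and then to extract a counting obstruction.

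By the identification of $\theta$-entwined algebras with the category $(\cA^B)^{C^\theta}$ of $C^\theta$-coalgebras in $\cA^B$, and since here $\cA^B = \mathbf{SemiGrp}$ and $C^\theta$ is the comonad $W$ described above, a $\theta$-entwined algebra is precisely a (nonunital) semigroup $(X,\cdot)$ together with a semigroup homomorphism $\nabla\colon X\to WX$ that is counital and coassociative for the comonad structure of $W$. Since the counit of $W$ returns the head of a list and the comultiplication returns the list of its tails, the counit axiom says that the first entry of $\nabla(x)$ is $x$, and coassociativity then forces
$$
\nabla(x)=[\,x,\ s(x),\ s^2(x),\ \ldots,\ s^{\ell(x)-1}(x)\,]
$$
for a uniquely determined partial endofunction $s$ on $X$ all of whose forward orbits are finite, where $\ell(x)\ge 1$ denotes the length of $\nabla(x)$.

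The remaining requirement, that $\nabla$ preserve the product, unwinds via the formula for the multiplication in $WX$ to the identity
$$
\nabla(xy)=[\,xy,\ s(x)y,\ \ldots,\ s^{\ell(x)-1}(x)\,y,\ y,\ s(y),\ \ldots,\ s^{\ell(y)-1}(y)\,]
$$
for all $x,y\in X$. Comparing the two sides entry by entry yields $\ell(xy)=\ell(x)+\ell(y)$, together with the relations $s^{i}(xy)=s^{i}(x)\,y$ for $i<\ell(x)$ and $s^{\ell(x)+j}(xy)=s^{j}(y)$ for $j<\ell(y)$. In particular $\ell$ is a homomorphism from $X$ into the additive semigroup of positive integers, so $X$ has no idempotent (an $x$ with $x^2=x$ would force $\ell(x)=2\ell(x)$), and $s$ is defined on every product.

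Finally I would deduce that $X=\emptyset$. For any $x\in X$ the last entry $e=s^{\ell(x)-1}(x)$ of $\nabla(x)$ satisfies $\ell(e)=1$, so a terminal element exists; for such an $e$ the displayed identity collapses to $s(ez)=z$ for all $z$, with $\nabla(ez)$ equal to $ez$ prepended to $\nabla(z)$, so that by induction $\nabla(e^k)=[\,e^k,e^{k-1},\ldots,e\,]$. The hard part is then to play the relation $s(xy)=s(x)\,y$ (valid whenever $\ell(x)\ge 2$) against associativity of the multiplication and the finiteness of all $s$-orbits, forcing $X$ to be empty; I expect this final step to carry the genuine content of the argument, the preceding reductions being essentially formal.
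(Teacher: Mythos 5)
Your reduction of a $\theta$-entwined algebra to a counital, coassociative semigroup homomorphism $\nabla\colon X\to WX$, and the entrywise identities you extract from it,
$$
\ell(xy)=\ell(x)+\ell(y),\qquad s^{i}(xy)=s^{i}(x)\,y\ \ (i<\ell(x)),\qquad s^{\ell(x)+j}(xy)=s^{j}(y)\ \ (j<\ell(y)),
$$
are exactly what the stated multiplication $[x_1,\ldots,x_m][y_1,\ldots,y_n]=[x_1y_1,\ldots,x_my_1,y_1,\ldots,y_n]$ on $WX$ produces. The genuine gap is the step you defer: you never derive a contradiction from $X\neq\emptyset$, you only announce that one should follow by playing $s(xy)=s(x)y$ against associativity and the finiteness of the $s$-orbits. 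This cannot be repaired from the identities you have listed. The free semigroup on one generator $r$, with $\nabla(r^k)=[r^k,r^{k-1},\ldots,r]$ (so $\ell(r^k)=k$ and $s(r^k)=r^{k-1}$), is counital, coassociative, and satisfies your displayed formula for $\nabla(xy)$ in every degree (and more generally the free semigroup on any set, with $\nabla(w)$ the list of suffixes of $w$, does too). Since your conditions admit a nonempty model, no further manipulation of them can force $X=\emptyset$; the "hard part" you postpone is not merely hard but impossible from that starting point.

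The proof in the text runs on a different identity, namely $\nabla(xy)=[xy,xy_1,\ldots,xy_n,y,y_1,\ldots,y_n]$ for $\nabla(y)=[y,y_1,\ldots,y_n]$, in which $x$ multiplies the entries of $\nabla(y)$ on the \emph{left}, so that $\ell(xy)=2\ell(y)$ rather than $\ell(x)+\ell(y)$. Under that identity the conclusion is almost immediate: a nonempty $X$ has a root, squaring it gives an element of height two, and comparing $\nabla$ of a product computed directly with the tail (coassociativity) of $\nabla$ of another product forces one element to have two distinct parents; alternatively, computing $\ell(x^3)$ as $\ell(x\cdot x^2)$ and as $\ell(x^2\cdot x)$ already gives $4\ell(x)=2\ell(x)$, which is impossible since $\ell\geq 1$. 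So before anything else you need to settle which of the two formulas the distributive law $\theta$ of this section actually yields: your (literal) reading is the one consistent with the displayed multiplication on $WX$, but it admits the nontrivial model above and therefore cannot give the Proposition, whereas the formula the text's proof uses kills every nonempty semigroup in one line.
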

\begin{proof}
An $L^+$-coalgebra structure $\beta \colon X \to L^+ X$ is equivalent to $X$ being a forest of at most countable height (rooted) trees, where each level may have arbitrary cardinality. The structure map $\beta$ sends $x$ to the finite list of predecessors of $x$. A $\theta$-entwined algebra is therefore such a forest, which also has the structure of a semigroup such that for all $x,y \in X$ with $\beta(y) = [y, y_1, \ldots, y_n]$ we have
$$
\beta(xy) = [xy, xy_1, \ldots, xy_n , y, y_1, \ldots, y_n].
$$

Let $X$ be a $\theta$-entwined algebra. If $X$ is non-empty, then there must be a root. We can multiply this root
with itself to generate branches of arbitrary height. Suppose that we have a
branch of height two; that is to say, an element $y \in X$ with $\beta(y) = [y,x]$
(so, in particular, $x \neq y$). Then $\beta(xy) = [xy, y]$,  but
$\beta(xx) = [xx, xy, x, y]$. This is impossible since $x$ and $y$ cannot both be the predecessor of $xy$.
\end{proof}
\section{Enriched functor categories}\label{finally}
We conclude the main part of the thesis by showing how to construct a duplicial object that generalises some of the examples we have previously seen.
\subsection{Enriched categories}
Let $\cV$ be a monoidal category.
\begin{defn}
A $\cV$-\emph{category} $\cH$ consists of
\begin{itemize}
\item a class $|\cH|$ whose elements we call objects
\item for any $A, B \in |\cH|$, an object $\cH(A, B) \in |\cV|$
\item for each object $A \in |\cH|$, a morphism $u_\cA \colon I \to \cH(A, A)$ in $\cV$, called the \emph{unit}
\item for any $A, B, C$ in $|\cH|$, a morphism $\circ_{A,B,C} \colon \cH(B, C) \otimes \cH(A, B) \to \cH(A, C)$, called
\emph{composition}
\end{itemize}
satisfying associativity and unitality conditions, that is, commutativity of the two diagrams
\begin{align*}
\xymatrix{
\cH({}C, {}D) \otimes \cH({}B, {}C) \otimes \cH({}A, {}B)\ar[d]_-{\circ_{B,C,D} \otimes 1} \ar[rr]^-{1 \otimes {\circ_{A,B,C}}} && \cH({}C, {}D) \otimes \cH({}A, {}C) \ar[d]^-{\circ_{A,C,D}} \\
\cH({}B, {}D) \otimes \cH({}A, {}B) \ar[rr]_-{\circ_{A,B,D}} && \cH({}A, {}D)
} \\
\\
\xymatrix{
\cH({}A, {}B) \ar@{=}[drr] \ar[rr]^-{u_B \otimes 1} \ar[d]_-{1 \otimes u} && \cH({}B, {}B) \otimes \cH({}A, {}B) \ar[d]^-{\circ_{A,B,B}} \\
\cH({}A, {}B) \otimes \cH({}A, {}A) \ar[rr]_-{\circ_{A,A,B}} & & \cH({}A, {}B)
}
\end{align*}
for all ${}A, {}B, {}C, {}D \in |\cH|$.
\end{defn}
\begin{exa}
A $\set$-category is an ordinary category.
\end{exa}
\begin{exa}
A $\cat$-category is a 2-category.
\end{exa}
We now suppose that $\cV$ is a complete, cocomplete, closed symmetric monoidal category. Furthermore, we assume for simplicity that the monoidal structure of $\cV$ is strict, but we make no assumption that the symmetry is strict. The tensor product is denoted by $\otimes$, the unit by $I$, and the closed structure is given by the functor
$$
[{-},{-}] \colon \cV^* \times \cV \to \cV.
$$

Since left adjoint functors preserve limits, it follows that that the tensor product commutes with coproducts. That is,  we have natural isomorphisms
$$
A \otimes \sum_{i \in I} B_i \cong \sum_{i \in I} A \otimes B_i
$$
where $\sum$ denotes the coproduct, and $\{B_i\}_{i \in I}$ is a family of objects in $\cV$.

For any $\cV$-categories $\cH$ and $\cK$, there is an associated (ordinary) $\cV$-functor category $[\cH, \cK]$. This can also be given the structure of a $\cV$-category, but we do not need it here and thus do not give any further details. We are interested particularly in the case that $\cK = \cV$. By definition, a $\cV$-functor $F \colon \cH \to \cV$ consists of
\begin{itemize}
\item for each object $X$ in $\cH$, an object $FX$ in $\cV$
\item for any objects $X,Y$ in $\cH$, a morphism $F_{X,Y} \colon \cH(X,Y) \to [FX, FY]$ in $\cV$
\end{itemize}
satisfying appropriate unitality and associativity axioms. We now omit subscripts and just write $F$ in place of $F_{X,Y}$.

Every morphism
$$
F\colon \cH(X, Y) \to [FX, FY]
$$
corresponds to an \emph{action}
$$
\overline{F} \colon \cH(X,Y) \otimes FX \to FY
$$
using the closed structure of $\cV$,
and thus the axioms defining a $\cV$-functor can be rewritten as the two commutative diagrams
$$
\xymatrix@C=3em{
\cH(Y,Z) \otimes \cH(X,Y) \otimes FX \ar[rr]^-{1 \otimes \overline F} \ar[d]_-{\circ \otimes 1} && \cH(Y,Z) \otimes FY \ar[d]^-{\overline F} \\
\cH(X,Z) \otimes FX \ar[rr]_-{\overline F}& & FZ
}
$$
$$
\xymatrix@C=3em{
FX \ar[rr]^-{u \otimes 1} \ar@{=}[drr] & & \cH(X,X) \otimes FX \ar[d]^-{\overline F} \\
& & FX
}
$$
Similarly, a $\cV$-natural transformation $\alpha \colon F \Rightarrow G$ is defined as a collection of morphisms $\alpha \colon FX \to GX$ in $\cV$ such that the diagram
$$
\xymatrix@C=3em{
\cH(X,Y) \otimes FX \ar[rr]^-{\overline F} \ar[d]_-{1 \otimes \alpha} && FY \ar[d]^-\alpha \\
\cH(X,Y) \otimes GX \ar[rr]_-{\overline G}  && GY
}
$$
commutes.
\subsection{Hopf categories}
\begin{defn}
A \emph{comonoidal $\cV$-category} is a category $\cH$ enriched over the category of coalgebras in $\cV$. Explicitly, each object $\cH(X,Y)$ is a coalgebra in $\cV$ in such a way that the composition and unit of $\cH$ are coalgebra morphisms.
\end{defn}
\begin{defn}
A \emph{Hopf $\cV$-category} is a comonoidal $\cV$-category $\cH$ equipped with a collection of morphisms
$$
\xymatrix{
\cH(X,Y) \ar[r]^-{S} & \cH(Y,X)
}
$$
such that the diagrams
$$
\xymatrix{
\cH(X,Y) \otimes \cH(X,Y) \ar[rr]^-{1\otimes S} && \cH(X,Y) \otimes \cH(Y,X) \ar[d]^-{\circ} \\
\cH(X,Y) \ar[u]^-{\delta} \ar[r]_-{\epsilon} & I \ar[r]_-{u} & \cH(X,X)
}
$$
and
$$
\xymatrix{
\cH(X,Y) \otimes \cH(X,Y) \ar[rr]^-{S \otimes 1} && \cH(Y,X) \otimes \cH(X,Y) \ar[d]^-{\circ} \\
\cH(X,Y) \ar[u]^-{\delta} \ar[r]_-{\epsilon} & I \ar[r]_-{u} & \cH(Y,Y)
}
$$
commute.
\end{defn}
Our terminology comes from~\cite{Hop} but these are called Hopf $\cV$-algebroids in~\cite{MR1458415}.
\subsection{The comonad $T_\cH$}
Let $\cH$ be any $\cV$-category.
We now construct a pair of comonads on the contravariant enriched functor category $[\cH^*, \cV]$ and a distributive law between them. We do this however, by first defining a comonad on the covariant enriched functor category and pulling a few tricks. We define an endofunctor $T_\cH$ on $[\cH,\cV]$ as follows: given a $\cV$-functor $\cH \to \cV$, let $T_\cH(F)$ be defined on objects by
$$
T_\cH(F)(X) = \sum_{Y} \cH(Y,X) \otimes FY.
$$
The action $\overline{T_\cH(F)}$ is defined by lifting the composition
$$
\xymatrix@C=2.5em{
\cH(X,Y) \otimes \cH(Z,X) \otimes FZ \ar[r]^-{\circ \otimes 1 }
& \cH(Z, Y) \otimes FZ
}
$$
to the coproduct, so it is clear that $T_\cH(F)$ is a well-defined $\cV$-functor. Given a $\cV$-natural transformation $\alpha \colon F \Rightarrow G$, we define $T_\cH(\alpha) \colon T_{\cH}(F) \Rightarrow T_\cH(G)$ by lifting the morphisms
$$\xymatrix{
\cH(Y,X) \otimes FX \ar[rr]^-{1 \otimes \alpha} & &\cH(Y,X) \otimes GX}
$$ to the coproduct, and so clearly $T_\cH$ is a well-defined functor.

The morphisms
$$
\xymatrix@C=2.5em{
\cH(Y,X) \otimes FY \ar[rr]^-{1 \otimes u \otimes 1} & & \cH(Y,X) \otimes \cH(Y,Y) \otimes FY,
}
$$
and
$$
\xymatrix{
\cH(Y,X) \otimes FY \ar[r]^-{\overline F} & FX
}
$$
lift to the coproduct and define natural transformations $\delta \colon T_\cH \Rightarrow T_\cH T_\cH$ and  $\epsilon \colon T_\cH \Rightarrow 1$ respectively, which endow $T_\cH$ with the structure of a comonad.

Now, suppose that $\cH$ is a comonoidal $\cV$-category. Then, the category $[\cH, \cV]$ is monoidal~\cite[p.~143]{MR1458415} with tensor product $\otimes$ and unit $I$ given pointwise by those of $\cV$, i.e.\
$$
(F \otimes G)(X) = FX \otimes GX, \qquad I(X) = I
$$
and with respect to this structure, $T_\cH$ is in fact an opmonoidal comonad. Therefore, there is a comonad $S_\cH = T_\cH(I) \otimes {-}$ on $[\cH, \cV]$ and a distributive law $T_{\cH}S_{\cH} \Rightarrow S_{\cH}T_{\cH}$ (cf.~Section~\ref{notsure} and e.g.~\cite{MR2948490}). Explicitly, $S_\cH$ is defined on objects $F$ by
$$
S_\cH (F) (X) = \sum_Y \cH(Y,X) \otimes FX
$$
and the composites $T_{\cH}S_{\cH}$ and $S_{\cH}T_{\cH}$ are given by
\begin{align*}
(T_\cH S_\cH) (F) (X) &= \sum_{Y,Z} \cH(Y,X) \otimes \cH(Z,Y) \otimes FY, \\
(S_\cH T_\cH) (F) (X) &= \sum_{Y,Z} \cH(Y,X) \otimes \cH(Z,X) \otimes FZ .
\end{align*}
The distributive law is defined by the diagram
$$
\xymatrix{
\cH(Y,X) \otimes \cH(Y,Z) \otimes FY \ar@{.>}[r] \ar[d]_-{\delta \otimes 1 \otimes 1} & \cH(Z,X) \otimes \cH(Y,X) \otimes FY \\
\cH(Y,X) \otimes \cH(Y,X) \otimes \cH(Z,Y) \otimes FY \ar[r]_-{\cong} &
\cH(Y,X) \otimes \cH(Z,Y) \otimes \cH(Y,X) \otimes FY \ar[u]_-{\circ \otimes 1 \otimes 1}
}
$$
where the bottom isomorphism swaps the two inner tensorands.

Of course, since $\cH$ was arbitrary, we can replace it with $\cH^*$ to obtain a comonad $T_{\cH^*}$.
\subsection{The comonad $R_\cH$}
Now suppose that $\cH$ is a Hopf $\cV$-category. Then $[\cH^*, \cV]$ becomes a left module category for $[\cH, \cV]$, with action
$$
\xymatrix{
[\cH, \cV] \times [\cH^*, \cV] \ar[rr]^-{\rhd} & & [\cH^*, \cV]
}
$$ defined on objects by $$
(F \rhd G)(X) = FX \otimes GX.
$$
The action of the $\cV$-functor $\overline{F \rhd G}$ is given by
$$
\xymatrix@C=4em{
\cH(Y,X) \ar@{.>}[ddd]_-{\overline{F \rhd G}}\otimes FX \otimes GX \ar[r]^-{\delta \otimes 1 \otimes 1} & \cH(Y,X) \otimes \cH(Y,X) \otimes FX \otimes GX  \ar[d]^-{1 \otimes S \otimes 1 \otimes 1}\\
  & \cH(Y,X) \otimes \cH(X,Y) \otimes FX \otimes GX \ar[d]^-{1 \otimes \overline F \otimes 1} \\
  & \cH(Y,X) \otimes FY \otimes GX \ar[d]^-{\cong} \\
FY \otimes GY  & FY \otimes \cH(Y,X) \otimes GX \ar[l]^-{1 \otimes \overline G}
}
$$
In particular, if we choose $F$ to be the coalgebra $T_\cH(I)$, then we get a comonad
$$
\xymatrix{
[\cH^*, \cV]\ar[rr]^-{T_\cH (I) \rhd {-}} & & [\cH^*, \cV]
}
$$
which we denote by $R_\cH$. Unravelling everything, we have that $R_\cH$ is defined on objects $F$ by
$$
R_\cH (F)(X) =S_\cH (F) (X) = \sum_Y \cH(Y,X) \otimes FX
$$
with action $\overline{R_\cH(F)}$ given by lifting to the coproduct the morphisms
$$
\xymatrix@C=4em{
\cH(Y,X) \ar@{.>}[ddd] \otimes \cH(Z,X) \otimes FX \ar[r]^-{\delta \otimes 1 \otimes 1} & \cH(Y,X) \otimes \cH(Y,X) \otimes \cH(Z,X) \otimes FX  \ar[d]^-{1 \otimes S \otimes 1 \otimes 1}\\
  & \cH(Y,X) \otimes \cH(X,Y) \otimes \cH(Z,X) \otimes FX \ar[d]^-{1 \otimes \circ \otimes 1} \\
  & \cH(Y,X) \otimes \cH(Z, Y) \otimes FX \ar[d]^-{\cong} \\
\cH(Z,Y) \otimes FY  & \cH(Z,Y) \otimes \cH(Y,X) \otimes FX \ar[l]^-{1 \otimes \overline F}
}
$$
The comonad structure is induced in the obvious way from the coalgebra structure on each $\cH(Y,X)$.
\subsection{The distributive law $\chi$}
Again, we automatically have a distributive law (cf.~Section~\ref{opmoduleadj})
$$
\chi \colon T_{\cH^*} R_{\cH} \Rightarrow R_{\cH} T_{\cH^*}.
$$
Explicitly we have
\begin{align*}
(T_{\cH^*} R_{\cH} ) (F)(X) &= \sum_{Y,Z} \cH(X,Y) \otimes \cH(Z,Y) \otimes FY, \\
(R_{\cH} T_{\cH^*} ) (F) (X) &= \sum_{Y,Z} \cH(Y,X) \otimes \cH(X,Z) \otimes FZ,
\end{align*}
and the distributive law is induced by the composites
$$
\xymatrix@C=4em{
\cH(X,Y) \ar@{.>}[dd] \otimes \cH(Z,Y) \otimes FY \ar[r]^-{\delta \otimes 1 \otimes 1} & \cH(X,Y) \otimes \cH(X,Y) \otimes \cH(Z,Y) \otimes FY  \ar[d]^-{1 \otimes S \otimes 1 \otimes 1}\\
  & \cH(X,Y) \otimes \cH(Y,X) \otimes \cH(Z,Y) \otimes FY \ar[d]^-{1 \otimes \circ \otimes 1} \\
\cH(Z,X) \otimes \cH(X,Y) \otimes FY  & \cH(X,Y) \otimes \cH(Z, X) \otimes FY \ar[l]^-{\cong}
}
$$
\subsection{The $\chi$-(op)coalgebras and duplicial object}
Consider the unit object $I$ in $[\cH^*, \cV]$. We define a $\cV$-natural transformation $$\rho \colon T_{\cH^*}(I) \Rightarrow R_{\cH}(I)$$ as follows. The components are
defined by
$$
\rho \colon T_{\cH^*}(I)(X) = \sum_{Y} \cH(X,Y) \lto \sum_{Y} \cH(Y,X) = R_{\cH}(I)(X)
$$
to be the coproduct of the antipodes $S \colon \cH(X,Y) \to \cH(Y,X)$. It follows that $(I, \rho)$ is a $\chi$-coalgebra in $[\cH^*, \cV]$.

Now, let $N \colon [\cH^*, \cV] \to \cV$ be the functor which maps a $\cV$-functor $F$ to the coend
$$
\int^{X} FX
$$
which is, by definition, the coequaliser
$$
\xymatrix@C=3.5em{
\displaystyle\sum_{X,Y} FY \otimes \cH(X,Y) \cong \sum_{X,Y} \cH(X,Y) \otimes FY \ar@<1ex>[r]^-{\sum \epsilon \otimes 1} \ar@<-1ex>[r]_-{\sum\overline F}& \displaystyle\sum_X FX \ar@{->>}[r] & \displaystyle\int^{X} FX.
}
$$
We define morphisms
$$
\xymatrix{
L \colon \cH(Y,X) \otimes FX \ar[r]^-{\overline F} & FY \ar[rr]^-{u \otimes 1} && \cH(Y,Y) \otimes FY
}
$$
Since we have
$$
\sum_X R_{\cH}(F)(X) = \sum_{X,Y} \cH(Y,X) \otimes FX, \qquad \sum_X T_{\cH^*}(F)(X) = \sum_{X,Y} \cH(X,Y) \otimes FX
$$
the morphisms $L$ define a morphism
$$
\sum_X R_\cH (F) (X) \lto \sum_X T_{\cH^*} (F) (X)
$$
which coequalises the morphisms defining the coend $\displaystyle\int^X R_{\cH}(F)$. Thus, there is an induced morphism
$$
\xymatrix{
\displaystyle \int^X R_{\cH} (F) (X)\ar[rr]^-{\lambda} & & \displaystyle\int^X T_{\cH^*} (F)(X)
}
$$
which defines a natural transformation $\lambda \colon N R_\cH \Rightarrow N T_{\cH^*}$ which turns the triple $(N, \cV, \lambda)$ into a $\chi$-opcoalgebra. Therefore, by Theorem~\ref{dup}, we have:
\begin{cor}\label{endcor}
The simplicial object $$ \rC_{T_{\cH^*} } (N, I)$$
is a duplicial object in $\cV$.
\end{cor}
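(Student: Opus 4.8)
The plan is to realise the duplicial structure as a direct application of B\"ohm and {\c S}tefan's Theorem~\ref{dup} to the comonad distributive law $\chi \colon T_{\cH^*} R_\cH \Rightarrow R_\cH T_{\cH^*}$ on $[\cH^*, \cV]$ constructed above, taking the $\chi$-coalgebra to be $(I, \rho)$ (with $I$ the unit of $[\cH^*, \cV]$, regarded as a functor $\mathbbm{1} \to [\cH^*, \cV]$, and $\rho$ the coproduct of antipodes) and the $\chi$-opcoalgebra to be $(N, \cV, \lambda)$ (with $N$ the coend functor $F \mapsto \int^X FX$). Granting that these really are the structures claimed, Theorem~\ref{dup} equips $\rC_{T_{\cH^*}}(N, I)$, whose degree-$n$ part is $\int^X T_{\cH^*}^{n+1}(I)(X)$, with the duplicial operator $t_{T_{\cH^*}}$, and the statement follows at once. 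So the content to be supplied is the verification of three things: that $\chi$ is a distributive law of comonads, that $\rho$ is a $\chi$-coalgebra structure, and that $\lambda$ is a $\chi$-opcoalgebra structure.

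For the first, I would avoid checking the four diagrams of Definition~\ref{finallyover} directly and instead observe that $R_\cH = T_\cH(I) \rhd -$ is exactly the comonad attached to the coalgebra $T_\cH(I)$ under the left $[\cH, \cV]$-module structure on $[\cH^*, \cV]$, with $T_{\cH^*}$ playing the role of the opmonoidal comonad; the distributive law produced in this way is precisely the $\chi$ written down above, so it is automatically a comonad distributive law by the opmodule machinery of Section~\ref{opmoduleadj}, itself an instance of Corollary~\ref{arisec}. For the second, I would check the two diagrams of Definition~\ref{coalgdef} one summand at a time: since the comonad comultiplications and counits, and the maps building $\chi$, are all assembled from the composition $\circ$ and from the comultiplications and counits of the hom-coalgebras $\cH(X,Y)$, the counit square collapses to a standard consequence of the antipode hexagons, while the comultiplication square unwinds (after disentangling the symmetry that swaps the inner tensorands in the definition of $\chi$) to the assertion that the family of antipodes $S \colon \cH(X,Y) \to \cH(Y,X)$ reverses composition compatibly with the comonoidal enrichment. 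This last diagram chase, matching that identity against the two defining hexagons of $S$, is where I expect essentially all of the technical work to sit, and it is the main obstacle.

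Finally, the $\chi$-opcoalgebra axioms for $(N, \cV, \lambda)$ are dual in character and somewhat softer: $\lambda \colon N R_\cH \Rightarrow N T_{\cH^*}$ was obtained by observing that the maps $L \colon \cH(Y,X) \otimes FX \to \cH(Y,Y) \otimes FY$ coequalise the pair defining the coend $\int^X R_\cH(F)(X)$, hence descend uniquely. Its two defining diagrams can therefore be checked after post-composing with the canonical epimorphism $\sum_X(-)(X) \twoheadrightarrow \int^X(-)(X)$, reducing each to an equality of maps between coproducts that follows from $\cV$-functoriality of $F$ together with the unit axiom and the antipode identities already used. With $\chi$, $\rho$ and $\lambda$ in hand, the corollary is immediate from Theorem~\ref{dup}.
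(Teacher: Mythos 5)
Your proposal follows exactly the route the paper takes: the corollary is an immediate application of Theorem~\ref{dup} once one has the comonad distributive law $\chi \colon T_{\cH^*}R_\cH \Rightarrow R_\cH T_{\cH^*}$ (obtained automatically from the opmodule machinery of Section~\ref{opmoduleadj}), the $\chi$-coalgebra $(I,\rho)$ built from the antipodes, and the $\chi$-opcoalgebra $(N,\cV,\lambda)$ descended to the coend. The paper leaves the verifications of the (op)coalgebra axioms implicit where you sketch how to carry them out, but the decomposition of the argument and all the key ingredients are the same.
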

\begin{exa}
Let $\cV = \set$ with monoidal structure given by the Cartesian product $\times$. A Hopf $\set$-category $\cG$ is precisely a groupoid~\cite[Prop.~2.4]{Hop}. The terminal object $I = \{ * \}$ is  the unit, and the coend functor $N$ is given by the colimit functor
$$
\colim \colon [\cG^*, \set] \to \set.
$$
In this case
$$
\rC_{T_{\cG^*} } (\colim, I) \cong N \cG
$$
where $N\cG$ denotes the nerve of the groupoid $\cG$, cf.~Section~\ref{nerve}.

We can say even more: suppose that we have an adjunction
$$
\xymatrix{
\cG \ar@{}[rr]|-{\perp}\ar@/^0.5pc/[rr]^-F & & \ar@/^0.5pc/[ll]^-U \cC
}$$
where $\cC$ is any category. The presheaf category $[\cC^*, \set]$ becomes a monoidal category with the pointwise Cartesian product, and the comonad $T_{\cC^*}$ is opmonoidal (since every comonad is with respect to this particular monoidal structure) and so there is a distributive law
$$
T_{\cC^*} \circ (T_{\cC^*} (I) \otimes {-}) \Rightarrow (T_{\cC^*}(I) \otimes {-}) \circ T_{\cC^*} .
$$ The functor $$[F^*, \set] \colon [\cC^*, \set] \to [\cG^*, \set]$$ is the functor part of a 1-cell in $\dist$ between the aforementioned distributive laws. However, since $F$ is necessarily full and faithful (see the proof of~Corollary~\ref{motivation}), we have
$$
\colim \circ~[F^*, \set] \cong \colim \colon [\cC^*, \set] \to \set.
$$Therefore we may act on the above duplicial object with the 1-cell $[F^*, \set]$ as in Section~\ref{twistsec} to obtain another one
$$
\rC_{T_{\cC^*}}(\colim, I) \cong N \cC.
$$
The duplicial structure induced on  $N\cC$ is the same as that given by Theorem~\ref{auspara}.
\end{exa}
\begin{exa}
Let $k$ be a commutative ring, and let $\cV = \kmod$, with tensor product $\otimes = \otimes_k$ and unit $k$. Any Hopf algebra $H$ over $k$ can be viewed as a one-object Hopf $\cV$-category $\cH$, and $[\cH, \cV] \cong H\cMod$. The coend functor becomes
$$
{-}\otimes_H k \colon H^*\cMod \to \kmod
$$
and the duplicial $k$-module $$
\rC_{T_{\cH^*}} ({-}\otimes_H k, k)
$$
is precisely the one given in Section~\ref{coeffsforhopf} in the case that $A = M = N = k$, whose ordinary homology is given by $\Tor_{H/ k} (k, k)$.
\end{exa}

\chapter{Unanswered questions}\label{GRANDFINALE}

In this brief final chapter, we state some problems that came up during the preparation of this thesis that remain unsolved.

\begin{ques}
In Chapter~\ref{EXAMPLES} we saw related algebraic examples of duplicial objects arising from Theorem~\ref{dup}. What is missing is a brand new cyclic homology theory. We do however, show that the non-empty list functor $L^+$ becomes a bimonad (Section~\ref{newbimonad}). Is there a functor $N \colon \mathbf{SemiGrp} \to \cY$ for some category $\cY$ that admits the structure of an opcoalgebra over a distributive law, which gives rise to an interesting cyclic homology theory of semigroups?
\end{ques}

\begin{ques}
The duplicial object $\rC_{T_{\cH^*}}(N, I)$ of Corollary~\ref{endcor} can be used to describe duplicial structures on the nerves of categories (Section~\ref{nerve}) as well as on the simplicial object with homology $\Tor_{H/k} (k, k)$ for a Hopf algebra $H$ over a commutative ring $k$ (Section~\ref{coeffsforhopf}). However, it says nothing of more general Hopf algebroids over a noncommutative base algebra $A$. Is there a way to upgrade the construction method of $\rC_{T_{\cH^*}}(N, I)$ so that it admits a special case of the duplicial object in Section~\ref{coeffsforhopf} as an example?
\end{ques}

\begin{ques}
In Theorem~\ref{auspara} we describe duplicial structures on the nerves of categories $\cC$ in terms of
adjunctions
$$
\xymatrix{
\cG \ar@{}[rr]|-{\perp}\ar@/^0.5pc/[rr]^-I & & \ar@/^0.5pc/[ll]^-R \cC
}$$
where $\cG$ is a groupoid. If there exists two groupoids $\cG, \cG'$ equipped with left adjoints into $\cC$, then we have $\cG \simeq \cG'$, as explained in Remark~\ref{catrem}. How are the two duplicial structures on $N\cC$ induced by $\cG$ and $\cG'$ related?
\end{ques}

\begin{ques}
Suppose that we are in the situation of Section~\ref{bimfromhopf}, in the special case that we have a bialgebra $H$ over a commutative ring $k$. There is a distributive law $\chi$ defined by
$$
\xymatrix@R=1em{
H \otimes H \otimes X \ar[r]^-{\chi_X} & H \otimes H \otimes X \\
h \otimes g \otimes x \ar@{|->}[r] & h_{(1)}g \otimes h_{(2)} \otimes x.
}$$
If $H$ is a Hopf algebra, the functor $k \otimes_H{-} \colon H\mbox{-}\mathbf{Mod} \to k\mbox{-}\mathbf{Mod}$ becomes a $\chi$-opcoalgebra, with structure morphism $\lambda$ defined by
$$
\xymatrix@R=1em{
   k \otimes_H (H \otimes X) \ar[r]^-{\lambda_X} & k \otimes_H (H \otimes X) \cong X \\
1 \otimes_H (h \otimes x) \ar@{|->}[r] & S(h)x.
}
$$
Does the converse hold; i.e.~does a $\chi$-opcoalgebra structure on $k \otimes_H{-}$ imply that $H$ must be a Hopf algebra?
\end{ques}

\begin{ques}
Is there an interesting application of the work in Section~\ref{HOCHLAX} other than duplicial objects?
\end{ques}

\begin{ques}
Yetter-Drinfel'd modules seem to be of some importance with regard to dupliciality/cyclicity. For example:
\begin{itemize}
\item One requires a Yetter-Drinfel'd module $N$ in Section~\ref{coeffsforhopf} to define a $\chi$-opcoalgebra.
\item In the language of Section~\ref{coeffsforhopf} it is shown that the duplicial object $\rCC_T({-}\otimes_H N,M)$ is cyclic if $M \otimes_{A^*} N$ is a stable anti Yetter-Drinfel'd module.
\item A monoidal category $\cA$ becomes a 0-cell in $\AmodA$ with the regular actions (c.f.~Section~\ref{HOCHLAX}). We get that $\rH^0(\cA, \cA)$ is isomorphic to the lax centre of the monoidal category $\cA$ (see~\cite{MR2381533,MR1800718}). When $\cA$ is the category of modules over a Hopf algebra, this is the category of Yetter-Drinfel'd modules.
\end{itemize}
Is there a general phenomenon at work which explains these connections?
\end{ques}

\bibliography{Bibliography}
\bibliographystyle{abbrv}

\end{document}